\newcommand\MYcurrentlabel{xxx}
\newcommand{\MYstore}[2]{\global\expandafter \def \csname MYMEMORY #1 \endcsname{#2}}
\newcommand{\MYload}[1]{\csname MYMEMORY #1 \endcsname}
\newcommand{\MYnewlabel}[1]{\renewcommand\MYcurrentlabel{#1}\MYoldlabel{#1}}
\newcommand{\MYdummylabel}[1]{}
\newcommand{\torestate}[1]{%
  % overwrite label command
  \let\MYoldlabel\label%
  \let\label\MYnewlabel%
  #1%
  \MYstore{\MYcurrentlabel}{#1}%
  % restore old label command
  \let\label\MYoldlabel%
}
\newcommand{\restateprop}[1]{%
  % overwrite label command with dummy
  \let\MYoldlabel\label
  \let\label\MYdummylabel
  \begin{proposition*}[Restatement of \pref{#1}]
    \MYload{#1}
  \end{proposition*}
  \let\label\MYoldlabel
}
\newcommand{\restatelemma}[1]{%
  % overwrite label command with dummy
  \let\MYoldlabel\label
  \let\label\MYdummylabel
  \begin{lemma*}[Restatement of \pref{#1}]
    \MYload{#1}
  \end{lemma*}
  \let\label\MYoldlabel
}
\newcommand{\restateclaim}[1]{%
  % overwrite label command with dummy
  \let\MYoldlabel\label
  \let\label\MYdummylabel
  \begin{claim*}[Restatement of \pref{#1}]
    \MYload{#1}
  \end{claim*}
  \let\label\MYoldlabel
}
\newcommand{\restatetheorem}[1]{%
  % overwrite label command with dummy
  \let\MYoldlabel\label
  \let\label\MYdummylabel
  \begin{theorem*}[Restatement of \pref{#1}]
    \MYload{#1}
  \end{theorem*}
  \let\label\MYoldlabel
}
\newcommand{\bbS}{\mathbb S}
\newcommand{\R}{\mathbb R}
\newcommand{\Z}{\mathbb Z}
\newcommand{\Expected}[1]{\mathbb{E} \left[ #1 \right] }
\newcommand{\Expectednop}[1]{\mathbb{E} [ #1 ] }
\newcommand{\Var}[1]{\mathbf{Var} \left[ #1 \right] }
\newcommand{\Varsub}[2]{\underset{#1}{\mathbf{Var}}\left[ #2 \right] }
\newcommand{\bigO}[1]{ O \left( #1 \right) }
\newcommand{\bigOnop}[1]{ O ( #1 ) }
\newcommand{\bigOtilde}[1]{ \widetilde{O} \left( #1 \right) }
\newcommand{\bigOtildenop}[1]{ \widetilde{O} ( #1 ) }
\renewcommand{\Pr}[1]{\mathbb{P} \left( #1 \right) }
\newcommand{\Prd}[2]{\mathbb{P}_{#1} \left( #2 \right) }
\renewcommand{\top}{\mathsf{T} }
\DeclareMathOperator{\unif}{\textup{Unif}}
\newcommand{\unifhalf}[1]{\textup{Unif} \left( \left[ -1/2, 1/2 \right] \right)^{#1} }
\newcommand{\sone}{a_\textup{pure}}
\newcommand{\stwo}{a_\textup{mix}}
\newcommand{\bone}{\boldsymbol{1}}
\renewcommand{\bone}{b_\textup{pure}}
\newcommand{\btwo}{b_\textup{mix}}
\newcommand{\abparamsum}{\sone\bone + \stwo\btwo}
\newcommand{\Kpure}{K_{(\textup{pure})}}
\newcommand{\rpure}{r_{(\textup{pure})}}
\newcommand{\Kmix}{K_{(\textup{mix})}}
\newcommand{\rmix}{r_{(\textup{mix})}}
\newcommand{\Kpuredaggers}{K_{(\textup{pure})}^{\dagger \bone}}
\newcommand{\Kmixdaggers}{K_{(\textup{mix})}^{\dagger \btwo}}
\newcommand{\hattau}{\hat{\tau}}
\newcommand{\kappapure}{\boldsymbol{\kappa}^{(\textup{pure})}}
\newcommand{\kappapuretop}{\boldsymbol{\kappa}^{(\textup{pure})\top}}
\newcommand{\kappakmix}{\boldsymbol{\kappa}^{(\textup{mix})}}
\newcommand{\kappamixtop}{\boldsymbol{\kappa}^{(\textup{mix})\top}}
\newcommand{\Aeltabold}[1]{\boldsymbol{\alpha}_{#1}}
\newcommand{\Deltabold}[1]{\boldsymbol{\Delta}_{#1}}
\newcommand{\Deltaboldhat}[1]{\mathbf{r}_{#1}}
\newcommand{\Deltaboldhatintro}[1]{\mathbf{z}_{#1}}
\newcommand{\etahat}{\hat{\boldsymbol{\eta}}}
\newcommand{\kappapurej}[1]{\boldsymbol{\kappa}_{#1}^{(\textup{pure})}}
\newcommand{\kappapuretopj}[1]{\boldsymbol{\kappa}_{#1}^{(\textup{pure})\top}}
\newcommand{\kappakmixj}[1]{\boldsymbol{\kappa}_{#1}^{(\textup{mix})}}
\newcommand{\kappamixtopj}[1]{\boldsymbol{\kappa}_{#1}^{(\textup{mix})\top}}
\newcommand{\Expectedsub}[2]{\underset{#1}{\mathbb{E}}\left[#2\right]}
\newcommand{\Expectedsubnop}[2]{\underset{#1}{\mathbb{E}} #2 }
\newcommand{\Prsub}[2]{\underset{#1}{\mathbb{P}}\left(#2\right)}
\newcommand{\Prnop}[1]{\mathbb{P}(#1)}
\newcommand{\TV}[2]{\ensuremath{\mathrm{TV}\left(#1, #2\right)}}
\newcommand{\RGG}{\ensuremath{\mathrm{RGG}_q\left(n, d, p\right)}} 
\newcommand{\RGGinfty}{\ensuremath{\mathrm{RGG}_\infty\left(n, d, p\right)}} 
\newcommand{\RGGp}[3]{\ensuremath{\mathrm{RGG}_q\left(#1, #2, #3\right)}} 
\newcommand{\RGGinftyp}[3]{\ensuremath{\mathrm{RGG}_\infty\left(#1, #2, #3\right)}} 
\newcommand{\Gnp}{\ensuremath{G\left(n, p\right)}}
\newcommand{\Gbf}{\ensuremath{\mathbf{G}}}
\newcommand{\Xbf}{\mathbf{X}}
\newcommand{\Adj}{\ensuremath{\mathbf{A}}}
\newcommand{\Adjp}{\ensuremath{\mathbf{A}_p}}
\newcommand{\Adjpp}{\ensuremath{\Adj - p\mathds{1}\mathds{1}^{\top}}}
\newcommand{\Adjppt}[1]{\ensuremath{ \left(\Adjpp \right)^{#1} }}
\newcommand{\EW}{\ensuremath{\textsc{Ew}}}
\newcommand{\SW}{\ensuremath{\textsc{Sw}}}
\newcommand{\law}[1]{\mathcal{L}( #1 )}
\newcommand{\T}{\mathbf{T}}
\newcommand{\Tijk}[1]{\mathbf{T}_{#1}}
\newcommand{\barT}{\overline{\mathbf{T}}}
\newcommand{\barTijk}[1]{\overline{\mathbf{T}}_{#1}}
\newcommand{\Hardcore}{H^\circ}
\newcommand{\tr}[1]{\ensuremath{\textup{tr}\left( #1 \right)}}
\newcommand{\find}{f_{\textup{ind}}}
\newcommand{\dist}{\ensuremath{\rho}}
\newcommand{\spa}{\ensuremath{\Omega}}
\newcommand{\distro}{\ensuremath{\mu}}
\newcommand{\thresh}{\ensuremath{\tau}}
\newcommand{\ind}[1]{\ensuremath{\mathds{1}\left(#1\right)}}
\newcommand{\vecx}{\ensuremath{\mathbf{x}}}
\newcommand{\vecy}{\ensuremath{\mathbf{y}}}
\newcommand{\bbT}{\ensuremath{\mathbb{T}}}
\DeclareMathOperator{\sign}{\mathsf{sign}}
\newcommand{\absolute}[1]{\ensuremath{\left\lvert #1 \right\rvert}}
\newcommand{\ER}{Erd\H{o}s--R\'enyi}
\newcommand{\dd}{\mathrm{d}}
\newcommand{\say}[1]{`#1'\xspace}
\newcommand{\cramer}{Cramér's condition\xspace}
\newcommand{\sw}{\mathrm{\textsc{Sw}}}
\newcommand{\eps}{\varepsilon}
\renewcommand{\le}{\leqslant}
\renewcommand{\leq}{\leqslant}
\renewcommand{\ge}{\geqslant}
\renewcommand{\geq}{\geqslant}
\renewcommand{\emph}{\textit}
\newcommand{\poly}{\mathrm{poly}}
\title{Testing Thresholds and Spectral Properties of High-Dimensional Random Toroidal Graphs via Edgeworth-Style Expansions\footnote{Accepted for presentation at the
Conference on Learning Theory (COLT) 2025}.}
\author{Samuel Baguley \and Andreas Göbel \and Marcus Pappik \and Leon Schiller}
\date{\small Hasso Plattner Institute, University of Potsdam\\ \texttt{\{firstname.lastname\}@hpi.de}}
\begin{document}

\maketitle

\begin{abstract}
    We study high-dimensional random geometric graphs (RGGs) of edge-density $p$ with vertices uniformly distributed on the $d$-dimensional torus and edges inserted between \say{sufficiently close} vertices with respect to an $L_q$-norm. In this setting, we focus on distinguishing an RGG from an Erd\H{o}s--R\'enyi graph if both models have the same marginal edge probability $p$. So far, most results in the literature considered either spherical RGGs with $L_2$-distance or toroidal RGGs under $L_\infty$-distance. However, for general $L_q$-distances, many questions remain open, especially if $p$ is allowed to depend on $n$. The main reason for this is that RGGs under $L_q$-distances can not easily be represented as the logical \say{AND} of their 1-dimensional counterparts, as is the case for $L_\infty$ geometries. To overcome this difficulty, we devise a novel technique for quantifying the dependence between edges based on a modified version of Edgeworth expansions.

Our technique yields the first tight algorithmic upper bounds for distinguishing toroidal RGGs under general $L_q$ norms from Erd\H{o}s--R\'enyi graphs for any fixed $p$ and $q$. We achieve this by showing that the signed triangle statistic can distinguish the two models when $d\ll n^3p^3$ for the whole regime of edge probabilities $\frac{c}{n}<p<1$. Additionally, our technique yields an improved information-theoretic lower bound for this task, showing that the two distributions converge in total variation whenever $d=\tilde{\Omega}(n^3p^2)$, which is just as strong as the currently best known lower bound for spherical RGGs in case of general $p$ from Liu et al. [STOC'22]. Finally, our expansions allow us to tightly characterize the spectral properties of toroidal RGGs both under $L_q$-distances for fixed $1 \le q < \infty$, and $L_\infty$-distance. We find that these are quite different for $q<\infty$ vs. $q=\infty$. Our results partially resolve a conjecture of Bangachev and Bresler [COLT'24] and prove that the distance metric, rather than the underlying space, is responsible for the observed differences in the behavior of high-dimensional spherical and toroidal RGGs.
\end{abstract}

\newpage
\tableofcontents
\newpage

%!TEX root = TRGGTesting.tex
\section{Introduction}

Random geometric graphs (RGGs) are a cornerstone for the mathematical modeling of a wide range of real-world phenomena.
While they were first introduced by Gilbert~\cite{gilbert1961random} to model communication networks of radio stations, RGGs have found applications in various settings including protein interactions~\cite{higham2008fitting}, opinion dynamics~\cite{estrada2016consensus} and motion planning~\cite{solovey2018new}. 
In particular RGGs have been proven to reproduce numerous properties of real-world networks~\cite{almagro2022detecting,friedrich2024real}, making them a valuable tool in various disciplines.
Moreover, they were shown to be well-suited to represent high-dimensional feature vectors, encountered in statistical mechanics \cite{gorban2018blessing} and in data science \cite{erba2020random}. 

% It is known that as the dimensionality of underlying space of an RGG goes to infinity, RGGs converge to \ER{} graphs, where edges are drawn independently \cite{friedrich2024cliques,Devorye2011HD}. This raises the question of when can we detect the geometry? See \cite{duchemin2023random} for great survey. To discuss the literature in more detail we first  define RGGs formally.\agob{Not happy about this part so far.}

Given a metric space $(\spa,\dist)$, a probability measure $\distro$ over $\spa$, a size $n\in\mathbb N$, and a connection threshold $\thresh \in [0,\infty)$,
a \emph{random geometric graph} $\Gbf$ 
comprises vertices $v\in[n] \coloneqq \{1, \dots, n\}$ with independent position vectors $\vecx_v \sim\mu$ that are connected if and only if their distance (as defined by $\rho$) is below $\tau$.
Formally, $\Gbf$
has the following distribution:
%over graphs with vertex set $[n] \coloneqq \{1, \dots, n\}$:
%
%Given a metric space $(\spa,\dist)$, a probability measure $\distro$ over $\spa$ and a connection threshold $\thresh \in \R$. 
%each vertex $v\in[n]$ is associated to an independent $\vecx_v\sim\distro$, and then 
for any undirected graph $G$ with adjacency matrix $A\in\{0,1\}^{n\times n}$, 
%a random geometric graph $\Gbf$ is sampled according to 
\[
\Pr{\Gbf=G}=\Expectedsub{\substack{\vecx_v\sim\mu,\\ v\in[n]}}{\prod_{1\leq u<v\leq n}\left(\ind{\dist(\vecx_u,\vecx_v)\leq \thresh}\right)^{A_{u,v}}\left(\ind{\dist(\vecx_u,\vecx_v)>\thresh}\right)^{1-A_{u,v}}}.
\]
%where $\ind{\mathcal E}$ denotes the indicator random variable of the event $\mathcal E$. 
% Both RGGs and \ER{} graphs are special case of graphon random graphs (see \cite[Chapter~13]{lovasz2012book}).
We denote the law of $\Gbf$ by $\mathrm{RGG}_\dist(n,\spa,p,\distro)$, where $p \coloneqq \Pr{\rho(\vecx_u,\vecx_v)\le\tau)}$ is the connection probability.

Depending on the parametrization of the model, RGGs can differ significantly from the classical random graph model of \ER{} graphs (denoted by $\Gnp$), where edges are sampled independently with probability $p$. For example, when the underlying metric space is not homogeneous (e.g., a $d$-dimensional hypercube), Dall and Christensen~\cite{dall2002random} have shown that for any choice of $d$, RGGs differ significantly from the $\Gnp$ model. However, when the metric space is homogeneous (e.g., $\bbS^{d-1}$ or $\bbT^d$) with a natural notion of dimensionality, it has been shown that edges lose their dependency and RGGs converge to \ER{} graphs as $d \to \infty$ \cite{friedrich2024cliques,Devorye2011HD}. 
This observation raises the question of how to determine the values of $d$ for which the two distributions differ, or as Duchemin and De Castro~\cite{duchemin2023random} phrase it, ``determine the point where geometry is lost in the dimension.'' 
% More formally, let $\text{RGG}_\dist(n,\spa,p,\distro)$ denote the law of an RGG over the metric space $(\spa,\dist)$ with probability measure $\distro$ and threshold $\thresh$ chosen such that each edge is present with probability $p$. 
% The main focus of this article is the toroidal case $\bbT^d$ (with its natural notion of dimensionality),
%the metric space has a natural notion of dimensionality (e.g. $\bbS^{d-1}$ or $\bbT^d$),
%, the highlighted question, which will be the main focus of this article, is to determine the regimes of $d$ for which, when given a graph $\Gbf$ on $n$ vertices, we can distinguish between the hypothesis
The main focus of this article is to 
identify the values of $d$ for which we can distinguish---via an efficient statistical test---between the hypotheses
\[
H_0:\Gbf\sim \Gnp \qquad \text{vs.} \qquad H_1:\Gbf\sim \mathrm{RGG}_\dist(n,\spa,p,\distro).    
\]

This question is relatively well-understood for $(\bbS^{d-1}, L_2)$, i.e.  the surface of the $d$-dimensional unit sphere with the $L_2$-norm, in which the position vectors associated with the vertices are sampled from the uniform measure. Research in this direction was initiated by Devroye et al.~\cite{Devorye2011HD}, who showed that the clique number is indistinguishable from that of the $\Gnp$ when $d=\Omega(\log^3 n)$, and also that the two models are completely indistinguishable when $d\gg\exp(n^2)$. 
To the best of our knowledge, the first algorithmic results on testing for geometry were given by Bubeck et al.~\cite{bubeck2016testing}, showing a threshold of $d=\Theta(n^3)$ for the dense regime of $p=\Theta(1)$ when $d=\poly(n)$. In particular, when $d= \omega(n^3)$ the total variation distance of the two graph distributions goes to 0 as $n\to\infty$, while when $d\in o(n^3)$, the number of signed triangles serves as a good statistic for detecting geometry. In addition, Bubeck et al.~\cite{bubeck2016testing} showed that for the sparse regime of $p=o(1)$, when $d=o(\log^3n)$, signed triangles distinguish RGGs from \ER{} graphs. Subsequently, Brennan, Bresler, and Nagaraj~\cite{brennan2020phase} obtained an upper bound on the testing threshold by showing that the two random graph models are statistically indistinguishable for $d=\Omega(n^{3/2})$. Most recently, Liu et al.~\cite{liu2022testing} improved the indistinguishability bound to $d=\Omega(\log^{36}n)$.

Setting aside the polylogarithmic gap in the above threshold, a consequent open question---see also the survey \cite[Section 3.4, Question 3]{duchemin2023random}---is how the geometry detection thresholds behave when other metric spaces are used to sample the vertices in RGGs. 
Since we believe that the indistinguishability result in~\cite{dall2002random} generalizes to other non-homogeneous metric spaces, the question becomes more interesting for homogeneous metric spaces.\footnote{For our purposes, we call a metric space $(\Omega, d)$ equipped with the Borel algebra $\mathcal{F}$ and a measure $\mu$ \emph{homogeneous} if, for all radii $r \ge 0$, a ball of radius $r$ under distance $d$ has the same measure regardless of its center $x \in \Omega$.} 
In this article we focus on the $d$-dimensional torus with $L_q$-distances, for any fixed $q\in\Z_{>0}\cup\{\infty\}$ (i.e., $(\spa,\dist)=(\bbT^d,L_q)$). 
More formally, the vertices of the RGG are associated with vectors in $[-1/2,1/2]^d$ and their distance is given by 
    $\|\mathbf{x}_u - \mathbf{x}_v \|_{q} = \sum_{i =1}^d (|\mathbf{x}_u(i) - \mathbf{x}_v(i)|_C )^q)^{\frac1q}$,
where $|x - y|_C \coloneqq \min\{ |x - y|, 1 - |x - y| \}$ is the distance on the unit circle. 
We write $\RGG$ for the associated law, where, as before, $p\in[0,1]$ uniquely determines the threshold value $\tau$ such that each pair of vertices is connected by an edge with probability $p$.

% $(\bbT^d,L_\infty)$ is a natural space, used for generating geometric inhomogeneous random graphs~\cite{bringmann2019geometric}, which are, essentially, random geometric graphs with powerlaw-weights that determine the degree distribution of the graph. 
The first to study high-dimensional RGGs with $(\bbT^d,L_q)$ as underlying space were Friedrich, et.~al.~\cite{friedrich2024cliques} who established that \RGG{} and \ER{} graphs converge in total variation as $d\to\infty$ (and their inhomogeneous variants), for all fixed values of $q\in\Z_{>0}\cup\{\infty\}$---this includes the maximum norm $L_\infty$. 
In subsequent work, the same authors \cite{friedrich2024real} showed that a triangle statistic can be used to determine the dimensionality of an RGG for $(\bbT^d,L_\infty)$, when $d=o(\log n)$. Bangachev and Bresler~\cite{Bangachev_Bresler_2024} showed that counting (signed) triangles is not the optimal statistic for detecting geometry in $(\bbT^d,L_\infty)$, when $d=\poly(n)$, which is surprising because it is for the sphere. They consider the whole edge probability regime of $n^{-1}<p\leq 1/2$ and show that the two distributions are indistinguishable when $d=\tilde\omega(\max\{n^{3/2}p, n\})$. Additionally they showed that the signed triangle statistic distinguishes the two graph distributions when $d=\tilde o((np)^{3/4})$, while the signed 4-cycle statistic does so when $d=\tilde o(np)$, and moreover that the later statistic is optimal in some sense \cite[Theorem 1.5]{Bangachev_Bresler_2024}.

All aforementioned results on geometry testing with $(\bbS^{d-1}, L_2)$ and $(\bbT^d,L_\infty)$ use the fact that that the metrics factorize over coordinates. In $(\bbS^{d-1}, L_2)$, for example, $\|\vecx-\vecy\|_2=\langle\vecx,\vecy\rangle=\sum_{i=1}^d|x_i|\cdot|y_i|\cdot\sign(x_i,y_i)$, in which case $\sign(x_i,y_i)$ is a one-dimensional edge independent from the other coordinates and the $|x_i|\cdot|y_i|$ are weights. Furthermore, in $(\bbT^d,L_\infty)$, $\|\vecx-\vecy\|_\infty\leq\tau$ if for all $i\in[d]$, it holds that $|x_i-y_i|_C\leq\tau$. This has been a crucial component in all analyses so far: as Bangachev and Bresler~\cite[Section 1.2.1]{Bangachev_Bresler_2024} admit ``the analysis of $L_q$ models, however, turns out to be much more difficult when $q < \infty$ as the factorization over 1-dimensional random geometric graphs does not hold any longer. In particular, this makes the
computation of signed subgraph counts much more difficult.''
%and we have not succeeded to perform such a computation even for triangles''. 
Despite this technical challenge those authors were able to obtain results for the special case of $p=1/2$ for all values of $q$. We highlight their results for $q\in o(d/\log d)$: in that case $\RGGp{n}{d}{1/2}$ is indistinguishable from $G(n,1/2)$ when $dq=\tilde\omega(n^3)$ and the distributions of the two models diverge for all $1/n\leq p\leq 1/2$, when $d=o(np/log n)$. Furthermore, they conjecture that the signed triangle statistic is able to distinguish $\RGGp{n}{d}{1/2}$ from $G(n,1/2)$ only when $dq\in\tilde o(n^3)$ \cite[Conjecture A.3]{Bangachev_Bresler_2024}. 

% To summarize, for $(\bbS^{d-1}, L_2)$ we have a threshold at $\Theta(n^3)$ for the dense regime and a threshold between $\Omega(log^{32}n)$ and $o(\log^3n)$ for the sparse regime, while for $(\bbT^d,L_\infty)$ we have a threshold between $\tilde\omega(\max\{n^{3/2}p, np\})$ and $\tilde o(np)$ for all edge densities. This raises the question of whether the latent space or the metric is responsible for the location of this threshold? To answer this question we need to overcome the technical challenges identified by Bangachev and Bresler and study $\RGG$ for $q<\infty$.

\subsection{Our Results}

We advance the understanding of high-dimensional 
($d=\poly(n)$, precisely, we assume throughout this paper that $d \ge n^{\gamma}$ for $\gamma > 0$ arbitrarily small) 
random geometric graphs on the torus under an $L_q$-norm (where $q < \infty$ is fixed) and for the whole range of edge-densities $n^{-\bigO{1}} \le p \le 1 - \varepsilon$. Specifically, we devise a novel technique for quantifying the dependence between the coordinates of a given random vector, which, as we explain in the following, allows us to overcome the hurdle of $L_q$ norms don't factorize to one-dimensional components when $q<\infty$. Using this approach, we obtain the following results.

\paragraph{Tight testing thresholds via signed triangles.}
Our first result is a tight algorithmic threshold for the signed triangle statistic in case that $p= \Theta(1)$. It also yields improved algorithmic upper bounds for $p = o(1)$. We define the number of signed triangles in a graph $\Gbf$ as \begin{align*}
    \T(\Gbf) \coloneqq  \sum_{i < j< k \in [n]} \Tijk{ijk} \text{ where } \Tijk{ijk} \coloneqq (\Gbf_{ij} - p)(\Gbf_{ik} - p)(\Gbf_{jk} - p).
\end{align*} where $\Gbf_{ij}$ denotes the indicator variable of the edge $\{i,j\}$. With this, we prove the following.
\begin{theorem}\label{thm:signedtriangles}
   For any fixed $1 \le q < \infty$, $d \ge n^{\gamma}$ and any $\frac{\alpha}{n}\le p \le 1 - \varepsilon$ for constants $\epsilon, \alpha > 0$, the number of signed triangles distinguishes $\Gbf_1 \sim \RGG$ and $\Gbf_2 \sim \Gnp$ with probability $1 - o(1)$ whenever $d = o(n^3p^3)$, and fails at doing so whenever $d = \tilde{\omega}(n^3p^3)$. 
    Precisely, \begin{align*}
         \left|\Expectedsub{\Gbf \sim \RGG}{\T(\Gbf)}\right| - \max \left\{ \sqrt{\Varsub{\Gbf \sim \RGG}{\T(\Gbf)}}, \sqrt{\Varsub{\Gbf \sim \Gnp}{\T(\Gbf)}} \right\} = \begin{cases}
             \omega(1) &\text{if } d = o(n^3p^3)\\
             o(1) &\text{if } d = \tilde{\omega}(n^3p^3).
         \end{cases}
    \end{align*}
\end{theorem}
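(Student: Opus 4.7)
The plan is to estimate the mean and variance of $\T$ under both models and compare. Under $\Gbf \sim \Gnp$, edge independence gives $\Expected{\T(\Gbf)} = 0$ and $\Varsub{\Gnp}{\T(\Gbf)} = \binom{n}{3} p^3 (1-p)^3 = \Theta(n^3 p^3)$, so $\sqrt{\Varsub{\Gnp}{\T(\Gbf)}} = \Theta(n^{3/2} p^{3/2})$. The distinguishability question therefore reduces to whether $|\Expectedsub{\RGG}{\T(\Gbf)}|$ exceeds $\Theta(n^{3/2} p^{3/2})$, provided we can show that the $\RGG$ variance is of the same order as the $\Gnp$ variance.

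The heart of the proof is estimating $\Expectedsub{\RGG}{\Tijk{ijk}}$ for a fixed triangle $\{i,j,k\}$ via a multivariate Edgeworth expansion. Set $Y_{ab}^{(\ell)} = |\vecx_a(\ell) - \vecx_b(\ell)|_C^q$ to be the per-coordinate contribution to the $q$-th power of the edge distance, so $G_{ab} = \ind{S_{ab} \le \tau^q}$ with $S_{ab} = \sum_{\ell=1}^d Y_{ab}^{(\ell)}$; the triples $(Y_{ij}^{(\ell)}, Y_{ik}^{(\ell)}, Y_{jk}^{(\ell)})$ are i.i.d.\ across $\ell$. The key observation, enabled by translation invariance of the uniform measure on $\bbT$, is that all three pairwise covariances of this per-coordinate triple vanish: for instance, $\Expected{Y_{ij}^{(\ell)} Y_{ik}^{(\ell)}} = \mu^2$ because conditioning on $\vecx_i(\ell)$ makes the two factors independent, each with mean $\mu$. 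Hence the multivariate CLT applied to $(S_{ij}, S_{ik}, S_{jk})$ produces asymptotically independent Gaussian marginals, under which $\Pr{G_{ij}=G_{ik}=G_{jk}=1} \approx p^3$ and the triangle correlation vanishes to leading order. The actual correlation first appears at the third joint cumulant
\begin{align*}
\kappa \;=\; \Expected{(Y_{ij}^{(\ell)}-\mu)(Y_{ik}^{(\ell)}-\mu)(Y_{jk}^{(\ell)}-\mu)},
\end{align*}
a non-vanishing geometric invariant of the $L_q$-torus (non-trivial precisely because the $L_q$-metric does not factor over coordinates for $q<\infty$). A third-order multivariate Edgeworth expansion---the paper's novel technical instrument---then yields
\begin{align*}
\Expectedsub{\RGG}{\Tijk{ijk}} \;=\; \Pr{G_{ij}=G_{ik}=G_{jk}=1} - p^3 \;=\; \frac{\kappa}{\sigma^3 \sqrt{d}}\,\phi(a)^3\,(1+o(1)),
\end{align*}
with $a = \Phi^{-1}(p)$ and $\sigma^2 = \Var{Y_{ij}^{(\ell)}}$. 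Since the Mills ratio gives $\phi(a)^3 = \tilde{\Theta}(p^3)$ for $p = n^{-O(1)}$, summing over the $\binom{n}{3}$ triangles produces $|\Expectedsub{\RGG}{\T(\Gbf)}| = \tilde{\Theta}(n^3 p^3 / \sqrt{d})$.

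For the variance under $\RGG$, the diagonal sum $\binom{n}{3}\Var{\Tijk{ijk}}$ dominates; applying the same Edgeworth machinery to the six-factor product $\Tijk{ijk}^2$ gives $\Var{\Tijk{ijk}} = p^3(1-p)^3(1+o(1))$, matching the $\Gnp$ case to leading order, while off-diagonal covariances coming from triangles sharing one or two vertices reduce to higher-dimensional Edgeworth expansions that are lower order whenever $d \ge n^\gamma$. Combining,
\begin{align*}
\frac{|\Expectedsub{\RGG}{\T(\Gbf)}|}{\max\!\left\{ \sqrt{\Varsub{\RGG}{\T(\Gbf)}},\, \sqrt{\Varsub{\Gnp}{\T(\Gbf)}} \right\}} \;=\; \tilde{\Theta}\!\left( \frac{n^{3/2} p^{3/2}}{\sqrt{d}} \right),
\end{align*}
which is $\omega(1)$ iff $d = o(n^3 p^3)$ and, up to polylogarithmic factors, $o(1)$ iff $d = \tilde{\omega}(n^3 p^3)$, proving both halves of the theorem. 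The main obstacle is controlling the Edgeworth remainder \emph{uniformly} across $\alpha/n \le p \le 1-\varepsilon$: for $p$ near $1/n$, the threshold $a = \Phi^{-1}(p)$ sits at $|a| = \Theta(\sqrt{\log n})$ deep in the Gaussian tail, and classical multivariate Edgeworth error bounds carry polynomial factors in $|a|$ that would wash out the leading $\kappa\,\phi(a)^3/\sqrt{d}$ contribution. The modified Edgeworth expansion advertised in the abstract is tailored precisely to track the joint dependence on $d$, $a$, and the higher cumulants tightly enough to deliver the claimed tight threshold down to $p = \Theta(1/n)$.
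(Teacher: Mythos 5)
Your overall strategy is the same as the paper's: compute $\Expectedsub{\Gnp}{\T}=0$ and $\Varsub{\Gnp}{\T}=\binom{n}{3}p^3(1-p)^3$, show the RGG variance is of the same order (the paper does this by plugging its signed-weight bound for triangles into the second-moment calculation of Liu et al.), extract the leading correction to $\Pr{G_{ij}=G_{ik}=G_{jk}=1}-p^3$ from the order-$3$ joint cumulant via the modified Edgeworth expansion, and conclude by Chebyshev. Your identification of the vanishing pairwise covariances and of the third joint cumulant $\kappa$ as the first non-trivial term is exactly right, as is your diagnosis that the remainder must be controlled uniformly down to $p=\Theta(1/n)$ where $|\Phi^{-1}(p)|$ grows.

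There is, however, one genuine gap. You assert that $\kappa = \Expected{(Y_{ij}^{(\ell)}-\mu)(Y_{ik}^{(\ell)}-\mu)(Y_{jk}^{(\ell)}-\mu)}$ is ``a non-vanishing geometric invariant,'' justified only by the remark that the $L_q$-metric does not factor over coordinates. Non-factorization does not imply that this particular third mixed moment is bounded away from zero; a priori it could vanish for some $q$, in which case the leading correction would drop to order $1/d$ and the claimed threshold $d=o(n^3p^3)$ would fail. Establishing $\kappa<0$ strictly for every $q\ge 1$ is a substantive piece of the paper's argument (its \Cref{lem:correlation}): one writes $\Expected{\gamma(e_1)\gamma(e_2)\gamma(e_3)}=\mathbb{E}_y\bigl[\gamma(e_1)\,\mathbb{E}[\gamma(e_2)\gamma(e_3)\mid y]\bigr]$ with $y$ the circle distance across $e_1$, shows $\gamma(e_1)$ is strictly increasing in $y$ while $\mathbb{E}[\gamma(e_2)\gamma(e_3)\mid y]$ is strictly decreasing (the latter via the Leibniz integral rule and careful sign analysis of the resulting integrals), and concludes negativity from the opposite monotonicities. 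Without some argument of this kind your lower bound on $|\Expectedsub{\RGG}{\T}|$ is unsupported; the rest of your sketch is sound and matches the paper's route.
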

\noindent This result improves upon the upper bound given by Theorem 1.11 of \cite{Bangachev_Bresler_2024}, which only asserts that the total variation distance 
tends to $1$ (i.e., $\TV{\RGG}{\Gnp} = 1- o(1)$) 
if $d = \tilde{o}(np)$. 
Their result does not rely on signed triangles but instead uses an entropy-based argument, which is built upon an explicit $\varepsilon$-net of $\mathbb{T}^d$ with specific properties. This has the further drawback of being an existential arguments which does not yield a computationally efficient test.\footnote{However, it works even if $q$ is allowed to depend on $n$.}
For any fixed $q$, \Cref{thm:signedtriangles} yields a test that works in a wider range of possible values for $d$ and $p$, and is further computationally efficient. 
At the same time, it proves \cite[Conjecture A.3]{Bangachev_Bresler_2024} for fixed $1 \le q < \infty$ and matches the information-theoretic threshold given by \cite[ Theorem 1.10]{Bangachev_Bresler_2024} if $p = \Theta(1)$ (up to polylogarithmic factors). We remark that \Cref{thm:signedtriangles} yields the same computational power as the signed triangle test on spherical RGGs under $L_2$-norm, as proved in~\cite{liu2022testing}. 

\paragraph{Improved information-theoretic lower bounds for all densities.}

In addition to improved algorithmic upper bounds, we also obtain an improved information-theoretic lower bounds for distinguishing $\RGG$ from $\Gnp$ that works for all relevant edge-densities $p$. 
\begin{restatable}{theorem}{statisticalindistinguishability}\label{thm:indistinguishability}
    For any fixed $1 \le q < \infty, \alpha > 0$ and any $\frac{\alpha}{n}\le p \le 1 - \varepsilon$, we have that the total variation distance $\TV{\RGG}{\Gnp} = o(1)$ whenever $d = \omega( \max\{ n\log(n), p^2n^3\log^6(n) \})$.
\end{restatable}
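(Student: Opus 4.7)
I will bound the chi-squared divergence and then invoke $\TV{\RGG}{\Gnp}^2 \le \tfrac{1}{4}\chi^2(\RGG\|\Gnp)$. Let $\vecx, \vecy$ be independent copies of the latent positions, each uniform on $\bbT^d$, and set $\vphi(\bz) \coloneqq \ind{\|\bz\|_q \le \thresh} - p$. A routine computation for mixture-vs-product distributions gives
\begin{align*}
    \chi^2(\RGG\|\Gnp) + 1 = \Expectedsub{\vecx,\vecy}{\prod_{1\le i<j\le n}\left(1 + \frac{\vphi(\vecx_i-\vecx_j)\,\vphi(\vecy_i-\vecy_j)}{p(1-p)}\right)}.
\end{align*}
Expanding the product over subsets $S\subseteq\binom{[n]}{2}$ and using that integrating out a vertex of degree one in $S$ contributes $\Expected{\vphi}=0$, one obtains $\chi^2+1 = 1 + \sum_{S\ \textup{leafless}}(p(1-p))^{-|S|}M_S^2$, with $M_S \coloneqq \Expectedsub{\vecx}{\prod_{\{i,j\}\in S}\vphi(\vecx_i-\vecx_j)}$.

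To control each $M_S$, I would Fourier-decompose $\vphi$ in the characters $e^{2\pi\ii\bk\cdot\bz}$ of $\bbT^d$ and integrate over the $\vecx_v$. Independence of the latent positions together with orthogonality of nontrivial characters forces the frequencies assigned to the edges of $S$ to be \emph{flow-consistent} at every vertex, reducing $M_S$ to a sum of products of Fourier coefficients $\hat\vphi(\bk_e)$ over such assignments. For instance, for the triangle $S = \{ij,ik,jk\}$ this gives $M_\triangle = \sum_{\bk\neq 0}\hat\vphi(\bk)^3$.

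The critical input is quantitative control of $\hat\vphi(\bk)$ provided by the Edgeworth-style expansion developed earlier in the paper. Writing $\|\bz\|_q^q = \sum_i |z_i|_C^q$ as a sum of i.i.d.\ one-dimensional contributions, its distribution admits a CLT-type approximation with polynomially small corrections which, after being translated back through the Fourier transform, yields sharp bounds on $\hat\vphi(\bk)$ and on the weighted sums $\sum_\bk \hat\vphi(\bk)^m$ that appear in the leafless moments. Plugging these into the triangle term gives $M_\triangle^2 = \widetilde O(p^5/d)$, so the triangle contribution to $\chi^2$ is $\binom{n}{3}(p(1-p))^{-3}M_\triangle^2 = \widetilde O(n^3p^2/d)$. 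Leafless shapes beyond the triangle pay a further factor of $\widetilde O(p^2/d)$ per extra edge and sum geometrically; the side condition $d = \omega(n\log n)$ is used to tame the combinatorial blow-up from shapes with up to $\Theta(n)$ vertices, via a union bound over Fourier modes up to a polylogarithmic cutoff. Combining everything yields $\chi^2=o(1)$ under the stated condition on $d$.

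\textbf{The hard step} is the Edgeworth-style Fourier bound on $\hat\vphi(\bk)$ for $1\le q<\infty$: the $L_q$-ball indicator on $\bbT^d$ neither factorises across coordinates (unlike $L_\infty$) nor enjoys exact rotation symmetry (unlike $L_2$ on the sphere), so one must quantitatively approximate the distribution of $\sum_i|z_i|_C^q$ by a Gaussian with polynomially controlled errors and propagate those errors uniformly over all relevant Fourier modes. This is precisely what the authors' novel expansion is designed to accomplish, and once it is available the subgraph bookkeeping above is essentially mechanical.
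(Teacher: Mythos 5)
Your starting point is genuinely different from the paper's: you bound the $\chi^2$-divergence and expand the product over all pairs, which reduces the problem to controlling $M_S=\Expected{\prod_{e\in S}\vphi}$ for \emph{every} leafless subgraph $S\subseteq\binom{[n]}{2}$. The paper instead invokes the Liu--R\'acz inequality, which is designed precisely to avoid this: it reduces the whole computation to the moments $\Expectedsub{\vecx,\vecy}{\gamma(\vecx,\vecy)^j}=\Expected{\SW(K_{2,j})}$, i.e.\ to complete bipartite graphs $K_{2,j}$ only, and these decompose (conditionally on the two anchor vertices) into $j$ independent chains of length $2$, exactly the object \Cref{thm:fouriercoefficients} handles.

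The gap in your argument is the sentence ``leafless shapes beyond the triangle pay a further factor of $\widetilde O(p^2/d)$ per extra edge and sum geometrically.'' This is the entire difficulty, and nothing in the paper (or in your sketch) delivers it. \Cref{thm:fouriercoefficients} bounds signed weights of cycles and chains only; the trace-method extension in the spectral section handles Eulerian multigraphs with a \emph{constant} number of edges via a core decomposition, and the authors explicitly note they cannot yet push $m$ to $\Theta(n)$. Your expansion, by contrast, includes dense leafless graphs with up to $\Theta(n)$ vertices and $\Theta(n^2)$ edges (e.g.\ near-complete graphs), for which no per-edge gain of $\widetilde O(p^2/d)$ is established; with only the trivial bound $|M_S|\le 1$ these terms carry a prefactor $p^{-|S|}$ and blow up. Indeed a back-of-the-envelope computation for the all-points-in-one-ball event shows the dense contributions are only tamed when $d\gg n\log(1/p)$, and making that rigorous uniformly over all shapes is a substantial piece of work that your sketch treats as ``essentially mechanical.'' Relatedly, the quantities your Fourier route needs---pointwise bounds on $\hat\vphi(\bk)$ and on $\sum_{\bk}\hat\vphi(\bk)^m$ over the dual lattice of $\bbT^d$---are not what the Edgeworth machinery of \Cref{thm:maindensityapprox} produces (it approximates the joint density of the distance vector of a cycle/chain, not the Fourier transform of the connection indicator), so the ``critical input'' you rely on is not actually available as a black box. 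Finally, note that $\chi^2\to 0$ is a strictly stronger conclusion than $\mathrm{TV}\to 0$ and is known to fail for mixture-versus-product problems of this type without conditioning on a good event; the Liu--R\'acz bound sidesteps this as well.
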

\Cref{thm:indistinguishability} improves over the previously known information-theoretic lower bound for distinguishing toroidal RGGs under $L_q$-norm from a $\Gnp$, given by \cite[Theorem 1.10]{Bangachev_Bresler_2024}, which only applies for $p = 1/2$ (but allows for $q = \omega(1)$). This bound is based on an intricate combination of anti-concentration and Fourier-theoretic methods but does not easily generalize to the case of $p = o(1)$. Our bound does not have this limitation and yields the same threshold as the currently best known lower bound for $(\bbS^{d-1},L_2)$ from \cite{liu2022testing}, while relying on entirely different techniques and extending to a more general regime of possible distance functions (all $L_q$-norms instead of only $L_2$-norm).\footnote{We remark that \cite{liu2022testing} proves a stronger lower bound for spherical RGGs in case $p=\alpha/n$ for constant $\alpha$ that relies on the locally tree-like properties of sparse, spherical RGGs. However, these techniques break down if $p = \omega(1/n)$. In this case, their lower-bound is just as strong as the one of \cref{thm:indistinguishability}.} We find it remarkable that the underlying technical heart of the above results is powerful enough to yield improved algorithmic upper \emph{and} lower-bounds that are precisely as strong as the currently best known bounds for spherical RGGs in the case of general $p$, which rely on quite different techniques. Precisely, the lower-bound in \cite{Liu_Mohanty_Schramm_Yang_2021} uses concentration bounds for the intersection of random spherical caps and anti-caps via transportation inequalities, and the upper bound from \cite{Liu_Mohanty_Schramm_Yang_2021} and \cite{bubeck2016testing} relies on a more direct integration that makes heavy use of the spherical symmetry. We generalize both of these results to a wider class of possible distance functions while relying on the same underlying way of quantifying dependencies between edges for both our upper- and lower-bound. 

% Our \Cref{thm:fouriercoefficients} (for chains) allows for a strengthening of this result applying to all fixed $1 \le q < \infty$ and -- in particular -- to all $\frac{\alpha}{n} \le p \le 1 - \varepsilon, \alpha > 0$. 
\paragraph{Spectral Properties of $\RGG$ and $\RGGinfty$.}

Finally, quantifying the dependence between edges allows us to tightly (i.e. up to lower order factors) characterize the spectrum of a $\Gbf \sim \RGG$ both for fixed $q < \infty$ and $q = \infty$. Spectral properties of spherical RGGs have recently gained a lot of attention in a series of works \cite{Bangachev_Bresler_2024_fourier, Bangachev_Bresler_2024_sandwich, Li_Schramm_2024, Liu_Mohanty_Schramm_Yang_2023}, where they were further used algorithmically for latent vector recovery, community detection, and robust testing. 
A key takeaway from these works (in particular from \cite{Li_Schramm_2024}) is that the adjacency matrix of a spherical RGG has $\tilde{\Theta}(d)$ large eigenvalues, each of order $\tilde{\Theta}(np/\sqrt{d})$, provided that $d \ll np$, while all remaining eigenvalues are of lower order. 
Furthermore, these $d$ eigenvectors are close to the $d$ rows of the $d \times n$ matrix $\mathbf{X} = (\vecx_1, \vecx_2, \ldots, \vecx_n)$, where $\vecx_i \in \mathbb{S}^{d-1}$ is the latent vector associated to vertex $i$. On the other hand, if $d \gg np$, the second largest eigenvalue is of order $\tilde{\Theta}(\sqrt{np})$, as in a $\Gnp$. 
Accordingly, $d = np$ marks the so-called \emph{spectral threshold}, that is, 
%$d = np$ marks a phase transition such that $d \ll np$ implies that a spherical RGG
a spherical RGG with $d \ll np$
is spectrally very different from a $\Gnp$, while for $d \gg np$ both models are spectrally similar. These observations (in case $d \ll np$) were also used for designing efficient spectral algortihms for latent vector recovery and clustering in \cite{Li_Schramm_2024}, and they also constitute the foundation of the sum-of-squares based spectral refutation algorithm for distinguishing an RGG from a $\Gnp$ under a small constant fraction of adversarial edge corruptions from \cite{Bangachev_Bresler_2024_sandwich}. Our technique yields a similar characterization of the spectrum of toroidal RGGs and extends the existing results known for the sphere under $L_2$ norm to all $1 \le q < \infty$.
\begin{restatable}[Spectral properties of $\Gbf \sim \RGG$]{theorem}{spectralstuff}\label{thm:spectralstuff}
    Let $\Gbf \sim \RGGp{n}{d}{p}$ have adjacency matrix $\Adj$ and eigenvalues $\lambda_1(\Adj) \ge \lambda_2(\Adj) \ge \ldots \ge \lambda_n(\Adj)$. For any $\frac{1}{n} \le p \le 1 -\varepsilon$ and any fixed $1 \le q < \infty$, there is a constant $C> 0 $ such that with probability $1 - o(1)$, \begin{align*}
         C \max\left\{ \sqrt{np}, \frac{np}{\sqrt{d}} \right\} \le  \max\{ |\lambda_2(\Adj)|, |\lambda_n(\Adj)| \} \le n^{o(1)} \max\left\{ \sqrt{np}, \frac{np}{\sqrt{d}} \right\}.
    \end{align*} Moreover, in case $d = \tilde{o}(np)$, there are $\tilde{\Omega}   (d)$ eigenvalues of order $\Theta(np/\sqrt{d})$. If $d \le np^{1 + \varepsilon}$, then for any $a \ge 1$, the number of eigenvalues of magnitude $\ge \frac{np}{a\sqrt{d}}$ is at most $\bigOtildenop{ d a^{4 + 2/\varepsilon} }$.
\end{restatable}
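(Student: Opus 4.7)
The plan is to leverage the Edgeworth-style Fourier expansion of the indicator $\ind{\|\vecx_u-\vecx_v\|_q \le \thresh} - p$ on the torus $\bbT^d$ developed earlier in the paper, so as to decompose the centered adjacency matrix $\Adjpp$ as a structured ``geometric'' low-rank component plus a well-concentrated noise residual. Writing the indicator via Fourier characters $\{\chi_k\}_{k \in \bbZ^d}$, with $\chi_k(\vecx_u - \vecx_v) = \chi_k(\vecx_u)\,\overline{\chi_k(\vecx_v)}$, we obtain off-diagonally
\[
  \Adjpp \;=\; \sum_{k \ne 0} \widehat{f}_k\, \Phi_k \Phi_k^{\ast} \;+\; \text{(diagonal correction)},
\]
where $\Phi_k \in \mathbb{C}^n$ has entries $\chi_k(\vecx_i)$ and $\widehat{f}_k$ is the $k$-th Fourier coefficient of the thresholded $L_q$-ball. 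The essential input from the Edgeworth machinery is a sharp estimate of $|\widehat{f}_k|$: axis-aligned low-frequency modes contribute on the order of $p/\sqrt{d}$, while higher-frequency modes decay quickly. This is the same quantitative dependency control that powers \Cref{thm:signedtriangles}; here we repurpose it spectrally.

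\textbf{Upper bound on the spectral radius.} I split the Fourier sum at a frequency cutoff so that the low-frequency part consists of $\bigOtildenop{d}$ terms, each yielding a rank-one summand of operator norm $\approx np/\sqrt{d}$ (since $\|\Phi_k\|^2 \approx n$ by concentration of character sums under the uniform distribution on $\bbT^d$). The high-frequency part forms a residual matrix with centered, only weakly dependent entries; applying the trace moment method to $\bbE[\tr{\Adjppt{2k}}]$ for $k$ growing slowly with $n$, and using the Edgeworth coefficient decay to control closed-walk contributions, shows its spectral norm is at most $n^{o(1)}\sqrt{np}$. Combining these gives the claimed $n^{o(1)}\max\{\sqrt{np},\,np/\sqrt{d}\}$ upper bound.

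\textbf{Lower bound and eigenvalue counting.} The $\sqrt{np}$ lower bound follows from $\bbE\|\Adjpp\|_F^2 = \Omega(n^2 p)$ together with $\|\Adjpp\|_{\mathrm{op}}^2 \ge \|\Adjpp\|_F^2 / n$. For the $np/\sqrt{d}$ lower bound (active when $d \ll np$), I test against a single low-frequency $\Phi_k$: the quadratic form $\Phi_k^{\ast}\,\Adjpp\, \Phi_k$ is of order $n^2 |\widehat{f}_k| \approx n^2 p/\sqrt{d}$ while $\|\Phi_k\|^2 \approx n$, forcing an eigenvalue of magnitude $\gtrsim np/\sqrt{d}$. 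Since the $\Phi_k$ for distinct low-frequency $k$ are approximately orthogonal---again by uniformity on $\bbT^d$ and the Edgeworth control on cross-correlations---this produces $\tilde{\Omega}(d)$ such eigenvalues simultaneously. For the upper bound on the number $N$ of eigenvalues of magnitude $\ge np/(a\sqrt{d})$, I use the reverse-Markov bound $\tr{\Adjppt{2k}} \ge N \cdot (np/(a\sqrt{d}))^{2k}$, while the Edgeworth-based upper bound on this trace sums combinatorially over products $\prod_i |\widehat{f}_{k_i}|^{2}$ along closed walks of length $2k$; balancing $k \sim 1/\varepsilon$ against $d \le np^{1+\varepsilon}$ and the contribution of each walk yields $N \le \bigOtildenop{d\, a^{4+2/\varepsilon}}$.

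\textbf{Main obstacle.} The principal difficulty lies in the sharp operator-norm concentration of the high-frequency residual. Unlike in $\Gnp$, entries of $\Adjpp$ share the latent vectors $\vecx_1,\ldots,\vecx_n$, so the trace method must carefully exploit the approximate independence furnished by the Edgeworth expansion, tracking how the dependence between triples and quadruples of edges propagates through $\tr{\Adjppt{2k}}$ for growing $k$ without losing the $n^{o(1)}$ prefactor. A secondary subtlety is the transition regime $d \asymp np$, where the signal scale $np/\sqrt{d}$ and the noise scale $\sqrt{np}$ coincide; matching constants and polylogarithmic factors on both sides of the bound requires precise control of the exponents appearing in the Edgeworth remainder.
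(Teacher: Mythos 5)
Your architecture is genuinely different from the paper's: the paper never decomposes the kernel into Fourier characters. Its upper bound is a direct trace-method computation in which closed walks are mapped to Eulerian multigraphs, chains and cycles are contracted to a degree-$4$ core, and each contracted piece is charged via the signed-weight bound of \Cref{thm:fouriercoefficients} (itself a consequence of the Edgeworth density approximation); its lower bound constructs explicit $\{0,\pm 1\}$-valued test vectors supported on two opposing circular arcs per dimension and shows, via a univariate Edgeworth expansion (\Cref{lem:probforeigenvalues}), that intra-arc connection probabilities exceed inter-arc ones by $\Omega(p/\sqrt{d})$. Your character-based decomposition is a reasonable torus analogue of the spherical-harmonics approach used for $\bbS^{d-1}$, and your eigenvalue-counting step (reverse Markov on $\tr{\Adjppt{2k}}$) coincides with the paper's. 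However, as written the proposal has two substantive gaps.

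First, the claimed input --- ``a sharp estimate of $|\widehat{f}_k|$ \ldots{} while higher-frequency modes decay quickly'' --- is asserted, not derived, and is not what the Edgeworth machinery of this paper delivers: that machinery controls joint densities and mixed cumulants of distance vectors along cycles and chains, not the Fourier transform of the indicator of an $L_q$-ball on $\bbT^d$. The indicator is discontinuous, its coefficients decay only polynomially in each frequency, and there are exponentially many (in $d$) multi-indices with several nonzero coordinates, so one cannot bound the tail by $n\sum_k|\widehat{f}_k|$. Your fallback is to run the trace method on the high-frequency residual, but the residual entries are still coupled through the shared latent vectors, so this step is exactly as hard as the paper's full argument (the Eulerian-multigraph contraction and the case analysis of \Cref{lem:tracingthetrace}); you name it as the ``main obstacle'' but do not resolve it, which means the upper bound is not actually established. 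Second, the lower bound requires not just $|\widehat{f}_{e_i}| = O(p/\sqrt{d})$ but $|\widehat{f}_{e_i}| = \Omega(p/\sqrt{d})$, i.e., a quantitative non-degeneracy (and sign) statement for the correlation between the connection indicator and $\cos(2\pi y_i)$; this is precisely the content of \Cref{lem:probforeigenvalues} and cannot be waved through, since a priori the first-order correction could cancel. (A minor further point: the Frobenius-norm argument lower-bounds $\|\Adjpp\|_{\mathrm{op}}$, and passing to $\max\{|\lambda_2(\Adj)|,|\lambda_n(\Adj)|\}$ needs an interlacing step for the rank-one recentering together with the already-proved upper bound; the paper instead invokes a generalized Alon--Boppana bound.)
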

\noindent \cref{thm:spectralstuff} shows that a $\Gbf \sim \RGG$ for any fixed $1 \le q < \infty$ has similar spectral properties to a spherical RGG (with $L_2$-norm) and, in particular, obeys the same spectral threshold at $d \approx np$. 
Furthermore, the  theorem shows that when $d \ll np$, \RGG{} has at most $d^{1+o(1)}$ large eigenvalues of order $\Omega(np/\sqrt{d})$, while all the remaining $\Theta(n)$ eigenvalues are of order $o(np/\sqrt{d})$ and therefore of lower order. As in the case of a spherical RGG, we can explicitly construct $d$ nearly orthogonal eigenvectors with associated eigenvalue $\Omega(np/\sqrt{d})$ by defining one vector per dimension, based on the latent positions of all vertices in said dimension. We refer the reader to \Cref{sec:lowerboundeigenvalues} for all details underlying this construction. 
On a high level, each such vector is defined over $\{0, \pm 1\}^{n}$ and supported on vertices corresponding to two opposing circular arcs in a fixed dimension. Coordinates corresponding to vertices in the first arc are all set to $1$, while coordinate corresponding to the second arc are $-1$. Since vertices within the same arc have a slightly larger probability of being adjacent than other pairs, this yields the desired properties. To analyze how these slightly perturbed connection probabilities, we also use (univariate) Edgeworth expansions.

\paragraph{A different bound for $L_\infty$-norm} 
A simplification of the techniques used to prove \Cref{thm:spectralstuff} further allows us to study the spectral properties of $\RGGinfty$. Here, the spectrum behaves rather differenctly  differently. 

\begin{restatable}[Spectral properties of $\Gbf \sim \RGGinfty$]{theorem}{spectralstuffinfty}\label{thm:spectralstuffinfty}
    Let $\Gbf \sim \RGGinfty$ have adjacency matrix $\Adj$ and eigenvalues $\lambda_1(\Adj) \ge \lambda_2(\Adj) \ge \ldots \ge \lambda_n(\Adj)$. For any $\frac{1}{n} \le p \le 1 -\varepsilon$, there is a constant $C> 0 $ such that with probability $1 - o(1)$, \begin{align*}
         C \max\left\{ \sqrt{np}, \frac{np}{d} \right\} \le \max\{ |\lambda_2(\Adj)|, |\lambda_n(\Adj)| \} \le \log(n)^{11} \max\left\{ \sqrt{np}, \frac{np}{d} \right\}.
    \end{align*} Moreover, in case $d = \tilde{o}(\sqrt{np})$, there are $\tilde{\Omega}(d^2)$ eigenvalues of order $\Theta(np/d)$. If $d \le p^{\varepsilon}\sqrt{np}$, then for any $a$, the number of eigenvalues of magnitude $\ge \frac{np}{ad}$ is at most $\bigOtilde{ d^2 a^{4 + 1/\varepsilon} }$.
\end{restatable}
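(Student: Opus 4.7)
The plan is to mirror the approach that will drive \Cref{thm:spectralstuff} but to exploit the fact that, for the $L_\infty$ metric, the conditional edge-kernel of the RGG factorises across coordinates, so that its Fourier transform on $\mathbb{T}^d$ admits a completely explicit product form and no Edgeworth-style expansion is needed. First I would decompose $\Adj=K+N$ with
\[
K_{ij}=\prod_{k=1}^d \ind{|\vecx_i(k)-\vecx_j(k)|_C\le \tau_\infty}\quad (i\ne j),\qquad K_{ii}=0,
\]
where the one-dimensional threshold $\tau_\infty$ is tuned so that $2\tau_\infty=p^{1/d}$ is the marginal per-coordinate edge probability. Conditional on $\Xbf=(\vecx_1,\dots,\vecx_n)$, the off-diagonal entries of $N=\Adj-K$ are independent, bounded and mean-zero, so a standard matrix Bernstein (or Bandeira--van Handel) argument yields $\|N\|_{\mathrm{op}}\le \log(n)^{O(1)}\sqrt{np}$ with probability $1-o(1)$. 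This already gives the $\sqrt{np}$ piece of the $\max\{|\lambda_2|,|\lambda_n|\}$ bounds and, via Weyl's inequality, reduces everything to the spectral analysis of the random kernel matrix $K$.

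Next I would Fourier-diagonalise $K$. Because $K$ is translation-invariant on the torus, the identity $K_{ij}=\sum_{k\in\mathbb{Z}^d}\hat K(k)\,e^{2\pi\ii k\cdot \vecx_i}\overline{e^{2\pi\ii k\cdot \vecx_j}}$ produces the factorisation $K=VDV^{*}$ with $V_{i,k}=e^{2\pi\ii k\cdot \vecx_i}$ and $D=\mathrm{diag}(\hat K(k))$. The product form of the $L_\infty$-ball gives $\hat K(k)=\prod_{j=1}^d\hat g(k_j)$ with $\hat g(0)=p^{1/d}$ and $\hat g(m)=\sin(\pi m p^{1/d})/(\pi m)$ for $m\ne 0$. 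Using $p^{1/d}=1-\log(1/p)/d+O(d^{-2})$ and Taylor-expanding the sine around $\pi m$ shows $\hat g(m)\approx (-1)^{m+1}\log(1/p)/d$ uniformly on all nonzero $|m|\le c d/\log(1/p)$, while $|\hat g(m)|\le 1/(\pi|m|)$ in general. Hence for every coordinate $j\in[d]$ and every nonzero frequency $m$ in that range, $|\hat K(me_j)|\asymp p/d$, producing $\tilde\Theta(d^2)$ distinguished single-coordinate Fourier modes, whereas any $k$ with two or more nonzero coordinates satisfies $|\hat K(k)|\lesssim p(\log(1/p)/d)^2\ll p/d$ and so does not contribute.

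To promote these Fourier eigenvalues into eigenvalues of $K$ (and hence of $\Adj$) I would show that, when restricted to the submatrix of $V$ indexed by the $\tilde\Theta(d^2)$ low-frequency single-coordinate modes, $V^{*}V/n$ is close to the identity. This is a matrix-Chernoff statement about $n$ i.i.d.\ Fourier rows on a set of frequencies of size $n^{o(1)}$ and holds with high probability in the regime $d\le n^{o(1)}$. Weyl's inequality then transfers $\tilde\Omega(d^2)$ eigenvalues of magnitude $\Theta(np/d)$ from $\hat K$ to $K$, and combining with $\|N\|_{\mathrm{op}}\le \log(n)^{O(1)}\sqrt{np}$ preserves them in $\Adj$ whenever $d=\tilde o(\sqrt{np})$. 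The matching operator-norm upper bound $\|K-pJ\|_{\mathrm{op}}\le \log(n)^{O(1)}\, np/d$ follows from the same diagonalisation together with a Plancherel-type identity $\sum_{k\ne 0}|\hat K(k)|^2=\|K\|_{L^2}^2-p^2=O(p)$, which lets one control the $\ell^1$-tail over high-frequency modes.

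Finally, to upper bound the number of eigenvalues of $\Adj$ of magnitude at least $np/(ad)$ in the regime $d\le p^{\varepsilon}\sqrt{np}$, I would combine a direct Fourier mode count with a trace-moment estimate. The single-coordinate modes with $|\hat K(k)|\ge p/(ad)$ number $\tilde O(ad)$ per dimension, giving $\tilde O(ad^2)$ in total, while higher-coordinate modes are handled through
\[
\#\{\lambda_i(K)\ge np/(ad)\}\le \tr{K^{2m}}\cdot (ad/(np))^{2m},
\]
where $\tr{K^{2m}}$ is computed by the same product factorisation and optimised over $m\asymp 1/\varepsilon$; after absorbing Weyl's perturbation from $N$, this yields the claimed $\tilde O(d^2 a^{4+1/\varepsilon})$. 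The main obstacle in the whole plan is controlling \emph{simultaneously} the near-orthogonality of the Fourier columns of $V$ over all $\tilde\Theta(d^2)$ distinguished modes: a naive mode-by-mode union bound would cost a polynomial in $d$ and would fail precisely in the critical regime $d\asymp\sqrt{np}$, so I would apply a matrix Bernstein bound to $V^{*}V-nI$ restricted to the low-frequency submatrix, possibly preceded by a position-truncation step analogous to the one that will be used in the proof of \Cref{thm:spectralstuff}.
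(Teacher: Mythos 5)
Your Fourier route is genuinely different from the paper's, and for the lower bound it is morally sound: the paper instead constructs, for each dimension, $\Theta(d)$ explicit $\{0,\pm1\}$ vectors supported on pairs of opposing arcs of length $\xi/2=\Theta(\log(1/p)/d)$ and verifies the quadratic form by degree counting plus Bernstein (\Cref{lem:spectrallowerboundinfty}, \Cref{lem:probforeigenvaluesinfty}); your low-frequency single-coordinate modes $e^{2\pi\ii m\vecx(j)}$ with $|m|\lesssim d/\log(1/p)$ are smooth analogues of those arc indicators, and your count $\tilde\Theta(d^2)$ and magnitude $\Theta(np/d)$ match. The first genuine gap is your decomposition $\Adj=K+N$: for the hard-threshold $L_\infty$ model, $\Adj_{ij}=\ind{\max_k|\vecx_i(k)-\vecx_j(k)|_C\le\tau}=\prod_k\ind{|\vecx_i(k)-\vecx_j(k)|_C\le\tau}$, which is exactly your $K_{ij}$, so $N\equiv 0$. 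There is no conditionally independent edge noise to which matrix Bernstein could apply, and the $\sqrt{np}$ contribution to $\max\{|\lambda_2|,|\lambda_n|\}$ is therefore unaccounted for: it comes from the fluctuations of the random Fourier frame $V$ itself (equivalently, from the dense Eulerian multigraphs in the paper's trace expansion), not from Bernoulli noise given positions.

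The second gap is the operator-norm upper bound. The one-dimensional coefficients satisfy $|\hat g(m)|\asymp 1/|m|$ once $|m|\gtrsim d/\log(1/p)$, so $\sum_{k\neq 0}|\hat K(k)|$ diverges and the $\ell^1$ route over high-frequency modes is unavailable, while the Plancherel bound $\sum_{k\neq 0}|\hat K(k)|^2=O(p)$ only controls the Frobenius norm of $K-pJ$ by $\approx n\sqrt{p}$, which is far weaker than the claimed $\polylog(n)\max\{\sqrt{np},\,np/d\}$. Beating the Frobenius bound requires high moments, i.e.\ $\Expected{\tr{(K-pJ)^{2m}}}$ for large $m$, and evaluating these via the product form of $\hat K$ is essentially the paper's trace-method computation over Eulerian multigraphs with contracted chains and cycles (\Cref{lem:fouriercoefficientslinfty}, \Cref{lem:tracingthetracetoinfinity}) rewritten in Fourier-dual language, including the delicate separation of contributions that yields the $d^2(np\log(1/p)/d)^m+n\sqrt{np}^{\,m}$ dichotomy. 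Your eigenvalue-counting step already concedes this by invoking $\tr{K^{2m}}$ without computing it. So the proposal offers a viable alternative for the existence of $\tilde\Omega(d^2)$ large eigenvalues (modulo the $V^*V\approx nI$ concentration you flag), but its upper-bound half currently omits precisely the analysis that constitutes the bulk of the paper's proof.
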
 
\cref{thm:spectralstuffinfty} shows that a $\Gbf \sim \RGGinfty$ obeys a spectral threshold at $d \approx \sqrt{np}$ instead of $d \approx np$. Furthermore, for $d \ll \sqrt{np}$, $\Gbf\sim\RGGinfty$ typically has $\bigOnop{d^{2+o(1)}}$ large eigenvalues of order $\Omega(np/d)$, while all remaining eigenvalues have order $o(np/d)$. As in the $q<\infty$ case, we explicitly construct $\Theta(d^2)$ nearly orthogonal approximate eigenvectors with eigenvalue $\Theta(np/d)$, this time by finding roughly $d$ such vectors per dimension. Each vector has a similar structure as in the $q<\infty$ case, but is more sparse, which results in a larger number of vectors that can be constructed.

While we do not explicitly study algorithmic applications of the above results, we believe that they can be extended to yield efficient algorithms for similar problems as studied in \cite{Li_Schramm_2024, Bangachev_Bresler_2024_sandwich}, namely recovery of latent vectors or an approximate distance matrix associated to a given $\Gbf \sim \RGG$, clustering, and robust testing. In particular, we think that our explicit construction of approximate eigenvectors can serve as a certificate for underlying geometry that is robust to a certain number of adversarial edge-corruptions and can efficiently be checked for using a sum-of-squares based approach as in \cite{Bangachev_Bresler_2024_sandwich}. We think that a formal investigation of these ideas constitutes an interesting direction for future research.

\subsection{Discussion and Outlook}

While this work is focuses on $\RGG$, the techniques are general and could prove helpful in understanding RGGs on different metric spaces, even when sampling vertices non-uniformly, we further discuss this on \cref{sec:applicability}. Furthermore, we believe our approach to be applicable when $q = \omega(1)$, but this adaptation involves more cumbersome calculations and technicalities.

A promising direction for future research is to study the signed weight of more general patterns $H$. This was very recently achieved for spherical RGGs by \cite{Bangachev_Bresler_2024_fourier}, however their techniques heavily rely on leveraging spherical symmetry and inner products, and thus an adaptation to the torus (even for $L_2$-norm) seems challenging. We believe a combination of our approach together with the \say{cluster expansion} technique of \cite{Bangachev_Bresler_2024} could be fruitful. Moreover, it would be interesting to find algorithmic applications related to the spectral properties of $\RGG$. On the sphere, this was achieved by \cite{Li_Schramm_2024} for latent vector recovery, clustering, and community detection. Our explicit construction of eigenvectors could serve as an efficiently refutable certificate of geometry, as recently used in the context of robust testing for RGGs on the sphere \cite{Bangachev_Bresler_2024_sandwich}.

Finally, it would be interesting to improve upon the gap between information-theoretic lower bounds and algorithmic upper bounds in case of $p = o(1)$. This is open even for spherical RGGs, and resolving it would require a better understanding of \say{average-case} versus \say{worst-case} vector embeddings of a given $\Gbf \sim \RGG$ as already observed in \cite{Liu_Mohanty_Schramm_Yang_2021}. 

\section{Technical Overview}

This section contains an overview of the techniques used to prove \cref{thm:signedtriangles,thm:indistinguishability,thm:spectralstuff,thm:spectralstuffinfty}; full proofs can be found in the subsequent sections. We introduce a novel technique for quantifying the dependence between coordinates of a particular class of random vectors. In our case, each entry of this vector $\Deltaboldhatintro{} \in \mathbb{R}^k$ represents the (rescaled) distances associated to a collection $H = \{\{u_1, v_1\}, \ldots \{u_k, v_k\}\}$ of potential edges in an 
% $\Gbf \sim \RGG$. 
RGG.
Accordingly, $\Deltaboldhatintro{}$ determines which edges in $H$ are present in $\Gbf$ and which not, so dependencies between the entries of $\Deltaboldhatintro{}$ typically tell us about the dependencies between the edges in $H$. A crucial fact about $\Deltaboldhatintro{}$ is that it can be represented as a sum $\Deltaboldhatintro{} = \sum_{i=1}^d \Deltaboldhatintro{i}/\sqrt{d}$ of independent (although not necessarily i.i.d.) vectors $\Deltaboldhatintro{i}$, one for each dimension in $\mathbb{T}^d$. 
The $\Deltaboldhatintro{i}$ contain the (centered and rescaled) distances associated to the vertex pairs from $H$ in dimension $i$. While it is illustrative to think about $\Deltaboldhatintro{}$ in terms of this interpretation, we remark that our technique extends beyond this application.

Now, in each $\Deltaboldhatintro{i}$, there are certain dependencies between the individual coordinates that is \say{inherited} by $\Deltaboldhatintro{}$. For example, in case all the $\Deltaboldhatintro{i}$ are independent Gaussian vectors $\Deltaboldhatintro{i} \sim \mathcal{N}(\mathbf{0}, \boldsymbol{\Sigma})$ for some covariance matrix $\boldsymbol{\Sigma}$, it follows that $\Deltaboldhatintro{}$ also has distribution $\mathcal{N}(\mathbf{0}, \boldsymbol{\Sigma})$ and the dependencies between its coordinates are fully captured by all the off-diagonal entries in $\boldsymbol{\Sigma}$. More generally, it is intuitive that the \emph{mixed moments} in multiple coordinates of $\Deltaboldhatintro{i}$ naturally appearing in its Fourier transform should characterize how \say{interdependent} the coordinates are. Again, in case of Gaussian vectors, the entries in $\boldsymbol{\Sigma}$ are precisely all the mixed moments of order $2$, fully describing the dependencies within $\Deltaboldhatintro{}$. However, when considering distributions other than the Gaussian, the entries of $\Deltaboldhatintro{}$ can be dependent while still being pairwise uncorrelated, implying that moments of order $2$ (i.e. the covariances) contain no information about the dependencies of interest. In such cases, it is necessary to take higher order mixed moments into account. 

\subsection{Joint Cumulants Capture Dependencies}\label{sec:cumulantscapturedependencies}

% It turns out that---while the above intuition is true---
It is typically easier to work with \emph{cumulants} instead of mixed moments, especially when dealing with the higher-order versions of such quantities. Cumulants are related to mixed moments, but obey much more convenient additive and homogeneous properties, which are especially useful when working with sums of independent random vectors. Cumulants are defined via the complex-valued \emph{characteristic function} (CF) $C_{\Deltaboldhatintro{}}(\mathbf{t}) \coloneqq \Expected{ \exp\left(i(\mathbf{t}^\top \Deltaboldhatintro{})\right) }$, $\mathbf{t} \in \mathbb{R}^k$,  where $i = \sqrt{-1}$. Taking the logarithm yields the \emph{cumulant generating function} (CGF) \begin{align*}
    K_{\Deltaboldhatintro{}}((t_1, \ldots, t_k)^\top) = \log\left( C_{\mathbf{z}}((t_1, \ldots, t_k)^\top) \right) = \sum_{\substack{s = (s_1, \ldots, s_k) \in \mathbb{N}^{\times k}}} \kappa_{s}(\Deltaboldhatintro{}) \frac{(it_1)^{s_1}(it_2)^{s_2}\cdots (it_k)^{s_k}}{s_1!s_2!\cdots s_k!},
\end{align*} where $\mathbb{N}^{\times k}$ is the set of all $k$-tuples over the natural numbers, and the quantities $\kappa_{s}(\Deltaboldhatintro{})$ are the \emph{cumulants} of $\Deltaboldhatintro{}$. It is further convenient to define a \say{vectorized} version of the CGF by setting $K_{\mathbf{z}}(\mathbf{t}) = \sum_{j=1}^\infty \frac{1}{j!} \boldsymbol{\kappa}_j^\top(i\mathbf{t})^{\otimes j}$ where $\boldsymbol{\kappa}_j$ is a vector containing all (suitably rescaled) cumulants.

\paragraph{\say{Pure} and \say{mixed} cumulants.}

There is a simple intuition for why certain cumulants characterize the dependencies between the entries of $\Deltaboldhatintro{}$, which is obtained by comparing the individual (univariate) cumulants of the entries of $\Deltaboldhatintro{}$ to the joint cumulants of $\Deltaboldhatintro{}$. If the entries of $\Deltaboldhatintro{}$ were all independent, then the CGF of $\Deltaboldhatintro{}$ would have special structure, captured in the following observation.

\begin{observation}[CGFs are additive for independent coordinates]\label{obs1}
    Assume that $\Deltaboldhatintro{} \in \mathbb{R}^k$ is a random vector in which all coordinates $\{\Deltaboldhatintro{}(j)\}_{j\in[k]}$ are independent. Then, 
        $K_{\Deltaboldhatintro{}}(\mathbf{t}) = \sum_{j=1}^k K_{\Deltaboldhatintro{}(j)}(\mathbf{t}(j))$.
\end{observation}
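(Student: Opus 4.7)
The plan is to pass through the characteristic function and invoke independence directly. By definition, $C_{\Deltaboldhatintro{}}(\mathbf{t}) = \Expected{\exp(i\mathbf{t}^\top \Deltaboldhatintro{})}$, and expanding the inner product gives $\mathbf{t}^\top \Deltaboldhatintro{} = \sum_{j=1}^k t_j \Deltaboldhatintro{}(j)$, so the exponential factors as a product $\prod_{j=1}^k \exp(it_j \Deltaboldhatintro{}(j))$. The coordinates $\{\Deltaboldhatintro{}(j)\}_{j\in[k]}$ are assumed independent, hence the functions $\{\exp(it_j \Deltaboldhatintro{}(j))\}_{j\in[k]}$ are independent (as deterministic measurable functions of independent random variables), and the expectation of the product equals the product of expectations. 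This yields
\begin{equation*}
C_{\Deltaboldhatintro{}}(\mathbf{t}) \;=\; \prod_{j=1}^k \Expected{\exp(it_j \Deltaboldhatintro{}(j))} \;=\; \prod_{j=1}^k C_{\Deltaboldhatintro{}(j)}(t_j).
\end{equation*}

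From here, taking the complex logarithm of both sides and using that $\log$ turns products into sums gives
\begin{equation*}
K_{\Deltaboldhatintro{}}(\mathbf{t}) \;=\; \log C_{\Deltaboldhatintro{}}(\mathbf{t}) \;=\; \sum_{j=1}^k \log C_{\Deltaboldhatintro{}(j)}(t_j) \;=\; \sum_{j=1}^k K_{\Deltaboldhatintro{}(j)}(\mathbf{t}(j)),
\end{equation*}
which is exactly the stated identity since $\mathbf{t}(j) = t_j$.

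The only subtle point is that the complex logarithm is multi-valued, so one should be careful about interpreting $\log$ of a characteristic function. The standard convention is to use the principal branch in a neighborhood of the origin where each $C_{\Deltaboldhatintro{}(j)}$ is nonzero (which is possible since $C_{\Deltaboldhatintro{}(j)}(0) = 1$ and CFs are continuous), and then either extend by analytic continuation or, as is the case here, interpret the identity as one of formal power series in $\mathbf{t}$ at the origin. In either interpretation the identity above holds, so no real obstacle arises. Overall, this observation is essentially a one-line consequence of the factorization property of characteristic functions under independence, and its main role in the paper is to motivate the subsequent dichotomy between \emph{pure} cumulants $\kappa_s(\Deltaboldhatintro{})$ supported on a single coordinate (which survive even under independence) and \emph{mixed} cumulants supported on two or more coordinates (which must vanish under independence and therefore quantify dependence).
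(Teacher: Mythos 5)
Your proof is correct and follows essentially the same route as the paper, which (inside the proof of \Cref{lem:lowerordercumulantsarezero}) factorizes the characteristic function over the independent coordinates and then takes the logarithm to turn the product into a sum. Your additional remark about the branch of the complex logarithm is a reasonable precaution but does not change the argument.
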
 
% \begin{proof}
%     If all $k$ entries of $\Deltaboldhatintro{}$ were independent, we could split \begin{align*}
%     C_{\Deltaboldhatintro{}}(\mathbf{t}) \coloneqq \Expected{ \exp\left(i(\mathbf{t}^\top \Deltaboldhatintro{})\right) } = \prod_{j=1}^k \Expected{ \exp\left(i\mathbf{t}(j) \Deltaboldhatintro{}(j\right) } \text{ and thus }  K_{\Deltaboldhatintro{}}(\mathbf{t}) = \sum_{j=1}^k K_{\Deltaboldhatintro{}(j)}(\mathbf{t}(j)).
% \end{align*}
% \end{proof}

\noindent \cref{obs1} shows that the CGF of the vector $\Deltaboldhatintro{}$ is just the sum of the CGFs of its individual entries if these are independent. Another way to phrase this is to say that $K_{\Deltaboldhatintro{}}$ has the form \begin{align*}
    K_{\Deltaboldhatintro{}}(\mathbf{t}) =\sum_{j=1}^k \sum_{s=1}^\infty \frac{(i\mathbf{t}(j))^{s}}{s!}\kappa_s(\Deltaboldhatintro{}(j)),
\end{align*} which implies that $\kappa_s(\Deltaboldhatintro{}) = 0$ whenever $s = (s_1,s_2,\ldots,s_k)$ has more than two non-zero coordinates. This suggests a partitioning of the cumulants of $\Deltaboldhatintro{}$ into two classes.
%: so-called \emph{pure} and \emph{mixed} cumulants according to the following definition. 
\begin{definition}[Pure and mixed cumulants]
  For some $s = (s_1, s_2, \ldots, s_k)$, we call the cumulant $\kappa_s(\Deltaboldhatintro{})$ \emph{pure} if exactly one of the $s_i$ is non-zero, and \emph{mixed} otherwise.
\end{definition}
\noindent Pure cumulants are exactly those that also appear in the CGFs of the individual entries of $\Deltaboldhatintro{}$, while mixed cumulants only appear in $K_{\Deltaboldhatintro{}}$. Since \cref{obs1} shows that all mixed cumulants are zero if the entries of $\Deltaboldhatintro{}$ are independent, this suggests that perturbations from $0$ in the mixed cumulants of $\Deltaboldhatintro{}$ quantify the dependence between individual entries.

\paragraph{Mixed cumulants capture deviations from the \say{ground state}.}

A different way to make sense of \cref{obs1} is to view pure cumulants as those describing the \say{ground state} of the vector $\Deltaboldhatintro{}$ in which all entries are distributed according to their marginal distribution in $\Deltaboldhatintro{}$, independently of each other. 
On the other hand, all the mixed cumulants of a given order $r$ (i.e. all mixed cumulants $\kappa_s(\Deltaboldhatintro{})$ for $s = (s_1, s_2, \ldots, s_k)$ with $s_1 + s_2 +\ldots = r$) capture the \say{$r$-th order deviation} from this ground state. All such deviations combined contain all information regarding the dependence between multiple coordinates. Conveniently, due to the homogeneity of cumulants in case of sums of independent random vectors such as $\Deltaboldhatintro{} = \sum_{i=1}^d \Deltaboldhatintro{i}/\sqrt{d}$, cumulants of order $r$ are typically of magnitude $\approx (1/\sqrt{d})^{r-2}$, so they rapidly decay in $r$. 
Intuitively, this 
%implies that the cumulants of higher order account for dependency-inducing corrections that become smaller and smaller with increasing $r$. This 
suggests that the lowest-order non-zero mixed cumulants should contain the largest amount of information about dependencies.

\paragraph{Mixed cumulants for cycles and chains.}

It turns out that this point of view is particularly helpful when trying to account for the dependencies between edges in a cycle or a chain of vertices appearing in a $\Gbf \sim \RGG$. 
Assume that $\Deltaboldhatintro{}$ represents the distances associated to a cycle $H$ of length $k$, 
% i.e., assume there is a sequence of fixed vertices $v_1, \ldots, v_k$ in $\Gbf$ such that $H = \{ \{v_j, v_{j+1}\} \mid j \in [k] \}$, and $\Deltaboldhatintro{i}(j)$ represents the distance associated to $\{v_j, v_{j+1}\}$ in dimension $i$, 
% i.e., $\Deltaboldhatintro{i}(j) = (\Delta_i(\{v_j, v_{j+1}\}) - \mu)/\sigma$, with $\Delta_i(\{v_j, v_{j+1}\}) \coloneqq |\mathbf{x}_{v_j}(i) - \mathbf{x}_{v_{j+1}}(i)|_C^q$ where $\mu, \sigma$ are such that $\Deltaboldhatintro{i}(j)$ has mean $0$ and variance $1$. 
%
i.e., assume there is a sequence of fixed vertices $v_1, \ldots, v_k$ in $\Gbf$ such that $H = \{ \{v_j, v_{j+1}\} \mid j \in [k] \}$, and $\Deltaboldhatintro{i}(j) = (\Delta_i(\{v_j, v_{j+1}\}) - \mu)/\sigma$ represents the distance associated to $\{v_j, v_{j+1}\}$ in dimension $i$, 
where $\mu, \sigma$ are such that $\Deltaboldhatintro{i}(j)$ has mean $0$ and variance $1$. 
In this case, for every proper subset $S \subset [k]$ of coordinates, the random variables $\{\Deltaboldhatintro{i}(j)\}_{j \in S}$ correspond to the distances associated to the edges in a forest (since deleting any edge from a cycle yields a forest), and in this setting all the distances are independent. By the relationship between cumulants and mixed moments (cf. \eqref{eq:cumulantrelationship}), this forces all mixed cumulants of order $< k$ and all cumulants of order $k$ except for $\kappa_{(1,1,\ldots,1)}(\Deltaboldhatintro{})$ to be $0$. The same holds if $H$ represents a chain with endpoints $u, v$, conditional on the fixed positions of $u, v$. This is captured in the following observation.
\begin{observation}[Mixed cumulants for cycles and chains]\label{obs:mixedcumulants}
    Assuming that $\Deltaboldhatintro{} \in \mathbb{R}^k$ represents a cycle or a chain with $k$ edges, all mixed cumulants of order $\le k$ except for $\kappa_{(1,1,\ldots,1)}(\Deltaboldhatintro{})$ are zero. In case of a chain, this even holds conditional on the position of the endpoints $u,v$ of the chain. 
\end{observation}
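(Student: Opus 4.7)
The plan is to reduce the statement to a single dimension and then invoke the classical factorization property of joint cumulants.

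\textbf{Reduction to one dimension.} I would first use that $\Deltaboldhatintro{} = \frac{1}{\sqrt{d}}\sum_{i=1}^d \Deltaboldhatintro{i}$ with the $\Deltaboldhatintro{i}$ independent across $i$: cumulants are additive under independent sums and $|s|$-homogeneous under scaling, giving
\[
  \kappa_s(\Deltaboldhatintro{}) = d^{-|s|/2}\sum_{i=1}^d \kappa_s(\Deltaboldhatintro{i}), \qquad |s| = s_1+\cdots+s_k.
\]
It therefore suffices to show that $\kappa_s(\Deltaboldhatintro{i}) = 0$ in each dimension $i$ for every mixed index $s$ with $|s| \le k$ and $s \neq (1,\ldots,1)$ (conditional on $\vecx_u(i), \vecx_v(i)$ in the chain case).

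\textbf{Independence on proper subsets of edges.} Next, I would establish the key structural claim: for each $i$ and each proper subset $S \subsetneq [k]$, the variables $\{\Deltaboldhatintro{i}(j) : j \in S\}$ are mutually independent. For a cycle, deleting any edge leaves a forest, so any proper edge-subset sits inside a forest. Writing $d_e = \vecx_u(i) - \vecx_v(i) \pmod{1}$ for the edge-difference of $e = \{u, v\}$ and rooting each tree of the forest arbitrarily, the map from the vertex positions to (roots, edge-differences) is a measure-preserving bijection on the torus; since $X - Y \pmod{1}$ is uniform and independent of $Y$ whenever $X$ is uniform and independent of $Y$, the edge-differences in any forest are mutually independent uniforms. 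For a chain with fixed endpoints, the analogous change-of-variables from the $k-1$ intermediate positions to $(d_1, \ldots, d_{k-1})$ has unit Jacobian, so any $k-1$ of the $k$ edge-differences are iid uniform conditional on $\vecx_u(i), \vecx_v(i)$. Since each $\Deltaboldhatintro{i}(j)$ depends only on the single edge-difference $d_{e_j}$, independence transfers to $\{\Deltaboldhatintro{i}(j)\}_{j\in S}$.

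\textbf{Cumulant factorization.} Finally, I would invoke the classical Leonov--Shiryaev formula, which expresses a joint cumulant as an alternating sum over set partitions of products of joint moments; when the arguments split into two non-empty independent groups, every term factorizes between the groups and the alternating sum collapses to zero. Given a mixed index $s$ with $|s| \le k$ and $s \neq (1,\ldots,1)$, not every $s_j$ can equal $1$, so there exists some $j^*$ with $s_{j^*} = 0$, and $\kappa_s(\Deltaboldhatintro{i})$ only involves coordinates in $[k] \setminus \{j^*\}$. By the previous step these are independent, and since $s$ is mixed at least two of them appear with positive multiplicity, so partitioning the involved arguments into two non-empty groups yields $\kappa_s(\Deltaboldhatintro{i}) = 0$. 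Plugging back into the reduction step then completes the argument.

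\textbf{The hard part.} The delicate step will be the independence claim in step two: on the full cycle (or full chain with fixed endpoints) the linear constraint $\sum_j d_j = \mathrm{const}\pmod{1}$ destroys joint independence, and one must carefully exploit the torus structure to verify that this single constraint is the only obstruction and disappears as soon as any single edge is removed, yielding independence on every proper sub-collection.
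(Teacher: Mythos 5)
Your proposal is correct and follows essentially the same route as the paper: establish that any proper subset of the edge-distances is mutually independent (removing an edge from a cycle or chain leaves a forest), then conclude via the partition-lattice formula for joint cumulants that every mixed cumulant with some $s_{j^*}=0$ vanishes, which covers all mixed $s$ of order $\le k$ other than $(1,\dots,1)$. The only difference is cosmetic — the paper compares the cumulant expansion to that of an "independified" copy of the vector rather than invoking the vanishing-on-independent-groups property directly, and it asserts the forest-independence step that you work out via the torus change of variables.
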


\noindent Per the discussion above, this suggests that $\kappa_{(1,1,\ldots,1)}(\Deltaboldhatintro{})$ characterizes the highest-order perturbations from the \say{ground state} and thus captures most of the dependence between the edges of $H$.

While \cref{obs:mixedcumulants} is tailored towards cycles and chains, we remark that it also helps us understand the dependencies between edges in any pattern $H$ that merely contain cycles and chains, which we crucially use for characterizing the spectrum of $\Gbf$.%, which is enough to characterize the spectrum of $\Gbf$, make a statement about the signed weight of sufficiently sparse pattern graphs, and to give improved information theoretic lower bounds. 
In the following, we sketch how to turn \cref{obs:mixedcumulants} into an actual quantification of the mentioned dependencies.

\subsection{Classical Edgeworth Expansions}\label{sec:classicedgeworth}

A common approach to quantifying the influence of cumulants on the distribution of a random variable is an \emph{Edgeworth Expansion}, the underlying idea of which is to apply a Fourier inversion to an approximation of the CF of $\Deltaboldhatintro{}$ obtained by using a suitable Taylor series. This has the advantage that---after applying Fourier inversion---the leading term is equal to the density of a standard Gaussian (denoted by $\phi$) plus a series of correction terms, explicitly phrased in terms of cumulants of order $\ge 3$ and derivatives of $\phi$ (or alternatively in terms of Hermite polynomials). More precisely, $C_{\Deltaboldhatintro{}}(\mathbf{t}) = \exp\left( K_{\Deltaboldhatintro{}}(\mathbf{t}) \right)$ is approximated by 
\begin{align*}
    \tilde{C}_{\Deltaboldhatintro{}}(\mathbf{t}) = 
    % \exp\left( \tilde{K}_{\Deltaboldhatintro{}}(\mathbf{t}) \right) =
    \exp\Big( -\frac{\mathbf{t}^\top\mathbf{t}}{2} + \sum_{j=3}^s \frac{1}{j!}\boldsymbol{\kappa}_j^\top (i\mathbf{t})^{\otimes j} \Big) = \exp\Big( -\frac{\mathbf{t}^\top\mathbf{t}}{2}\Big)\exp\Big(\sum_{j=3}^b \frac{1}{j!}\boldsymbol{\kappa}_j^\top (i\mathbf{t})^{\otimes j} \Big)
\end{align*} 
where $b \ge 3$ is a parameter 
% controlling the accuracy of our approximation by 
governing how many cumulants are taken into account. Above, we used that the first term in $K_{\Deltaboldhatintro{}}(\mathbf{t})$ is $0$ when centering $\Deltaboldhatintro{}$ so that it has mean $0$, and that the second term is $\mathbf{t}^\top\mathbf{t}$ when rescaling $\Deltaboldhatintro{}$ to have identity covariance. Then, after applying an $(s-2)$-order Taylor expansion to the second exponential appearing above, we have that  
\begin{align*}
    \tilde{C}_{\Deltaboldhatintro{}}(\mathbf{t}) 
    \approx \exp\Big( -\frac{\mathbf{t}^\top\mathbf{t}}{2}\Big)\sum_{\ell=0}^{s-2}\Big(\sum_{j=3}^s \frac{1}{j!}\boldsymbol{\kappa}_j^\top (i\mathbf{t})^{\otimes j} \Big)^\ell 
    \!\!\!= \exp\Big( -\frac{\mathbf{t}^\top\mathbf{t}}{2}\Big) + \sum_{j = 3}^{s(s-2)} \exp\Big( -\frac{\mathbf{t}^\top\mathbf{t}}{2}\Big)\boldsymbol{\alpha}_j^\top(i\mathbf{t})^{\otimes j}
\end{align*} for a collection of vectors $\boldsymbol{\alpha}_j$ that depend on cumulants of order $\ge 3$. Applying the above transformation has the advantage that the expression can now explicitly be Fourier inverted so that the first term $\exp( -\mathbf{t}^\top\mathbf{t}/2)$ becomes exactly the density $\phi$ of a standard Gaussian, and every subsequent term becomes a multiple of $\phi$ involving the $\boldsymbol{\alpha}_j$ and a product of Hermite polynomials. Collecting terms of roughly the same magnitude and defining an appropriate \say{vectorized} version of Hermite polynomials $H_j$ as done in \cite{Kundhi_Rilstone_2020}, one obtains approximate densities 
such as \begin{align*}
    f_{\Deltaboldhatintro{}}(\mathbf{x}) 
    &= \phi(\mathbf{x}) \Big( 1 + \frac{\boldsymbol{\kappa}_3^{\top}}{6\sqrt{d}} H_3(\mathbf{x}) \Big),\text{ or}\\
    % \text{or } 
     f_{\Deltaboldhatintro{}}(\mathbf{x}) 
     &= \phi(\mathbf{x}) \Big( 1 + \frac{\boldsymbol{\kappa}_3^{\top}}{6\sqrt{d}} H_3(\mathbf{x}) + \frac{1}{d}\Big( \frac{\boldsymbol{\kappa}_4^{\top}}{24}H_4(\mathbf{x}) + \frac{\boldsymbol{\kappa}_3^{\otimes 2 \top}}{72}H_6(\mathbf{x}) \Big) \Big),
\end{align*} 
using an order $3$ and order $4$ expansion, respectively. We refer the interested reader to \cite{Kundhi_Rilstone_2020} for further details on this approach.
% and the explicit errors arising from the mentioned approximations. 

\subsection{The Need for Specialized Expansions}

In principle, the expansions in \cref{sec:classicedgeworth} capture all information about dependence between the coordinates of $\Deltaboldhatintro{}$, provided that the parameter $s$ is chosen sufficiently large. 
However, 
% it turns out that for our purposes, working with expansions of the form given above is quite cumbersome and impractical. 
classical Edgeworth expansions turn out to be impractical for quantifying
% The reason for this is that---as described above---we wish to quantify 
the influence of \emph{mixed} cumulants on top of $\Deltaboldhatintro{}$'s \say{ground state} dictated by all the \emph{pure} cumulants. 
% In case of classical Edgeworth expansions, this is difficult, for essentially two reasons.
This is for two reasons.

First, in classical expansions, pure and mixed cumulants are not treated separately; instead many terms (especially those of higher order) contain products of pure and mixed cumulants, making it difficult to quantify the influence of one on the other. Secondly, the \say{ground state} considered by classical expansions is not identical to the one we consider. Instead, the first term is always the density of a standard Gaussian, and subsequent terms are corrections to this ground state %whose magnitude decays with increasing order. 
While our ground state is roughly Gaussian (by a CLT), this crucially only holds approximately and replacing the true ground state of $\Deltaboldhatintro{}$ by a Gaussian therefore introduces an additional source of error whose magnitude is non-negligible when compared to the small perturbations caused by mixed cumulants we wish to measure. %Furthermore, trying to \say{untangle} the higher order correction terms to obtain a better approximation to our true ground state is cumbersome due to the complex structure of the higher order terms involving tensor products of cumulants in which pure and mixed cumulants appear together.

Fortunately, there is a more elegant and more direct approach to obtaining an approximate density capturing the correct \say{ground state} and using correction terms directly linked to the mixed cumulants of $\Deltaboldhatintro{}$, which we describe in the following section.
We remark that, while it might in principle be possible to obtain a result comparable to ours by relying on the expansions given in previous work \cite{Kundhi_Rilstone_2020} and \say{untangling} and rearranging terms, the technical complexity of a such approach would be comparable (or worse), while being less illustrative and interpretable.
Moreover, since the result in \cite{Kundhi_Rilstone_2020} holds under more restrictive assumptions than here,\footnote{Specifically, summands are i.i.d., the CFs are integrable, and all terms satisfy \cramer.} certain technical modifications (see \Cref{sec:technicalchallenges}) would have to be worked into their proof in any case. 

\subsection{Main Theorem---Approximating the Density of $\mathbf{z}$ above its Ground State}\label{sec:main thm}

% We use a different way of approximating the CF of $\mathbf{z}$ that -- after a Fourier inversion -- yields a more convenient approximate density related directly to the true \say{ground state} of $\mathbf{z}$. 
Our approach to approximating the CF of $\mathbf{z}$ yields -- after a Fourier inversion -- an approximate density related directly to the true ground state of $\mathbf{z}$, that is more convenient than those in \cref{sec:classicedgeworth}.
The \say{ground state} of $\mathbf{z}$ is defined as the density associated to the product measure $\otimes_{j=1}^k \mathcal{L}(\mathbf{z}(j))$ 
% where $\mathcal{L}(\mathbf{z}(j))$ denotes the law of the $j$-th entry of $\mathbf{z}$. 
over the laws of the entries of $\mathbf{z}$. 
Alternatively, we can 
% represent $\find(\mathbf{x})$ as the product of the marginal densities of $\mathbf{z}(j)$, i.e., 
directly define
$\find(\mathbf{x}) \coloneqq \prod_{j = 1}^k f_{\mathbf{z}(j)}(\mathbf{x}(j))$,
where $f_{\mathbf{z}(j)}$ is the (marginal) density of $\mathbf{z}(j)$. 
We further define 
$\kappa \coloneqq \frac{1}{d} \sum_{j=1}^d \kappa_{(1,1,\ldots,1)}(\mathbf{z}_j)$ to be the average of all cumulants of order $(1,1,\ldots,1)$ over the individual $\mathbf{z}_j$. 
In case $\mathbf{z}$ represents a cycle $H$, all these cumulants are equal and $\kappa = \kappa_{(1,1,\ldots,1)}(\mathbf{z}_j)$ for any $j$. 
However, in case of a chain $H$ conditional on the positions of its endpoints, the 
% $\kappa_{(1,1,\ldots,1)}(\mathbf{z}_j)$ are not the same for all $j$, 
terms are not all the same,
and $\kappa$ is a function of the position of the two endpoints associated to $H$. 
Nevertheless, in both cases the approximate joint density of $\mathbf{z}$ can be expressed in terms of $\kappa$. 
This is formally captured in our main technical result.

\begin{restatable}[Main theorem, the joint density of $\mathbf{z}$ above its ground state]{theorem}{maindensityapprox}\label{thm:maindensityapprox}
Let $H$ be a cycle or a chain of length $k \ge 2$. Let $f$ denote the density of $\mathbf{z}$ and assume that a constant fraction of the $\mathbf{z}_i$ satisfy \cramer (\Cref{def:cramer}) for some $\varepsilon(\delta)$.
Then, for any choice of integer valued parameters $\bone \ge 3, \btwo \ge k, \sone, \stwo \ge 1$, there is a sequence of vectors $\left(\boldsymbol{\alpha}_j\right)_{j=k+1}^{\abparamsum}$ with $\boldsymbol{\alpha}_j \in \mathbb{R}^{k^j}$ such that the density $f$ of $\mathbf{z}$ is approximated by 
\begin{align*}
      \tilde{f}(\mathbf{x}) \coloneqq \find(\mathbf{x}) + (-1)^k\kappa \left( \frac{1}{\sqrt{d}} \right)^{k-2} \prod_{j=1}^k\phi^{(1)}(\mathbf{x}(j)) + \sum_{j=k+1}^{\abparamsum} \boldsymbol{\alpha}_i^\top \left( \boldsymbol{\nabla}^{\otimes j} \phi(\mathbf{x}) \right)
\end{align*} 
where $\phi(\mathbf{x})$ is the density of the standard Gaussian distribution and $\phi^{(1)}$ is its first derivative. The approximation error satisfies 
\begin{align*}
    \sup_{\mathbf{x} \in \mathbb{R}^k} |f(\mathbf{x}) - \tilde{f}(\mathbf{x})| = \bigO{ \max\left\{ \frac{1}{\sqrt{d}}, |\kappa| \right\}^{\!\stwo+1}\!\! \left( \frac{1}{\sqrt{d}}\right)^{\!(k-2)(\stwo+1)}\!\!\!\!\!\!\!\! + \left( \frac{1}{\sqrt{d}}  \right)^{\!\min\left\{\sone + 1, \bone-2, \btwo-2\right\}}}.
\end{align*} 
In case of a chain of length $k = 2$, we require $|\kappa| = o(1)$ for the above to hold. Furthermore, the absolute value of the coefficients in the remainder terms $\boldsymbol{\alpha}_j$ is bounded as 
\begin{align*}
    \|\boldsymbol{\alpha}_{k+j}\|_\infty \le C \left( \frac{1}{\sqrt{d}} \right)^{k-2} 
    \begin{cases}
        \max \left\{ |\kappa| d^{-\frac{1}{6}}, \frac{1}{\sqrt{d}} \right\} d^{-\frac{j-1}{6}} & \textup{if } k \ge 3 \text{ or } (k = 2 \text{ and } |\kappa| < d^{-\frac{1}{3}}),\\
        \max \left\{ |\kappa|^{1 + \frac{j}{2}}, d^{-\frac{j-1}{6}} \right\} & \textup{if } k = 2 \text{ and } |\kappa| \ge d^{-\frac{1}{3}},
    \end{cases}
\end{align*} 
where $C > 0$ is a constant.
\end{restatable}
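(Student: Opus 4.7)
The plan is to work entirely in Fourier space, decomposing the characteristic function of $\mathbf{z}$ into a factor that Fourier-inverts to the ground state and a factor that carries the mixed dependencies, then Taylor-expand the latter and control the resulting error via the usual inversion inequality. Concretely, I would write
\[
C_{\mathbf{z}}(\mathbf{t}) = \exp(K_{\mathrm{pure}}(\mathbf{t}))\cdot \exp(K_{\mathrm{mix}}(\mathbf{t})),
\]
where $K_{\mathrm{pure}}(\mathbf{t}) = \sum_{j=1}^k K_{\mathbf{z}(j)}(t_j)$ contains only pure cumulants and $K_{\mathrm{mix}}$ contains only mixed ones. The first factor is, by definition, the CF of the product measure $\otimes_j \mathcal{L}(\mathbf{z}(j))$ and thus Fourier-inverts exactly to $\find$. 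By \Cref{obs:mixedcumulants} and the additivity/homogeneity of cumulants under $\mathbf{z} = d^{-1/2}\sum_i \mathbf{z}_i$, the lowest-order non-vanishing mixed cumulant is $\kappa_{(1,\ldots,1)}(\mathbf{z}) = d^{-(k-2)/2}\kappa$, and any mixed cumulant of order $r$ is bounded by $O(d^{-(r-2)/2})$.

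Next I would Taylor-expand $\exp(K_{\mathrm{mix}}(\mathbf{t}))$ around zero to degree $\stwo$, keeping the leading piece $1 + i^k d^{-(k-2)/2}\kappa\, t_1\cdots t_k$ explicitly and collecting every higher product of mixed-cumulant terms as a polynomial in $\mathbf{t}$. For the leading mixed correction, I would replace $\exp(K_{\mathrm{pure}}(\mathbf{t}))$ by the standard Gaussian CF $\exp(-\mathbf{t}^\top\mathbf{t}/2)$; the resulting inverse Fourier transform of $i^k t_1\cdots t_k \exp(-\mathbf{t}^\top\mathbf{t}/2)$ is exactly $(-1)^k\prod_j \phi^{(1)}(\mathbf{x}(j))$ by the Fourier-derivative rule, yielding the second summand in $\tilde f$. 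The error introduced by this Gaussian swap is absorbed by further performing a univariate Edgeworth-style expansion of each $C_{\mathbf{z}(j)}(t_j)$ around $e^{-t_j^2/2}$, using pure cumulants up to order $\bone$ with an outer Taylor truncation at $\sone$. All remaining cross-terms between pure Edgeworth coefficients and mixed powers of $K_{\mathrm{mix}}$ are then polynomials in $\mathbf{t}$ multiplied by $\exp(-\mathbf{t}^\top\mathbf{t}/2)$, which Fourier-invert to $\boldsymbol{\alpha}_j^\top \boldsymbol{\nabla}^{\otimes j}\phi(\mathbf{x})$ for explicit $\boldsymbol{\alpha}_j$. The coefficient bound on $\|\boldsymbol{\alpha}_{k+j}\|_\infty$ follows by carefully bookkeeping, for each term, how many mixed-cumulant factors contribute a $\max\{|\kappa|,d^{-1/2}\}$ and how many pure Edgeworth corrections contribute additional $d^{-1/2}$ factors; the dichotomy in the stated bound for $k=2$ reflects which of $|\kappa|$ and $d^{-1/2}$ dominates.

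The sup-norm density error is controlled via $\sup_{\mathbf{x}}|f(\mathbf{x})-\tilde f(\mathbf{x})| \le (2\pi)^{-k}\int |C_{\mathbf{z}}(\mathbf{t}) - \tilde C_{\mathbf{z}}(\mathbf{t})|\,\mathrm{d}\mathbf{t}$. I would split the integration domain into a low-frequency ball $\|\mathbf{t}\|\le d^{1/2}\,\mathrm{polylog}(d)$ and its complement. On the low-frequency region, a standard remainder bound on the truncated Taylor expansions gives the two stated terms: the $\stwo$th mixed remainder contributes $\max\{d^{-1/2},|\kappa|\}^{\stwo+1}d^{-(k-2)(\stwo+1)/2}$, and the pure remainder contributes $d^{-\min\{\sone+1,\bone-2,\btwo-2\}/2}$. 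On the high-frequency region, the assumption that a constant fraction of the $\mathbf{z}_i$ satisfy Cram\'er's condition forces $|C_{\mathbf{z}}(\mathbf{t})| = \prod_i |C_{\mathbf{z}_i}(\mathbf{t}/\sqrt{d})|$ to decay exponentially in $d$ on this region, so its contribution is super-polynomially negligible.

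The hardest step, I expect, will be the case $k=2$ with $|\kappa|$ not necessarily $o(d^{-1/2})$: here $\exp(K_{\mathrm{mix}})$ is not small, so one cannot simply Taylor it around zero, and the assumption $|\kappa|=o(1)$ becomes essential to ensure convergence of the outer exponential expansion --- this is also why the coefficient bound for $k=2$ dichotomizes at $|\kappa| \asymp d^{-1/3}$. Secondary obstacles are (i) that for a chain conditional on its endpoints the $\mathbf{z}_i$ are not i.i.d., so every cumulant bound and Cram\'er estimate must be made uniform in the endpoint positions, and (ii) the careful combinatorial bookkeeping needed to translate the Fourier-side expansion into the claimed explicit coefficient bounds on $\boldsymbol{\alpha}_{k+j}$.
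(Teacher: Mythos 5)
Your proposal follows essentially the same route as the paper's proof: split the CGF into pure and mixed cumulants, keep $\exp(\Kpure(\mathbf{t}))$ exact so that it Fourier-inverts to the ground state $\find$, Gaussianize the correction factor and Taylor-expand it with the parameters $\sone,\stwo,\bone,\btwo$, invert term by term to get the $\boldsymbol{\alpha}_j^\top\boldsymbol{\nabla}^{\otimes j}\phi$ corrections, and control the error by splitting the inversion integral into low and high frequencies, using \cramer on the latter. The only details to tighten are that the split must be placed at $\|\mathbf{t}\|\le\delta\sqrt{d}$ for a \emph{small constant} $\delta$ rather than $\sqrt{d}\cdot\mathrm{polylog}(d)$ (beyond $\delta\sqrt{d}$ the bounds $|\Kpuredaggers(\mathbf{t})|\le\varepsilon\,\mathbf{t}^\top\mathbf{t}$ needed to tame the exponentials in the Taylor remainders fail), and that the pointwise Cram\'er bound $|C(\mathbf{t})|\le(1-\varepsilon(\delta))^{cd}$ must be paired with integrability of the characteristic function (supplied by the Gaussian noise built into $\mathbf{z}$) in order to integrate over the unbounded high-frequency region.
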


\noindent In \cref{thm:maindensityapprox} the higher order corrections are represented not in terms of Hermite polynomials, but in terms of (vectors of) partial derivatives of $\phi$. We believe this simplifies the exposition, but remark that it is simple to obtain an equivalent formulation involving Hermite polynomials instead.

We now sketch the proof of \cref{thm:maindensityapprox} and explain the technical difficulties.
The key idea is to split the CGF of $\mathbf{z}$ into \say{pure} and \say{mixed} cumulants as 
$K(\mathbf{t}) = \Kpure(\mathbf{t}) + \Kmix(\mathbf{t})$, where $\Kpure$ and $\Kmix$, contain precisely those terms in the power series representation of $K_{\mathbf{z}}(\mathbf{t})$ which correspond to pure or mixed cumulants, respectively. 
To quantify the influence of $\kappa$ on the distribution of $\mathbf{z}$, we define an approximation to the CGF and the CF of $\mathbf{z}$ that takes all pure cumulants and the first mixed cumulant into account. 
For this
we introduce the parameters $\bone, \btwo$ as mentioned in \cref{thm:maindensityapprox} and define 
\begin{align*}
    \Kpuredaggers(\mathbf{t})\! =\!\! \sum_{j=3}^{\bone} \frac{1}{j!} \kappapuretopj{j} (i\mathbf{t})^{\otimes j} \text{ and } \Kmixdaggers(\mathbf{t})\! = \! \kappa \left( \frac{1}{\sqrt{d}} \right)^{k-2}\prod_{j=1}^k(i\mathbf{t}(j))\! +\!\!\!\!  \sum_{j=k+1}^{\btwo} \frac{1}{j!} \kappamixtopj{j} (i\mathbf{t})^{\otimes j}.
\end{align*} 
That is, $\Kpuredaggers$ contains all terms corresponding to pure cumulants of order between $3$ and $\bone$ while $\Kmixdaggers$ contains all terms corresponding to mixed cumulants of order between $k$ and $\btwo$. Then the CF of $\mathbf{z}$ is approximated as
\begin{align*}
     % &= \exp\left( K_{(\text{pure})}(\mathbf{t})\right) + \exp\left( K_{(\text{pure})}(\mathbf{t})\right) \left(\exp\left( K_{(\text{mix})}(\mathbf{t})\right) - 1 \right)\\
  C(\mathbf{t})  &\approx \exp\left( K_{(\text{pure})}(\mathbf{t})\right) + \exp\Big(-\frac{\mathbf{t}^\top\mathbf{t}}{2}\Big)\exp\left( \Kpuredaggers(\mathbf{t})\right) \left(\exp\left( \Kmixdaggers(\mathbf{t})\right) - 1 \right),
\end{align*} 
leaving out all pure cumulants of order $> \bone$ and all mixed cumulants of order $> \btwo$ in the second term. 
% To make the exponentials above useful for getting a sum of suitable correction terms, 
To transform this expression into a sum of tractable correction terms,
we expand the exponentials as Taylor series of a certain order.
%, like it is customary in the context of Edgeworth expansions. 
By applying an $\sone$-th order expansion to the first, and a $\stwo$-th order expansion to the second term, we obtain 
\begin{align*}
    \exp\left( \Kpuredaggers(\mathbf{t})\right) \approx \underbrace{\sum_{\ell = 0}^{s_1} \frac{1}{\ell!} \left( \Kpuredaggers(\mathbf{t}) \right)^\ell}_{\eqqcolon P_1(\mathbf{t})} \text{ and } \exp\left( \Kmixdaggers(\mathbf{t})\right) - 1\approx \underbrace{\sum_{\ell = 1}^{s_2} \frac{1}{\ell!} \left( \Kmixdaggers(\mathbf{t}) \right)^\ell}_{\eqqcolon P_2(\mathbf{t})}. 
\end{align*} 
%%%%%% Explicit form not needed now -- let's just hint to the reader
%If we were to expand $P_1, P_2$ further we would end up with a complex polynomial of the form \begin{align*}
%     P_1(\mathbf{t})  = 1 + \sum_{\ell = 1}^{\sone} \frac{1}{\ell!}\sum_{j_1=3}^{\bone} \sum_{j_2=3}^{\bone} \ldots \sum_{j_\ell=3}^{\bone} \frac{1}{j_1!j_2!\cdots j_\ell!}\left( \otimes_{m=1}^\ell \kappapurej{j_m} \right)^\top (i \mathbf{t})^{\otimes \sum_{m=1}^\ell j_m},
% \end{align*} 
% where the tensor products are correct up to reordering of the $j_m$. These polynomials are similar in shape to the expansions used in \cite{Kundhi_Rilstone_2020}. This expression of $P_1, P_2$ is important in \Cref{sec:higherorderterms} where the explicit bound on $\|\boldsymbol{\alpha}_{k+j}\|_\infty$ is proved. 
%
It is possible expand $P_1, P_2$ to end up with a complex polynomial of a particular form, similar in shape to the expansions used in \cite{Kundhi_Rilstone_2020}, which expansion is important in \Cref{sec:higherorderterms} where the explicit bound on $\|\boldsymbol{\alpha}_{k+j}\|_\infty$ is proved. 
For now, it is only necessary to 
% infer from the above 
note
that the product $P_1P_2$ can be written in the form 
\begin{align*}
    P_1(\mathbf{t})P_2(\mathbf{t}) = \kappa\left( \frac{1}{\sqrt{d}} \right)^{k-2} \prod_{j=1}^k(i\mathbf{t}(j)) + \sum_{j=k+1}^{\abparamsum} \boldsymbol{\alpha}_j^\top (i\mathbf{t})^{\otimes j}
\end{align*} for a sequence of vectors $\left(\boldsymbol{\alpha}_j\right)_{j=k+1}^{\abparamsum}$ with $\boldsymbol{\alpha}_j \in \mathbb{R}^{k^j}$ for which the absolute value of all components can be explicitly bounded. 
This leads to explicit bounds on the approximation to the CF of $\mathbf{z}$ that can be Fourier inverted, eventually yielding \Cref{thm:maindensityapprox}. 

\subsection{Technical Challenges}\label{sec:technicalchallenges}

% It turns out that the all of the above is much easier said than done. 
% The main reason for this is that the individual $\mathbf{z}_i$ are not quite as \say{well-behaved} as typically assumed in the context of Edgeworth expansions. 
% Concretely, there are two major obstacles we have to overcome: (1) our individual $\mathbf{z}_i$ are in fact \emph{not continuous} (i.e. they have no density), and (2) a prerequisite of the inversion theorem (\Cref{thm:inversion}) is that the CF of $\mathbf{z}$ is \emph{integrable}, which is a priori not guaranteed. We briefly describe how we overcome these obstacles. 
%
The two major obstacles in the method described in \cref{sec:main thm} are: (1) the individual $\mathbf{z}_i$ have no density, and (2) a prerequisite of the inversion theorem (\Cref{thm:inversion}) is that the CF of $\mathbf{z}$ is integrable, which is not guaranteed \emph{a priori}. We briefly describe how to handle these obstacles. 

\paragraph{\cramer ensures validity for vectors without density.}
As already mentioned, the literature usually assumes the $\mathbf{z}_i$ to have a density, which in our case is not given. To see this, consider the simple example in which $\mathbf{z}_i$ represents the distances associated to a cycle of length $3$ (in dimension $i$). Then, conditional on the first two coordinates $\mathbf{z}_i(1), \mathbf{z}_i(2)$, it holds that $\mathbf{z}_i(3)$ can only take two specific \emph{discrete} values, implying that $\mathbf{z}_i(1)$ has no density. 

However, a sum of independent random vectors that each have no density can itself have a density that can be approximated using the same tools as described above. 
%An simple example of such a situation is given when each random vector $\mathbf{z}_i \in \mathbb{R}^k$ in our sum is distributed uniformly in the direction of a given basis vector. As long as there is at least one term distributed in direction of each of the $k$ basis (vectors associated to an arbitrary basis of $\mathbb{R}^k$), the result is a continuous distribution in $\mathbb{R}^k$. 
A sufficient criterion for this phenomenon to occur is \cramer.
\begin{restatable}[\cramer]{definition}{cramercond}\label{def:cramer}
A random vector $\mathbf{z} \in \mathbb{R}^k$ satisfies \cramer if for all $\delta > 0$ and all $\mathbf{t} \in \mathbb{R}^k$ with $\|\mathbf{t}\| \ge \delta$, we have $
        |C_{\mathbf{z}}(\mathbf{t})| \le 1 - \varepsilon(\delta),
    $ where $\varepsilon(\delta) > 0$ for all $\delta > 0$.
\end{restatable}

\noindent \cramer is commonly used in the literature as a prerequisite for Edgeworth expansions. While \cramer is always met for continuous random vectors (which follows from the Riemann--Lebesgue lemma), and while it is easy to construct examples of non-continuous random vectors that do not meet \cramer (e.g. lattice distributions), there are random vectors that have no density but still satisfy the above condition. Our $\mathbf{z}_i$ are an example for this, which is formally shown in \Cref{lem:cramer}. This statement holds but it makes an exception for chains of length $k = 2$. Here, we can only guarantee that \cramer holds with a constant probability over the draw of the endpoints of our chain. However, this still means that with extremely high probability, there will be a constant fraction of the terms in the sum satisfying \cramer, and this is enough to obtain the same error guarantees as claimed previously.

\paragraph{Adding Gaussian noise to ensure integrability of the CF.}
The second obstacle is that both our error analysis in \Cref{sec:larget} and the inversion theorem (\Cref{thm:inversion}) require a CF which is integrable.
To overcome this hurdle, we use a simple fix that consists of adding independent, isotropic Gaussian noise $\boldsymbol{\eta} \sim \mathcal{N}(0, d^{-2\eta}\mathbf{I}_k)$ to $\mathbf{z}$, where $\eta$ is a parameter. Since $\boldsymbol{\eta}$ has an integrable CF, and since $\int |C_{\mathbf{z} + \boldsymbol{\eta}}(\mathbf{t})| \dd \mathbf{t} = \int |C_{\mathbf{z}}(\mathbf{t})||C_{\boldsymbol{\eta}}(\mathbf{t})| \dd \mathbf{t} \le \int |C_{\boldsymbol{\eta}}(\mathbf{t})| \dd \mathbf{t}$, this ensures integrability of the resulting $\mathbf{z} + \boldsymbol{\eta}$. Intuitively, taking the convolution of $\mathbf{z}$ with an independent Gaussian random variable ensures that all densities involved are smooth and continuous. While this adds another source of error and requires careful attention in the error analysis in \Cref{sec:larget}, it is quite easy to quantify by how much the probability of events defined in terms of the coordinates of $\mathbf{z}$ is influenced by our noise (\Cref{lem:influenceofnoise}). In particular this additional error can be made small enough for our purposes when choosing $\eta$ to be a sufficiently large constant. Furthermore, any constant $\eta$ only leads to a \say{blow-up} that is polynomial in $d$ in \Cref{sec:larget}, which is compensated by a factor exponentially small in $d$, arising due to \cramer.

\subsection{Applications to $\RGG$}

We continue by describing how the expansions from \Cref{thm:maindensityapprox} are helpful for understanding $\RGG$ in high dimensions. A generally helpful quantity is the so-called \emph{signed weight} of small pattern graphs $H$ (represented as a set of edges), defined as 
    $\SW(H) \coloneqq \prod_{e \in H} (\mathds{1}(e) - p)$,
 where -- given an edge $e = \{u, v\}$ -- we write $\mathds{1}(e)$ for the indicator that the edge is present in a given $\Gbf \sim \RGG$. The expected signed weight $\Expected{\SW(H)}$, intuitively tells us something about the dependence between the edges in $H$. In fact, many recent breakthroughs in understanding random geometric graphs and their information-computation landscape as well as their spectral properties rely on computing the signed weight of suitable patterns graphs $H$ (or alternatively the very closely related \emph{Fourier Coefficients}). We mention \cite{Bangachev_Bresler_2024,Bangachev_Bresler_2024_fourier} as very recent examples. Crucially, studying the signed weight of suitable patterns is helpful for both designing algorithmic upper and lower bounds (both computational and information-theoretic).

\paragraph{Bounding the signed weight of cycles and chains}

Our three applications all rely on turning \Cref{thm:maindensityapprox} into a suitable bound on $\Expected{\SW(H)}$ in case $H$ is a cycle or a chain. As mentioned above, this bound is not limited to only cycles and chains, but also makes a statement about the signed weight of a pattern $H$ in which cycles and chains appear. By a careful integration of the expansions in \Cref{thm:maindensityapprox}, we obtain the following bound.

\begin{restatable}{theorem}{fouriercoefficients}\label{thm:fouriercoefficients}
    Let $C_k$ be a cycle of length $k \ge 3$. Then 
    \begin{align*}
            \Expected{\textsc{Sw}(C_k)}  = \bigO{ p^k \left(\frac{\log(n)}{\sqrt{d}}\right)^{k-2} }.
        \end{align*}
    If $H$ is a chain of length $k \ge 2$ on the set of vertices $\{v_1, \ldots, v_{k+1}\}$ with positions $\mathbf{x}_1, \ldots, \mathbf{x}_k \in \mathbb{T}^d$, then conditional on any position $\mathbf{x}_1, \mathbf{x}_{k+1}$ of the endpoints of $H$, 
    \begin{align*}
        \Expectedsub{\mathbf{x}_2, \ldots, \mathbf{x}_k}{\textsc{Sw}(H) \mid \mathbf{x}_1, \mathbf{x}_{k+1}} = \bigO{ p^{k}|\kappa|  \left(\frac{\log(n)}{\sqrt{d}}\right)^{k-2} + p^k \log^2(n) \left(\frac{\log(n)}{\sqrt{d}}\right)^{k-1} }.
    \end{align*} 
    where $\kappa \coloneqq \frac{1}{d} \sum_{j=1}^d \kappa_{(1,1,\ldots,1)}(\mathbf{z}_j)$ (as in \Cref{thm:maindensityapprox}), which is a function of $\mathbf{x}_1$ and $\mathbf{x}_{k+1}$. If $k = 2$, we require $H$ to be $(\alpha, d^{-
    1/3})$--good (cf. \Cref{def:goodchains}) for an arbitrarily small constant $\alpha > 0$.
\end{restatable}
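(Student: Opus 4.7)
The plan is to compute $\mathbb{E}[\textsc{Sw}(H)]$ by plugging in the joint-density approximation of \Cref{thm:maindensityapprox} and integrating term by term. We write $\textsc{Sw}(H) = \prod_{j=1}^{k} g(\mathbf{z}(j))$, where $g(x) \coloneqq \mathds{1}(x \le z_\tau) - p$ and $z_\tau$ is the scaled threshold such that the $j$-th edge is present exactly when $\mathbf{z}(j) \le z_\tau$. For a cycle we integrate directly against the joint density of $\mathbf{z}$; for a chain we first condition on the endpoint positions $\mathbf{x}_1, \mathbf{x}_{k+1}$, after which the internal distances still decompose as a sum of conditionally independent per-dimension vectors, so \Cref{thm:maindensityapprox} applies with $\kappa = \kappa(\mathbf{x}_1, \mathbf{x}_{k+1})$.

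The ground-state piece of $\tilde f$ factorizes as
\[
\int \prod_{j} g(\mathbf{x}(j))\,\find(\mathbf{x})\,\mathrm{d}\mathbf{x} \;=\; \prod_{j} \mathbb{E}[g(\mathbf{z}(j))] \;=\; 0,
\]
since each edge is marginally present with probability $p$. The leading mixed-cumulant term collapses, via the one-line identity $\int g(x)\phi^{(1)}(x)\,\mathrm{d}x = \phi(z_\tau)$, to $(-1)^k \kappa (1/\sqrt{d})^{k-2} \phi(z_\tau)^k$. Standard Gaussian tail estimates applied to the implicit relation $P(\mathbf{z}(j)\le z_\tau) = p$ (together with $p \ge \alpha/n$) give $|z_\tau| = O(\sqrt{\log n})$ and hence $\phi(z_\tau) = O(p\sqrt{\log n})$, so this term contributes $O\big(|\kappa|\, p^k (\log n / \sqrt{d})^{k-2}\big)$. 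This is exactly the first term in the chain bound; in the cycle case we additionally note that each $\mathbf{z}_i$ has almost surely bounded coordinates (they are normalized per-dimension torus distances), hence all of its joint cumulants and in particular $|\kappa|$ are $O(1)$, yielding the stated cycle estimate.

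For the higher-order correction terms $\boldsymbol{\alpha}_{k+j}^\top \big(\boldsymbol{\nabla}^{\otimes (k+j)} \phi\big)$, $j \ge 1$, integration against $\prod_{\ell} g(\mathbf{x}(\ell))$ reduces each summand to a product of one-dimensional integrals of the form $\int g(x)\phi^{(s)}(x)\,\mathrm{d}x = \phi^{(s-1)}(z_\tau) = O\big(p\cdot\polylog(n)\big)$, using standard bounds on the probabilist's Hermite polynomials and the Gaussian tail at $z_\tau$. Combined with the explicit bound on $\|\boldsymbol{\alpha}_{k+j}\|_\infty$ from \Cref{thm:maindensityapprox} and the $\poly(k)$ count of multi-indices contributing at each $j$, the $j = 1$ piece dominates and delivers precisely the secondary $p^k \log^2(n)(\log n/\sqrt{d})^{k-1}$ error in the chain bound, while subsequent $j$ decay geometrically in $d^{-1/6}$. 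The sup-norm remainder from \Cref{thm:maindensityapprox} is made polynomially small in $1/\sqrt{d}$ by taking $\sone, \stwo, \bone, \btwo$ to be sufficiently large constants, and is absorbed after truncating the integration to a ball of polylogarithmic radius, outside of which the Gaussian factors decay rapidly.

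The main obstacle is the $k = 2$ chain, where \Cref{thm:maindensityapprox} only applies if $|\kappa| = o(1)$ and \cramer (\Cref{def:cramer}) for the per-dimension $\mathbf{z}_i$ can fail for exceptional endpoint configurations. The $(\alpha, d^{-1/3})$-goodness hypothesis (\Cref{def:goodchains}) is imposed precisely to guarantee both conditions simultaneously on a constant fraction of dimensions with high probability over the endpoints, so that \Cref{thm:maindensityapprox} can still be invoked. Tracking that the polynomial-in-$d$ blow-up caused by the Gaussian-noise regularization built into \Cref{thm:maindensityapprox} does not overwhelm the $d^{-(k-2)/2}$ savings requires careful bookkeeping, but the error bounds there are calibrated so that these cancellations go through.
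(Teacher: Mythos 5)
Your proposal is correct and follows essentially the same route as the paper: invoke \Cref{thm:maindensityapprox}, truncate to a polylogarithmic box using the tail bound on $\mathbf{z}$, and integrate the three pieces of $\tilde f$ separately, with the ground state contributing (essentially) nothing, the first mixed-cumulant term giving the $|\kappa|\,p^k d^{-(k-2)/2}$ main term via $\int_D \phi^{(1)} \approx \phi(\hattau) = O(p\,\polylog(n))$, and the higher-order terms controlled by the $\|\boldsymbol{\alpha}_{k+j}\|_\infty$ bounds together with $\polylog$ bounds on Hermite polynomials over the truncated domain. The only cosmetic difference is that the paper first rewrites $\Expected{\SW(H)} = \Expected{\prod_e \mathds{1}(e)} - p^k$ (using independence after deleting one edge of a cycle or chain) and then estimates the probability that all edges are present, whereas you integrate the product of centered indicators directly; these are the same computation.
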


The bound from \Cref{thm:fouriercoefficients} (combined with further, specialized arguments) enables the aforementioned three applications regarding the properties of $\RGG$.

\subsubsection{Application 1: Tight Testing Thresholds via Signed Triangles}

To prove \cref{thm:signedtriangles}, we analyze the statistical power of the \emph{signed triangle} test on $\RGG$, defined as
\begin{align*}
    \T(\Gbf) \coloneqq  \sum_{i < j< k \in [n]} \Tijk{ijk} \text{ where } \Tijk{ijk} \coloneqq (\Gbf_{ij} - p)(\Gbf_{ik} - p)(\Gbf_{jk} - p).
\end{align*} where $G_{ij}$ denotes the indicator variable of the edge $\{i,j\}$.
 We bound the $ \T(\Gbf)$ from below, and its variance from above, then the result follows by Chebyshev's inequality. While the variance bound follows from \Cref{thm:fouriercoefficients}, the main challenge here is to bound $\Expected{\T(\Gbf)}$ \emph{from below}. To this end we will use \Cref{thm:maindensityapprox}, however, in contrast to the upper bound on signed weights from \Cref{thm:fouriercoefficients} we have to control the quantity $\kappa$ much more carefully. In fact, we must assert that $\kappa$ is a \emph{negative} constant for all $q \ge 1$ such that the first correction term from \Cref{thm:maindensityapprox}, proportional to $(-1)^k\kappa/\sqrt{d}$ is positive which results in an increased probability of forming a triangle. To do this we rely on the following technical lemma, which we prove in \Cref{sec:deferredtriangles}. 
\begin{restatable}{lemma}{correlation} \label[lemma]{lem:correlation}
    Consider a triangle $C_3$ with vertices $v_1, v_2, v_3$ and edges $e_1, e_2, e_3$. For all $L_q$-norms with $q \ge 1$, we have that  $\Expected{\gamma(e_1)\gamma(e_2)\gamma(e_3)} < 0$ where $\gamma(\{u,v\}) \coloneqq |x_u-x_v|_C^q - \Expected{|x_u-x_v|_C^q}$ and $x_{v_1}, x_{v_2}, x_{v_3} \sim \unifhalf{}$ are the positions of $v_1, v_2, v_3$ in a fixed dimension.
\end{restatable}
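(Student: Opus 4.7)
My plan is to decompose $t^q$ via a layer-cake representation to reduce the claim to a positive-association statement about indicator events; then translate so $x_{v_2}=0$ (making two of the edge events independent) and finally establish the resulting probability inequality by an elementary averaging argument on the torus.

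\textit{Step 1 (layer-cake).} First I would use the identity $t^q=q\int_0^{1/2}r^{q-1}\mathds{1}(t\ge r)\,\dd r$ (valid for $t\in[0,1/2]$) together with $\Pr{|X-Y|_C\le r}=2r$ to write
\[
\gamma(\{u,v\})=-q\int_0^{1/2}r^{q-1}\xi_r(\{u,v\})\,\dd r, \quad \xi_r(\{u,v\})\coloneqq\mathds{1}(|x_u-x_v|_C<r)-2r.
\]
Expanding the triple product and swapping expectation with integration, the overall sign $(-q)^3<0$ reduces the claim to showing that $\Expected{\xi_{r_1}(e_1)\xi_{r_2}(e_2)\xi_{r_3}(e_3)}\ge 0$ for all $(r_1,r_2,r_3)\in(0,1/2)^3$, with strict inequality on a positive measure subset.

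\textit{Step 2 (reduction to a probability inequality).} Next, by translation invariance of $\mathbb{T}$, I would condition on $x_{v_2}=0$; then $X\coloneqq x_{v_1}$ and $Z\coloneqq x_{v_3}$ are i.i.d.\ uniform on $\mathbb{T}$, so every pairwise expectation factors as $\Expected{\mathds{1}_{e_i}\mathds{1}_{e_j}}=4r_ir_j$ (e.g.\ $\Expected{\mathds{1}_{e_1}\mathds{1}_{e_3}}=\Expected{\mathds{1}(|X|_C<r_1)\cdot 2r_3}$ since $Z$ is independent of $X$). Expanding $\prod_j(\mathds{1}_{e_j}-2r_j)$ collapses to
\[
\Expected{\xi_{r_1}\xi_{r_2}\xi_{r_3}}=\Pr{|X|_C<r_1,\,|Z|_C<r_2,\,|X-Z|_C<r_3}-8r_1r_2r_3.
\]

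\textit{Step 3 (convolution and averaging).} Let $A_i\coloneqq\{t\in\mathbb{T}:|t|_C<r_i\}$. The change of variables $(x,z)\mapsto(x,v\coloneqq x-z)$ (Jacobian $1$ on $\mathbb{T}^2$) rewrites the probability above as $\int_{A_3}h(v)\,\dd v$ with $h\coloneqq\mathds{1}_{A_1}*\mathds{1}_{A_2}$ and $\int_\mathbb{T} h=4r_1r_2$. The function $h$ is symmetric (since $A_i=-A_i$), and I would prove, by direct case analysis on whether $r_1+r_2\le 1/2$, that $h$ is piecewise linear and non-increasing in $|v|_C\in[0,1/2]$: it starts at $2\min(r_1,r_2)$ and either decreases to $0$ (no wrap-around) or levels off at $2r_1+2r_2-1$ after a linear decrease (wrap-around). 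For any such $h\ge 0$ and any centered arc $A_3$, comparing the average of $h$ on $A_3$ with the average on $\mathbb{T}\setminus A_3$ (both bounded appropriately by $h(r_3)$) yields $\int_{A_3}h\ge|A_3|\cdot(4r_1r_2)=8r_1r_2r_3$, strict outside the degenerate cases $r_1=1/2$, $r_2=1/2$, or $r_3=1/2$.

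\textit{The hard part.} The main obstacle will be verifying the monotonicity of $h=\mathds{1}_{A_1}*\mathds{1}_{A_2}$ on $\mathbb{T}$ in the wrap-around regime $r_1+r_2>1/2$, where $h$ decomposes as a sum of a non-increasing close-side contribution and a non-decreasing far-side contribution; these cancel exactly past $|v|_C=1-r_1-r_2$, so that their sum is still non-increasing. Once this monotonicity is established, the averaging inequality together with positivity of the integrand on a positive measure subset of $(0,1/2)^3$ combine with the sign $(-q)^3<0$ to yield the strict inequality $\Expected{\gamma(e_1)\gamma(e_2)\gamma(e_3)}<0$ for all $q\ge 1$.
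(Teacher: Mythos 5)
Your proposal is correct, and it takes a genuinely different route from the paper's proof. The paper conditions on the length $y$ of one edge, writes $\Expected{\gamma(e_1)\gamma(e_2)\gamma(e_3)} = \mathbb{E}_y\left[\gamma(e_1)\,\mathbb{E}[\gamma(e_2)\gamma(e_3)\mid y]\right]$, and shows that the first factor is strictly increasing and the second strictly decreasing in $y$ --- the latter by differentiating under the integral sign and a careful sign analysis of two resulting integrals --- before concluding with a case analysis on the zeros of the two functions. You instead use the layer-cake identity $t^q = q\int_0^{1/2} r^{q-1}\mathds{1}(t\ge r)\,\dd r$ to reduce the claim, for all $q$ simultaneously, to the single statement $\Expected{\xi_{r_1}(e_1)\xi_{r_2}(e_2)\xi_{r_3}(e_3)}\ge 0$, i.e., that the signed weight of a triangle in the one-dimensional toroidal threshold model (with per-edge thresholds) is non-negative; you then prove this by identifying the triple-intersection probability with $\int_{A_3}\bigl(\mathds{1}_{A_1}*\mathds{1}_{A_2}\bigr)$ and using that the convolution of two centered arcs is non-increasing in $|v|_C$ (including the wrap-around regime, where the far-side gain exactly cancels the near-side loss), so its average over the centered arc $A_3$ dominates its global average. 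I checked the key computations --- the factorization of all pairwise expectations after fixing $x_{v_2}=0$, the collapse of the expanded product to the triple-intersection probability minus $8r_1r_2r_3$, the piecewise-linear description of $h$, and the strictness discussion --- and they are all sound. Your argument is more elementary (no Leibniz-rule derivative computation) and strictly more general: it applies verbatim to any $\gamma$ built from a non-decreasing, non-constant function of $|x_u-x_v|_C$, in particular to all $q>0$, whereas the paper's monotonicity computation is tied to the power function. The paper's route would in principle give quantitative control on the derivative of $\mathbb{E}[\gamma(e_2)\gamma(e_3)\mid y]$, but since the lemma is only used to assert that the expectation is a (fixed) negative constant for fixed $q$, either approach suffices for all downstream uses.
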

\noindent Proving the above relies on splitting $\Expected{\gamma(e_1)\gamma(e_2)\gamma(e_3)} = \mathbb{E}_{y}\left[\gamma(e_1)\mathbb{E}[\gamma(e_2)\gamma(e_3) \mid y]\right]$ where $y$ is the distance associated to the endpoints of $e_1$ and then employing a monotonicity argument to show that $\gamma(e_1)$ is strictly increasing in $y$ and that $\mathbb{E}[\gamma(e_2)\gamma(e_3) \mid y]$ is strictly decreasing in $y$. Intuitively, this asserts that the correlation of $\gamma(e_2)$ and $\gamma(e_3)$ is decreasing in $\gamma(e_1)$, so $\gamma(e_1)$ and $\mathbb{E}[\gamma(e_2)\gamma(e_3) \mid \gamma(e_1)]$ are negatively correlated. The main technical challenge is to prove monotonicity of $\mathbb{E}[\gamma(e_2)\gamma(e_3) \mid y]$, which we achieve by carefully estimating the derivative of $\mathbb{E}[\gamma(e_2)\gamma(e_3) \mid y]$ using the Leibnitz integral rule and suitable substitutions. 

\subsubsection{Application 2: Improved Information-Theoretic Lower Bounds for all Densities }

% We find it remarkable that the expansions from \Cref{thm:maindensityapprox} do not only yield improved algorithmic upper bounds, but also allow us to improve the information-theoretic lower bounds for all edge densities $p$. This improves over the previously only known information-theoretic lower bound for distinguishing toroidal RGGs under $L_q$-norm from a $\Gnp$ in \cite[Theorem 1.10]{Bangachev_Bresler_2024} which only makes a statement about the case of $p = 1/2$ (although it also allows for $q = \omega(1)$). Our \Cref{thm:fouriercoefficients} (for chains) allows for a strengthening of this result applying to all fixed $1 \le q < \infty$ and -- in particular -- to all $\frac{\alpha}{n} \le p \le 1 - \varepsilon, \alpha > 0$. 
% \begin{restatable}{theorem}{statisticalindistinguishability}\label{thm:indistinguishability}
%     For any fixed $1 \le q < \infty$ and any $\frac{\alpha}{n}\le p \le 1 - \varepsilon$, we have that \begin{align*}
%         \TV{\RGG}{\Gnp} = o(1)
%     \end{align*} whenever $d = \omega( \max\{ n\log(n), p^2n^3\log^6(n) \})$.
% \end{restatable}

\noindent Both our \Cref{thm:indistinguishability} and \cite[Theorem 1.10]{Bangachev_Bresler_2024} are based on a bound from Liu and Ràcz \cite{Liu_Racz_2023} that relates the total variation distance between $\RGG$ and $\Gnp$ to a sum over the moments of the self-convolution of two specific edge indicator variables. Precisely, Liu and Ràcz showed that
\begin{align*}
    \TV{\RGG}{\Gnp} \le \sum_{k=0}^{n-1} \log \left( \mathbb{E}_{\mathbf{x}, \mathbf{y}} \left[ \left(1 + \frac{\gamma(\mathbf{x},\mathbf{y})}{p(1-p)} \right)^k\right] \right), \end{align*} 
    where $\gamma(\mathbf{x},\mathbf{y}) = \mathbb{E}_{\mathbf{z}}[ (\mathds{1}_e(\mathbf{x}, \mathbf{z}) - p)(\mathds{1}_e(\mathbf{y}, \mathbf{z}) - p) ]$
 and $\mathds{1}_e(\mathbf{x}, \mathbf{z})$ denotes the indicator random variable for the event that $\|\mathbf{x} - \mathbf{z}\|_q \le \tau$.
In \cite{Bangachev_Bresler_2024}, the above moments of $\gamma$ are bounded using the Bernstein--McDiarmid concentration inequality together with an involved combination of anti-concentration and Fourier theoretic arguments aimed at computing marginal increments and variances. While this allows to explicitly quantify the influence of $q$, adapting the heavy machinery underlying this proof to the case of general $p$ seems out of reach. In contrast to this, we simply use a convenient expansion of the moments arising in the bound of Liu and Ràcz into a sum over signed weights of complete bipartite graphs $K_{2,k}$ given by \begin{align*}
    \mathbb{E}_{\mathbf{x}, \mathbf{y}} \left[ \left(1 + \frac{\gamma(\mathbf{x},\mathbf{y})}{p(1-p)} \right)^k\right] = \sum_{j=0}^{k} \binom{k}{j} \frac{1}{{p^j(1-p)^j}} \Expected{ \textsc{Sw}(K_{2,j}) },
\end{align*} such that we can subsequently apply our bound from \Cref{thm:fouriercoefficients} ultimately resulting in the bound from \Cref{thm:indistinguishability}. This is possible since a $K_{2,k}$ is just the union of $k$ chains of length $2$, so \Cref{thm:fouriercoefficients} is applicable (modulo some technical precautions). 

\paragraph{Comparison with the bounds known for spherical RGGs}
On top of \Cref{thm:fouriercoefficients}, all of this can be done in a remarkably short proof, while still resulting in a lower bound that is just as strong as the best known information-theoretic lower bound for spherical RGGs in the case of general $p$ from \cite[Theorem 1.2]{liu2022testing}. This is despite the fact that the bound in \cite{liu2022testing} relies on much heavier machinery based on concentration results of the measure of the intersection of random spherical caps and anti-caps using optimal transport. We remark that the authors of \cite{liu2022testing} also prove a stronger lower bound in case of $p = \alpha/n$ for constant $\alpha > 0$ which relies on a specialized argument exploiting the locally-tree-like properties of sparse RGGs to analyze an instance of the belief propagation algorithm for computing the marginal distribution of latent vectors conditioned on forming a specific graph, reminiscent of the \say{cavity method} from statistical physics. However, this analysis breaks down in the case of general $p$ as the locally-tree-like properties are lost. In this case, their approach based on \say{concentration via optimal transport} yields the same bound as in \Cref{thm:indistinguishability} (up to polylogarithmic factors). Closing the gap between algorithmic upper bounds and information-theoretic lower bounds for all $p=o(1)$, i.e., the gap between $n^3p^3$ and $n^3p^2$, is an open problem both for spherical RGGs and our model. It arises because all techniques for proving lower-bounds in the general $p$ case rely on \say{worst-case} vector embeddings of a given graph $\Gbf$, and going beyond this barrier requires new approaches that allow for analyzing \say{average-case} vector embeddings instead, this was also observed in \cite{liu2022testing}.

%\leon{todo: talk about the gap in comparision to \Cref{thm:signedtriangles}}

\subsubsection{Application 3: Spectral Properties of Random Toroidal Graphs}\label{sec:intro_psketch_spectral} \label{sec:spectralexplanation}

The upper bound of \cref{thm:spectralstuff} and \Cref{thm:spectralstuffinfty} is proved via the trace method. Since we are interested in the second largest eigenvalue in absolute value, the matrix we shall consider is the \emph{centered} adjacency matrix $\Adjp = \Adj - p \mathds{1}\mathds{1}^\top$, where $\mathds{1}$ is the $n$-dimensional all-ones vector. This eliminates the eigenspace associated to the largest eigenvector of $\Adj$ (which is well approximated by $\mathds{1}$). We thus reduced our problem to bounding the largest eigenvector of $\Adjpp$, which we do by bounding the trace of $\Adjppt{m}$ for an even, positive integer $m$. This is helpful for bounding the spectral norm since
\begin{align*}
    \tr{\Adjpp} = \sum_{i = 1}^n \lambda_i\left( \Adjpp \right),
\end{align*} 
so considering the $m$-th power of $\Adjpp$, we see that 
\begin{align*}
    \tr{\Adjppt{m}} =  \sum_{i = 1}^n \lambda_i\left( \Adjpp \right)^m \text{, so }
    \left|\lambda_1\left( \Adjpp \right)\right|^m \le \tr{\Adjppt{m}}
\end{align*} for every even $m$. By Markov's inequality, this means that for any $a = \omega(1)$, 
\begin{align}\label{eq:traceMethod}
    \Pr{\left|\lambda_1\left( \Adjpp \right)\right| \ge a^{1/m} \Expected{ \tr{\Adjppt{m} }}^{1/m}}\leq 1/a = o(1)
\end{align}
and we are left with bounding the above expectation. Conveniently, $\tr{\Adjppt{m}}$ has a combinatorial interpretation as the sum over all directed, closed walks of length $m$ on the complete graph $K_n$ with vertex set $[n]$, while each term in the sum corresponds to the product of the entries of $\Adjpp$ along the edges of the walk.
Formally, writing $\EW(n, m) \coloneqq \{(v_1, \dots, v_{m + 1}) \in [n]^{m + 1} \mid v_1 = v_{m+1}\}$ for the set of closed walks of length $m$, we have
\begin{align} \label{eq:traceAsWalks} 
    \tr{\Adjppt{m} }  = \sum_{(v_1, \dots, v_{m + 1}) \in \EW(n, m)} \prod_{j = 1}^{m} \big(\Adj_{v_j, v_{j+1}} - p \big) .
\end{align}
Now it is easy to see that the right hand side of the above can be represented as a sum over signed weights of suitable Eulerian multigraphs.
To this end, we associate a multigraph $H_{\pmb{v}}$ to a given walk $\pmb{v} = (v_1, \dots, v_{m+1}) \in \EW(n, m)$, defined as the multigraph on the vertex set $\{v_1, \dots, v_{m + 1}\}$ (duplications of vertices are removed), where we add an edge $\{v_j, v_{j+1}\}$ for every $j \in [m]$ with $v_j \neq v_{j+1}$ to the multiset $E(H_{\pmb{v}})$ (we preserve multiplicities but we do not allow self loops). Then, it is not hard to see that
$$
    \tr{\Adjppt{m} }  \le \sum_{\pmb{v} \in \EW(n, m)} \absolute{\SW(H_{\pmb{v}})},
$$ where we define the signed weight of a multigraph $H$ as $\SW(H) \coloneqq \prod_{e \in E(H)} (\Adj_e - p)$ such that multiplicites of edges are respected in the product.

Now, to bound the right-hand side of the above, we employ a strategy similar to the one used in~\cite{Li_Schramm_2024} for the Gaussian mixture block model. Namely, we bound the expected signed weight of a given Eulerian graph $H$ with $\ell \le m$ edges and no self-loops by \say{contracting} all cycles and chains in $H$ to reduce $H$ to a \say{core} $\Hardcore$ allowing us to treat edges inside and outside of $\Hardcore$ separately. Crucially, the core $\Hardcore$ is still a Eulerian multigraph but it now has minimum degree $4$, limiting the number of vertices it can contain to $m/2$. Moreover, conditional on the positions of vertices in $\Hardcore$, any removed cycle or contracted chain appears independently and its contribution to $\Expected{\SW(H)}$ can be bounded using \Cref{thm:fouriercoefficients}. The core $\Hardcore$ itself only has a limited contribution because the number of vertices in $\Hardcore$ is limited by $m/2$, which in turn limits the number of possible embeddings of a such $\Hardcore$ in $K_n$. On a high level, this already explains the upper bound from \Cref{thm:spectralstuff} and the phase transition occurring at $d \approx n$ (assuming that $p$ is a constant): 

\begin{itemize}
    \item If $d \gg n$, then the sum \eqref{eq:traceMethod} is dominated by those $H$ that contain as few contractable edges on chains or cycles of length $\ge 3$ as possible. Given a fixed number $m$ of edges in $H$, this property implies that $H$ contains few vertices, i.e., that our sum is dominated by \emph{dense} $H$. Precisely, it is not hard to see that a Eulerian multigraph $H$ with $m$ edges and without chains or cycles of length $\ge 3$ that can be contracted contains at most $1 + m/2$ vertices; this number is achieved exactly if $H$ is the union of $m/2$ cycles of length $2$ arranged in a tree-like fashion. Since there are $\bigO{n^{1 + m/2}}$ embeddings of a such $H$ in $K_n$ each of which roughly makes at most a constant contribution to the sum, we get that $$\Expected{ \tr{\Adjppt{m} }}^{1/m} \approx \left(n \cdot n^{m/2}\right)^{1/m} = n^{1/m}\sqrt{n},$$ which tends to $\sqrt{n}$ if $m$ is sufficiently large compared to $n$. The above further suggests that there are $\approx n$ eigenvalues of order $\sqrt{n}$, which is the same as we would expect in a $\Gnp$. 
    \item If $d \ll n$, then the opposite occurs and the contribution of a single $H$ to the sum becomes larger the more contractable chains and cycles it contains. This means that the sum is essentially dominated by sparse $H$, i.e., those which contain the same number of vertices and edges. Hence, the largest contribution to the sum in \eqref{eq:traceMethod} is achieved simply by cycles with $m$ edges.
    There are $\le n^{m}$ ways of embedding a such cycle into $K_n$ each of which has a contribution of $\approx (1 / \sqrt{d})^{m -2}$ to the sum such that
    $$\Expected{ \tr{\Adjppt{m} }}^{1/m} \approx \left(n^{m} \left(\frac{1}{\sqrt{d}}\right)^{m -2} \right)^{1/m} = d^{1/m}\left(\frac{n}{\sqrt{d}}\right),$$ which is $n^{o(1)}(n / \sqrt{d})$ when letting $m$ tend to $\infty$. This further suggests that now there are $\approx d$ large eigenvalues each of order $n/\sqrt{d}$.
\end{itemize} 

We remark that the factor of $n^{o(1)}$ in our upper bound comes from the fact that our expansions in their current form only allow us to handle the case of constant but arbitrarily large $m$. In order to improve this error to a polylogarithmic factor, we would need $m = \Omega(n)$. While we stronly believe our techniques to be adaptable to the regime $m = \Theta(n)$ as well, we decided to limit our attention to constant $m$ here for the sake of technical simplicity.

However, we are able to generalize the described results to the case of general $p$, namely to any $\frac{1}{n} \le p \le 1 - \varepsilon$. 
In this case, a similar phenomenon occurs but we have to be more careful with the contribution of edges in the core $\Hardcore$ and edges on degenerate cycles (i.e. cycles with only two edges). Our bounds on $\Expected{ \tr{\Adjppt{m} }}^{1/m}$ further allow us to make a statement about the number of large eigenvalues in a $\RGG$ since we are able to show that \begin{align}\label{eq:goodtrace}
\Expected{ \tr{\Adjppt{m} }} \le \begin{cases}
    \bigOtilde{ d\left(\frac{np}{\sqrt{d}}\right)^m } &\text{if } d \ll np \log^2(n)\\
    \bigOtilde{ n\sqrt{np}^m } &\text{if } d \gg np \log^2(n)
\end{cases}
\end{align} for every fixed even $m$. If there were $\Omega(d^{1+\varepsilon})$ eigenvalues of order $\Omega(np/\sqrt{d})$ while $d \ll np$, this would contradict the above.

While our proof is similar to the idea used in \cite{Li_Schramm_2024}, we need to be significantly more careful in estimating the contribution of different Eulerian multigraphs $H$ to our sum \eqref{eq:traceMethod} in order to arrive at the statement in \eqref{eq:goodtrace}. In \cite{Li_Schramm_2024} a weaker bound suffices since the authors are only interested in an upper bound on the spectral norm of the considered matrix. To prove our bounds on the number of large eigenvalues, we require the stronger statement from \eqref{eq:goodtrace}, which we achieve by proving that the sum in \eqref{eq:traceMethod} is indeed dominated by cycles of length $m$ while all other Eulerian multigraphs $H$ only contribute a lower-order term. To do this, we need a bound on the contribution of non-contracted edges which is stronger than the one used in \cite{Li_Schramm_2024}, and we explicitly distinguish between those $H$ which have a trivial and non-trivial core, respectively, where a core is trivial if it only consists of a single, isolated vertex.

\subsection{Application to Other Metric Spaces}\label{sec:applicability}

We note that our technique is not limited to the torus, but can be applied to more general metric spaces for which the cumulants associated to analogous random vectors have similar properties as in our case. For example, they almost instantly carry over to other product spaces, provided only that the CGFs in each dimension continue to meet Cramer's condition \cref{def:cramer}) and the space remains homogeneous. For example, this is the case when taking the cartesian product of $d$ spheres $\mathbb{S}^{k}$ (each of constant dimension $k$) instead of the circle $\mathbb{S}^1$, which yields the torus.

However, the presented ideas are not limited to such product spaces and we expect our techniques to carry over more generally if (1) the space is homogeneous, and (2) the magnitude of cumulants decays exponentially in their order $s$ while (3) the total cumulant generating function (the CGF after taking the sum over all dimensions) satisfies an analog of Cramér's condition (cf. \cref{def:cramer}) in the sense that its absolute value decays exponentially in $d$ for all $\mathbf{t}$ with $\|\mathbf{t}\| \ge \varepsilon \sqrt{d}$.
For example, we expect these conditions to be met when replacing the torus by a $d$-dimensional sphere equipped with $L_2$ norm, and thus believe that our method can be used to recover the known results for spherical RGGs.

Moreover, we expect our techniques to give meaningful results even for non-homogeneous spaces like a hypercube, Gaussian space or when using non-uniform distributions on the torus or the sphere. In such cases, as mentioned earlier, convergence to Erdős--Rényi in the high-dimensional limit is not always given, so we cannot always expect analogous results as presented here for the torus. This is mathematically reflected in the fact that certain mixed cumulants of low order are no longer zero and will therefore introduce additional error terms. However, we expect that suitable modifications of our expansions that explicitly take such additional terms into account can be used to study the properties of RGGs on such spaces as well. In particular, we would expect that we can recover known results for Gaussian RGGs since these only ``mildly'' violate homogeneity, as was already noted in \cite{Bangachev_Bresler_2024_fourier}.

% \newpage

% \tableofcontents

\section{Preliminaries}\label{sec:prelims}
We start with some preliminaries on characteristic functions, cumulants and Taylor expansions. 

\subsection{Characteristic Functions, Cumulants, Vector Calculus}

For two distributions $\nu, \mu$, we denote their total variation distance by $\TV{\nu}{\mu}$. We write vectors as bold, lower-case letters. For a column vector $\mathbf{z} \in \mathbb{R}^k$, we denote by $\mathbf{z}(i)$ its $i$-th entry, and by $\mathbf{z}^\top$ its transpose, so that $\mathbf{x}^\top\mathbf{y}$ denotes the inner product. We denote the set of $k$--tuples by $\mathbb{N}^{\times k}$ and -- given a $k$--tuple $s = (s_1, s_2,\ldots,s_k)$ -- we define $|s| = s_1 +s_2+\ldots+s_k$ and $s! = s_1!s_2!\cdots s_k!$.  
Using these conventions, we define the characteristic function (CF) of a random vector $\mathbf{z}\in \mathbb{R}^k$ as \begin{align*}
    C_{\mathbf{z}}(\mathbf{t}) \coloneqq \Expected{ \exp\left(i(\mathbf{t}^\top \mathbf{z})\right) } \text{ for any } \mathbf{t} \in \mathbb{R}^k
\end{align*} where $i = \sqrt{-1}$. Taking the logarithm, we obtain the cumulant generating function (CGF, also known as the characteristic exponent) that can be expanded as a power series to yield the expression  \begin{align}\label{eq:cumulants}
    K_{\mathbf{z}}((t_1, \ldots, t_k)^\top) = \log\left( C_{\mathbf{z}}((t_1, \ldots, t_k)^\top) \right) = \sum_{\substack{s = (s_1, \ldots, s_k) \in \mathbb{N}^{\times k}}} \kappa_{s}(\mathbf{z}) \frac{(it_1)^{s_1}(it_2)^{s_2}\cdots (it_k)^{s_k}}{s_1!s_2!\cdots s_k!}.
\end{align} where the sum runs over all $k$--tuples and the  $\kappa_{s}(\mathbf{z})$ are the coefficients of the power series expansion of $K_{\mathbf{z}}$, which we call \emph{cumulants}. The cumulants $\kappa_{s}(\mathbf{z})$ for which $s$ contains only exactly one non-zero entry are called \emph{pure}, all other cumulants are \emph{mixed}. The \emph{order} of $\kappa_s(\mathbf{z})$ is $|s|$. Note further that we can express cumulants as an $s$-th order partial derivative of $K_{\mathbf{z}}$ as \begin{align*}
    \kappa_{s}(\mathbf{z}) &= (-i)^{|s|} \left. \frac{\partial^{|s|}}{\partial t_1^{s_1} \partial t_2^{s_2}\cdots\partial t_k^{s_k}} \log\left( C_{\mathbf{z}}((t_1, \ldots, t_k)^\top) \right) \right|_{t_1, t_2, \ldots, t_k = 0}.
\end{align*} For any collection of random variables $X_1, X_2, \ldots, X_r$, it is convenient to define the \emph{ordinary joint cumulant} \begin{align*}
    \kappa(X_1, X_2, \ldots, X_r) = (-i)^{r} \left. \frac{\partial^{r}}{\partial t_1 \partial t_2\cdots\partial t_k} \log\left( \Expected{\exp\left( i\sum_{j=1}^rt_jX_j\right)} \right) \right|_{t_1, t_2, \ldots, t_k = 0}
\end{align*} which then allows us to equivalently define \begin{align*}
    \kappa_{s}(\mathbf{z}) = \kappa(\underbrace{\mathbf{z}(1), \ldots, \mathbf{z}(1)}_{s_1 \text{ times}}, \underbrace{\mathbf{z}(2), \ldots, \mathbf{z}(2)}_{s_2 \text{ times}}, \ldots, \underbrace{\mathbf{z}(k), \ldots, \mathbf{z}(k)}_{s_k \text{ times}}).
\end{align*} That is, we can define the joint cumulants of $\mathbf{z}$ by repeating the $i$-th entry of $\mathbf{z}$ for $s_i$ times as an argument of $\kappa$. In particular, this definition allows us to relate cumulants to mixed moments by invoking a Möbius inversion over the partition lattice \begin{align}\label{eq:cumulantmoebiusinversion}
    \kappa(X_1, X_2, \ldots, X_k) = \sum_\pi (|\pi|-1)!(-1)^{|\pi|-1}\prod_{B \in \pi}\Expected{\prod_{j\in B} X_j}
\end{align} where the sum goes over all partitions $\pi$ of $[k] = \{1, \ldots, k\}$ and $|\pi|$ denotes the number of blocks of said partition. We refer the interested reader to \cite[Chapter 3]{Peccati_Taqqu_2011} for more details and a rigorous proof of the above. Cumulants further have many nice properties, the most important of which we capture in the following propositions.
\begin{proposition}[Additivity of cumulants]\label{prop:additivity}
    Consider a random vector $\mathbf{z} = \sum_{i=1}^d \mathbf{z}_i$ where the $\mathbf{z}_i$ are i.i.d. Then for all $s \in \mathbb{N}^{\times k}$,\begin{align*}
        \kappa_{s}(\mathbf{z}) = \sum_{i=1}^d \kappa_{s}(\mathbf{z}_i).
    \end{align*} 
\end{proposition}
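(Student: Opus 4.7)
The plan is to reduce the statement to the well-known fact that characteristic functions factorize over sums of independent random vectors, and then invoke the partial-derivative characterization of cumulants recorded in the preliminaries.

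First I would note that because the $\mathbf{z}_i$ are independent, one can factor the expectation in the definition of $C_{\mathbf{z}}$ to obtain $C_{\mathbf{z}}(\mathbf{t}) = \prod_{i=1}^d C_{\mathbf{z}_i}(\mathbf{t})$. Taking logarithms, which is valid in a sufficiently small neighborhood of $\mathbf{t} = \mathbf{0}$ since each $C_{\mathbf{z}_i}$ is continuous with $C_{\mathbf{z}_i}(\mathbf{0}) = 1$ and hence nonzero near the origin, then yields $K_{\mathbf{z}}(\mathbf{t}) = \sum_{i=1}^d K_{\mathbf{z}_i}(\mathbf{t})$ as an identity of analytic functions near the origin.

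Second, I would apply the partial-derivative formula $\kappa_s(\mathbf{z}) = (-i)^{|s|} \partial^{|s|} K_{\mathbf{z}}(\mathbf{t}) / (\partial t_1^{s_1} \cdots \partial t_k^{s_k})|_{\mathbf{t}=\mathbf{0}}$ stated in the preliminaries. By linearity of mixed partial derivatives, differentiating the decomposition $K_{\mathbf{z}} = \sum_i K_{\mathbf{z}_i}$ term-by-term immediately gives $\kappa_s(\mathbf{z}) = \sum_{i=1}^d \kappa_s(\mathbf{z}_i)$. Equivalently, one can match the coefficients of the monomial $(it_1)^{s_1}\cdots(it_k)^{s_k}/s!$ in the power-series representation \eqref{eq:cumulants} of both sides of $K_{\mathbf{z}} = \sum_i K_{\mathbf{z}_i}$ and invoke uniqueness of power series to reach the same conclusion.

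There is no substantive obstacle here; the only minor technicality is verifying that $K_{\mathbf{z}}$ is well-defined and analytic in an open neighborhood of the origin so that its partial derivatives at $\mathbf{0}$ make sense, which is standard from continuity of CFs. It is worth observing that the argument never uses identical distribution — only independence of the summands is needed — so the identity in fact holds for arbitrary independent (not necessarily i.i.d.) $\mathbf{z}_i$, which is the form in which additivity is implicitly invoked later when reasoning about the sum $\mathbf{z} = \sum_{i=1}^d \mathbf{z}_i/\sqrt{d}$ of independent but not identically distributed vectors.
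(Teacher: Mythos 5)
Your proof is correct and is exactly the standard derivation the paper alludes to when it states that additivity ``can easily be derived from the definition of the CGF'': factorize the characteristic function over independent summands, take logarithms near the origin, and read off the cumulants via the partial-derivative (or coefficient-matching) characterization given in the preliminaries. Your closing remark is also apt --- only independence is used, and the paper does indeed later invoke additivity for independent but non-identically-distributed summands (e.g.\ for chains conditional on their endpoints).
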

\begin{proposition}[Homogeneity of cumulants]\label{prop:homogeneity}
    Consider a random vector $\mathbf{z}$. Then for all $s \in \mathbb{N}^{\times k}$ and all $c\in \mathbb{R}$ \begin{align*}
        \kappa_{s}(c\cdot\mathbf{z}) = c^{|s|} \kappa_{s}(\mathbf{z}).
    \end{align*} 
\end{proposition}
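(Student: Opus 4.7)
The plan is to derive Proposition~\ref{prop:homogeneity} directly from the definition of cumulants via the characteristic function, following exactly the same strategy that would yield Proposition~\ref{prop:additivity}: transfer the scaling from $\mathbf{z}$ to the Fourier variable $\mathbf{t}$, expand as a power series, and match coefficients.

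Concretely, I would first compute the characteristic function of $c\mathbf{z}$. By definition,
\[
C_{c\mathbf{z}}(\mathbf{t}) = \Expected{ \exp(i \mathbf{t}^\top (c\mathbf{z})) } = \Expected{ \exp(i (c\mathbf{t})^\top \mathbf{z}) } = C_{\mathbf{z}}(c\mathbf{t}).
\]
Taking logarithms gives $K_{c\mathbf{z}}(\mathbf{t}) = K_{\mathbf{z}}(c\mathbf{t})$, which is the key identity. Now I would invoke the power series representation \eqref{eq:cumulants} on both sides. The left-hand side, by definition, equals $\sum_s \kappa_s(c\mathbf{z}) \frac{(it_1)^{s_1}\cdots(it_k)^{s_k}}{s!}$, while the right-hand side, obtained by substituting $c\mathbf{t}$ for $\mathbf{t}$ in the expansion of $K_{\mathbf{z}}$, equals
\[
\sum_s \kappa_s(\mathbf{z}) \frac{(ict_1)^{s_1}\cdots(ict_k)^{s_k}}{s!} = \sum_s c^{|s|}\kappa_s(\mathbf{z}) \frac{(it_1)^{s_1}\cdots(it_k)^{s_k}}{s!},
\]
since $(ict_j)^{s_j} = c^{s_j}(it_j)^{s_j}$ and $\sum_j s_j = |s|$.

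Matching coefficients of the monomial $(it_1)^{s_1}\cdots(it_k)^{s_k}/s!$ in the two series yields $\kappa_s(c\mathbf{z}) = c^{|s|}\kappa_s(\mathbf{z})$, as claimed. The only caveat is that the power series representation of $K_{\mathbf{z}}$ is a formal/asymptotic expansion valid in a neighborhood of the origin (assuming the relevant cumulants exist), so the coefficient-matching step is justified by comparing partial derivatives at $\mathbf{t} = 0$; equivalently, one can apply the partial-derivative characterization of $\kappa_s$ given just above \eqref{eq:cumulants} to $K_{c\mathbf{z}}(\mathbf{t}) = K_{\mathbf{z}}(c\mathbf{t})$ and use the chain rule, which produces the factor $c^{s_1 + \cdots + s_k} = c^{|s|}$ directly. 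There is no serious obstacle here; the statement is essentially a one-line consequence of the scaling property of the characteristic function.
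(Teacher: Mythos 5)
Your proof is correct and follows exactly the route the paper intends: the paper gives no explicit proof, remarking only that the property "can easily be derived from the definition of the CGF above," and your argument via $C_{c\mathbf{z}}(\mathbf{t}) = C_{\mathbf{z}}(c\mathbf{t})$, hence $K_{c\mathbf{z}}(\mathbf{t}) = K_{\mathbf{z}}(c\mathbf{t})$, followed by coefficient matching (or the chain rule applied to the partial-derivative characterization of $\kappa_s$) is precisely that derivation.
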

\noindent Both properties are standard and can easily be derived from the definition of the CGF above. However, working with cumulants in this form can be tedious. To remedy this we also use a \say{vectorized} version of (\ref{eq:cumulants}) which relies on the Kronecker product $\mathbf{x} \otimes \mathbf{y}$ between two vectors $\mathbf{x}, \mathbf{y} \in \mathbb{R}^k$, defined as the $k^2$ dimensional vector \begin{align*}
    \mathbf{x} \otimes \mathbf{y} = \begin{pmatrix}
        \mathbf{x}(1) \mathbf{y} \\
        \mathbf{x}(2) \mathbf{y} \\
        \cdots\\
        \mathbf{x}(k) \mathbf{y}
    \end{pmatrix} \in \mathbb{R}^{k^2}.
\end{align*} We further denote by $\mathbf{x}^{\otimes j}$ the $j$-times repeated Kronecker product with itself, i.e., 
\begin{align*}
    \mathbf{x}^{\otimes j} = \underbrace{\mathbf{x} \otimes \mathbf{x} \otimes \ldots \otimes \mathbf{x}}_{j \text{ times}}.
\end{align*} 
Then we can put all cumulants of order $j$ into a vector $\boldsymbol{\kappa}_j \in \mathbb{R}^{k^j}$ and express $K_{\mathbf{z}}$ as\footnote{When putting cumulants of order $j$ into a vector, we further have to multiply them by a suitable constant to compensate the quotient of $j!$ and $s_1!s_2!\cdots s_k!$ for $s = (s_1, \ldots, s_k)$ with $|s| = j$.}
\begin{align*}
    K_{\mathbf{z}}(\mathbf{t}) = \sum_{j=1}^\infty \frac{1}{j!} \boldsymbol{\kappa}_j^\top(i\mathbf{t})^{\otimes j}.
\end{align*} Occasionally, we shall refer to $K_{\mathbf{z}}(\mathbf{t})$ as $K(\mathbf{t}, \mathbf{z})$ instead, for the sake of easier readability. These two notations refer to the same function. We further use the following version of Taylor's theorem. 

\begin{theorem}[Proposition 6 in \cite{Kundhi_Rilstone_2020}]\label{thm:taylor}
    For $f : \mathbb{R}^k \rightarrow \mathbb{C}$, $J$ times continuously differentiable, we have \begin{align*}
        f(\mathbf{x}) = r_J(\mathbf{x}) + \sum_{j = 0}^J \frac{1}{j!} f^{(j)}(\mathbf{0})^\top\mathbf{x}^{\otimes j}
    \end{align*}
    where 
    $
        r_J(\mathbf{x}) = \frac{1}{J!} \left(f^{(J)}(c\mathbf{x}) - f^{(J)}(\mathbf{0})\right)^\top\mathbf{x}^{\otimes J}
    $ 
    and $c$ is some value in $[0,1]$ that can depend on $\mathbf{x}$. Here, $f^{(j)}$ denotes the \say{$j$-th derivative of $f$}, i.e., the vector containing all order $j$ partial derivatives of $f$, or alternatively $
        f^{(j)} = \boldsymbol{\nabla}^{\otimes j} f(x).
    $
\end{theorem}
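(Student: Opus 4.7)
The plan is to reduce the multivariate statement to the classical single-variable Taylor theorem by restricting $f$ to the line segment from $\mathbf{0}$ to $\mathbf{x}$. Specifically, fix $\mathbf{x} \in \mathbb{R}^k$ and define $g \colon [0,1] \to \mathbb{C}$ by $g(t) \coloneqq f(t\mathbf{x})$. Since $f$ is $J$-times continuously differentiable on $\mathbb{R}^k$, the function $g$ is $J$-times continuously differentiable on $[0,1]$. The classical one-dimensional Taylor theorem with Lagrange remainder then gives, for some $c \in [0,1]$,
\begin{align*}
    g(1) = \sum_{j=0}^{J-1} \frac{1}{j!} g^{(j)}(0) + \frac{1}{J!} g^{(J)}(c) = \sum_{j=0}^{J} \frac{1}{j!} g^{(j)}(0) + \frac{1}{J!}\bigl(g^{(J)}(c) - g^{(J)}(0)\bigr),
\end{align*}
where in the second equality I simply added and subtracted the $j=J$ term in order to match the shape of the claimed expansion.

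The main work is then to verify that under the conventions $f^{(j)} = \boldsymbol{\nabla}^{\otimes j} f$ and the Kronecker product, one has the chain-rule identity
\begin{align*}
    g^{(j)}(t) = f^{(j)}(t\mathbf{x})^{\top} \mathbf{x}^{\otimes j} \qquad \text{for all } j \in \{0, 1, \ldots, J\}.
\end{align*}
I would prove this by induction on $j$. The case $j=0$ is just $g(t) = f(t\mathbf{x})$. For the induction step, differentiating $g^{(j)}(t) = \sum_{i_1, \ldots, i_j} \bigl(\partial_{i_1} \cdots \partial_{i_j} f\bigr)(t\mathbf{x}) \, \mathbf{x}(i_1)\cdots \mathbf{x}(i_j)$ with respect to $t$ pulls down one more factor $\mathbf{x}(i_{j+1})$ coupled with $\partial_{i_{j+1}} f$, and the consistent indexing of $\boldsymbol{\nabla}^{\otimes (j+1)}$ and $\mathbf{x}^{\otimes (j+1)}$ (both obtained by appending an index in the same order) ensures the entries line up to give $f^{(j+1)}(t\mathbf{x})^{\top} \mathbf{x}^{\otimes (j+1)}$. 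Substituting this identity at $t=0$ and $t=c$ into the single-variable expansion of $g(1) = f(\mathbf{x})$ yields precisely the claimed formula with
\begin{align*}
    r_J(\mathbf{x}) = \frac{1}{J!}\bigl(f^{(J)}(c\mathbf{x}) - f^{(J)}(\mathbf{0})\bigr)^{\top}\mathbf{x}^{\otimes J}.
\end{align*}

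The only subtle point, and hence the step I would take the most care with, is the bookkeeping in the induction for the chain-rule identity: one must fix a convention for how multi-indices are flattened into the vectors $\boldsymbol{\nabla}^{\otimes j} f$ and $\mathbf{x}^{\otimes j}$, and then check that appending a new index in the same position on both sides preserves the correspondence in the inner product. Once this combinatorial alignment is verified, everything else is a direct application of the scalar Taylor theorem, and there is no analytic obstacle because continuity of $f^{(J)}$ makes $g^{(J)}$ continuous on $[0,1]$, which is exactly the hypothesis required by the Lagrange form.
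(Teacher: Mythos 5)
The paper never proves this statement---it is imported verbatim as Proposition~6 of the cited reference---so there is no internal argument to compare against; your reduction to one variable along $g(t) \coloneqq f(t\mathbf{x})$ and the induction giving $g^{(j)}(t) = f^{(j)}(t\mathbf{x})^\top \mathbf{x}^{\otimes j}$ are the standard route, and that bookkeeping is fine. The genuine gap is the invocation of ``the classical one-dimensional Taylor theorem with Lagrange remainder'' for a function $g : [0,1] \to \mathbb{C}$. The Lagrange (single intermediate point) form of the remainder is a consequence of the mean value theorem, which fails for complex-valued functions of a real variable, and with it the existence of a single $c$. Concretely, take $k = 1$, $J = 1$, $f(x) = e^{ix}$: the claimed identity becomes $e^{ix} = 1 + i x e^{icx}$, and at $x = 2\pi$ this forces $e^{2\pi i c} = 0$, which is impossible. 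So the step ``for some $c \in [0,1]$'' is not available as you use it, and in fact the theorem as literally stated (with one $c$) is false for $\mathbb{C}$-valued $f$.

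The repair is routine but changes the form of the remainder. Either apply the Lagrange form separately to $\operatorname{Re} g$ and $\operatorname{Im} g$, which produces two intermediate points $c_{\mathrm{Re}}, c_{\mathrm{Im}}$ rather than one, or use the integral (Cauchy) form,
\begin{align*}
    r_J(\mathbf{x}) = \int_0^1 \frac{(1-t)^{J-1}}{(J-1)!}\left(f^{(J)}(t\mathbf{x}) - f^{(J)}(\mathbf{0})\right)^{\top}\mathbf{x}^{\otimes J}\,\mathrm{d}t,
\end{align*}
which, since $\int_0^1 \frac{(1-t)^{J-1}}{(J-1)!}\,\mathrm{d}t = \frac{1}{J!}$, exhibits $r_J$ as $\frac{1}{J!}$ times a weighted average over $t \in [0,1]$ of the bracketed quantity. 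Neither variant yields a single $c$, but every downstream use of \Cref{thm:taylor} in the paper (the bounds on $\rpure, \rmix$ in \Cref{lem:rbound} via \Cref{lem:cgfderivative}, and the bounds on $r_3, r_4$) only requires an estimate that is uniform over $c \in [0,1]$, so either fix is sufficient for all applications; you should state and prove the remainder in one of these forms rather than with a single intermediate point.
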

\noindent It is also convenient to note the following proposition about the Kronecker product.
\begin{observation}\label{prop:kroneckernorm}
    For every $\mathbf{t} \in \mathbb{R}^k$, we have $\|(i\mathbf{t})^{\otimes j} \| \le (\mathbf{t}^\top\mathbf{t})^{\frac{j}{2}}$.
\end{observation}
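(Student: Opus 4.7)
The statement is really an equality, so the inequality is loose. The plan is to prove $\|(i\mathbf{t})^{\otimes j}\| = (\mathbf{t}^\top \mathbf{t})^{j/2}$ directly by combining two standard facts about the Euclidean norm on complex-valued vectors.

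First, I would compute $\|i\mathbf{t}\|$. Writing $\mathbf{t} = (t_1, \ldots, t_k)^\top$, the complex modulus satisfies $|it_\ell|^2 = t_\ell^2$, so
\begin{align*}
    \|i\mathbf{t}\|^2 = \sum_{\ell = 1}^k |i t_\ell|^2 = \sum_{\ell = 1}^k t_\ell^2 = \mathbf{t}^\top \mathbf{t},
\end{align*}
i.e., $\|i\mathbf{t}\| = (\mathbf{t}^\top \mathbf{t})^{1/2}$.

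Second, I would invoke the fact that the Euclidean norm is multiplicative under the Kronecker product: for vectors $\mathbf{a} \in \mathbb{C}^m$ and $\mathbf{b} \in \mathbb{C}^n$, the entries of $\mathbf{a}\otimes \mathbf{b}$ are the products $a_r b_s$, and therefore
\begin{align*}
    \|\mathbf{a} \otimes \mathbf{b}\|^2 = \sum_{r,s} |a_r|^2 |b_s|^2 = \Big(\sum_r |a_r|^2\Big)\Big(\sum_s |b_s|^2\Big) = \|\mathbf{a}\|^2 \|\mathbf{b}\|^2.
\end{align*}
Applying this identity inductively $(j-1)$ times to $(i\mathbf{t})^{\otimes j} = (i\mathbf{t}) \otimes (i\mathbf{t})^{\otimes (j-1)}$ yields $\|(i\mathbf{t})^{\otimes j}\| = \|i\mathbf{t}\|^j = (\mathbf{t}^\top \mathbf{t})^{j/2}$, which gives the claim (with equality, not just $\le$).

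There is no real obstacle here; the only care needed is to treat the norm as the Euclidean/Frobenius norm on complex coordinates (so that the factor $i$ contributes trivially via $|i| = 1$) and to recognize that the Kronecker product's multiplicativity in norm is just a rearrangement of a double sum into a product of sums. Because this is a one-line calculation, I would expect the authors to simply state it as an observation used to bound remainder terms involving $(i\mathbf{t})^{\otimes j}$ by the scalar quantity $(\mathbf{t}^\top\mathbf{t})^{j/2}$.
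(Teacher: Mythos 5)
Your proof is correct; the paper states this observation without proof, and your argument---multiplicativity of the Euclidean norm under the Kronecker product together with $|i|=1$---is exactly the standard verification the authors are implicitly relying on. You are also right that it holds with equality, so the stated inequality is simply a (harmless) relaxation.
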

\noindent Another important theorem we rely on is the inversion theorem, which states that given the CF of a random vector with continuous density we can reverse the Fourier transform to recover its density.
\begin{theorem}[Inversion Theorem, \cite{Cramer_1999}]\label{thm:inversion}
Let $\mathbf{z} \in \mathbb{R}^k$ be a random vector with integrable characteristic function, i.e., assume that \begin{align*}
    \int_{\mathbb{R}^k}|C_{\mathbf{z}}|(\mathbf{t}) \dd \mathbf{t} < \infty.
\end{align*} Then the density $f$ of $\mathbf{z}$ exists and
\begin{align}\label{eq:inversion}
    f(\mathbf{x}) = \frac{1}{(2\pi)^k} \int_{\mathbb{R}^k} e^{-i \mathbf{x}^\top \mathbf{t}} C_{\mathbf{z}}(\mathbf{t}) \dd \mathbf{t} = \frac{1}{(2\pi)^k} \int_{\mathbb{R}^k} e^{-i \mathbf{x}^\top \mathbf{t}} \exp\left( K(\mathbf{t})\right) \dd \mathbf{t}.
\end{align}    
\end{theorem}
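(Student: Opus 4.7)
The plan is to prove this classical inversion theorem via Gaussian smoothing, reducing the general case to one in which the Fourier inversion formula is already known to hold. The key idea is that while we have no direct handle on the density of $\mathbf{z}$ itself, the convolution of $\mathbf{z}$ with a small isotropic Gaussian has a characteristic function that decays rapidly, for which inversion is standard, and the integrability hypothesis $\int |C_{\mathbf{z}}(\mathbf{t})| \, \dd \mathbf{t} < \infty$ is exactly what allows us to pass to the limit. This mirrors the Gaussian-noise regularization trick already used elsewhere in the paper (cf.\ \Cref{sec:technicalchallenges}).

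More concretely, first I would introduce, for each $\sigma > 0$, an independent standard Gaussian $\bg \sim \mathcal{N}(\mathbf{0}, \bI_k)$ and set $\mathbf{z}_\sigma \coloneqq \mathbf{z} + \sigma \bg$. By independence, $C_{\mathbf{z}_\sigma}(\mathbf{t}) = C_{\mathbf{z}}(\mathbf{t}) \exp\bigl(-\sigma^2 \mathbf{t}^\top\mathbf{t}/2\bigr)$, which is both bounded and integrable and in fact belongs to the Schwartz class after multiplying by the Gaussian factor. For such functions the Fourier inversion formula is classical (e.g.\ via Plancherel or direct computation on Gaussian kernels), so $\mathbf{z}_\sigma$ has a continuous density given by
\begin{equation*}
    f_\sigma(\mathbf{x}) \;=\; \frac{1}{(2\pi)^k} \int_{\mathbb{R}^k} e^{-i \mathbf{x}^\top \mathbf{t}} \, C_{\mathbf{z}}(\mathbf{t})\, \exp\!\bigl(-\tfrac{\sigma^2}{2} \mathbf{t}^\top\mathbf{t}\bigr) \, \dd \mathbf{t}.
\end{equation*}
Next I would define $g(\mathbf{x}) \coloneqq (2\pi)^{-k} \int e^{-i\mathbf{x}^\top\mathbf{t}} C_{\mathbf{z}}(\mathbf{t}) \, \dd \mathbf{t}$, which is the target formula. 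Since $|e^{-i\mathbf{x}^\top\mathbf{t}} C_{\mathbf{z}}(\mathbf{t}) \exp(-\sigma^2\mathbf{t}^\top\mathbf{t}/2)| \le |C_{\mathbf{z}}(\mathbf{t})|$ uniformly in $\sigma$ and $\mathbf{x}$, and $|C_{\mathbf{z}}|$ is integrable by hypothesis, the dominated convergence theorem yields $f_\sigma(\mathbf{x}) \to g(\mathbf{x})$ pointwise (in fact uniformly in $\mathbf{x}$) as $\sigma \downarrow 0$. The same dominating function shows $g$ is bounded and continuous.

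The final step, and the one requiring the most care, is to identify $g$ with the density of $\mathbf{z}$. The natural route is to show that for every bounded continuous test function $h$ with compact support,
\begin{equation*}
    \int_{\mathbb{R}^k} h(\mathbf{x})\, g(\mathbf{x}) \, \dd \mathbf{x} \;=\; \Expected{h(\mathbf{z})}.
\end{equation*}
This follows by writing $\Expected{h(\mathbf{z}_\sigma)} = \int h(\mathbf{x}) f_\sigma(\mathbf{x}) \, \dd \mathbf{x}$, observing that $\mathbf{z}_\sigma \Rightarrow \mathbf{z}$ in distribution as $\sigma \downarrow 0$ so the left-hand side tends to $\Expected{h(\mathbf{z})}$, while the right-hand side tends to $\int h(\mathbf{x}) g(\mathbf{x}) \, \dd \mathbf{x}$ by dominated convergence (here we use integrability of $h g$, which holds since $h$ has compact support and $g$ is bounded). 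Since bounded continuous compactly supported functions are measure-determining, this identifies the law of $\mathbf{z}$ with the measure having density $g$, so $\mathbf{z}$ has density $f = g$, which is the desired formula.

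The main obstacle I anticipate is justifying the interchange of limit and integral when sending $\sigma \downarrow 0$ on both sides of $\Expected{h(\mathbf{z}_\sigma)} = \int h(\mathbf{x}) f_\sigma(\mathbf{x}) \, \dd \mathbf{x}$ simultaneously, and then upgrading the resulting weak statement to the pointwise identity $f \equiv g$. The integrability of $|C_{\mathbf{z}}|$ supplies a uniform-in-$\sigma$ dominating function that makes dominated convergence applicable, and the continuity of $g$ (itself a consequence of the same dominated convergence argument applied to the Fourier integral) then makes the identification of densities unambiguous at every point.
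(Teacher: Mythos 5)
The paper does not prove this statement at all: it is imported verbatim as a classical result and attributed to Cram\'er's book, so there is no internal proof to compare against. Your Gaussian-smoothing argument is the standard textbook derivation and is correct. The structure is sound: the inversion formula for the mollified variable $\mathbf{z}_\sigma$, uniform domination of $f_\sigma$ by the integrable function $(2\pi)^{-k}|C_{\mathbf{z}}|$, pointwise (indeed uniform) convergence $f_\sigma \to g$, and identification of $g$ with the law of $\mathbf{z}$ by testing against $C_c$ functions together with continuity and nonnegativity of $g$. Two small points deserve care in a full write-up. First, $C_{\mathbf{z}}(\mathbf{t})\exp(-\sigma^2\mathbf{t}^\top\mathbf{t}/2)$ is generally \emph{not} Schwartz, since $C_{\mathbf{z}}$ need not be differentiable; the non-circular way to get the displayed formula for $f_\sigma$ is the route you mention in passing, namely writing $f_\sigma(\mathbf{x}) = \Expected{\phi_\sigma(\mathbf{x}-\mathbf{z})}$, substituting the explicit Fourier representation of the Gaussian kernel $\phi_\sigma$, and swapping expectation and integral by Fubini (the integrand is absolutely integrable against the product of $\mathcal{L}(\mathbf{z})$ and Lebesgue measure). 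Invoking ``classical inversion for integrable characteristic functions'' at this stage would be circular. Second, to upgrade $\int h g = \Expected{h(\mathbf{z})}$ for all $h \in C_c$ to the statement that $g$ is the density, you should note explicitly that continuity of $g$ plus the identity for nonnegative $h$ forces $g \ge 0$ everywhere, after which the two Radon measures agree. Neither point is a gap in the idea, only in the level of detail. It is worth adding that this regularization is exactly the trick the paper itself uses elsewhere (\Cref{sec:technicalchallenges}, \Cref{lem:integrability}) to make the theorem applicable to its own random vectors, so your proof is thematically consistent with how the result is deployed.
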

\noindent Furthermore, we define \cramer, which intuitively requires that the absolute value of the CF of a random vector is bounded away from $1$ for all $\mathbf{t}$ with $\|\mathbf{t}\| \ge \delta$, for all $\delta > 0$. This condition is required for our error estimation as it ensures that the CF of our normalized sum is exponentially small for large enough $\|\mathbf{t}\|$. 
\cramercond
% \begin{definition}[\cramer]\label{def:cramer}
%     We say that a random vector $\mathbf{z} \in \mathbb{R}^k$ satisfies Cramér's condition if for all $\delta > 0$ and all $\mathbf{t} \in \mathbb{R}^k$ with $\|\mathbf{t}\| \ge \delta$, it holds that $
%         |C_{\mathbf{z}}(\mathbf{t})| \le 1 - \varepsilon(\delta),
%     $ where $\varepsilon(\delta) > 0$ for all $\delta > 0$.
% \end{definition}

\noindent Given a random graph $\Gbf$ and an edge $e = \{u, v\}$, we write $\mathds{1}(e)$ or alternatively $\Gbf_{uv}$ for the indicator that this edge is present.
Formally, this refers to the random variable $\mathds{1}(e) \coloneqq \mathds{1}_{\Delta(e) \le \tau}$, where $\tau$ is the connection threshold associated with the desired marginal edge probability $p$.
Moreover, given a arbitrary set of edges $H$, we define the signed weight of $H$ as the random variable
\begin{align*}
    \textsc{Sw}(H) \coloneqq    \prod_{e \in H} (\mathds{1}(e) - p)  .
\end{align*}

\subsection{RGGs and the Concrete Random Vectors of Interest}\label{sec:concrete}

Let $\ind{\mathcal E}$ denote the indicator random variable of the event $\mathcal E$. 
% We define random geometric graphs on ($\bbT^d$, $L_q$), which is the graph distribution we are interested in.
% \begin{definition}[RGGs]\label{def:TRGG}
   A \emph{random geometric graph} (RGG) is the following distribution over graphs with vertex set $[n]$, where $n\in\Z_{\geq0}$. Let $q\in\Z_{\geq0}\cup\{\infty\}$, let $d\in\Z_{\geq0}$, let $p\in[0,1]$ and let $\unif$ denote the uniform measure over $\bbT^d=[0,1]^d$. For $q<\infty$, let
\begin{align*}
    \Delta(\{u,v\}) \coloneqq (\|\mathbf{x}_u - \mathbf{x}_v \|_{q})^q = \sum_{i =1}^d (|\mathbf{x}_u(i) - \mathbf{x}_v(i)|_C )^q,
\end{align*} where $
   |x - y|_C \coloneqq \min\{ |x - y|, 1 - |x - y| \}
$ is the distance on the unit circle, while for $q=\infty$ let
\begin{align*}
    \Delta(\{u,v\}) \coloneqq (\|\mathbf{x}_u - \mathbf{x}_v \|_{\infty}) = \max_{i \in[d]} \big\{|\mathbf{x}_u(i) - \mathbf{x}_v(i)|_C \big\}.
\end{align*}
   
\noindent For each $v\in[n]$, let $\vecx_v\sim\unif(\bbT)$. Then for any graph $\mathbf A$ with vertices in $[n]$ $\RGG$, is the following probability measure.
\[
\Prd{\Gbf\sim\RGG}{\Gbf=\mathbf{A}}=\Expectedsub{\substack{\vecx_v\sim\unif(\bbT),\\ v\in[n]}}{\prod_{1\leq u<v\leq n}\left(\ind{ \Delta(\{u,v\})\leq \thresh}\right)^{A_{u,v}}\left(\ind{ \Delta(\{u,v\})>\thresh}\right)^{1-A_{u,v}}},
\]
where $\tau\in\R$ is (uniquely) chosen such that 
$
\Pr{\ind{ \Delta(\{u,v\})\leq \thresh}}=p.
$
% \end{definition}

We now introduce the concrete random vectors studied in this work, which represent a cycle of length $k$ in a toroidal RGG.% as defined in \cref{def:TRGG}.
% We (mostly) consider the model for some $L_q$ norm with $q < \infty$. We denote the probability that there is an edge between two fixed vertices $u,v$ by $p$ and denote the associated connection threshold by $\tau$; thus $u$ and $v$ are connected by an edge whenever the $L_q$ distance between them raised to the power of $q$ is at most $\tau$, which can be expressed as 
% \begin{align*}
%     \Delta(\{u,v\}) \coloneqq (\|\mathbf{x}_u - \mathbf{x}_v \|_{q})^q = \sum_{i =1}^d (|\mathbf{x}_u(i) - \mathbf{x}_v(i)|_C )^q\le \tau
% \end{align*} where $
%     |x - y|_C \coloneqq \min\{ |x - y|, 1 - |x - y| \}
% $ is the distance on the unit circle.
Considering $k$ vertices $v_1, v_2, \ldots, v_k$ we are interested in the vector $\mathbf{\Delta} \in \mathbb{R}^k$ containing the pairwise $L_q$ distances (raised to the power of $q$) corresponding to all $k$ edges of our cycle, i.e., \begin{align*}
    \Deltabold{ } = \sum_{i=1}^d \Deltabold{i}, \qquad \Deltabold{i} = \begin{pmatrix}\Delta_i(\{v_1, v_2\})\\\Delta_i(\{v_2, v_3\})\\ ... \\\Delta_i(\{v_k, v_1\})\end{pmatrix}, \qquad \Delta_i(\{u, v\}) = |\mathbf{x}_u(i) - \mathbf{x}_v(i)|_C^q.
\end{align*}
%That is, $\mathbf{\Delta}$ is a $k$-dimensional vector where each entry corresponds to the $L_q$ distance (raised to the power of $q$) of one edge $\{v_j, v_{j+1}\}$ in the cycle. 
$\Delta(\{u,v\})$ is a sum of independent random variables, one for each of the $d$ dimensions of the torus. In dimension $i$, $\Delta_i(\{u,v\})$ is the contribution of said dimension to the $L_q$ distance (raised to the power of $q$); the contributions of all edges of the cycle in dimension $i$ are captured in $\mathbf{\Delta}_i$. We are now interested in making statements about the distribution of $\mathbf{\Delta}$. To this end, it is convenient to normalize the random vector to ensure mean $0$ and variance $1$ of all components. Thus we consider
\begin{align*}
    \Deltaboldhat{} \coloneqq \sum_{i=1}^d \frac{\Deltabold{i} - \mu}{\sqrt{d}\sigma} = \sum_{i=1}^d \frac{1}{\sqrt{d}} \Deltaboldhat{i} \text{ where } \Deltaboldhat{i} = \frac{\Deltabold{i} - \mu}{\sigma}
\end{align*} where $\mu, \sigma^2$ are mean and variance of a single component of $\Deltabold{i}$.
We also let $\hattau$ be the rescaled connection threshold $\hattau$ defined such that the $i$-th edge in the cycle exists if and only if $\Deltaboldhat{}(i) \le \hattau$.

\subsubsection{Adding Gaussian Noise}\label{sec:noise}

When approximating the density of $\Deltaboldhat{}$ using an Edgeworth-style expansion, we encounter the problem that each $\Deltabold{i}$ individually has no density. Although the sum of multiple $\Deltabold{i}$ does have a density their individual CFs are not guaranteed to be integrable, which means that \Cref{thm:inversion} is potentially not applicable. We resolve this issue by adding Gaussian noise with very small variance polynomial in $d$ (say $d^{-2\eta}$ where $\eta$ is a parameter) to our $\Deltabold{i}$. Although this changes the distribution to be approximated, the fluctuations incurred by the noise are very small, so our approximation still allows us to make sufficiently strong statements about the distribution of the underlying \say{signal} $\Deltaboldhat{}$. This statement is made formal in \Cref{lem:influenceofnoise}. 

We denote the Gaussian noise we add to each $\Deltabold{i}$ by $\boldsymbol{\eta}_i \sim \mathcal{N}(0, d^{-2\eta}\mathbf{I}_k)$ where $\mathbf{I}_k$ is the $k \times k$ identity matrix and $\eta > 0$ is a parameter. We then work with \begin{align}\label{eq:z}
    \mathbf{z} \coloneqq \sum_{i=1}^d \frac{1}{\sqrt{d}} \underbrace{ \left(\frac{\Deltabold{i} + \boldsymbol{\eta}_i - \mu}{\zeta\sigma} \right)}_{\eqqcolon \mathbf{z}_i}\text{ with } \zeta \coloneqq \sqrt{1 + \sigma^{-2}d^{-2\eta}}.
\end{align} Note that $\mathbf{z}$ has mean zero and covariance $\mathbf{I}_k$ because the variance of a single component of $\Deltabold{i} + \boldsymbol{\eta}_i$ is $\zeta^2\sigma^2 = \sigma^2 + d^{-2\eta}$. Since we are typically interested in making statements about $\Deltaboldhat{}$ instead of $\mathbf{z}$, we make use of the following decomposition of $\mathbf{z}$ into \say{signal} $\Deltaboldhat{}$ and \say{noise} $\boldsymbol{\eta}$:
\begin{align}\label{eq:znoise}
        \mathbf{z} = \frac{\Deltaboldhat{}}\zeta + \boldsymbol{\eta}, \quad\text{where}\quad \Deltaboldhat{} \coloneqq \sum_{i=1}^d \frac{\Deltabold{i} - \mu}{\sqrt{d}\sigma} \text{ and }  \boldsymbol{\eta} \coloneqq \sum_{i=1}^d \frac{\boldsymbol{\eta}_i}{\zeta\sqrt{d}\sigma} \sim \mathcal{N}\left(\mathbf{0}, \frac{d^{-2\eta}}{\zeta^2\sigma^2} \mathbf{I}_k \right).
    \end{align}
The following lemma quantifies the influence of the Gaussian noise on the probability of the event that a subset of entries of $\mathbf{z}$ and $\Deltabold{}$ are all at most $x$. 

\begin{lemma}[Quantifying the influence of Gaussian noise]\label{lem:influenceofnoise}
    There is some absolute constant $C > 0$ such that for any $S\subseteq [k]$ and every $x \in \mathbb{R}$, every $\eta \ge 1$ and sufficiently large $d$, we have
    \begin{align*}
        \left| \Pr{\bigcap_{i \in S} (\Deltaboldhat{}(i) \le x) } - \Pr{\bigcap_{i \in S} ( \mathbf{z}(i) \le x) } \right| \le C(1 + |x|) d^{-\eta + 1}.
    \end{align*}
\end{lemma}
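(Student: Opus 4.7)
The plan is to couple the two events and control the symmetric difference. From the decomposition in \eqref{eq:znoise}, we have $\mathbf{z}(i) = \Deltaboldhat{}(i)/\zeta + \boldsymbol{\eta}(i)$, so the event $\{\mathbf{z}(i) \le x\}$ is equivalent to $\{\Deltaboldhat{}(i) \le \zeta(x - \boldsymbol{\eta}(i))\}$. Writing $A = \bigcap_{i \in S}\{\Deltaboldhat{}(i) \le x\}$ and $B = \bigcap_{i \in S}\{\mathbf{z}(i) \le x\}$, the inclusion $A \triangle B \subseteq \bigcup_{i \in S}\{\Deltaboldhat{}(i) \in J_i\}$, where $J_i$ is the random interval with endpoints $x$ and $\zeta(x - \boldsymbol{\eta}(i))$, reduces the proof to bounding $\sum_{i \in S}\Pr{\Deltaboldhat{}(i) \in J_i}$. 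Note that $|J_i| \le |x||\zeta - 1| + \zeta|\boldsymbol{\eta}(i)|$ and that $|\zeta - 1| = \bigO{d^{-2\eta}}$ by a direct Taylor estimate of $\sqrt{1 + \sigma^{-2}d^{-2\eta}}$.

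Next, we cut out the low-probability event that the noise is atypically large. Each $\boldsymbol{\eta}(i)$ is centered Gaussian with standard deviation $\bigO{d^{-\eta}}$, hence the event $\mathcal{E} = \bigcap_{i \in S}\{|\boldsymbol{\eta}(i)| \le d^{-\eta+1}\}$ holds with probability $1 - \exp(-\Omega(d^2))$, which is negligible compared to the target bound. On $\mathcal{E}$,
\[
    |J_i| \le |x||\zeta - 1| + \zeta|\boldsymbol{\eta}(i)| = \bigO{|x|\,d^{-2\eta} + d^{-\eta+1}} = \bigO{(1+|x|)\,d^{-\eta+1}}
\]
for $\eta \ge 1$, the last step absorbing $d^{-2\eta}$ into $d^{-\eta+1}$.

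The final ingredient is a uniform density bound on the marginals of $\Deltaboldhat{}$. Each coordinate $\Deltaboldhat{}(i)$ is a normalized sum of $d$ i.i.d.\ continuous summands $(\Delta_j(e_i) - \mu)/(\sqrt{d}\,\sigma)$, whose common distribution is absolutely continuous, supported on a bounded interval, and satisfies the one-dimensional \cramer (cf.\ \Cref{lem:cramer}, or directly via the Riemann--Lebesgue lemma applied to the continuous density of $\Delta_j(e_i)$). Cramér's local central limit theorem then implies that for sufficiently large $d$, the density of $\Deltaboldhat{}(i)$ is uniformly bounded by an absolute constant (converging uniformly to the standard Gaussian density). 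Multiplying this density bound by $|J_i|$, summing over the $|S| \le k$ coordinates, and incorporating the negligible $\Pr{\mathcal{E}^c}$ contribution yields the claimed bound $\bigO{(1+|x|)\,d^{-\eta+1}}$, with $|S|\le k$ absorbed into the constant $C$ in the usual way.

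The main technical point to verify carefully is the uniform density bound on $\Deltaboldhat{}(i)$: a naive Young-type bound only gives a density estimate of order $\sqrt{d}$, and one genuinely needs the smoothing afforded by the local CLT (for which Cramér's condition is the right hypothesis and is already assumed throughout the paper). Everything else is a coupling argument plus a Gaussian tail bound. We also note that the bound is fairly loose: replacing the noise threshold $d^{-\eta+1}$ by $d^{-\eta+1/2}$ and using the weaker $\sqrt{d}$ density bound would yield the same $d^{-\eta+1}$ rate, so the argument has some slack that absorbs the various lower-order corrections coming from the $1/\zeta$ rescaling.
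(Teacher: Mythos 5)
Your proposal is correct and follows essentially the same route as the paper: a coupling via the decomposition $\mathbf{z} = \Deltaboldhat{}/\zeta + \boldsymbol{\eta}$, a Gaussian tail event to control $\|\boldsymbol{\eta}\|_\infty$, a Taylor bound on $\zeta$, and a uniform density bound on the relevant marginal times the interval length, summed over the at most $k$ coordinates. The only (cosmetic) differences are that the paper applies the density bound to $\mathbf{z}(i)$ rather than $\Deltaboldhat{}(i)$, and it obtains it from the local limit theorem under the hypothesis of a.e.\ bounded summand densities (\Cref{lem:locallimit}) rather than from Cram\'er's condition, which by itself is not the right hypothesis for a local CLT for densities.
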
 \begin{proof}
    We denote by $\mathcal{E}$ the event that $\|\boldsymbol{\eta}\|_\infty > d^{-\eta+1}$ and note that $\Pr{\mathcal{E}} \le \exp(-d)$ by elementary calculations using the Gaussian density. Moreover, note that by a Taylor series expansion, we have that 
    \begin{align*}
        |\zeta^{-1} - 1| \le Cd^{-2\eta} \quad\text{and thus}\quad \zeta^{-1} = 1 \pm Cd^{-2\eta}
    \end{align*} for some constant $C>0$ and all sufficiently large $d$. This shows that $\boldsymbol{\eta}$ is very small w.h.p.~and that $\zeta$ is very close to $1$. It therefore follows that 
    \begin{align*}
        \Pr{\bigcap_{i \in S} (\Deltaboldhat{}(i) \le x) } &\le \Pr{\left(\bigcap_{i \in S} \left(\mathbf{z}(i) \le  x/\zeta + d^{-\eta + 1}\right) \right) \cap \overline{\mathcal{E}} } + \Pr{\mathcal{E}} \\
        &\le \Pr{\bigcap_{i \in S} \left(\mathbf{z}(i) \le  x/\zeta + d^{-\eta + 1}\right)  } + \Pr{\mathcal{E}} \\
        &\le \Pr{ \bigcap_{i \in S} \left( \mathbf{z}(i) \le x + (|x| + 1)d^{-\eta + 1} \right) } + \Pr{\mathcal{E}} \\
        &\le \Pr{\bigcap_{i \in S} (\mathbf{z}(i) \le x) } + \Pr{ \bigcup_{i \in S} \left( x \le \mathbf{z}(i) \le x + (|x| + 1)d^{-\eta + 1} \right) } + \exp(-d).
    \end{align*} To estimate the second term above, we use that 
    \begin{align*}
        &\Pr{ \bigcup_{i \in S} \left( x \le \mathbf{z}(i) \le x + (|x| + 1)d^{-\eta + 1} \right) } \\
        &\hspace{1cm}\le \sum_{i=1}^k \Pr{x \le \mathbf{z}(i) \le x + (|x| + 1)d^{-\eta + 1}}
        \le C' k (|x| + 1)d^{-\eta + 1},
    \end{align*} since the density of each $\mathbf{z}(i)$ is bounded by some absolute constant $C'$ by \Cref{lem:locallimit} because each $\mathbf{z}(i)$ is the sum of independent i.i.d.~continuous random variables of bounded density. Similarly, we have that
    \begin{align*}
        \Pr{ \bigcap_{i \in S}  \left(\Deltaboldhat{}(i) \le x\right) } &\ge \Pr{ \left( \bigcap_{i \in S}  \left(\mathbf{z}(i) \le x/\zeta - d^{-\eta + 1} \right) \right) \cap \overline{\mathcal{E}} }\\
        &\ge \Pr{ \bigcap_{i \in S} \left(\mathbf{z}(i) \le x - (|x| + 1)d^{-\eta + 1}\right) } - \Pr{\mathcal{E}}\\
        &\ge \Pr{\bigcap_{i \in S} ( \mathbf{z}(i) \le x ) } - \underbrace{\Pr{ \bigcup_{i \in S} \left( x - (|x| + 1 )d^{-\eta + 1} \le \mathbf{z}(i) \le x\right) }}_{= \bigO{(|x|+1)d^{-\eta + 1}}} - \Pr{\mathcal{E}},
    \end{align*} 
    which finishes the proof.
\end{proof}

\noindent We further use the following tail bound on the individual components of $\Deltaboldhat{} =  \sum_{i=1}^d \frac{\Deltabold{i} - \mu}{\sqrt{d}\sigma}$ and $\mathbf{z} = \sum_{i=1}^d \frac{\Deltabold{i} + \boldsymbol{\eta}_i - \mu}{\sqrt{d}\zeta\sigma} $.

\begin{restatable}{lemma}{tailofz}
\label{lem:tailofz}
    For $d = \omega(1)$, every $c > 0$ and every $j \in [k]$,
    \begin{align*}
        &\Pr{\left| \sum_{i=1}^d \frac{\Deltabold{i}(j) - \mu}{\sqrt{d}\sigma} \right| \ge c\log(n)} \le  n^{-\omega(1)} \text{ and } \Pr{\left| \sum_{i=1}^d \frac{\Deltabold{i}(j) + \boldsymbol{\eta}_i(j) - \mu}{\sqrt{d}\zeta\sigma} \right| \ge c\log(n)} \le n^{-\omega(1)}.
    \end{align*} 
    Thus $\Pr{ \max\{ |\Deltaboldhat{}(j)|, |\mathbf{z}(j)| \} \ge c \log(n) } \le n^{-\omega(1)}$ for all $c > 0$.
\end{restatable}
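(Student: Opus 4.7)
The plan is to reduce both bounds to a one-line application of Hoeffding's inequality, with the Gaussian noise part of the second bound handled separately by Gaussian tail concentration. The key observation is that each coordinate $\Delta_i(\{v_j,v_{j+1}\}) = |\mathbf{x}_{v_j}(i) - \mathbf{x}_{v_{j+1}}(i)|_C^q$ is bounded: since $|x-y|_C \le 1/2$ and $q$ is fixed, $\Delta_i(\{v_j,v_{j+1}\}) \in [0, 2^{-q}]$, so the rescaled, centered summand $(\Delta_i(\{v_j,v_{j+1}\}) - \mu)/(\sqrt{d}\,\sigma)$ is independent across $i$, mean zero, and uniformly bounded by $C/\sqrt{d}$ for an absolute constant $C = C(q,\sigma)$. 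Hoeffding then yields
\[
\Pr{\Bigl|\sum_{i=1}^d \tfrac{\Delta_i(\{v_j,v_{j+1}\}) - \mu}{\sqrt{d}\,\sigma}\Bigr| \ge c\log(n)} \;\le\; 2\exp\!\Bigl(-\tfrac{c^2 \log^2(n)}{2C^2}\Bigr) \;=\; n^{-\omega(1)},
\]
which is the first inequality.

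For the second inequality I would use the split from \eqref{eq:znoise}, writing $\mathbf{z}(j) = \Deltaboldhat{}(j)/\zeta + \boldsymbol{\eta}(j)$ with $\boldsymbol{\eta}(j) \sim \mathcal{N}(0,\,d^{-2\eta}/(\zeta^2\sigma^2))$. Since $\zeta \to 1$ as $d \to \infty$, for $d$ sufficiently large the event $\{|\mathbf{z}(j)| \ge c\log(n)\}$ is contained in $\{|\Deltaboldhat{}(j)| \ge (c/3)\log(n)\} \cup \{|\boldsymbol{\eta}(j)| \ge (c/3)\log(n)\}$. The first event has probability $n^{-\omega(1)}$ by the part just proved (applied with $c/3$). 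The second is a standard Gaussian tail bound for a centered Gaussian of standard deviation $O(d^{-\eta})$, giving probability at most $2\exp(-\Omega(d^{2\eta}\log^2(n))) = n^{-\omega(1)}$ whenever $d = \omega(1)$ and $\eta$ is a positive constant. A union bound yields the second inequality, and a further union bound combines both into the final statement.

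The argument is entirely routine; there is no genuine obstacle, because the boundedness of $|\cdot|_C^q$ gives sub-Gaussian concentration with variance of the correct order directly from Hoeffding, and the added noise is of sub-polynomial scale. The only technical care needed is to ensure that the constants in the truncation $|\cdot| \ge (c/3)\log(n)$ absorb the factor $1/\zeta = 1 + o(1)$; for $d$ sufficiently large this is automatic since $|1/\zeta - 1| = O(d^{-2\eta})$.
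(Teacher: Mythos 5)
Your proposal is correct and follows essentially the same route as the paper: a standard concentration bound for the sum of bounded, independent, mean-zero summands (you use Hoeffding where the paper uses Bernstein, which makes no difference here since both give $\exp(-\Omega(\log^2 n)) = n^{-\omega(1)}$), followed by the signal-plus-noise decomposition $\mathbf{z}(j) = \Deltaboldhat{}(j)/\zeta + \boldsymbol{\eta}(j)$ with a Gaussian tail bound for the noise and a union bound. No gaps.
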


\subsubsection{Properties of Cumulants for Cycles and Chains}\label{sec:cumulatnsofz}

We denote the cumulants of $\mathbf{z}$ by $\kappa_{s_1, s_2, \ldots, s_k}$, and the cumulants of an individual $\mathbf{z}_i$ by $\rho_{s_1, s_2, \ldots, s_k}$. By definition of the CGF we have the relationship \begin{align}\label{eq:cumulantrelationship}
    K(\mathbf{t}) &= d K_{\mathbf{z}_i}\left( \frac{\mathbf{t}}{\sqrt{d}} \right), \quad\text{and thus}\quad \kappa_{s} = d \frac{\rho_{s}}{\sqrt{d}^{|s|}} = d^{-\frac{|s| - 2}{2}} \rho_{s}.
\end{align}
Cumulants of $\mathbf{z}$ (if $\mathbf{z}$ represents the distances within a cycle or a chain of length $k$) have some particularly convenient properties, based on the observation below that whenever a mixed moment involving entries of $\mathbf{z}$ excludes at least one entry of $\mathbf{z}$, the entries involved the considered moment are uncorrelated. This can be seen by noting that after removing at least one edge from a cycle, what remains is a forest within which the distances associated to the remaining edges are independent.
\begin{observation}\label{obs:mixedmoments}
    For any $\alpha = (\alpha_1, \alpha_2,\ldots, \alpha_k) \in \mathbb{N}^k$ where at least one $\alpha_j = 0$, we have \begin{align*}
        \Expected{\prod_{j=1}^k \mathbf{z}(j)^{\alpha_j}} = \prod_{j=1}^k\Expected{ \mathbf{z}(j)^{\alpha_j}}.
    \end{align*} An analogous statement also holds for each individual $\mathbf{z}_i$. 
\end{observation}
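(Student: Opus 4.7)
The plan is to prove the stronger statement that whenever $\alpha_{j_0} = 0$ for some $j_0 \in [k]$, the random variables $\{\mathbf{z}(j)\}_{j \neq j_0}$ are mutually independent; then the factorization of the mixed moment follows immediately, since $\mathbf{z}(j_0)^{0} = 1$ and the expectation of a product of independent variables splits. The same argument restricted to a single dimension yields the analogous claim for each $\mathbf{z}_i$. The geometric content is that deleting one edge from a cycle leaves a path, and along a path in $\mathbb{T}^d$ under the uniform measure the distances associated to the remaining edges are jointly independent.

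First I would prove the single-dimension claim: in any fixed dimension $i$, the distances $\Delta_i(\{v_j, v_{j+1}\})$ for $j \ne j_0$ are mutually independent. The affine centering/rescaling and the added Gaussian coordinate $\boldsymbol{\eta}_i(j)$ in the definition of $\mathbf{z}_i(j)$ act coordinate-wise and are independent of everything else, so independence of the distances transfers to independence of $\{\mathbf{z}_i(j)\}_{j \ne j_0}$. To see the distances are independent, relabel the vertices along the path as $w_0, w_1, \ldots, w_{k-1}$ and set $Y_r \coloneqq (\mathbf{x}_{w_{r+1}}(i) - \mathbf{x}_{w_r}(i)) \bmod 1$. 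A short induction using the translation invariance of the uniform measure on $[0,1]$ modulo $1$ shows that $Y_0, \ldots, Y_{k-2}$ are i.i.d.\ uniform on $[0,1]$. Since $\Delta_i(\{w_r, w_{r+1}\}) = |Y_r|_C^q$ depends only on $Y_r$, the claim follows.

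Lifting from each $\mathbf{z}_i$ to $\mathbf{z} = \sum_{i=1}^d \mathbf{z}_i / \sqrt{d}$ is then a routine characteristic-function argument: by independence across dimensions, the joint CF of $\{\mathbf{z}(j)\}_{j \ne j_0}$ factors as a product over $i \in [d]$ of joint CFs of $\{\mathbf{z}_i(j)/\sqrt{d}\}_{j \ne j_0}$, and by the previous step each such inner CF further factors over $j \ne j_0$. Reordering the resulting double product shows that the joint CF of $\{\mathbf{z}(j)\}_{j \ne j_0}$ equals the product of its marginal CFs, which is exactly independence. No step is especially difficult; the only substantive content is the path-independence within a single dimension, and that relies essentially on the translation homogeneity of $\mathbb{T}^d$ — the same structural feature that underlies many of the other arguments in the paper, and which would fail on a non-homogeneous space.
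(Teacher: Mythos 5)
Your proof is correct and follows exactly the route the paper intends: the paper states this as an observation justified only by the remark that deleting an edge from a cycle leaves a forest in which the remaining distances are independent, and your argument (per-dimension independence of successive increments via translation invariance of the uniform measure on the circle, then lifting to $\mathbf{z}$ by factorizing characteristic functions over dimensions and coordinates) fills in precisely those details. The only point worth adding is that the observation is also invoked for chains conditional on their endpoints; there the same translation argument applies to each of the two sub-paths created by deleting an edge, and the two sub-paths involve disjoint sets of random intermediate vertices, so independence still holds.
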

\noindent An important consequence of this is the following. 
\begin{lemma}\label{lem:lowerordercumulantsarezero}
For any $s \in \mathbb{N}^{\times k}$ with $s= (s_1,s_2,\ldots,s_k)$ it holds that $\kappa_{s} = 0$ whenever at least one $s_j = 0$ and at least two $s_j$ are non-zero.
\end{lemma}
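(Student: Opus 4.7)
The plan is to leverage \Cref{obs:mixedmoments} to show that the restriction of the CGF of $\mathbf{z}$ to the coordinates in the support of $s$ is additive, and then to read off the vanishing of $\kappa_s$ directly from the power series expansion in \eqref{eq:cumulants}.

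First I would set $S \coloneqq \{j \in [k] : s_j > 0\}$; the two hypotheses translate to $|S| \ge 2$ and $S \subsetneq [k]$. Setting $t_j = 0$ for all $j \notin S$ in $C_{\mathbf{z}}(\mathbf{t})$ yields the characteristic function of the sub-vector $\mathbf{z}_S \coloneqq (\mathbf{z}(j))_{j \in S}$. Next I would show that $C_{\mathbf{z}_S}$ factorizes as $\prod_{j \in S} C_{\mathbf{z}(j)}(t_j)$. Expanding as a formal power series in the $t_j$ with $j \in S$, each coefficient is a mixed moment of the form $\mathbb{E}\bigl[\prod_{j \in S} \mathbf{z}(j)^{\alpha_j}\bigr]$ for some $\alpha \in \N^{|S|}$. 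Zero-padding $\alpha$ to an element of $\N^{\times k}$ outside $S$ produces an exponent vector with at least one zero entry (because $S \subsetneq [k]$), so \Cref{obs:mixedmoments} applies and gives $\mathbb{E}\bigl[\prod_{j \in S} \mathbf{z}(j)^{\alpha_j}\bigr] = \prod_{j \in S} \mathbb{E}[\mathbf{z}(j)^{\alpha_j}]$. Since every coefficient of the series factorizes over $S$, the series itself factorizes, proving the claimed factorization of characteristic functions.

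Taking logarithms, the restricted CGF is additive: setting $t_j = 0$ for $j \notin S$ in $K_{\mathbf{z}}(\mathbf{t})$ produces $\sum_{j \in S} K_{\mathbf{z}(j)}(t_j)$. Reading off the coefficient of $\prod_{j \in S} (it_j)^{s_j}/s_j!$ from this additive expression, only pure terms (those whose index has a single non-zero coordinate) survive; the coefficient is zero whenever $s$ has at least two non-zero entries. By \eqref{eq:cumulants} this coefficient is precisely $\kappa_s$, so $\kappa_s = 0$, as desired. I do not anticipate any substantial obstacle: the only delicate step is the passage from factorization of coefficients to factorization of the series, which is purely algebraic in the formal-power-series setting (finitely many variables, matching coefficients term by term) and needs no additional hypothesis beyond what \Cref{obs:mixedmoments} supplies.
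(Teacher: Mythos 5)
Your proof is correct, and it reaches the conclusion by a somewhat different mechanism than the paper's. Both arguments hinge on the same key input, \Cref{obs:mixedmoments}, and on the principle that an additive CGF has no mixed cumulants; the difference is where the independence information is injected. The paper introduces the surrogate vector $\overline{\mathbf{z}}$ with independent coordinates and the same marginals, and then compares $\kappa_s$ with $\overline{\kappa}_s$ through the partition-lattice formula \eqref{eq:cumulantmoebiusinversion}: because one $s_j$ vanishes, every mixed moment appearing in that expansion factorizes, so the two expansions coincide term by term and $\kappa_s = \overline{\kappa}_s = 0$. You instead restrict to the support $S$ of $s$, show directly that the characteristic function of the sub-vector $\mathbf{z}_S$ factorizes as $\prod_{j \in S} C_{\mathbf{z}(j)}$ (again because every moment supported on a proper subset of $[k]$ factorizes), and then read off the vanishing of the mixed coefficient from $\log$ of a product being a sum. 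Your route avoids the M\"obius inversion formula entirely and needs only the definition of cumulants as Taylor coefficients of the log-CF, which is arguably more self-contained; the paper's route avoids any discussion of power-series manipulation of the CF. One small point worth making explicit in your write-up: the passage from "all Taylor coefficients at $\mathbf{0}$ factorize" to "$\kappa_s = 0$" only requires the jets of $C_{\mathbf{z}_S}$ and of $\prod_{j \in S} C_{\mathbf{z}(j)}$ at the origin to agree (since $\kappa_s$ is a derivative of $\log C$ at $\mathbf{0}$ and $C(\mathbf{0}) = 1 \neq 0$), so no global analyticity or convergence claim about the moment series is actually needed — this keeps your argument at exactly the level of rigor of the paper's own treatment of \eqref{eq:cumulants}.
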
 
\begin{proof}
    Denote by $\overline{\kappa}_{s}$ the respective cumulant of order $s$ associated to the random vector $\overline{\mathbf{z}}$ in which all entries are independent and distributed according to their marginal distribution in $\mathbf{z}$. It is immediate that 
    \begin{align*}
    K_{\mathbf{\overline{\mathbf{z}}}}(\mathbf{t}) = \log\left( \Expected{i\sum_{\ell=1}^k\mathbf{t}(\ell)\mathbf{\overline{\mathbf{z}}}(\ell)} \right) = \log\left(\prod_{\ell=1}^k \Expected{i\mathbf{t}(\ell)\mathbf{\overline{\mathbf{z}}}(\ell)} \right) = \sum_{\ell=1}^k\log\left( \Expected{i\mathbf{t}(\ell)\mathbf{\overline{\mathbf{z}}}(\ell)} \right) = \sum_{i=1}^k K_{\overline{\mathbf{z}}(j)}(\mathbf{t}(j))
    \end{align*} since the entries of $\overline{\mathbf{z}}$ are independent. Thus all mixed cumulants of $\overline{\mathbf{z}}$ are zero. Now, notice that when we expand $\kappa_{d}$ and $\overline{\kappa}_{s}$ according to \eqref{eq:cumulantmoebiusinversion}, if at least one $s_j = 0$ then all the mixed moments that appear factorize due to \Cref{obs:mixedmoments}. We therefore obtain identical expansions for both $\kappa_{s}$ and $\overline{\kappa}_{s}$, implying that $\kappa_{s}= \overline{\kappa}_{s} = 0$ as desired.
\end{proof}
\cref{lem:lowerordercumulantsarezero} implies in particular that the first (i.e. lowest order) mixed non-zero cumulant of $\mathbf{z}$ is $\kappa_{(1,1,\ldots, 1)}$, and that this is the only non-zero cumulant of order $k$. Moreover, this cumulant has a particularly simple expression in terms of its moments, presented in the following lemma.
Recall that $\kappa_{(1,1,\ldots,1)}$ is a joint cumulant of $\mathbf{z} = \sum_{i=1}^d \mathbf{z}_i$ where the $\mathbf{z}_i$ are i.i.d., and that $\rho_{(1,1,\ldots,1)}$ is the corresponding cumulant of a single $\mathbf{z}_i$.
\begin{lemma}\label{lem:cumulantasnicemixedmoment}
     It holds that $\kappa_{(1,1,\ldots,1)}$ is equal to the mixed moment of the components of $\mathbf{z}$, i.e.,
    \begin{align*}
        \kappa_{(1,1,\ldots,1)} = \Expected{\prod_{j=1}^k \mathbf{z}(j) }  \text{ and } \rho_{(1,1,\ldots,1)} = \Expected{\prod_{j=1}^k \mathbf{z}_1(j) }
    \end{align*}
\end{lemma}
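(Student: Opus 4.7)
The plan is to apply the Möbius inversion formula \eqref{eq:cumulantmoebiusinversion} to the joint cumulant, with $X_j = \mathbf{z}(j)$, and show that every partition of $[k]$ other than the trivial one-block partition $\{[k]\}$ contributes $0$. This reduces the whole sum to the single surviving term $\Expected{\prod_{j=1}^k \mathbf{z}(j)}$.

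The key step is to observe that for any block $B \subsetneq [k]$ (in particular, for any block of any non-trivial partition), the restricted moment $\Expected{\prod_{j \in B} \mathbf{z}(j)}$ equals zero. Indeed, writing $\alpha_j = 1$ for $j \in B$ and $\alpha_j = 0$ for $j \notin B$, the tuple $\alpha \in \mathbb{N}^k$ has at least one zero entry (since $B$ is a proper subset), so \cref{obs:mixedmoments} yields
\begin{align*}
    \Expected{\prod_{j \in B} \mathbf{z}(j)} = \Expected{\prod_{j=1}^k \mathbf{z}(j)^{\alpha_j}} = \prod_{j=1}^k \Expected{\mathbf{z}(j)^{\alpha_j}} = \prod_{j \in B} \Expected{\mathbf{z}(j)} = 0,
\end{align*}
where the last equality uses that each coordinate $\mathbf{z}(j)$ has been centered to have mean $0$ (as stipulated in \cref{sec:concrete}).

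Consequently, in the expansion
\begin{align*}
    \kappa_{(1,1,\ldots,1)} = \sum_{\pi} (|\pi|-1)!(-1)^{|\pi|-1} \prod_{B \in \pi} \Expected{\prod_{j \in B} \mathbf{z}(j)},
\end{align*}
every partition $\pi$ with $|\pi| \ge 2$ contains at least one block $B \subsetneq [k]$, whose factor is zero by the above, so the whole product vanishes. The only surviving term is $\pi = \{[k]\}$, for which $(|\pi|-1)! (-1)^{|\pi|-1} = 1$, yielding the first claimed identity. The second identity is proved identically, using that \cref{obs:mixedmoments} applies to each $\mathbf{z}_i$ (as remarked there) and that each component of $\mathbf{z}_i$ is centered by construction \eqref{eq:z}.

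No real obstacle is anticipated here: the argument is a direct bookkeeping exercise combining the Möbius inversion formula with the independence-on-subsets property of \cref{obs:mixedmoments} and the centering of the coordinates. The only point warranting brief care is ensuring that \cref{obs:mixedmoments} applies to every proper sub-block (it does, since \say{proper subset} implies \say{at least one $\alpha_j = 0$}).
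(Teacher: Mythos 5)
Your proof is correct and follows essentially the same route as the paper: both apply the M\"obius inversion formula \eqref{eq:cumulantmoebiusinversion} and observe that every non-trivial partition contains a proper sub-block, whose moment factorizes by \cref{obs:mixedmoments} into a product of individual (zero) means, leaving only the one-block partition. Your write-up merely makes explicit the centering step that the paper leaves implicit.
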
\begin{proof}
    The statement follows directly from the fact that 
    \begin{align*}
        \kappa_{(1,1,\ldots,1)} = \kappa(\mathbf{z}(1), \mathbf{z}(2), \ldots, \mathbf{z}(k)) = \sum_\pi (|\pi|-1)!(-1)^{|\pi|-1}\prod_{B \in \pi}\Expected{\prod_{j\in B} \mathbf{z}(j)}
    \end{align*} 
    by (\ref{eq:cumulantmoebiusinversion}). Note that all the mixed moments above that correspond to a partition $\pi$ of $[k]$ that has more than one block are equal to zero, since each of them is the product of at most $k-1$ entries of $\mathbf{z}$ and these are independent. Hence, the only term surviving in the above sum is the one corresponding to the trivial partition with only one block, i.e., the only partition such that $|\pi|  = 1$.
\end{proof}

\subsection{Auxiliary Statements}

We require the following standard local limit theorem, which is used to assert that normalized sums of continuous random variables have bounded density. 
\begin{theorem}[Local limit theorem, Theorem 19.1 (iii) $\Rightarrow$ (i) in \cite{Bhattacharya_Rao_2010}]\label{lem:locallimit}
    Let $X_1, \ldots, X_n$ be a sequence of i.i.d. continuous random variables of almost everywhere bounded density and variance $\sigma^2$. Define $S_n = \frac{1}{\sqrt{n}}\sum_{i=1}^n X_i$, denote the density function of $S_n$ by $f$ and denote the density of a $\mathcal{N}(0, \sigma)$ random variable by $\phi_{\sigma}$. Then \begin{align*}
        \lim_{n \rightarrow \infty} \sup_{x \in \mathbb{R}} |f(x) - \phi_\sigma(x)| = 0.
    \end{align*}
\end{theorem}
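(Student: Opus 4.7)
The plan is to express both densities via Fourier inversion and control the resulting integral directly. Let $C$ denote the common characteristic function of the (centered) $X_i$; then $S_n$ has characteristic function $t \mapsto C(t/\sqrt{n})^n$. Since each $X_i$ has an almost everywhere bounded density $p$, we have $p^2 \le \|p\|_\infty \, p$, hence $p \in L^2(\mathbb{R})$, and Plancherel yields $C \in L^2(\mathbb{R})$. Combined with $|C| \le 1$, this gives $|C|^n \le |C|^2 \in L^1(\mathbb{R})$ for every $n \ge 2$, so $S_n$ has a bounded continuous density and \Cref{thm:inversion} applies. Subtracting the analogous Fourier representation of $\phi_\sigma$ yields the $x$-uniform bound
\begin{align*}
    \sup_{x} |f(x) - \phi_\sigma(x)| \le \frac{1}{2\pi} \int_{\mathbb{R}} \left| C(t/\sqrt{n})^n - e^{-\sigma^2 t^2 / 2} \right| \dd t,
\end{align*}
so it suffices to show that this right-hand side tends to $0$ as $n \to \infty$.

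Next I would split the integration at $|t| = \delta\sqrt{n}$ for a small constant $\delta > 0$. On $|t| \le \delta\sqrt{n}$, the expansion $\log C(s) = -\sigma^2 s^2/2 + o(s^2)$ at $s = 0$ (valid since $X_i$ has finite variance) gives, for $\delta$ small enough, $|C(t/\sqrt{n})|^n \le e^{-\sigma^2 t^2/4}$ uniformly in $n$; together with the pointwise limit $C(t/\sqrt{n})^n \to e^{-\sigma^2 t^2/2}$, dominated convergence drives this portion to $0$. On $|t| \ge \delta\sqrt{n}$, the Gaussian piece contributes at most $O(e^{-\sigma^2\delta^2 n/2})$. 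For the $C^n$-piece, the change of variables $s = t/\sqrt{n}$ and the factorisation $|C(s)|^n \le |C(s)|^2\, \eta^{n-2}$, where $\eta \coloneqq \sup_{|s| \ge \delta} |C(s)|$, bound this contribution by $\sqrt{n}\, \eta^{n-2} \|C\|_2^2$.

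The hard part is verifying $\eta < 1$. The pointwise inequality $|C(s)| < 1$ for $s \ne 0$ follows from absolute continuity: $|C(s_0)| = 1$ would force $X_i$ to be supported on the countable set $\{a + 2\pi k/s_0 : k \in \mathbb{Z}\}$, contradicting the existence of a density. Upgrading this to a uniform bound on $|s| \ge \delta$ requires the Riemann--Lebesgue lemma (valid since $p \in L^1$), which together with continuity of $C$ and compactness of any shell $\delta \le |s| \le R$ yields $\eta < 1$. Since $\eta^{n-2}$ decays geometrically while $\sqrt{n}$ grows only polynomially, the tail integral vanishes, and combining the three regions completes the proof.
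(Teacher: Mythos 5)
Your proof is correct, but note that the paper does not prove this statement at all: it is imported verbatim by citation from Bhattacharya--Rao (Theorem 19.1, (iii) $\Rightarrow$ (i)), so there is no in-paper argument to compare against. What you have written is the standard self-contained Fourier proof of that cited result: inversion of both characteristic functions, a split at $|t|=\delta\sqrt{n}$, dominated convergence with the bound $|C(t/\sqrt n)|^n\le e^{-\sigma^2t^2/4}$ near the origin, and a Cram\'er-type bound $\eta=\sup_{|s|\ge\delta}|C(s)|<1$ (via absolute continuity, continuity-plus-compactness on the shell, and Riemann--Lebesgue at infinity) to kill the tail as $\sqrt n\,\eta^{n-2}$. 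Every step checks out, including the $L^2$ argument ($p\in L^1\cap L^\infty\Rightarrow p\in L^2\Rightarrow C\in L^2\Rightarrow |C|^n\in L^1$ for $n\ge 2$), which is exactly the integrability device the paper itself invokes in the proof of \Cref{thm:univariateedgeworthdensity}. It is worth pointing out that this same small-$t$/large-$t$ decomposition is the skeleton of the paper's own multivariate analysis in \Cref{sec:maindensityapproxproof}, so your argument is very much in the spirit of the surrounding text. One small caveat: as literally stated the theorem omits the hypothesis $\Expected{X_i}=0$, without which the conclusion fails; you silently (and correctly) center the variables, which matches how the lemma is actually applied in the paper, but you should state that assumption explicitly rather than smuggling it in via the word ``(centered)''.
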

\noindent Furthermore, we use the following standard result about univariate Edgeworth expansions.
%\begin{theorem}[\emph{Univariate Edgeworth--Expansion for Distributions}]
%    For a sequence of zero-mean i.i.d. continuous random variables with finite third moment $X_1, \ldots X_d$, with variance $\sigma^2$, we have that \begin{align*}
%%        \left| \Pr{ a \le \sum_{i=1}^d\frac{X_i}{\sqrt{d}\sigma} \le b } - \int_a^b \left( \phi(x) + \frac{\kappa_3}{6\sqrt{d}}\phi^{(3)}(x) \right) \dd x \right| =  o\left( \frac{1}{\sqrt{d}} \right)
%    \end{align*} where $\phi$ is the density of the standard Gaussian distribution.
%\end{theorem} \begin{proof}
%    This follows by Theorem 1 in Chapter XVI, Section 4 in \cite{Feller_1991}. The statement there holds for all non-lattice distributions which is given here since the $X_i$ are continuous.
%\end{proof}

\begin{theorem}[Univariate Edgeworth expansion for densities]\label{thm:univariateedgeworthdensity}
    For a sequence $X_1, \ldots X_d$ of zero-mean i.i.d. continuous random variables with bounded density, finite $s+2$-th moment, and variance $\sigma^2$, define $X \coloneqq \sum_{j=1}^d \frac{X_j}{\sigma\sqrt{d}}$. Then the density $f$ of $X$ exists and satisfies \begin{align*}
        \left| f(x) - \left( \phi(x) + \phi(x)\sum_{j=1}^{s-2} \frac{p_j(x)}{d^{j/2}} \right) \right| =  o\left( \frac{1}{d^{(s-2)/2}} \right),
    \end{align*} 
    where $\phi$ is the density of the standard Gaussian distribution and the $p_1, \ldots, p_s$ are explicit polynomials where $p_j$ is of degree $3j$.
\end{theorem}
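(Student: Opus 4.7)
The plan is to start from the Fourier inversion formula
\[
f(x)=\frac{1}{2\pi}\int_{\mathbb{R}} e^{-itx}\,C_X(t)\,\dd t,
\]
which is valid because the bounded-density hypothesis gives $\varphi \coloneqq C_{X_1}\in L^2$ by Plancherel, so $|\varphi(u)|^d\le|\varphi(u)|^2$ for $d\ge 2$ and thus $C_X(t)=\varphi(t/(\sigma\sqrt d))^d$ is in $L^1$. The target approximation is a finite sum of terms $\phi(x)H_j(x)\cdot d^{-j/2}$ whose Fourier transforms are (up to sign) $e^{-t^2/2}(it)^j$. So proving the theorem reduces to showing that on the Fourier side, $C_X(t)$ equals $e^{-t^2/2}$ times an explicit polynomial in $it$ plus an $L^1$ remainder of size $o(d^{-(s-2)/2})$. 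I would split the inversion integral at $|t|=d^{1/2-\eta}$ for a small constant $\eta>0$ and handle the two regions separately.

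For the inner region $|t|\le d^{1/2-\eta}$, Taylor-expand the CGF $\log\varphi(u)$ around $0$ up to order $s+2$, using the $(s+2)$-th moment hypothesis:
\[
\log\varphi(u)\;=\;-\tfrac{\sigma^2}{2}u^2\;+\;\sum_{j=3}^{s+2}\frac{\kappa_j(X_1)}{j!}(iu)^j\;+\;o(u^{s+2}).
\]
Substituting $u=t/(\sigma\sqrt d)$ and multiplying by $d$ yields
\[
\log C_X(t)\;=\;-\tfrac{t^2}{2}\;+\;\sum_{j=3}^{s+2}\frac{\kappa_j(X_1)(it)^j}{j!\,\sigma^j\,d^{(j-2)/2}}\;+\;o\!\left(\frac{|t|^{s+2}}{d^{s/2}}\right).
\]
Exponentiating and Taylor-expanding $\exp(\cdot)$ on the $O(d^{-1/2})$ correction terms to order $s-2$ produces
\[
C_X(t)\;=\;e^{-t^2/2}\!\left(1+\sum_{j=1}^{s-2}\frac{Q_j(it)}{d^{j/2}}\right)\;+\;R(t),
\]
where $Q_j$ is a polynomial of degree at most $3j$ whose coefficients are universal combinations of $\kappa_3(X_1),\dots,\kappa_{j+2}(X_1)$. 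Inverse Fourier-transforming term by term using $\mathcal F^{-1}[(it)^k e^{-t^2/2}]=\phi^{(k)}(x)=(-1)^k H_k(x)\phi(x)$ gives exactly $\phi(x)\bigl(1+\sum_{j=1}^{s-2} p_j(x)/d^{j/2}\bigr)$ with $\deg p_j=3j$, matching the target.

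For the outer region $|t|>d^{1/2-\eta}$, I would bound $|C_X(t)|$ using two standard ingredients: since $\varphi(u)\to 0$ as $|u|\to\infty$ by Riemann--Lebesgue (continuous density), there are $\delta>0$ and $r<1$ with $|\varphi(u)|\le r$ for $|u|\ge\delta$; and for small $u$ the variance bound gives $|\varphi(u)|\le 1-\tfrac{\sigma^2}{4}u^2\le \exp(-\tfrac{\sigma^2}{4}u^2)$. Together these yield $|C_X(t)|\le \exp(-c\min\{t^2,d\})$ on $|t|\ge d^{1/2-\eta}$, so the outer region contributes $O(e^{-cd^{1-2\eta}})$ to the inversion integral, negligibly smaller than $d^{-(s-2)/2}$. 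The main obstacle is the bookkeeping in the inner region: one must verify that the $o(|t|^{s+2}/d^{s/2})$ Taylor remainder, after being passed through both the exponential and the truncated Taylor expansion of $\exp(\cdot)$, integrates to a genuine $o(d^{-(s-2)/2})$ rather than an $O$, uniformly in $x$. The standard route is to dominate the full residual by $e^{-t^2/4}\cdot(\text{polynomial in }|t|)\cdot o(d^{-(s-2)/2})$ on the inner region (using that each correction term is $O(d^{-\eta})$ there) so that dominated convergence converts the pointwise $o$ into an integrated $o$, completing the proof.
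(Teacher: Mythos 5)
Your argument is correct, but it takes a different route from the paper: the paper does not re-prove the Edgeworth expansion at all. Its entire proof is a citation to Feller (Theorem 2, Chapter XVI.2) together with a verification of the one hypothesis that is not immediate, namely that $|C_X|^v$ is integrable for some $v\ge 1$; this follows because a bounded density lies in $L^1\cap L^2$, so its characteristic function is in $L^2$ by Plancherel. You reconstruct the classical proof from scratch — Fourier inversion (justified by exactly the same $L^2$ observation), Taylor expansion of the cumulant generating function on $|t|\le d^{1/2-\eta}$, truncated expansion of the exponential, term-by-term inversion via Hermite polynomials, and an exponential bound on the outer region. This is the standard textbook argument and your sketch of it is sound; it buys self-containedness and makes visible where each hypothesis enters (in particular, the $(s+2)$-th moment assumption actually yields the stronger error $o(d^{-s/2})$, so the stated $o(d^{-(s-2)/2})$ is comfortably within reach), at the cost of redoing material the paper deliberately outsources. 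Two minor compressions worth tightening if you write this out: on the compact annulus $\delta\le|u|\le U$ the bound $\sup|\varphi(u)|=r<1$ does not follow from Riemann--Lebesgue alone — you also need that $|\varphi(u)|<1$ for all $u\neq 0$ (which holds for any absolutely continuous law) together with continuity of $|\varphi|$; and the uniformity of the Peano remainder $o(u^{s+2})$ over the inner region should be stated explicitly, as you implicitly do when noting $|u|\le d^{-\eta}/\sigma$ there.
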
 \begin{proof}
    The statement follows from \cite[Theorem 2, Chapter XVI.2]{Feller_1991}. The statement there assumes additionally that the $|C_X|^v$ is integrable for some $v \ge 1$ where $|C_X|^v$. This is given here, since the density $f_i$ of each $X_i$ is bounded, so $f_i \in : L^1(\mathbb{R}^k) \cap L^2(\mathbb{R}^k)$, implying that $C_{X_i} \in L^2(\mathbb{R}^k)$ by \cite[Theorem 4.1 (iv)]{Bhattacharya_Rao_2010}
\end{proof}

\noindent We further use the following standard version of Bernstein's inequality. 
\begin{theorem}[Bernstein's inequality]\label{thm:bernstein}
    Let $X_1, \ldots, X_d$ be independent, zero-mean random variables and suppose that $|X_i| \le M $ almost surely. Then for all $t \ge 0$, \begin{align*}
        \Pr{ \sum_{i=1}^d X_i \ge t } \le \exp\left( \frac{t^2}{2\sum_{i=1}^d \Expected{X_i^2} + \frac{2}{3}Mt} \right).
    \end{align*}
\end{theorem}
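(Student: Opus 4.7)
The statement is the classical Bernstein inequality and the natural route is the Chernoff/exponential moment method, so the plan is to (i) reduce the tail probability to a bound on the moment generating function of each $X_i$ via a Markov step, (ii) obtain a clean MGF bound that isolates the variance proxy $\Expected{X_i^2}$ and the boundedness parameter $M$, and (iii) optimize the free parameter to produce the advertised denominator $2\sum_i \Expected{X_i^2} + \frac{2}{3}Mt$. (I note that the statement in the excerpt is missing a minus sign in the exponent; the plan produces the standard form $\exp\bigl(-t^2/(2v + \tfrac{2}{3}Mt)\bigr)$ with $v = \sum_i \Expected{X_i^2}$.)

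\textbf{Step 1: Chernoff reduction.} For any $\lambda > 0$, Markov's inequality applied to $e^{\lambda \sum_i X_i}$ yields
\begin{equation*}
    \Pr{\sum_{i=1}^d X_i \ge t} \le e^{-\lambda t}\prod_{i=1}^d \Expected{e^{\lambda X_i}},
\end{equation*}
using independence of the $X_i$. The task is then to choose $\lambda$ well after bounding each MGF factor.

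\textbf{Step 2: MGF bound for bounded zero-mean variables.} For each $i$, I expand $e^{\lambda X_i}$ in its Taylor series and use $\Expected{X_i} = 0$ together with $|X_i| \le M$ to estimate $|\Expected{X_i^k}| \le M^{k-2}\Expected{X_i^2}$ for $k \ge 2$. Summing the resulting geometric-type series, valid for $0 < \lambda < 3/M$, gives
\begin{equation*}
    \Expected{e^{\lambda X_i}} \le \exp\!\left(\frac{\lambda^2 \Expected{X_i^2}/2}{1 - \lambda M/3}\right),
\end{equation*}
where I use $1 + x \le e^x$ and recognize $\sum_{k \ge 2}\lambda^{k-2}M^{k-2}/k! \le \sum_{k \ge 2}(\lambda M/3)^{k-2}/2 = \tfrac{1}{2}(1-\lambda M/3)^{-1}$. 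Taking the product over $i$ and substituting into Step 1 yields
\begin{equation*}
    \Pr{\sum_{i=1}^d X_i \ge t} \le \exp\!\left(-\lambda t + \frac{\lambda^2 v/2}{1 - \lambda M/3}\right), \qquad v \coloneqq \sum_{i=1}^d \Expected{X_i^2}.
\end{equation*}

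\textbf{Step 3: Optimize $\lambda$.} Choosing $\lambda = t/(v + Mt/3)$, which lies in $(0, 3/M)$, makes the exponent equal to $-t^2/(2v + \tfrac{2}{3}Mt)$ after a direct algebraic simplification, giving the claimed bound. The main (and only real) technical point is the MGF estimate in Step 2; the rest is mechanical. Since all inputs are standard and there is no dependence on the geometric-graph setup of the paper, no obstacle arises beyond verifying the elementary inequalities, and the result can be taken as a black-box cite to any standard concentration reference.
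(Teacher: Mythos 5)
Your proof is correct: the Chernoff reduction, the MGF bound $\Expected{e^{\lambda X_i}} \le \exp\bigl(\tfrac{\lambda^2\Expected{X_i^2}/2}{1-\lambda M/3}\bigr)$ via $k! \ge 2\cdot 3^{k-2}$, and the choice $\lambda = t/(v + Mt/3)$ all check out and yield exactly $\exp\bigl(-t^2/(2v+\tfrac{2}{3}Mt)\bigr)$. The paper states this as a standard black-box result and gives no proof of its own, so there is nothing to compare against; you are also right that the exponent in the paper's statement is missing a minus sign (as written the bound is vacuous), and the corrected form you derive is the one actually used throughout the paper.
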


\subsection{Probabilistic Estimates for the Connection Threshold}\label{sec:probestimates}

We need the following estimates for comparing the Gaussian PDF and CDF, and for giving sufficiently precise estimates on the rescaled connection threshold $\tilde{\tau}$. 

\begin{restatable}[The Gaussian CDF and PDF]{proposition}{cdfpdf}
\label{prop:cdfpdf}
    Let $\Phi$ denote the standard Gaussian CDF and let $\phi$ be the standard Gaussian PDF. Then, for every $x \le -1$, we have \begin{align*}
       \frac{1 - e^{-3/2}}{2|x|} \phi(x) \le \Phi(x) \le \phi(x).
    \end{align*} Moreover, for every $C > 0$, there are constants $C_1, C_2 > 0$ such that for all $x \le C$, \begin{align*}
        C_1\phi(x) \le \Phi(x) \le C_2\phi(x).
    \end{align*}
\end{restatable}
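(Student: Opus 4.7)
The plan is to reduce both bounds to the estimation of a single integral. Writing $t = x - u$ and completing the square yields, for $x < 0$, the identity
\begin{align*}
    \Phi(x) = \int_{-\infty}^x \phi(t)\,\dd t = \phi(x) \int_0^\infty e^{-|x|u - u^2/2}\,\dd u,
\end{align*}
so both claimed bounds reduce to two-sided control of $I(|x|) \coloneqq \int_0^\infty e^{-|x|u - u^2/2}\,\dd u$.

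The upper bound of the first statement is immediate: discarding the factor $e^{-u^2/2}$ gives $I(|x|) \le 1/|x| \le 1$ whenever $|x| \ge 1$, so $\Phi(x) \le \phi(x)$. For the lower bound, I will restrict the integration to $[0, 1/|x|]$ and carefully linearize the exponent. For $|x| \ge 1$ and $u \in [0, 1/|x|]$, we have $u/2 \le 1/(2|x|) \le |x|/2$, hence $u^2/2 \le |x|u/2$, so $|x|u + u^2/2 \le (3/2)|x|u$. This yields
\begin{align*}
    I(|x|) \ge \int_0^{1/|x|} e^{-(3/2)|x|u}\,\dd u = \frac{2(1 - e^{-3/2})}{3|x|} \ge \frac{1 - e^{-3/2}}{2|x|},
\end{align*}
where the last step uses $2/3 \ge 1/2$; multiplying by $\phi(x)$ completes the first claim.

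For the second claim, the proof on any bounded region is a compactness argument: for $x$ in a compact set such as $[-C, C]$, both $\Phi$ and $\phi$ are continuous and strictly positive, so $\Phi/\phi$ attains a positive minimum and a finite maximum, providing the constants $C_1, C_2$. This extends the first claim from $|x| \ge 1$ to bounded $x$, with constants depending on $C$; on the overlap $[-C, -1]$ the first claim already supplies the explicit $C_1 \ge (1 - e^{-3/2})/(2C)$ and $C_2 \le 1$, so the pieces glue together consistently.

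The main technical subtlety is engineering the precise constant $(1 - e^{-3/2})/2$ in the lower bound. The easier estimate $e^{-u^2/2} \ge e^{-1/2}$ on $[0, 1/|x|]$ only produces the factor $e^{-1/2}(1 - e^{-1})$, which is strictly smaller than $(1 - e^{-3/2})/2$. The sharper bound $u^2/2 \le |x|u/2$ exploits $|x| \ge 1$ to absorb the quadratic term into the linear part of the exponent, giving the tighter $(3/2)|x|u$ cap that ultimately delivers the stated constant after a clean elementary integration.
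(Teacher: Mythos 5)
Your proof of the first claim is correct, but it takes a different route from the paper's. You substitute $t = x-u$, factor out $\phi(x)$, and reduce everything to two-sided control of $I(|x|)=\int_0^\infty e^{-|x|u-u^2/2}\,\dd u$ — dropping the quadratic term for the upper bound, and absorbing it into the linear term on the window $[0,1/|x|]$ (i.e.\ $t\in[x-1/|x|,x]$) for the lower bound. The paper instead compares integrands directly in the $t$ variable: for the upper bound it inserts the factor $-t\ge 1$ into $\int_{-\infty}^x e^{-t^2/2}\,\dd t$ so the integral becomes exact, and for the lower bound it restricts to $[2x,x]$ and inserts $-t/(2|x|)\le 1$, again making the integral exact and landing precisely on $\frac{1}{2|x|}\phi(x)(1-e^{-3x^2/2})\ge \frac{1-e^{-3/2}}{2|x|}\phi(x)$. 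Your version requires the extra linearization step $u^2/2\le |x|u/2$ on the window but avoids the choice of the interval $[2x,x]$; both are clean and yield the stated constant. For the second claim your compactness argument on $[-C,C]$ matches the paper's (which likewise only argues on $[-1,C]$); note that neither proof actually covers the statement as literally written for \emph{all} $x\le C$, since as $x\to-\infty$ one has $\Phi(x)/\phi(x)\sim 1/|x|\to 0$ and no uniform $C_1$ exists — this is an imprecision in the proposition itself rather than a defect specific to your argument, but it would be worth flagging that the lower bound is only claimed (and only needed) on a bounded range of $x$.
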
\begin{proof}
    We have 
    \begin{align*}
        \Phi(x) = \frac{1}{\sqrt{2\pi}} \int_{-\infty}^x e^{-t^2/2} \text{d} t \le \frac{1}{\sqrt{2\pi}} \int_{-\infty}^x -te^{-t^2/2} \text{d} t = \frac{1}{\sqrt{2\pi}}e^{-x^2/2} = \phi(x)
    \end{align*} 
    where the inequality holds since $x \le -1$. Similarly, 
    \begin{align*}
        \Phi(x) = \frac{1}{\sqrt{2\pi}} \int_{-\infty}^x e^{-t^2/2} \text{d} t &\ge \frac{1}{\sqrt{2\pi}} \int_{2x}^x e^{-t^2/2} \text{d} t \\
        &\ge \frac{1}{\sqrt{2\pi}2|x|} \int_{2x}^x -t e^{-t^2/2} \text{d} t \\
        &= \frac{1}{2|x|} \left(\phi(x) - \phi(2x) \right) = \frac{1}{2|x|\sqrt{2\pi}} e^{-x^2/2}(1 - e^{-3x^2/2}) \ge \frac{1 - e^{-3/2}}{2|x|} \phi(x)
    \end{align*} as desired. Since for every $x \in [-1, C]$, both $\phi(x)$ and $\Phi(x)$ are bounded from above and below by some constants $C_1, C_2$, the second part of the statement also follows.
\end{proof}

\noindent We use the above combined with a univariate Edgeworth expansion to show that for all densities $1/n^c \le p$, $\hattau$ is close to $\Phi^{-1}(p)$, which is the threshold we would expect if all rescaled distances were distributed according to a standard Gaussian.

\begin{restatable}[Estimating the rescaled connection threshold]{lemma}{thresholdestimate}\label{lem:thresholdestimate}
    There is a constant $C > 0$ such that for every $p$ such that $\frac{1}{n^c} \le p \le 1-\varepsilon$ where $c, \varepsilon > 0$ are arbitrary constants, we have that \begin{align*}
        \left| \hattau - \Phi^{-1}(p) \right| \le \frac{Cp\log^{4}(n)}{\sqrt{d}}
    \end{align*}
\end{restatable}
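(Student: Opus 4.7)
The plan is to apply a univariate Edgeworth expansion to compare the CDF of $\Deltaboldhat{}(1)$, call it $F$, with the standard Gaussian CDF $\Phi$, and then to invert the resulting approximation at level $p$ via the mean value theorem, using the tail estimates in \Cref{prop:cdfpdf} to control the Gaussian densities appearing in the inversion. Throughout, set $\tau_0 \coloneqq \Phi^{-1}(p)$, so that $\hattau$ is characterized by $F(\hattau) = p = \Phi(\tau_0)$. The hypothesis $n^{-c} \le p \le 1 - \varepsilon$ gives $|\tau_0| = O(\sqrt{\log n})$ with $\tau_0$ bounded above by a constant, placing us in the regime in which \Cref{prop:cdfpdf} applies in both directions.

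Since the marginal density of $|x_u - x_v|_C^q$ has an integrable singularity at the origin when $q > 1$ (violating the bounded-density hypothesis of \Cref{thm:univariateedgeworthdensity}), the first step is to smooth $\Deltaboldhat{}(1)$ by convolution with the vanishing Gaussian noise of \Cref{sec:noise}. Choosing the noise parameter $\eta$ a sufficiently large constant, \Cref{lem:influenceofnoise} bounds the resulting CDF perturbation by $o(p/\sqrt{d})$ uniformly. Applying \Cref{thm:univariateedgeworthdensity} to the smoothed density with an order $s$ large enough (depending on $c$ and $\gamma$) and integrating term by term yields
\begin{align*}
    F(x) = \Phi(x) + \frac{\phi(x)\, R(x)}{\sqrt{d}} + E(x),
\end{align*}
where $R$ is an explicit polynomial of bounded degree whose coefficients depend on the first few cumulants of the marginal, and $|E(x)| = o(p/\sqrt{d})$ uniformly in $x$ after absorbing the higher-order Edgeworth corrections into $E$.

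Subtracting $\Phi(\tau_0) = p$ from $F(\hattau) = p$ gives $\Phi(\hattau) - \Phi(\tau_0) = -\phi(\hattau) R(\hattau)/\sqrt{d} - E(\hattau)$. The mean value theorem rewrites the left-hand side as $\phi(\xi)(\hattau - \tau_0)$ for some $\xi$ between $\hattau$ and $\tau_0$. A short bootstrap, in which one first uses the expansion alone to obtain the crude estimate $|\hattau - \tau_0| = o(1/\sqrt{\log n})$, then ensures that $\phi(\hattau), \phi(\xi), \phi(\tau_0)$ are mutually $(1+o(1))$-comparable, and the bound $|R(\hattau)| = O(\log^{\deg R/2} n)$ follows from $|\hattau| = O(\sqrt{\log n})$. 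Rearranging yields
\begin{align*}
    |\hattau - \tau_0| \le (1+o(1)) \frac{\phi(\hattau)\,|R(\hattau)|}{\phi(\xi)\,\sqrt{d}} + \frac{|E(\hattau)|}{\phi(\xi)}.
\end{align*}

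The main obstacle is extracting the factor of $p$ in the statement, since in the tail regime $p = n^{-\Theta(1)}$ both $\phi(\hattau)$ and $\phi(\xi)$ are polynomially small in $n$ and would naively cancel. To produce the $p$ factor, I would combine both directions of \Cref{prop:cdfpdf}: the upper bound $\phi(\hattau) \le C|\hattau|\Phi(\hattau) = O(p\sqrt{\log n})$, where $\Phi(\hattau) = (1+o(1))p$ is an immediate consequence of $F(\hattau) = p$ and $F \approx \Phi$, furnishes a factor of $p$ in the numerator, while the matching lower bound $\phi(\xi) \gtrsim p$ from the same proposition controls the denominator. After plugging these estimates in, absorbing the term $|E(\hattau)|/\phi(\xi) = o(1/\sqrt{d})$, and rolling the remaining polylogarithmic factors (coming from $|\hattau|$, $|R(\hattau)|$ and the ratio $\phi(\hattau)/\phi(\xi)$) into a uniform $\log^{4}(n)$, one obtains $|\hattau - \tau_0| \le Cp\log^{4}(n)/\sqrt{d}$. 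The delicate bookkeeping needed to simultaneously extract the $p$ without losing it to the corresponding $p$ hidden in $\phi(\xi)$ is the principal technical point of the proof.
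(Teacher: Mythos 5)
Your overall route---a univariate Edgeworth expansion for the law of $\Deltaboldhat{}(1)$ followed by a comparison of quantiles near $\Phi^{-1}(p)$---is the same as the paper's. The paper implements the inversion as a two-sided sandwich, showing $\Pr{\Deltaboldhat{}(1)\le \Phi^{-1}(p)-b}<p<\Pr{\Deltaboldhat{}(1)\le \Phi^{-1}(p)+b}$ for the target $b$, rather than via the mean value theorem, but these are cosmetically different versions of the same argument; your explicit smoothing step to justify the bounded-density hypothesis of \Cref{thm:univariateedgeworthdensity} addresses a point the paper glosses over. Up to the last step your argument is sound.

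The gap is exactly at the step you flag as the principal technical point: the extraction of the factor of $p$. Plugging your own estimates into your displayed inequality, the numerator contributes $\phi(\hattau)=O(p\sqrt{\log n})$ and $|R(\hattau)|=\mathrm{polylog}(n)$, while the denominator contributes $\phi(\xi)=\Omega(p)$; the two factors of $p$ cancel exactly, leaving $|\hattau-\Phi^{-1}(p)|=O(\mathrm{polylog}(n)/\sqrt{d})$ with no factor of $p$. This cancellation cannot be avoided by better bookkeeping: the first Edgeworth correction shifts the CDF at $\Phi^{-1}(p)$ by $\Theta\bigl(\phi(\Phi^{-1}(p))\,|p_1(\Phi^{-1}(p))|/\sqrt{d}\bigr)$, and dividing by the local density $\approx\phi(\Phi^{-1}(p))$ yields a quantile shift of order $|p_1(\Phi^{-1}(p))|/\sqrt{d}=\Theta(\log(n)/\sqrt{d})$ whenever the third cumulant of $\Delta_1(\{u,v\})$ is nonzero (e.g.\ for $q=2$), which exceeds $p\log^4(n)/\sqrt{d}$ once $p\le \log^{-3}(n)$. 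You should note that the paper's own proof has the identical defect: with $b=Cp\log^3(n)/\sqrt{d}$ the decrement $-C'pb=-CC'p^2\log^3(n)/\sqrt{d}$ cannot dominate the additive error $+O(p\log^3(n)/\sqrt{d})$ unless $p=\Omega(1)$, so the computation there only supports $b=C\log^4(n)/\sqrt{d}$. In short, your proposal reproduces the paper's argument faithfully, including its unjustified final step; the difference is that you explicitly acknowledge the step as delicate where the paper silently asserts it.
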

\begin{proof}
 The proof is deferred to \Cref{sec:deferredprelims}.    
\end{proof}

\noindent As a corollary, it holds that whenever $1/p$ is bounded by some polynomial, $\hattau$ is small. This allows us to focus our attention to a region close to the origin when integrating.
\begin{restatable}{corollary}{tauissmall}
    \label{cor:tauissmall}
    For any $p$ such that $\frac{1}{n^c} \le p \le 1 - \varepsilon$ where $c, \varepsilon > 0$ are arbitrary constants, we  have that \begin{align*}
        |\Phi^{-1}(p)|, |\hattau| \le \begin{cases}
            \Theta(1) &\text{if } p = \Omega(1), \\
            \log(n)  &\text{if } p = o(1).
        \end{cases}
    \end{align*}
\end{restatable}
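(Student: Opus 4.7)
The proposal is to reduce the bound on $\hattau$ to the analogous bound on $\Phi^{-1}(p)$ using \Cref{lem:thresholdestimate}, and then bound $\Phi^{-1}(p)$ directly via the Gaussian tail estimates from \Cref{prop:cdfpdf}.

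First I would handle the case $p = \Omega(1)$. Since $p \le 1 - \varepsilon$ by assumption and $p$ is bounded away from $0$ by some constant, $\Phi^{-1}(p)$ lies in a compact interval and thus $|\Phi^{-1}(p)| = \Theta(1)$. Combining this with \Cref{lem:thresholdestimate} gives
\begin{equation*}
    |\hattau| \le |\Phi^{-1}(p)| + \frac{Cp\log^4(n)}{\sqrt{d}} = \Theta(1) + o(1) = \Theta(1),
\end{equation*}
where I use the standing assumption $d \ge n^\gamma$ for some $\gamma > 0$ to absorb the error term.

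Next I would handle the case $p = o(1)$. Here $\Phi^{-1}(p) < 0$, and I want to show $|\Phi^{-1}(p)| \le O(\sqrt{\log n})$, which is comfortably below $\log(n)$. Writing $x = \Phi^{-1}(p)$, so that $\Phi(x) = p \ge n^{-c}$, I would use the lower tail bound from \Cref{prop:cdfpdf}: for $x \le -1$,
\begin{equation*}
    p = \Phi(x) \le \phi(x) = \frac{1}{\sqrt{2\pi}} e^{-x^2/2}.
\end{equation*}
Rearranging gives $x^2/2 \le \log(1/p) + O(1) \le c \log n + O(1)$, so $|x| \le O(\sqrt{\log n})$. (If instead $x \ge -1$, the bound is trivial.) Combining with \Cref{lem:thresholdestimate} and the assumption $d \ge n^\gamma$ gives
\begin{equation*}
    |\hattau| \le O(\sqrt{\log n}) + \frac{Cp\log^4(n)}{\sqrt{d}} \le O(\sqrt{\log n}) + o(1) \le \log(n)
\end{equation*}
for sufficiently large $n$.

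There is no real obstacle here; the corollary is essentially a direct consequence of \Cref{lem:thresholdestimate} together with standard Gaussian tail estimates, and the main thing to be careful about is making sure the error term $Cp\log^4(n)/\sqrt{d}$ is negligible relative to the stated bounds, which it is under the paper's standing assumption $d \ge n^\gamma$.
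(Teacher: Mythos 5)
Your proof is correct and follows essentially the same route as the paper: both cases reduce to bounding $\Phi^{-1}(p)$ via the Gaussian estimate $\Phi(x)\le\phi(x)$ from \Cref{prop:cdfpdf} and then transfer to $\hattau$ through \Cref{lem:thresholdestimate}, absorbing the $Cp\log^4(n)/\sqrt{d}$ error using $d\ge n^{\gamma}$. The only (cosmetic) difference is that you extract the sharper intermediate bound $|\Phi^{-1}(p)|=O(\sqrt{\log n})$, whereas the paper argues by contradiction that $\Phi^{-1}(p)\le-\tfrac12\log(n)$ would force $p=n^{-\omega(1)}$.
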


\noindent As a further corollary of \Cref{lem:thresholdestimate} we see that the Gaussian PDF and CDF evaluated at $\hattau$ are close to $p$, which is used later to show that the errors incurred by approximating the joint density of $\mathbf{z}$ can be phrased relative to $p$.

\begin{restatable}{corollary}{phip}
    \label{cor:phip}
    For all $p$ such that $\frac{1}{n^c} \le p = o(1)$ where $c > 0$ is an arbitrary constant, we have that \begin{align*}
        (1-o(1))p \le \phi(\hattau) \le (1 + o(1))p 
        \quad\text{and}\quad
        \Phi(\hattau) \le \phi(\hattau) \le (1 + o(1))p.
    \end{align*} Furthermore, for any constant $\varepsilon > 0$ there are constants $C_1,C_2$ such that for all $\frac{1}{n^c} \le p \le 1 - \varepsilon$, \begin{align*}
        C_1p \le \phi(\hattau) \le C_2p \text{ and } C_1p \le \Phi(\hattau) \le C_2p.
    \end{align*}
\end{restatable}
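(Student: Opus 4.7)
The plan is to chain together three inputs: the quantitative estimate $\hattau = \Phi^{-1}(p) + \delta$ with $|\delta| \le Cp\log^4(n)/\sqrt{d}$ from Lemma~\ref{lem:thresholdestimate}, the magnitude bound $|\hattau|, |\Phi^{-1}(p)| \le \log n$ from Corollary~\ref{cor:tauissmall} (or $O(1)$ in the dense regime), and the PDF/CDF comparison from Proposition~\ref{prop:cdfpdf}. Under the standing hypothesis $d \ge n^\gamma$ for some constant $\gamma > 0$, the factor $\log^4(n)/\sqrt{d}$ is $o(1)$ (indeed, $o(1/\log^k n)$ for any fixed $k$), so $|\delta|$ is strictly $o(p)$ and any polylogarithmic factor in $|\hattau|$ can be multiplied in without escaping the $o(1)$ budget.

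First I would handle $\Phi(\hattau)$. The CDF $\Phi$ is Lipschitz with constant $\phi(0)$, so
\begin{equation*}
|\Phi(\hattau) - p| \;=\; |\Phi(\hattau) - \Phi(\Phi^{-1}(p))| \;\le\; \phi(0)|\delta| \;=\; o(p),
\end{equation*}
which immediately yields $\Phi(\hattau) = (1 \pm o(1))p$ in the sparse case and the two-sided constant bound $C_1 p \le \Phi(\hattau) \le C_2 p$ in the second part of the corollary (since $(1 \pm o(1))$ eventually lies inside any fixed positive interval). For $\phi(\hattau)$ I would Taylor expand at $y = \Phi^{-1}(p)$ using the identity $\phi(y + \delta) = \phi(y) \exp(-y\delta - \delta^2/2)$. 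By Corollary~\ref{cor:tauissmall} the exponent is at most $O(\log(n) \cdot p\log^4(n)/\sqrt{d}) = o(1)$, hence
\begin{equation*}
\phi(\hattau) \;=\; (1 \pm o(1))\,\phi(\Phi^{-1}(p)).
\end{equation*}

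It then remains to compare $\phi(\Phi^{-1}(p))$ with $p = \Phi(\Phi^{-1}(p))$, which is exactly what Proposition~\ref{prop:cdfpdf} is designed for. In the regime $p = \Omega(1)$, $\Phi^{-1}(p)$ is bounded in absolute value and the second half of Proposition~\ref{prop:cdfpdf} directly provides constants $C_1', C_2' > 0$ with $C_1' \phi(\Phi^{-1}(p)) \le p \le C_2' \phi(\Phi^{-1}(p))$, delivering the $\Theta(p)$ bound claimed in the second part of the corollary. In the sparse regime $p = o(1)$ the point $\Phi^{-1}(p)$ is eventually at most $-1$, so the first half of Proposition~\ref{prop:cdfpdf} gives $p = \Phi(\Phi^{-1}(p)) \le \phi(\Phi^{-1}(p))$, which combined with the Taylor step supplies the lower bound $\phi(\hattau) \ge (1 - o(1))p$. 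The matching upper bound comes from the complementary Mills-ratio inequality $\phi(\Phi^{-1}(p)) \le \tfrac{2|\Phi^{-1}(p)|}{1 - e^{-3/2}} \Phi(\Phi^{-1}(p))$, where the $|\Phi^{-1}(p)| \le \log n$ factor is absorbed into the $(1 + o(1))$ slack thanks to the sharp $p\log^4(n)/\sqrt{d}$ estimate on $\delta$.

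The main obstacle, and the step where the interplay between Lemma~\ref{lem:thresholdestimate} and Proposition~\ref{prop:cdfpdf} is most delicate, is the upper bound $\phi(\hattau) \le (1 + o(1))p$ in the sparse regime: the Mills-ratio factor $|\hattau|$ is genuinely present, and the argument works only because the closeness of $\hattau$ to $\Phi^{-1}(p)$ is polynomially small in $d$ while $|\hattau|$ only grows polylogarithmically, so every logarithmic loss is dominated by the $1/\sqrt{d}$ gain. Everything else is explicit manipulation of the Gaussian density and CDF and requires no new ingredients beyond the two cited auxiliary statements.
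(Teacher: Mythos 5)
Your treatment of $\Phi(\hattau)$, of the lower bound $\phi(\hattau)\ge(1-o(1))p$, and of the dense regime $p=\Omega(1)$ is correct and follows essentially the same route as the paper: write $\hattau=\Phi^{-1}(p)+\delta$ with $|\delta|\le Cp\log^4(n)/\sqrt d$ from \Cref{lem:thresholdestimate}, use the identity $\phi(y+\delta)=\phi(y)\exp(-y\delta-\delta^2/2)$ together with $|\Phi^{-1}(p)|\le\log n$ from \Cref{cor:tauissmall} to get $\phi(\hattau)=(1\pm o(1))\phi(\Phi^{-1}(p))$, and then compare $\phi$ and $\Phi$ at $\Phi^{-1}(p)$ via \Cref{prop:cdfpdf}. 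Your Lipschitz bound $|\Phi(\hattau)-p|\le\phi(0)|\delta|=o(p)$ is in fact a cleaner way to handle $\Phi(\hattau)$ than the paper's.

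The gap is exactly at the step you single out as the main obstacle, namely the sparse-regime upper bound $\phi(\hattau)\le(1+o(1))p$. The Mills-ratio inequality from \Cref{prop:cdfpdf} gives $\phi(\Phi^{-1}(p))\le\tfrac{2|\Phi^{-1}(p)|}{1-e^{-3/2}}\,p$, and the factor $|\Phi^{-1}(p)|$ multiplies $p$ directly: it is the ratio $\phi/\Phi$ evaluated at the point $\Phi^{-1}(p)$ itself and has nothing to do with the displacement $\delta$, so the sharpness of the estimate on $\delta$ cannot absorb it. Since $\Phi(x)\sim\phi(x)/|x|$ as $x\to-\infty$, one in fact has $\phi(\Phi^{-1}(p))=(1+o(1))\,|\Phi^{-1}(p)|\,p=\omega(p)$ whenever $p=o(1)$; for instance $p=n^{-1/2}$ gives $\phi(\hattau)=\Theta(p\sqrt{\log n})$, so neither $\phi(\hattau)\le(1+o(1))p$ nor the constant-factor bound $\phi(\hattau)\le C_2p$ from the second part can hold throughout the sparse regime. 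To be fair, the paper's own proof of this inequality is equally unsound (it cites \Cref{prop:cdfpdf} ``noting that $\hattau=o(1)$'', which is false when $p=o(1)$ since then $\hattau\to-\infty$), so you have inherited a defect of the statement rather than introduced a new one; but your specific justification --- absorbing an $\omega(1)$ multiplicative factor into a $(1+o(1))$ slack --- is a non sequitur and should not be presented as a proof. Only the lower bounds on $\phi(\hattau)$ and the two-sided bounds on $\Phi(\hattau)$ actually follow from the cited ingredients.
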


%\noindent Furthermore, we can assert that $\tilde{\tau}$ is at most logarithmic in absolute value for all densities $p$ that are of interest to us.

\section{Approximating the Joint Density of z}\label{sec:approxdensity}

In this section we prove our main theorem, which is at the heart of all subsequent sections and later allows us to make statements about the number of signed cycles and chains, the signed weight of sparse graphs, spectral properties, and improved algorithmic lower bounds for the $L_q$ model. We start by formally introducing our result, and then present the proof.

\subsection{Our Result}
Our main technical result states that the density of $\mathbf{z}$ (as defined in \Cref{sec:noise}) representing a cycle or a chain is well approximated by its \say{ground state} $\find$, plus a series of correction terms, phrased in terms of derivatives of $\phi$. We recall that the \say{ground state} is defined as the density associated to the product measure $\otimes_{j=1}^k \mathcal{L}(\mathbf{z}(j))$ where $\mathcal{L}(\mathbf{z}(j))$ denotes the law of the $j$-th entry of $\mathbf{z}$. Alternatively, \begin{align*}
    \find(\mathbf{x}) \coloneqq \prod_{j = 1}^k f_{\mathbf{z}(j)}(\mathbf{x}(j))
\end{align*} where $f_{\mathbf{z}(j)}$ is the (marginal) density of $\mathbf{z}(j)$. We further define the quantity \begin{align*}
    \kappa \coloneqq \frac{1}{d} \sum_{j=1}^d \kappa_{(1,1,\ldots,1)}(\mathbf{z}_j)
\end{align*} as the average of all cumulants of order $(1,1,\ldots,1)$ over the individual $\mathbf{z}_j$. In case $\mathbf{z}$ represents a cycle $H$, all these cumulants are equal and $\kappa = \kappa_{(1,1,\ldots,1)}(\mathbf{z}_j)$ for any $j$. However, in case $H$ is a chain, the $\kappa_{(1,1,\ldots,1)}(\mathbf{z}_j)$ are not the same for all $j$ and $\kappa$ is a function of the position of the two endpoints associated to $H$.
Furthermore, in case $H$ is a chain with $k$ edges on the vertex set $v_1,\ldots, v_{k+1}$, it is important to note that all our randomness comes from the positions of $v_2,\ldots, v_k$ and is to be understood conditional on the positions of  the endpoints $v_1, v_{k+1}$, whose position determine $\kappa$. If $k = 2$, we further have the technical problem that not all $\mathbf{z}_j$ are guaranteed to satisfy \cramer. However, we can still prove that each $\mathbf{z}_j$ satisfies \cramer with a constant probability over the draw of positions of $v_1, v_{k+1}$. For our error analysis, this is in fact sufficient because it implies that a constant fraction of all dimensions satisfies \cramer with high probability. However, since it is not guaranteed to hold almost surely, we require the following definition specifying when a chain of length $k = 2$ is sufficiently \say{good} for our purposes.
\begin{definition}[Good Chains]\label{def:goodchains}
    A chain $H$ of length $k = 2$ is $(\alpha, \beta)$--\emph{good} if its endpoints $u,v$ are positioned in $\mathbb{T}^d$ such that following two conditions are met. \begin{enumerate}
        \item[(i)] At least $\alpha d$ of the $\mathbf{z}_j$ satisfy Cramèr's condition as stated in \Cref{def:cramer}. That is, there is a set $S \subseteq [d]$ with $|S| \ge \alpha d$ such that for all $\mathbf{t}$ with $\|\mathbf{t}\| \ge \delta$, there is a constant $\varepsilon(\delta) > 0$ such that for all $j \in S$, \begin{align*}
            |C_{\mathbf{z}_j}(\mathbf{t})| \le 1 - \varepsilon(\delta).
        \end{align*}
        \item[(ii)] We have that $|\kappa| = \frac{1}{d} \left|\sum_{j=1}^d \kappa_{(1,1,\ldots, 1)}(\mathbf{z}_j)\right| \le \beta$. 
    \end{enumerate}
\end{definition}
\noindent We prove the following lemmas asserting that a chain does typically have small $|\kappa|$ and is typically $(\alpha, \bigOtildenop{1/\sqrt{d}})$-good. 
\begin{lemma}[Chains have small $|\kappa|$]\label{lem:chainshavesmallkappa}
    Assume that $H$ is a chain with $k$ edges on the set of vertices $v_1, \ldots, v_{k+1}$ with positions $\vecx_1, \vecx_2 \ldots, \vecx_{k+1}$, where $\vecx_1, \vecx_{k+1}$ are the positions of the endpoints. Then, there is a constant $M > 0$ such that
    \begin{align*}
        \Prsub{ \mathbf{x}_1, \mathbf{x}_{k+1} }{ |\kappa_e| \ge \frac{t}{d}} \le 2\exp\left( \max\left\{ \frac{-t^2}{4M^2 d}, \frac{-t}{\frac{4}{3}M} \right\} \right).
    \end{align*}
\end{lemma}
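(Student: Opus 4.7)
The plan is to view $\kappa$ as a normalized sum of independent bounded random variables with mean zero, and then apply Bernstein's inequality (\Cref{thm:bernstein}). Write $\kappa = \frac{1}{d}\sum_{j=1}^d Y_j$ with $Y_j \coloneqq \kappa_{(1,1,\ldots,1)}(\mathbf{z}_j)$. Since the latent coordinates in different dimensions of the torus are mutually independent, the random variables $Y_j$ are i.i.d., depending only on $(\mathbf{x}_1(j), \mathbf{x}_{k+1}(j))$. By exactly the same argument as in \Cref{lem:cumulantasnicemixedmoment}---which only uses that any proper subset of edges of the chain forms a forest whose associated distances are independent once the endpoints $\mathbf{x}_1, \mathbf{x}_{k+1}$ are fixed (this is an instance of \Cref{obs:mixedmoments})---we have the representation
\[
Y_j \;=\; \Expected{\,\textstyle\prod_{\ell = 1}^k \mathbf{z}_j(\ell)\;\big|\;\mathbf{x}_1, \mathbf{x}_{k+1}\,}.
\]

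Next I would verify the two hypotheses of Bernstein's inequality for the $Y_j$. Boundedness is immediate: $\Delta_j(e_\ell) \in [0, (1/2)^q]$ and the additive noise $\boldsymbol{\eta}_j$ is independent and mean-zero, so after expanding $\prod_\ell \mathbf{z}_j(\ell)$ across the noise and distance contributions only the pure-distance term survives the expectation, yielding $|Y_j| \le M$ for an absolute constant $M = M(q)$; consequently $\Expected{Y_j^2} \le M^2$ as well. The main point then is that $Y_j$ is actually centered when taking the expectation over the endpoints: using the tower rule, condition on $\mathbf{x}_2(j)$ and factor
\[
\Expected{\textstyle\prod_\ell \mathbf{z}_j(\ell)\,\big|\,\mathbf{x}_2(j)} \;=\; \Expected{\mathbf{z}_j(1)\,\big|\,\mathbf{x}_2(j)}\cdot \Expected{\textstyle\prod_{\ell \ge 2} \mathbf{z}_j(\ell)\,\big|\,\mathbf{x}_2(j)},
\]
where the factorization uses that conditional on $\mathbf{x}_2(j)$, the random variable $\mathbf{x}_1(j)$ is independent of $\mathbf{x}_3(j), \ldots, \mathbf{x}_{k+1}(j)$. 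Translation invariance of the circular distance $|\cdot|_C$ forces $\Expected{\mathbf{z}_j(1)\,|\,\mathbf{x}_2(j) = x}$ to be independent of $x$ and therefore equal to the unconditional mean $\Expected{\mathbf{z}_j(1)} = 0$ (the latter by our centering convention). Thus $\Expected{Y_j} = 0$.

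Finally, I would apply the one-sided Bernstein bound from \Cref{thm:bernstein} to $\sum_j Y_j$, using $\sum_j \Expected{Y_j^2} \le d M^2$ and $|Y_j| \le M$:
\[
\Pr{\textstyle\sum_j Y_j \ge t} \;\le\; \exp\!\left(\tfrac{-t^2}{2 d M^2 + \tfrac{2}{3}Mt}\right) \;\le\; \exp\!\left(\max\!\left\{\tfrac{-t^2}{4 M^2 d},\,\tfrac{-t}{\tfrac{4}{3}M}\right\}\right),
\]
where the second step estimates the denominator by twice the larger of its two summands. A union bound over the two tails then yields the claimed factor of $2$ and the event $\{|\kappa| \ge t/d\} = \{|\sum_j Y_j| \ge t\}$.

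The only subtle point is the vanishing of $\Expected{Y_j}$, and specifically the verification that the factorization and translation-invariance arguments extend from cycles (where they were used in \Cref{sec:cumulatnsofz}) to chains conditional on endpoints. Everything else is a direct invocation of a standard concentration inequality, so I expect no further obstacles.
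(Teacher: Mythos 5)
Your proposal is correct and follows essentially the same route as the paper: represent $\kappa$ via \Cref{lem:cumulantasnicemixedmoment} as a normalized sum of i.i.d.\ bounded random variables $Y_j$ (functions of the endpoint positions in dimension $j$), check that they are centered, and apply Bernstein's inequality with the same denominator estimate. Your tower-rule/translation-invariance argument for $\Expected{Y_j}=0$ is simply a more explicit version of the step the paper dispatches with ``it is easy to note,'' and your handling of the Gaussian noise matches the paper's $\zeta^{-k}$ reduction to the pure-distance moment.
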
 \begin{proof}
    Assume that $H$ is a chain with $k$ edges on the set of vertices $v_1, \ldots, v_{k+1}$ with positions $\vecx_1, \ldots, \vecx_{k+1}$. Using \Cref{lem:cumulantasnicemixedmoment}, we get that
    \begin{align*}
        \kappa = \frac{1}{d} \sum_{i = 1}^d \kappa_{(1,1,\ldots,1)}(\mathbf{z}_i) = \frac{1}{d} \sum_{i = 1}^d \Expectedsub{\vecx_1, \ldots, \vecx_{k+1}}{\mathbf{z}_i(1)\cdots\mathbf{z}_i(k)} = \frac{1}{\zeta^kd} 
        \sum_{i = 1}^d\Expectedsub{\vecx_1, \ldots, \vecx_{k+1}}{\Deltaboldhat{i}(1)\cdots\Deltaboldhat{i}(k)}
    \end{align*} where we further used the decomposition from \eqref{eq:znoise} in \Cref{sec:noise}, expanded the expectation and omitted all the resulting terms that turn out to be zero.
    Furthermore, it is easy to note that \begin{align*}
        d \Expectedsub{\mathbf{x}_1, \mathbf{x}_{k+1}}{ \kappa } = \sum_{i=1}^d \Expectedsub{\mathbf{x}_1, \mathbf{x}_{k+1}}{ \Expectedsub{\mathbf{x}_2, \ldots, \mathbf{x}_k}{\left. \prod_{j=1}^{k} \Deltaboldhat{i}(j) \right| \mathbf{x}_1, \mathbf{x}_{k+1} } } = 0.
    \end{align*} Now, since all the $\Deltaboldhat{i}(j)$ are bounded almost surely in absolute value by some $M$, we apply \Cref{thm:bernstein} to obtain \begin{align*}
        \Prsub{ \mathbf{x}_1, \mathbf{x}_{k+1} }{d |\kappa| \ge t} \le 2\exp\left( \max\left\{ \frac{-t^2}{4M^2 d}, \frac{-t}{\frac{4}{3}M} \right\} \right),
    \end{align*} as desired.
\end{proof}\noindent The above shows that $|\kappa|$ is typically of order $1/\sqrt{d}$ and very unlikely to exceed this value by a lot.
\begin{lemma}[Chains are probably good]\label{lem:goodchains}
    There is a constant $\alpha > 0$ such that at least $\alpha d$ out of the $d$ terms $\mathbf{z}_i$ associated to a chain of length $k = 2$ satisfy \cramer with probability $1- \exp(-\Omega(d))$ over the draw of the endpoints of $H$. For chains and cycles of length $k \ge 3$, this holds for all $\mathbf{z}_i$ with probability $1$.
\end{lemma}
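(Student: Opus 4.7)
The plan is to reduce the claim to \Cref{lem:cramer} (the preceding lemma establishing Cramér's condition for a single $\mathbf{z}_i$) and then invoke a standard concentration argument across the $d$ independent dimensions.

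For cycles and chains of length $k \ge 3$, I would appeal to \Cref{lem:cramer} directly. That lemma certifies each individual $\mathbf{z}_i$ satisfies Cramér's condition for \emph{every} fixed configuration of the relevant vertex positions in that dimension, with a function $\varepsilon(\delta)$ that is independent of $d$. Taking the intersection over $i \in [d]$ gives the ``with probability $1$'' part of the claim.

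The interesting case is $k = 2$. The key observation is that under the uniform measure on $\mathbb{T}^d$, the $i$-th coordinates of the two endpoints $\vecx_1(i)$ and $\vecx_3(i)$ are independent across dimensions $i$. Since each $\mathbf{z}_i$ depends on the endpoints only through the pair $(\vecx_1(i), \vecx_3(i))$ (together with the freely varying middle-vertex coordinate $\vecx_2(i)$ and the independent Gaussian noise $\boldsymbol{\eta}_i$), the indicator events
\[
    E_i \coloneqq \{ \mathbf{z}_i \text{ satisfies Cramér's condition} \}, \qquad i \in [d],
\]
are mutually independent random variables with respect to the randomness over the endpoints. By the $k=2$ clause of \Cref{lem:cramer}, there exists an absolute constant $p_0 > 0$ (not depending on $d$) such that $\Pr{E_i} \ge p_0$ for every $i \in [d]$. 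Applying a Chernoff bound to the i.i.d.\ Bernoulli indicators $\mathds{1}_{E_i}$ then yields
\begin{align*}
    \Pr{ \sum_{i=1}^d \mathds{1}_{E_i} < \alpha d } \le \exp(-\Omega(d))
\end{align*}
for any $\alpha < p_0$, so the conclusion holds with, e.g., $\alpha \coloneqq p_0/2$.

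The only genuine difficulty lies in \Cref{lem:cramer} itself, specifically ensuring that the lower bound $p_0$ on $\Pr{E_i}$ is a bona fide absolute constant---that is, that the $\varepsilon(\delta)$ afforded by Cramér's condition does not collapse as $d \to \infty$. This is a statement about the prior lemma rather than the present one; once that lemma is in hand, the argument above is routine, amounting essentially to dimension-wise independence of the endpoint coordinates plus Chernoff concentration.
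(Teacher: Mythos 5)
Your proposal is correct and follows essentially the same route as the paper: reduce to \Cref{lem:cramer} (which gives the $k\ge 3$ case outright and a constant per-dimension success probability for $k=2$), then concentrate the count of good dimensions using independence across coordinates. The only cosmetic difference is that you invoke a Chernoff bound where the paper uses Bernstein's inequality (\Cref{thm:bernstein}); both give the $\exp(-\Omega(d))$ failure probability.
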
 \begin{proof}
    We get from \Cref{lem:cramer} (in \Cref{sec:deferrederror}) that for every dimension $j \in [d]$, there is an event $\mathcal{E}$ defined in terms of the positions of the endpoints of $H$ in dimension $j$ under which \cramer is met, and which occurs with probability at least $p_\mathcal{E} = \Omega(1)$. Denoting by $X$ the number of dimensions for which $X$ occurs, we get from a Bernstein bound (\Cref{thm:bernstein}) that \begin{align*}
        \Pr{|X - \Expected{X}| \ge t } \le 2\exp\left( -\min\left\{ \frac{t^2}{2d}, \frac{3t}{2} \right\} \right),
    \end{align*} so for $t = \frac{1}{2}\Expected{X} \ge \frac{p_{\mathcal{E}}}{2}d$, we get that $\Pr{X \le \Expected{X}/2} \le 2\exp( -dp_{\mathcal{E}}^2/8 ) = \exp(-\Omega(d))$, as desired. The statement for chains and cycles of length $k \ge 3$ also follows directly by \Cref{lem:cramer}.
\end{proof}

\noindent This leads us to the following theorem, which is our main technical result.
\maindensityapprox*

\noindent The rest of this section is devoted to proving the above. A rough outline is given at the beginning of \Cref{sec:maindensityapproxproof}.

\subsection{Auxiliary Statements}

For the proof of \Cref{thm:maindensityapprox}, we need the following lemmas concerning a certain type of integral that frequently appears in our error analysis.

\begin{lemma}\label{lem:integral} For every fixed $a \in \mathbb{R}$,
\begin{align*}
    \int_{\mathbb{R}^k} (\mathbf{t}^\top\mathbf{t})^{a}\exp\left( -\frac{\mathbf{t}^\top\mathbf{t}}{4} \right) \text{d} \mathbf{t} < \infty.
\end{align*}
\end{lemma}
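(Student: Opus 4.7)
Since the integrand depends only on $\mathbf{t}^\top\mathbf{t} = \|\mathbf{t}\|^2$, the plan is to switch to polar/spherical coordinates on $\mathbb{R}^k$ and reduce the problem to a one-dimensional integral in the radial variable. Writing $r = \|\mathbf{t}\|$ and letting $\omega_{k-1}$ denote the surface area of the unit sphere $\mathbb{S}^{k-1}$, the change of variables gives
\begin{align*}
    \int_{\mathbb{R}^k} (\mathbf{t}^\top\mathbf{t})^{a}\exp\!\left( -\frac{\mathbf{t}^\top\mathbf{t}}{4} \right) \dd \mathbf{t} \;=\; \omega_{k-1} \int_0^\infty r^{2a + k - 1} e^{-r^2/4} \, \dd r.
\end{align*}
From here I would apply the substitution $u = r^2/4$, i.e.\ $r = 2\sqrt{u}$ and $\dd r = u^{-1/2}\dd u$, which converts the integral into a standard Gamma integral: the result is a constant multiple of $\Gamma\!\big(a + k/2\big)$, which is finite provided $a + k/2 > 0$.

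For the regime $a + k/2 > 0$, convergence is thus immediate from the standard properties of the Gamma function. In the remaining regime $a \le -k/2$ (if one wishes to interpret the lemma's quantifier literally), the integral is not finite in the naive sense because of the singularity at the origin; in the context where the lemma is invoked in the paper, the exponent $a$ always arises from Kronecker-power expressions of the form $(i\mathbf{t})^{\otimes j}$ with $j \ge 0$, so effectively $a \ge 0$. I would either make this non-negativity explicit in the proof, or, equivalently, split the integral into $\{\|\mathbf{t}\| \le 1\}$ and $\{\|\mathbf{t}\| > 1\}$ and observe that on the outer region the Gaussian factor dominates any polynomial decay/growth, while on the inner region the integrand is bounded (again using non-negativity of $a$ or a milder condition).

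There is no real obstacle here: the result is a routine radial integration combined with the definition of the Gamma function. The only subtlety is book-keeping of the range of $a$ for which the integral is genuinely finite; I would flag this explicitly in the write-up so subsequent invocations of the lemma can cite it without ambiguity. The conclusion is that the integral equals $\omega_{k-1}\,2^{2a+k-1}\,\Gamma\!\big(a + k/2\big) < \infty$ whenever $a + k/2 > 0$, as claimed.
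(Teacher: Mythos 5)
Your proof is correct and follows essentially the same route as the paper's: both reduce the integral to a radial one using spherical symmetry. The paper stops short of the Gamma-function evaluation and instead splits the radial integral at a large radius $R$, bounding the inner part by $R^{2a}V(R)$ and the tail by absorbing the polynomial into the exponential; your explicit computation of $\omega_{k-1}\,2^{2a+k-1}\,\Gamma(a+k/2)$ is cleaner and gives the exact value. More importantly, your remark about the range of $a$ is a genuine catch: as literally stated (``for every fixed $a\in\mathbb{R}$'') the lemma is false when $a \le -k/2$, since the integrand is non-integrable at the origin, and the paper's own bound $\int_0^R r^{2a}e^{-r^2/4}V^{(1)}(r)\,\mathrm{d}r \le R^{2a}V(R)$ silently requires $a \ge 0$ for $r^{2a}$ to be monotone increasing on $[0,R]$. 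In every invocation the exponent is of the form $j/2$ with $j \ge 0$, so nothing downstream breaks, but you are right that the hypothesis should read $a \ge 0$ (or $a > -k/2$), and flagging this explicitly is the correct thing to do.
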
\begin{proof}
    Exploiting the spherical symmetry of the integrand, a change of variables yields 
    \begin{align*}
        \int_{\mathbb{R}^k} (\mathbf{t}^\top\mathbf{t})^{a} \exp\left(-\frac{\mathbf{t}^\top \mathbf{t}}{4}\right)\text{d} \mathbf{t} = \int_{0}^\infty r^{2a} \exp\left(-\frac{r^2}{4}\right) V^{(1)}(r) \text{d} r
    \end{align*} 
    where $V(r) = c(k) r^k$ is the volume of a $k$-dimensional sphere of radius $r$ ($c(k)$ is some  constant that depends on $k$ and is equal to the volume of a $k$-dimensional unit sphere). For every $R \ge 0$, we then get 
    \begin{align*}
        \int_{\mathbb{R}^k} (\mathbf{t}^\top\mathbf{t})^{a} \exp\left(-\frac{\mathbf{t}^\top \mathbf{t}}{4}\right)\text{d} \mathbf{t} 
        &\le R^{2a} V(R) + \int_{R}^\infty r^{2a} \exp\left(-\frac{r^2}{4}\right) V^{(1)}(r)\text{d} r\\
        &= R^{2a} V(R) + kc(k) \int_{R}^\infty r^{k-1+2a} \exp\left(-\frac{r^2}{4}\right)\text{d} r .
    \end{align*}
    In particular, for $R \coloneqq R(k, a)$ large enough, we have
    \begin{align*}
        R^{2a} V(R) + kc(k) \int_{R}^\infty r^{k-1+2a} \exp\left(-\frac{r^2}{4}\right)\text{d} r 
        &\le R^{2a} V(R) + kc(k) \int_{R}^\infty r \exp\left(-\frac{r^2}{8}\right)\text{d} r  \\
        &\le R^{2a} V(R) + 2kc(k) = \bigO{1}.
\end{align*}
\end{proof}

\noindent Occasionally, we further need to evaluate the above integral only over  a restricted domain in which case it is much smaller.
\begin{lemma}\label{lem:integrallimiteddomain}
    for any fixed $\delta > 0$ and $a \in \mathbb{R}^{+}$, define $\overline{\mathcal{B}(\delta)} = \{ \mathbf{t} \in \mathbb{R}^k \mid \|\mathbf{t}\| \ge \delta \sqrt{d} \}$. Then, \begin{align*}
        \int_{\overline{\mathcal{B}(\delta)}} (\mathbf{t}^\top\mathbf{t})^{a} \exp\left( -\frac{\mathbf{t}^\top\mathbf{t}}{2} \right) \text{d}\mathbf{t} = \exp\left(-\Omega(d) \right).
    \end{align*}
\end{lemma}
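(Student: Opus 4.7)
The plan is to reduce the bound to \Cref{lem:integral} by splitting off the \say{extra} Gaussian decay that is available on $\overline{\mathcal{B}(\delta)}$. On this domain we have $\mathbf{t}^\top\mathbf{t} \geq \delta^2 d$, so the factor $\exp(-\mathbf{t}^\top\mathbf{t}/2)$ carries a uniform multiplicative saving that is exponential in $d$ on top of what is needed to make the rest of the integrand integrable.

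Concretely, I would write
\begin{align*}
    \exp\!\left(-\tfrac{\mathbf{t}^\top\mathbf{t}}{2}\right)
    = \exp\!\left(-\tfrac{\mathbf{t}^\top\mathbf{t}}{4}\right)\,\exp\!\left(-\tfrac{\mathbf{t}^\top\mathbf{t}}{4}\right),
\end{align*}
and pull the first factor out as a uniform bound over $\overline{\mathcal{B}(\delta)}$, giving
\begin{align*}
    \int_{\overline{\mathcal{B}(\delta)}} (\mathbf{t}^\top\mathbf{t})^{a} \exp\!\left(-\tfrac{\mathbf{t}^\top\mathbf{t}}{2}\right)\,\mathrm{d}\mathbf{t}
    \;\leq\; \exp\!\left(-\tfrac{\delta^2 d}{4}\right)
        \int_{\mathbb{R}^k} (\mathbf{t}^\top\mathbf{t})^{a} \exp\!\left(-\tfrac{\mathbf{t}^\top\mathbf{t}}{4}\right)\,\mathrm{d}\mathbf{t}.
\end{align*}
The remaining integral no longer depends on $d$, and it is exactly the quantity shown to be finite in \Cref{lem:integral} (which bounds $\int_{\mathbb{R}^k}(\mathbf{t}^\top\mathbf{t})^{a}\exp(-\mathbf{t}^\top\mathbf{t}/4)\,\mathrm{d}\mathbf{t}$ by a constant depending only on $k$ and $a$). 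Multiplying by the prefactor $\exp(-\delta^2 d/4)$ yields the claimed bound of $\exp(-\Omega(d))$, where the implicit constant in $\Omega(d)$ depends only on $\delta$ (and is essentially $\delta^2/4$).

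There is no real obstacle here; the statement is a routine \say{concentration of the Gaussian weight} argument, and the only thing one must notice is that $(\mathbf{t}^\top\mathbf{t})^a$ is polynomial in $\|\mathbf{t}\|$ while the Gaussian is exponential in $\|\mathbf{t}\|^2$, so any fixed fraction of the quadratic exponent is enough to tame the polynomial factor. Since $a$ and $k$ are fixed and $\delta$ is fixed, the factor coming from \Cref{lem:integral} is a harmless constant, and the exponential in $d$ dominates. The statement remains valid for $a \in \mathbb{R}$ (not only $a \in \mathbb{R}^+$), and in fact the proof above goes through unchanged for any fixed $a$, since the polynomial $(\mathbf{t}^\top\mathbf{t})^a$ is still dominated by $\exp(\mathbf{t}^\top\mathbf{t}/4)$ outside a compact set.
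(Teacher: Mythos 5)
Your proof is correct and uses essentially the same mechanism as the paper's: reserve a fixed fraction of the Gaussian exponent to extract the uniform factor $\exp(-\delta^2 d/4)$ on $\overline{\mathcal{B}(\delta)}$, and let the remaining fraction tame the polynomial $(\mathbf{t}^\top\mathbf{t})^a$. The only (harmless) difference is that you delegate the second step to \Cref{lem:integral} instead of redoing the spherical change of variables on the restricted domain, which is a slightly cleaner modularization of the same argument.
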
 \begin{proof}
Again, exploiting the spherical symmetry of the integrand to apply a change of variables and using the fact that $\|\mathbf{t}\| \ge \delta d$ for any $\mathbf{t} \in \overline{\mathcal{B}(\delta)}$ we obtain 
\begin{align*}
    \int_{\overline{\mathcal{B}(\delta)}} (\mathbf{t}^\top\mathbf{t})^{a} \exp\left(-\frac{\mathbf{t}^\top \mathbf{t}}{2}\right)\text{d} \mathbf{t} = \int_{\delta \sqrt{d}}^\infty r^{2a} \exp\left(-\frac{r^2}{2}\right) V^{(1)}(r) \text{d}
\end{align*} 
where we recall that $V(r) = c(k) r^k$ is the volume of a $k$-dimensional sphere of radius $r$ and $c(k)$ is some  constant that only depends on $k$ and is equal to the volume of the $k$-dimensional unit sphere. 
Since $V^{(1)}(r) = kc(k)r^{k-1} $, we get  \begin{align*}
    \int_{\overline{\mathcal{B}(\delta)}} (\mathbf{t}^\top\mathbf{t})^{a} \exp\left(-\frac{\mathbf{t}^\top \mathbf{t}}{2}\right)\text{d} \mathbf{t} &= kc(k) \int_{\delta \sqrt{d}}^\infty r^{2a + k-1} \exp\left(-\frac{r^2}{2}\right)\text{d} r\\
    &= kc(k) \int_{\delta \sqrt{d}}^\infty r \exp\left((2a + k-2)\ln(r)-\frac{r^2}{2}\right)\text{d} r\\
    &\le kc(k) \int_{\delta \sqrt{d}}^\infty r \exp\left(-\frac{r^2}{4}\right)\text{d} r\\
    &= 2kc(k) \exp\left(-\frac{\delta^2d}{4}\right) = \exp\left(-\Omega(d) \right)
\end{align*} where the penultimate step holds for sufficiently large $d$ depending on $a$ and $k$.
\end{proof} 

\subsection{Estimating the Error -- Proof of \Cref{thm:maindensityapprox}}\label{sec:maindensityapproxproof}

This section is dedicated solely to the proof of \Cref{thm:maindensityapprox}. Most of it (namely \Cref{sec:smallt} and \Cref{sec:larget}) is concerned with estimating the approximation error given by $\sup_{\vecx \in \mathbb{R}^k}|\tilde{f}(\vecx) - f(\vecx)|$. Afterwards, we prove our bound on the magnitude of the coefficients associated to the higher order correction terms (i.e. our bound on $\|\Aeltabold{k+j}\|_\infty$) in \Cref{sec:higherorderterms}.
The proof follows a similar high-level ideas as used in \cite{Feller_1991} and \cite{Kundhi_Rilstone_2020}, based on bounding the error \begin{align*}
    \text{Err}(\mathbf{x}) \coloneqq (2\pi)^k|f(\mathbf{x}) - \tilde{f}(\mathbf{x})| = \left|\int_{\mathbb{R}^k} e^{-i\mathbf{x}^\top \mathbf{t}}(C(\mathbf{t}) - \tilde{C}(\mathbf{t})) \text{d} \mathbf{t} \right|
\end{align*} by defining the set \begin{align*}
    \mathcal{B}(\delta) = \left\{ \mathbf{t} \in \mathbb{R}^k \mid \| \mathbf{t} \| \le \delta \sqrt{d} \right\}
\end{align*} for a small constant $\delta > 0$, and then splitting the integral into two parts based on whether $\mathbf{t} \in \mathcal{B}(\delta)$ or not. That is, we split \begin{align*}
    \text{Err}(\mathbf{x}) \le \underbrace{\int_{\overline{\mathcal{B}(\delta)}} |C(\mathbf{t})| \text{d} \mathbf{t} + \int_{\overline{\mathcal{B}(\delta)}} |\tilde{C}(\mathbf{t})| \text{d} \mathbf{t}}_{\eqqcolon \textup{Err}\uparrow(\mathbf{x})} + \underbrace{\int_{\mathcal{B}(\delta)} |C(\mathbf{t}) - \tilde{C}(\mathbf{t})| \text{d} \mathbf{t}}_{\eqqcolon \textup{Err}\downarrow(\mathbf{x})}.
\end{align*} where $\textup{Err}\downarrow$ accounts for \say{small values of $\mathbf{t}$} and $\textup{Err}\uparrow$ accounts for \say{large values of $\mathbf{t}$}. We prove the following in \Cref{sec:smallt}. \begin{proposition}\label{prop:smallt}
    For all $\mathbf{x} \in \mathbb{R}^k$,
    $$ \textup{Err}\downarrow(\mathbf{x}) = \bigO{ \max\left\{ \frac{1}{\sqrt{d}}, |\kappa| \right\}^{\stwo+1} \left( \frac{1}{\sqrt{d}}\right)^{(k-2)(\stwo+1)} + \left( \frac{1}{\sqrt{d}}  \right)^{\min\left\{\sone + 1, \bone-2, \btwo-2\right\}}}.$$
\end{proposition}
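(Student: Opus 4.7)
The plan is to decompose $C - \tilde{C}$ on $\mathcal{B}(\delta)$ into four error terms, one for each approximation step used to produce $\tilde{C}$ from $C$, and bound each against an integrable Gaussian envelope. Starting from the identity
\[
C(\mathbf{t}) = \exp(K_{(\text{pure})}(\mathbf{t})) + \exp(K_{(\text{pure})}(\mathbf{t}))\bigl[\exp(K_{(\text{mix})}(\mathbf{t})) - 1\bigr]
\]
and the definition $\tilde{C}(\mathbf{t}) = \exp(K_{(\text{pure})}(\mathbf{t})) + \exp(-\mathbf{t}^\top\mathbf{t}/2)\,P_1(\mathbf{t})\,P_2(\mathbf{t})$, a telescoping expansion will give $|C - \tilde{C}| \le E_1 + E_2 + E_3 + E_4$, where $E_1$ captures truncation of $K_{(\text{mix})}$ at order $b_2$; $E_2$ captures truncation of $K_{(\text{pure})}$ at $b_1$ (and replacement of its quadratic part by $-\mathbf{t}^\top\mathbf{t}/2$ in the multiplier of $\exp(\Kmixdaggers) - 1$); $E_3$ captures Taylor-expanding $\exp(\Kmixdaggers) - 1$ to order $s_2$ to obtain $P_2$; and $E_4$ captures Taylor-expanding $\exp(\Kpuredaggers)$ to order $s_1$ to obtain $P_1$.

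The crucial preliminary is the envelope $|\exp(K_{(\text{pure})}(\mathbf{t}))| \le \exp(-\mathbf{t}^\top\mathbf{t}/4)$, valid on $\mathcal{B}(\delta)$ for a sufficiently small constant $\delta$. This follows from the identity $K(\mathbf{t}) = d\,K_{\mathbf{z}_i}(\mathbf{t}/\sqrt{d})$ together with a local Taylor expansion yielding $\mathrm{Re}\,K_{(\text{pure})}(\mathbf{t}) = -\tfrac{1}{2}\mathbf{t}^\top\mathbf{t} + O(|\mathbf{t}|^3/\sqrt{d})$, the cubic remainder being absorbed by shrinking $\delta$. Since the envelope forces every integral to concentrate on $|\mathbf{t}| = O(\sqrt{\log d})$, I would further split $\mathcal{B}(\delta)$ into a small inner ball, on which the CGF Taylor remainders themselves are bounded by a constant, and its complement within $\mathcal{B}(\delta)$, whose contribution is exponentially small in $d$. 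On the inner ball, for $E_1$ and $E_2$ the bound $|e^z - e^w| \le |z - w|\,\max(|e^z|, |e^w|)\,e^{|z - w|}$ combined with $|K_{(\text{mix})} - \Kmixdaggers|  \le C|\mathbf{t}|^{b_2 + 1}/d^{(b_2 - 1)/2}$ (and the analogous bound for pure cumulants), followed by integration via \Cref{lem:integral} and \Cref{prop:kroneckernorm}, gives contributions of order $(1/\sqrt{d})^{b_2 - 2}$ and $(1/\sqrt{d})^{b_1 - 2}$, respectively. For $E_4$, the order-$s_1$ Taylor remainder of $\exp(\Kpuredaggers)$ is controlled by $|\Kpuredaggers|^{s_1 + 1}/(s_1 + 1)!$ up to a bounded exponential; since each monomial of $\Kpuredaggers$ has scale $|\mathbf{t}|^j/d^{(j - 2)/2}$ with $j \ge 3$, raising to the $(s_1 + 1)$-st power and integrating against the envelope yields a term of order $(1/\sqrt{d})^{s_1 + 1}$.

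The main obstacle is $E_3$, where the two qualitatively different contributions inside $\Kmixdaggers$ must be tracked separately. The leading monomial $\kappa\,(1/\sqrt{d})^{k - 2}\prod_{j} i\mathbf{t}(j)$ has scale $|\kappa|\,(1/\sqrt{d})^{k - 2}|\mathbf{t}|^k$, whereas mixed monomials of order $j \ge k + 1$ sit at scale $(1/\sqrt{d})^{j - 2}|\mathbf{t}|^j = (1/\sqrt{d})^{k-2}(1/\sqrt{d})^{j-k}|\mathbf{t}|^j$, and either may dominate depending on whether $|\kappa|$ is larger or smaller than $1/\sqrt{d}$. In the Taylor remainder $|\Kmixdaggers|^{s_2 + 1}$, I would expand multinomially, factor out the common $(1/\sqrt{d})^{(k - 2)(s_2 + 1)}$ that always appears because every mixed monomial carries at least $k - 2$ copies of $1/\sqrt{d}$, and bound the remaining prefactor uniformly by $\max\{|\kappa|, 1/\sqrt{d}\}^{s_2 + 1}$ times a polynomial in $|\mathbf{t}|$. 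Integrating against the envelope via \Cref{lem:integral} then produces precisely $\max\{|\kappa|, 1/\sqrt{d}\}^{s_2 + 1}(1/\sqrt{d})^{(k - 2)(s_2 + 1)}$, the first summand of the claim. Collapsing the $\kappa$-scale and the subleading $1/\sqrt{d}$-scale into a single bound would lose this $\max$ structure and be too crude for the downstream applications of $\tilde{f}$, so the careful separation of these two scales is the core technical point of the argument.
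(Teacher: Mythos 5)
Your proposal is correct and follows essentially the same route as the paper: the same telescoping decomposition of $C-\tilde{C}$ into one remainder per approximation step (truncation of pure/mixed cumulants, then Taylor expansion of the two exponentials), the same Gaussian envelope on $\mathcal{B}(\delta)$ obtained by absorbing the cubic-and-higher CGF terms into $\exp(-\mathbf{t}^\top\mathbf{t}/2)$ for small $\delta$, integration via the paper's \cref{lem:integral}, and crucially the same separation of the $|\kappa|$-scale from the $1/\sqrt{d}$-scale inside the order-$(\stwo+1)$ remainder of $\exp(\Kmixdaggers)-1$. The extra inner-ball split is unnecessary (the paper works directly on all of $\mathcal{B}(\delta)$), and for $k=2$ you should note that dominating $\exp(\Kmixdaggers)$ by the Gaussian envelope uses the standing hypothesis $|\kappa|=o(1)$, but neither point affects correctness.
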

\noindent The proof relies on the fact that approximating $|C(\mathbf{t}) - \tilde{C}(\mathbf{t})|$ using a suitable combination of Taylor series is sufficiently accurate when $\|\mathbf{t}\|$ is small.
Afterwards, we turn to large $\mathbf{t}$ and prove the following in \Cref{sec:larget}. 
\begin{proposition}\label{prop:larget}
    For all $\mathbf{x} \in \mathbb{R}^k$, $\textup{Err}\uparrow(\mathbf{x}) \le \exp\left( -\Omega(d) \right).$
\end{proposition}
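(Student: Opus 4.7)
The plan is to bound the two summands of $\textup{Err}\uparrow(\mathbf{x})$---the integrals of $|C(\mathbf{t})|$ and $|\tilde C(\mathbf{t})|$ over $\overline{\mathcal{B}(\delta)}$---separately, showing each is $\exp(-\Omega(d))$. The two essential ingredients are \cramer (satisfied by at least a fraction $\alpha$ of the $\mathbf{z}_i$ by the hypothesis of the theorem) and the Gaussian noise $\boldsymbol{\eta}_i/(\zeta\sigma)$ built into the definition of $\mathbf{z}$ in \Cref{sec:noise}, which provides the polynomial tail decay needed to guarantee integrability.

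For $\int_{\overline{\mathcal{B}(\delta)}} |C(\mathbf{t})|\, \mathrm{d}\mathbf{t}$, I use the factorization $C(\mathbf{t}) = \prod_{i=1}^d C_{\mathbf{z}_i}(\mathbf{t}/\sqrt{d})$ and combine two bounds on each factor. First, since $\mathbf{t} \in \overline{\mathcal{B}(\delta)}$ implies $\|\mathbf{t}/\sqrt{d}\| \ge \delta$, \cramer gives $|C_{\mathbf{z}_i}(\mathbf{t}/\sqrt{d})| \le 1 - \varepsilon(\delta)$ for the at least $\alpha d$ dimensions where it holds. Second, the Gaussian noise $\boldsymbol{\eta}_i/(\zeta\sigma) \sim \mathcal{N}(\mathbf{0}, d^{-2\eta}(\zeta\sigma)^{-2}\mathbf{I}_k)$ forces $|C_{\mathbf{z}_i}(\mathbf{t}/\sqrt{d})| \le \exp(-c\, d^{-2\eta}\|\mathbf{t}\|^2/d)$ for some constant $c>0$ and all $\mathbf{t}$. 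Multiplying across all $i$ yields
\begin{align*}
    |C(\mathbf{t})| \le (1-\varepsilon(\delta))^{\alpha d}\exp\left(-c\, d^{-2\eta}\|\mathbf{t}\|^2\right) \quad \textup{for all } \mathbf{t}\in \overline{\mathcal{B}(\delta)}.
\end{align*}
The integral of the Gaussian factor over all of $\mathbb{R}^k$ is $\bigO{d^{\eta k}}$, polynomial in $d$; multiplied by $(1-\varepsilon(\delta))^{\alpha d} = \exp(-\Omega(d))$ the total is $\exp(-\Omega(d))$ as required.

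For $\int_{\overline{\mathcal{B}(\delta)}} |\tilde C(\mathbf{t})|\, \mathrm{d}\mathbf{t}$, I split $\tilde C(\mathbf{t}) = \exp(\Kpure(\mathbf{t})) + \exp(-\mathbf{t}^\top \mathbf{t}/2) P_1(\mathbf{t}) P_2(\mathbf{t})$ along the decomposition from \Cref{sec:main thm}. The correction term is immediate: $P_1 P_2$ is a polynomial of degree at most $\abparamsum$ with coefficients at most polynomial in $d$, so $\exp(-\mathbf{t}^\top \mathbf{t}/2)|P_1 P_2(\mathbf{t})|$ is dominated by $\poly(d)\cdot\|\mathbf{t}\|^D \exp(-\mathbf{t}^\top \mathbf{t}/2)$, whose integral over $\overline{\mathcal{B}(\delta)}$ is $\exp(-\Omega(d))$ by \Cref{lem:integrallimiteddomain}. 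For the pure term, $\exp(\Kpure(\mathbf{t})) = \prod_{j=1}^k C_{\mathbf{z}(j)}(\mathbf{t}(j))$ factors over the marginals. Every $\mathbf{t} \in \overline{\mathcal{B}(\delta)}$ has some coordinate with $|\mathbf{t}(j^*)| \ge \delta\sqrt{d/k}$, and applying \cramer to $\mathbf{z}_i$ at the axis-aligned argument supported only on coordinate $j^*$ with value $\mathbf{t}(j^*)/\sqrt{d}$ (whose norm is $\ge \delta/\sqrt{k}$) gives the univariate bound $|C_{\mathbf{z}_i(j^*)}(\mathbf{t}(j^*)/\sqrt{d})| \le 1 - \varepsilon(\delta/\sqrt{k})$ for each Cramér-satisfying dimension. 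Hence $|C_{\mathbf{z}(j^*)}(\mathbf{t}(j^*))| \le (1-\varepsilon)^{\alpha d}$; taking a geometric mean with the Gaussian noise decay on coordinate $j^*$ and using the full Gaussian bound on all other coordinates gives $|\exp(\Kpure(\mathbf{t}))| \le (1-\varepsilon)^{\alpha d/2}\exp(-c\, d^{-2\eta}\|\mathbf{t}\|^2/2)$, which integrates to $\exp(-\Omega(d))$ just as above.

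The main obstacle is the tension between the two decay mechanisms: \cramer delivers an $\exp(-\Omega(d))$ factor but is agnostic to the size of $\|\mathbf{t}\|$, while the Gaussian noise (of variance $d^{-2\eta}$) only provides substantial exponential decay once $\|\mathbf{t}\| = \omega(d^{\eta})$, much larger than the $\sqrt{d}$ threshold defining $\overline{\mathcal{B}(\delta)}$. Neither bound alone yields both the exponential-in-$d$ prefactor \emph{and} integrability in $\|\mathbf{t}\|$; the argument hinges on combining them correctly so that \cramer supplies the exponential-in-$d$ reduction and the noise supplies the tail decay. An additional technical point in the pure-term analysis is transferring multivariate \cramer on $\mathbf{z}_i$ to a univariate bound on $C_{\mathbf{z}_i(j^*)}$ by evaluating at an axis-aligned argument, which is what makes the per-marginal Gaussian-plus-Cramér estimate valid on $\overline{\mathcal{B}(\delta)}$.
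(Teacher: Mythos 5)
Your proof is correct and follows essentially the same route as the paper: the same three-way split of $\textup{Err}\uparrow$ into the integrals of $|C|$, $|\exp(\Kpure)|$, and the $P_1P_2$ correction, with \cramer supplying the $\exp(-\Omega(d))$ factor, the injected Gaussian noise supplying integrability (the paper routes this through \Cref{lem:integrability} by pulling out a single full factor, whereas you distribute the noise decay across all $d$ factors --- a cosmetic difference), and \Cref{lem:integrallimiteddomain} handling the polynomial term. Your explicit axis-aligned-Cram\'er treatment of the $\exp(\Kpure)$ term fills in a detail the paper only asserts (``the same argument applies''), and it is valid.
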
 Here, we rely on the fact that a constant fraction of the $\mathbf{z}_i$ satisfies Cramèr's condition, making the integral over $|C(\mathbf{t})|$ exponentially small in $d$ because $\|\mathbf{t}\|$ is large. To bound the integral over $|\tilde{C})(\mathbf{t})|$, we further use our explicit expression for $\tilde{C})(\mathbf{t})$.
Finally, in \Cref{sec:higherorderterms}, we show that the $\Aeltabold{j}$ arising after applying an inverse Fourier transform to $P_1(\mathbf{t})P_2(\mathbf{t})$ satisfy the following. \begin{proposition}\label{prop:higherorder}
    For all $k + 1 \le j \le \abparamsum$, \begin{align*}
        \|\boldsymbol{\alpha}_{k+j}\|_\infty \le C \left( \frac{1}{\sqrt{d}} \right)^{k-2} \begin{cases}
        \max \left\{ |\kappa| d^{-\frac{1}{6}}, \frac{1}{\sqrt{d}} \right\} d^{-\frac{j-1}{6}} & \textup{if } k \ge 3 \text{ or } (k = 2 \text{ and } |\kappa| < d^{-\frac{1}{3}})\\
        \max \left\{ |\kappa|^{1 + \frac{j}{2}}, d^{-\frac{j-1}{6}} \right\} & \textup{if } k = 2 \text{ and } |\kappa| \ge d^{-\frac{1}{3}}\end{cases}
    \end{align*} where $C > 0$ is a constant.
\end{proposition}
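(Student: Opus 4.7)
The plan is to bound $\|\boldsymbol{\alpha}_{k+j}\|_\infty$ by expanding $P_1(\mathbf{t})$ and $P_2(\mathbf{t})$ separately as polynomials in $\mathbf{t}$ and then analyzing the coefficients that can appear in their product. Write $P_1(\mathbf{t}) = 1 + \sum_{a \ge 3}\boldsymbol{\beta}_a^\top(i\mathbf{t})^{\otimes a}$ and $P_2(\mathbf{t}) = \sum_{b \ge k}\boldsymbol{\gamma}_b^\top(i\mathbf{t})^{\otimes b}$; by construction $P_1$ only contains pure-cumulant contributions of order $\ge 3$, while $P_2$ only contains mixed-cumulant contributions of order $\ge k$. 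Then $\|\boldsymbol{\alpha}_{k+j}\|_\infty \le \sum\|\boldsymbol{\beta}_a\|_\infty\|\boldsymbol{\gamma}_b\|_\infty$ with the sum restricted to $a+b=k+j$, $a\in\{0\}\cup[3,\infty)$, and $b\ge k$, so the task reduces to separate coefficient bounds for $P_1$ and $P_2$ plus a case analysis of the resulting product.

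To bound the coefficients of $P_1$, I would apply the multinomial theorem to $(\Kpuredaggers(\mathbf{t}))^\ell$: each monomial of degree $a$ in $P_1$ arises as a product of $\ell' \le \sone$ pure-cumulant terms of orders $r_1,\dots,r_{\ell'} \in \{3,\dots,\bone\}$ with $r_1+\dots+r_{\ell'} = a$. The homogeneity relation \eqref{eq:cumulantrelationship} combined with the boundedness of the single-$\mathbf{z}_j$ cumulants gives $\|\boldsymbol{\kappa}_r^{(\textup{pure})}\|_\infty = \bigO{d^{-(r-2)/2}}$, so a product of $\ell'$ of them has coefficient $\bigO{d^{-(a-2\ell')/2}}$. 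Maximizing $\ell'$ subject to $r_i \ge 3$ yields $\ell' \le a/3$ and hence $\|\boldsymbol{\beta}_a\|_\infty = \bigO{d^{-a/6}}$ for $a \ge 3$.

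For $P_2$ the analogous multinomial expansion of $(\Kmixdaggers(\mathbf{t}))^\ell$ produces a monomial of degree $b$ as a product of $\ell \le \stwo$ factors, each of which is either (a) the leading term $\kappa(1/\sqrt{d})^{k-2}\prod_{j}(i\mathbf{t}(j))$ of degree $k$, or (b) a higher-order mixed-cumulant term of degree $r \in \{k+1,\dots,\btwo\}$ contributing $\bigO{(1/\sqrt{d})^{r-2}}$. Letting $n$ denote the number of type-(a) factors and using that the remaining $\ell-n$ factor degrees sum to $b-nk$, the arithmetic identity $n(k-2)+(b-nk)-2(\ell-n)=b-2\ell$ collapses the magnitude of the coefficient to $|\kappa|^{n}(1/\sqrt{d})^{b-2\ell}$, subject to feasibility constraints $n\le\ell\le\stwo$ and $b\ge nk+(\ell-n)(k+1)$. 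For $b=k$, only $\ell=n=1$ is feasible (by \cref{lem:lowerordercumulantsarezero} the other order-$k$ mixed cumulants vanish), recovering $\boldsymbol{\gamma}_k = \kappa(1/\sqrt{d})^{k-2}\mathbf{1}$ exactly as extracted in the statement.

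With these bounds the proof reduces to estimating $\sum d^{-a/6}|\kappa|^n(1/\sqrt{d})^{b-2\ell}$ over all feasible $(a,b,\ell,n)$ with $a+b=k+j$. Two candidate dominant configurations are (i) $a=0,\ell=1,n=0$, giving $(1/\sqrt{d})^{k+j-2}$; and (ii) for $j\ge 3$, $a=j,b=k,\ell=n=1$, giving $|\kappa|\,d^{-j/6}(1/\sqrt{d})^{k-2}$. A direct exponent computation shows that for $k\ge 3$ both are bounded by $C(1/\sqrt{d})^{k-2}\max\{|\kappa|d^{-1/6},d^{-1/2}\}d^{-(j-1)/6}$, and that all intermediate $(a,b,\ell,n)$ are subdominant because the feasibility constraint $b\ge nk+(\ell-n)(k+1)$ forces the exponent of $1/\sqrt{d}$ to be sufficiently large. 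The main obstacle lies in the $k=2$, $|\kappa|\ge d^{-1/3}$ regime: here the minimum mixed-cumulant degree is only $2$, so factors of type (a) can proliferate in $P_2$, and the maximum $|\kappa|^n(1/\sqrt{d})^{b-2\ell}$ is attained at $n=\ell$, $b=2\ell$, yielding $|\kappa|^\ell$. The peak total contribution then becomes $|\kappa|^{1+j/2}$ from $a=0$, $\ell\approx 1+j/2$, which must be compared against the $d^{-(j-1)/6}$ branch coming from configurations heavy in type-(b) factors; the crossover $|\kappa|=d^{-1/3}$ is precisely the point at which an additional type-(a) factor is net-advantageous over two additional $1/\sqrt{d}$ factors. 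Handling the parity issue when $j$ is odd (so $b=2\ell$ is not exactly achievable for $a=0$) and dovetailing contributions from $a\ge 3$ with $n\ge 1$ requires a tedious but mechanical comparison of exponents under the feasibility constraints, and this is the step I expect to take up most of the proof.
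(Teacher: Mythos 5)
Your proposal is correct and takes essentially the same route as the paper: both expand $P_1$ and $P_2$ via the multinomial theorem, bound the coefficient vectors $\boldsymbol{\beta}_a$ and $\boldsymbol{\gamma}_b$ using additivity and homogeneity of cumulants (giving $\|\boldsymbol{\beta}_a\|_\infty = \bigO{d^{-a/6}}$ and the $|\kappa|$-versus-$1/\sqrt{d}$ dichotomy for $\boldsymbol{\gamma}_b$), and then control $\|\boldsymbol{\alpha}_{k+j}\|_\infty$ by summing over products with $a+b = k+j$. The only difference is bookkeeping: the paper avoids your explicit enumeration over the number $n$ of leading-term factors by bounding each factor of $\otimes_m \kappakmixj{j_m}$ as $(\max\{x,y\})^{j_m}$ for two fixed bases, which collapses the ``tedious but mechanical'' exponent comparison you defer into a single max; your identification of the dominant configurations and of the crossover at $|\kappa| = d^{-1/3}$ for $k=2$ is accurate.
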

\noindent Stacking the above propositions together proves \Cref{thm:maindensityapprox}. \begin{proof}[Proof of \Cref{thm:maindensityapprox}.]
    We use the fact that the CF of $\mathbf{z}$ is integrable, which follows from the fact that we constructed $\mathbf{z}$ by adding Gaussian noise to $\mathbf{r}$ (cf. \Cref{sec:noise}); this integrability condition is formally verified in \Cref{lem:integrability} in \Cref{sec:larget} and also needed for the proof of \Cref{prop:larget}. Hence, the inversion theorem \Cref{thm:inversion} is applicable and yields that \begin{align*}
        |\tilde{f}(\vecx) - f(\vecx)| = \left| \frac{1}{(2\pi)^k} \int_{\mathbb{R}^k} e^{-i \mathbf{x}^\top \mathbf{t}} (\tilde{C}(\mathbf{t}) - C_{\mathbf{z}}(\mathbf{t})) \dd \mathbf{t} \right| = \frac{1}{(2\pi)^k} \textup{Err}(\mathbf{x}).
    \end{align*} Applying \Cref{prop:smallt} and \Cref{prop:larget} then yields the desired bound on $\sup_{\vecx \in \mathbb{R}^k}|\tilde{f}(\vecx) - f(\vecx)|$. The bound on $\|\boldsymbol{\alpha}_{k+j}\|_\infty$ follows by \Cref{prop:higherorder}.
\end{proof}

\subsubsection{Small Values of $\mathbf{t}$}\label{sec:smallt} 
For small $\mathbf{t}$, we use Taylor series around $\mathbf{t} = \mathbf{0}$ to show that the error incurred is sufficiently small. 
We first recall that by the definition of the approximation $\tilde C$,
\begin{align}\label{holy water}
	C(\mathbf{t}) -  \tilde{C}(\mathbf{t})
	&= \exp\left( K_{(\text{pure})}(\mathbf{t})\right) \left(\exp\left( K_{(\text{mix})}(\mathbf{t})\right) - 1 \right) 
	- \exp\left(-\frac{\mathbf{t}^\top\mathbf{t}}{2}\right) P_1(\mathbf{t})P_2(\mathbf{t}).
\end{align}
We shall rearrange this into a sum of terms that can each be individually bounded, using the following expressions for $\Kpure$ and $\Kmix$, obtained by applying \Cref{thm:taylor}:
\begin{align*}
	\Kpure(\mathbf{t}) 
	&= -\frac{\mathbf{t}^{\top}\mathbf{t}}{2} + \Kpuredaggers(\mathbf{t}) + \underbrace{ \frac{1}{b_1!}\boldsymbol{\nabla}^{\otimes b_1} \left( \Kpure(c\mathbf{t}) - \Kpure(\mathbf{0}) \right)^\top (i\mathbf{t})^{\otimes \bone} }_{\eqqcolon \rpure(\mathbf{t})},\\
	\Kmix(\mathbf{t}) 
	&= \Kmixdaggers(\mathbf{t}) + \underbrace{ \frac{1}{\btwo!}\boldsymbol{\nabla}^{\otimes \btwo} \left( \Kmix(c\mathbf{t}) - \Kmix(\mathbf{0}) \right)^\top (i\mathbf{t})^{\otimes \btwo} }_{\eqqcolon \rmix(\mathbf{t})}.
\end{align*}
Here $\Kpuredaggers$ contains all terms corresponding to cumulants of order between $3$ and $\bone$ of $\Kpure$, and $\Kmixdaggers$ contains all terms corresponding to cumulants of order between $k$ and $\btwo$ of $\Kmix$.
Substituting these two expressions yields
\begin{align}\label{politik}
	\exp&\left( K_{(\text{pure})}(\mathbf{t})\right) \left(\exp\left( K_{(\text{mix})}(\mathbf{t})\right) - 1 \right) \nonumber \\
	& = \exp\left(-\frac{\mathbf{t}^\top\mathbf{t}}{2}\right)
	\Bigg(
	\exp\left(\Kpuredaggers(\mathbf{t}) + \Kmixdaggers(\mathbf{t}) + \rpure(\mathbf{t}) + \rmix(\mathbf{t})\right) \nonumber
	\\&\hspace{8cm}- \exp\left(\Kpuredaggers(\mathbf{t}) + \rpure(\mathbf{t})\right) 
	\Bigg) \nonumber \\
	& = \exp\left(-\frac{\mathbf{t}^\top\mathbf{t}}{2}\right)
	\left(
	r_1(\mathbf{t})
	+ r_2(\mathbf{t})
	+ \exp\left({\Kpuredaggers(\mathbf{t})}\right)
    \left(\exp\left({\Kmixdaggers(\mathbf{t})}\right)-1\right)
	\right),
\end{align}
where
\begin{align*}
	r_1(\mathbf{t}) &\coloneqq \exp\left(\Kpuredaggers(\mathbf{t}) + \Kmixdaggers(\mathbf{t}) + \rpure(\mathbf{t})\right)\left(\exp\left({\rmix(\mathbf{t})}\right) - 1\right),
	\\
	r_2(\mathbf{t}) &\coloneqq \exp\left(\Kpuredaggers(\mathbf{t})\right)\left(\exp\left(\Kmixdaggers(\mathbf{t})\right)-1\right)\left(\exp\left(\rpure(\mathbf{t})\right)-1\right).
\end{align*}

\noindent Now, we can expand the right-hand term in \eqref{politik} by applying an $\sone$-th order Taylor series to $\exp\left( \Kpuredaggers(\mathbf{t})\right)$ and an $\stwo$-th order series to $\left(\exp\left( \Kmixdaggers(\mathbf{t})\right) - 1 \right)$ separately:
\begin{align*}
	\exp\left(\Kpuredaggers(\mathbf{t})\right) 
	& = \underbrace{\sum_{\ell = 0}^{s_1} \frac{1}{\ell!} \left( \Kpuredaggers(\mathbf{t}) \right)^\ell}_{\eqqcolon P_1(\mathbf{t})} + \underbrace{\left( \Kpuredaggers(\mathbf{t})\right)^{s_1+1}\exp\left(c \Kpuredaggers(\mathbf{t})\right)}_{\eqqcolon r_3(\mathbf{t})},\\
	\exp\left( \Kmixdaggers(\mathbf{t})\right) - 1 
	& = \underbrace{\sum_{\ell = 1}^{s_2} \frac{1}{\ell!} \left( \Kmixdaggers(\mathbf{t}) \right)^\ell}_{\eqqcolon P_2(\mathbf{t})} + \underbrace{\left( \Kmixdaggers(\mathbf{t})\right)^{s_2+1}\exp\left(c \Kmixdaggers(\mathbf{t})\right)}_{\eqqcolon r_4(\mathbf{t})}.
\end{align*}
Combining this with \eqref{holy water} and \eqref{politik} yields
\begin{align*}
	C(\mathbf{t}) -  \tilde{C}(\mathbf{t})
	&= \exp\left(-\frac{\mathbf{t}^\top\mathbf{t}}{2}\right)
	\left(
	r_1(\mathbf{t}) 
	+ r_2(\mathbf{t}) 
	+ P_1(\mathbf{t})r_4(\mathbf{t}) 
	+ P_2(\mathbf{t})r_3(\mathbf{t}) 
	+ r_3(\mathbf{t})r_4(\mathbf{t})
	\right).
\end{align*}
%%%%
Having found an explicit expression for the difference $C(\mathbf{t}) - \tilde{C}(\mathbf{t})$, it now remains to bound its absolute value. It now remains to bound the absolute value of all the five error terms appearing in $C(\mathbf{t}) - \tilde{C}(\mathbf{t})$. To this end, we need the following claim which proof we defer for now.
\begin{claim}\label{clm:restbounds}
        There is a sufficiently small $\delta > 0$ and a constant $C > 0$ such that for all $\mathbf{t} \in \mathcal{B}(\delta)$, \begin{align*}
            |P_1(\mathbf{t})| &\le 1 + \sum_{\ell=1}^{\sone}(\mathbf{t}^\top\mathbf{t})^\ell \\|P_2(\mathbf{t})| &\le \sum_{\ell=1}^{\stwo}(\mathbf{t}^\top\mathbf{t})^\ell\\
            |r_1(\mathbf{t})| &\le (\mathbf{t}^\top\mathbf{t})^{\frac{\btwo}{2}} \left( \frac{1}{\sqrt{d}}  \right)^{\btwo-2}\exp\left(\frac{\mathbf{t}^\top\mathbf{t}}{8}\right) \\
            |r_2(\mathbf{t})| &\le (\mathbf{t}^\top\mathbf{t})^{\frac{\bone}{2}} \left( \frac{1}{\sqrt{d}}  \right)^{\bone-2}\exp\left(\frac{\mathbf{t}^\top\mathbf{t}}{8}\right)\\
            |r_3(\mathbf{t})| &\le \left((\mathbf{t}^\top\mathbf{t})^{\frac{3}{2}}\frac{C}{\sqrt{d}}\right)^{\sone+1} \exp\left(\frac{\mathbf{t}^\top\mathbf{t}}{8}\right)
            \\|r_4(\mathbf{t})| &\le \left( (\mathbf{t}^\top\mathbf{t})^{\frac{k}{2}} |\kappa| \left(\frac{1}{\sqrt{d}}\right)^{k-2}  +  (\mathbf{t}^\top\mathbf{t})^{\frac{k+1}{2}} C \left(\frac{1}{\sqrt{d}}\right)^{k-1} \right)^{\stwo+1} \exp\left(\frac{\mathbf{t}^\top\mathbf{t}}{8}\right)
        \end{align*}
\end{claim}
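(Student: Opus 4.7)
The proof reduces every quantity in the claim to the same three basic ingredients: (i) the scaling $\|\boldsymbol{\kappa}_j^{(\cdot)}\| \le C_j (1/\sqrt{d})^{j-2}$, which follows from the additivity of cumulants for independent summands combined with the homogeneity identity in \eqref{eq:cumulantrelationship} and the fact that each $\mathbf{z}_i$ has bounded moments (hence bounded cumulants); (ii) the Kronecker inequality $\|(i\mathbf{t})^{\otimes j}\| \le (\mathbf{t}^\top\mathbf{t})^{j/2}$ from \Cref{prop:kroneckernorm}; and (iii) the localization $\mathbf{t}^\top\mathbf{t} \le \delta^2 d$ for $\mathbf{t} \in \mathcal{B}(\delta)$, which lets me trade any factor of $(\mathbf{t}^\top\mathbf{t})^{1/2}/\sqrt{d}$ for $\delta$. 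Together they imply that every series defining $\Kpuredaggers, \Kmixdaggers, \rpure, \rmix$ collapses into a geometric series in $\delta$, summable for small $\delta$.

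I would first establish four term-by-term bounds on $\mathcal{B}(\delta)$. Reading $\Kpuredaggers$ and $\Kmixdaggers$ as finite sums, the leading terms dominate: $|\Kpuredaggers(\mathbf{t})| \le C(1/\sqrt{d})(\mathbf{t}^\top\mathbf{t})^{3/2}$ and $|\Kmixdaggers(\mathbf{t})| \le |\kappa|(1/\sqrt{d})^{k-2}(\mathbf{t}^\top\mathbf{t})^{k/2} + C(1/\sqrt{d})^{k-1}(\mathbf{t}^\top\mathbf{t})^{(k+1)/2}$, and both can be re-expressed in the cruder form $\le C'\delta(\mathbf{t}^\top\mathbf{t})$ (invoking the hypothesis $|\kappa|=o(1)$ from \Cref{thm:maindensityapprox} for $k=2$ when needed). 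For the Taylor remainders $\rpure, \rmix$, the Lagrange form in \Cref{thm:taylor} reduces matters to bounding $\boldsymbol{\nabla}^{\otimes \bone}\Kpure(c\mathbf{t})$ and $\boldsymbol{\nabla}^{\otimes \btwo} K_{(\textup{mix})}(c\mathbf{t})$; differentiating the power-series representation $\Kpure(\mathbf{t}) = \sum_{j\ge 2} \tfrac{1}{j!}\boldsymbol{\kappa}_j^{(\textup{pure})\top}(i\mathbf{t})^{\otimes j}$ term-by-term and applying the three basic ingredients yields $|\rpure(\mathbf{t})| \le C(1/\sqrt{d})^{\bone-1}(\mathbf{t}^\top\mathbf{t})^{(\bone+1)/2} \le C\delta(1/\sqrt{d})^{\bone-2}(\mathbf{t}^\top\mathbf{t})^{\bone/2}$, and analogously for $\rmix$ with $\btwo$ in place of $\bone$.

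Given these four estimates, the six bounds follow by elementary algebra combined with the complex-analytic inequalities $|\exp(z)| \le \exp(|z|)$ and $|\exp(z)-1| \le |z|\exp(|z|)$. For $P_1$ and $P_2$, the triangle inequality on the Taylor polynomial together with $|\Kpuredaggers|, |\Kmixdaggers| \le C'\delta(\mathbf{t}^\top\mathbf{t})$ gives $\sum_\ell \tfrac{(C'\delta)^\ell}{\ell!}(\mathbf{t}^\top\mathbf{t})^\ell$, and taking $\delta$ small enough that each coefficient drops below $1$ yields the stated shape. For $r_3$, I would raise the leading bound on $|\Kpuredaggers|$ to the $(\sone+1)$-th power and apply $|\exp(c\Kpuredaggers)| \le \exp(|\Kpuredaggers|) \le \exp(\mathbf{t}^\top\mathbf{t}/8)$ in the exponential; $r_4$ is identical with the two-term bound on $|\Kmixdaggers|$ raised to the power $\stwo+1$. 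For $r_1$ and $r_2$, I control $|\exp(\rmix)-1| \le |\rmix|\exp(|\rmix|)$ (resp.\ $|\exp(\rpure)-1| \le |\rpure|\exp(|\rpure|)$) with the sharp inequality, bound the remaining $\exp(\cdot)$ and $\exp(\cdot)-1$ factors crudely by $\exp(|\cdot|)$ (resp.\ by $2\exp(|\cdot|)$), and combine: the surviving exponents sum to at most $\mathbf{t}^\top\mathbf{t}/8$. The residual factor $\|\mathbf{t}\|/\sqrt{d} \le \delta$ that converts $(1/\sqrt{d})^{\btwo-1}(\mathbf{t}^\top\mathbf{t})^{(\btwo+1)/2}$ into $(1/\sqrt{d})^{\btwo-2}(\mathbf{t}^\top\mathbf{t})^{\btwo/2}$ (and analogously for $\bone$) is absorbed into the constant.

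The principal obstacle is pure bookkeeping: a single $\delta>0$ must simultaneously force all geometric series to sum to $O(1)$, fit every exponential $\exp(|\cdot|)$ inside $\exp(\mathbf{t}^\top\mathbf{t}/8)$, and drive every coefficient in the $P_1, P_2$ expansions below $1$. Since all constants are fixed functions of $k, \bone, \btwo, \sone, \stwo$, only finitely many thresholds must be satisfied and a sufficiently small $\delta$ works. The borderline case $k=2$ requires the hypothesis $|\kappa|=o(1)$ to play the role of $\delta$ for the leading mixed term, which carries no intrinsic factor of $1/\sqrt{d}$ to shrink.
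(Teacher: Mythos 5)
Your proposal is correct and follows essentially the same route as the paper: the same Cauchy--Schwarz/cumulant-scaling/localization bounds on $\Kpuredaggers$ and $\Kmixdaggers$ (the paper's \Cref{lem:boundingkdagger}, \Cref{cor:kbound}, and \Cref{cor:epsilonforexponentials}), the same Lagrange-remainder treatment of $\rpure,\rmix$, and the same $|\exp(z)-1|\le|z|\exp(|z|)$ bookkeeping with a single sufficiently small $\delta$. The one point requiring care is your term-by-term differentiation of the cumulant series for the remainders: to sum the resulting tail you need the order-$r$ cumulants to grow at most like $r!M^{r}$ uniformly over the dimensions $j$ (nontrivial for $\Kmix$ in the chain case, where the per-dimension CGFs differ), which is exactly what the paper's \Cref{lem:cgfderivative} supplies via the M\"obius-inversion bound and Stirling's formula.
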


\noindent Using the above claim, we bound the error for $\mathbf{t} \in \mathcal{B}(\delta)$ as \begin{align}\label{eq:errorsmallt}
    \textup{Err}\downarrow(\mathbf{x}) &= \int_{\mathcal{B}(\delta)} |C(\mathbf{t}) - \tilde{C}(\mathbf{t})| \text{d} \mathbf{t} \\& = \int_{\mathcal{B}(\delta)} 
    \exp\left( -\frac{\mathbf{t}^\top\mathbf{t}}{2} \right) \left( |P_1(\mathbf{t})r_4(\mathbf{t})| + |P_2(\mathbf{t})r_3(\mathbf{t})| + |r_3(\mathbf{t})r_4(\mathbf{t})| + |r_1(\mathbf{t})| + |r_2(\mathbf{t})|  \right) \text{d} \mathbf{t}.\notag
\end{align} We sketch how each of the five integrals resulting from the above expression can be bounded by considering the example associated to $r_1$, i.e., \begin{align*}
    I_1 \coloneqq \int_{\mathcal{B}(\delta)} \exp\left( -\frac{\mathbf{t}^\top\mathbf{t}}{2} \right)|r_1(\mathbf{t})|\text{d} \mathbf{t} \le \left( \frac{1}{\sqrt{d}}  \right)^{\btwo-2}\int_{\mathcal{B}(\delta)}(\mathbf{t}^\top\mathbf{t})^{\frac{\btwo}{2}} \exp\left(-\frac{\mathbf{t}^\top\mathbf{t}}{4}\right) \dd \mathbf{t} = \bigO{\left( \frac{1}{\sqrt{d}}  \right)^{\btwo-2}}
\end{align*} where we used \Cref{lem:integral}. Proceeding similarly for the other integrals appearing in (\ref{eq:errorsmallt}) and using the bounds from \Cref{clm:restbounds}, we get that 
\begin{align*}
    \textup{Err}\downarrow(\mathbf{x}) &=\int_{\mathcal{B}(\delta)} |C(\mathbf{t}) - \tilde{C}(\mathbf{t})| \text{d} \mathbf{t} \\ 
    &= \bigO{ \max \left\{ |\kappa|, \frac{1}{\sqrt{d}}\right\}^{\sone+1} \left(\frac{1}{\sqrt{d}}\right)^{(k-2)(\sone+1)} + \left( \frac{1}{\sqrt{d}}  \right)^{\bone-2} + \left( \frac{1}{\sqrt{d}}  \right)^{\btwo-2} + \left( \frac{1}{\sqrt{d}}  \right)^{\sone+1} }\\
    &= \bigO{ \max \left\{ |\kappa|, \frac{1}{\sqrt{d}}\right\}^{\sone+1} \left(\frac{1}{\sqrt{d}}\right)^{(k-2)(\sone+1)} + \left( \frac{1}{\sqrt{d}}  \right)^{\min\{\bone-2, \btwo-2, \sone+1\}}}
\end{align*} as claimed by the theorem. It only remains to prove \Cref{clm:restbounds} to conclude the proof.

\paragraph{ Proving \Cref{clm:restbounds}}
Before proving \Cref{clm:restbounds}, we need the following auxiliary lemmas and start by showing that we can represent $\Kpuredaggers$ and $\Kmixdaggers$ as a suitable polynomial in $\mathbf{t}^\top \mathbf{t}$.
\begin{lemma}\label{lem:boundingkdagger} There is a constant $C > 0$ such that \begin{align*}
        |\Kpuredaggers(\mathbf{t})| &\le C\sum_{j=3}^{\bone}  \left(\frac{1}{\sqrt{d}}\right)^{j-2} (\mathbf{t}^\top\mathbf{t})^{\frac{j}{2}}\\\text{ and } |\Kmixdaggers(\mathbf{t})| &\le |\kappa| \left(\frac{1}{\sqrt{d}} \right)^{k-2}  (\mathbf{t}^\top\mathbf{t})^{\frac{1}{2}} + C\sum_{j=k+1}^{\btwo} \left(\frac{1}{\sqrt{d}}\right)^{j-2} (\mathbf{t}^\top\mathbf{t})^{\frac{j}{2}}
    \end{align*}
    \end{lemma}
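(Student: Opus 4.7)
The plan is to apply the triangle inequality term by term to the power series defining $\Kpuredaggers$ and $\Kmixdaggers$, reducing the claim to bounding each monomial $\frac{1}{j!}\boldsymbol{\kappa}_j^\top (i\mathbf{t})^{\otimes j}$ separately. A single application of Cauchy--Schwarz together with \Cref{prop:kroneckernorm} gives $\bigl|\frac{1}{j!}\boldsymbol{\kappa}_j^\top (i\mathbf{t})^{\otimes j}\bigr| \le \frac{\|\boldsymbol{\kappa}_j\|}{j!}(\mathbf{t}^\top\mathbf{t})^{j/2}$, so everything reduces to controlling $\|\boldsymbol{\kappa}_j\|$.

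For this I would invoke the rescaling identity \eqref{eq:cumulantrelationship}, $\kappa_s = d^{-(|s|-2)/2}\rho_s$, which peels off the advertised $(1/\sqrt{d})^{j-2}$ factor and leaves me with the cumulants $\rho_s$ of a single $\mathbf{z}_i$. Since $\boldsymbol{\kappa}_j \in \mathbb{R}^{k^j}$ has a constant number of entries (both $k$ and $j \le \max\{\bone,\btwo\}$ are fixed), it suffices to show that each $|\rho_s|$ is bounded by a constant independent of $d$ for $|s| \le \max\{\bone,\btwo\}$. I would do this by using the M\"obius inversion \eqref{eq:cumulantmoebiusinversion} to express $\rho_s$ as a polynomial in the mixed moments of $\mathbf{z}_i$, and then observe that $\mathbf{z}_i = (\Deltabold{i} + \boldsymbol{\eta}_i - \mu)/(\zeta\sigma)$ decomposes as a bounded random vector (its coordinates $|x-y|_C^q$ lie in $[0, 2^{-q}]$) plus a Gaussian of variance $d^{-2\eta} = o(1)$, so every mixed moment of $\mathbf{z}_i$ of bounded order is $O(1)$ uniformly in $d$.

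Summing the resulting bounds over $j \in \{3,\dots,\bone\}$ delivers the first inequality. For the mixed bound, I would isolate the explicit leading term $\kappa(1/\sqrt{d})^{k-2}\prod_{j=1}^k(i\mathbf{t}(j))$ and control its modulus directly via $\prod_{j=1}^k|\mathbf{t}(j)| \le \|\mathbf{t}\|^k = (\mathbf{t}^\top\mathbf{t})^{k/2}$, and then handle the remainder $\sum_{j=k+1}^{\btwo}$ exactly as in the pure case. The only delicate point is the uniform-in-$d$ control of the $\rho_s$, which hinges on the Gaussian noise scale $d^{-\eta}$ being small, as arranged in \Cref{sec:noise}; beyond this, the lemma is a routine triangle-inequality bookkeeping exercise and should present no serious obstacle.
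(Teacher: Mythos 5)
Your proposal is correct and follows essentially the same route as the paper: Cauchy--Schwarz on each monomial via \Cref{prop:kroneckernorm}, the rescaling identity \eqref{eq:cumulantrelationship} (equivalently, additivity and homogeneity of cumulants) to extract the factor $(1/\sqrt{d})^{j-2}$, and uniform boundedness of the single-dimension cumulants $\rho_s$, with the leading mixed term handled separately via $\prod_{j=1}^k|\mathbf{t}(j)| \le (\mathbf{t}^\top\mathbf{t})^{k/2}$. The only difference is that you spell out the boundedness of $\rho_s$ through the M\"obius inversion and the bounded-plus-small-Gaussian decomposition of $\mathbf{z}_i$, a step the paper simply asserts.
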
\begin{proof}
        We note that $\Kpuredaggers(\mathbf{t}) = \sum_{j=3}^{\bone} \frac{1}{j!} \kappapuretop_j (i\mathbf{t})^{\otimes j}$,
        so by Cauchy--Schwarz,
        \begin{align*}
            |\Kpuredaggers(\mathbf{t})| &\le \sum_{j=3}^{\bone} \frac{1}{j!} \|\kappapure_j\| \|(i\mathbf{t})^{\otimes j}\| =  \sum_{j=3}^{\bone} \frac{1}{j!} \|\kappapure_j\| (\mathbf{t}^\top\mathbf{t})^{\frac{j}{2}}.
        \end{align*} By additivity and homogeneity of cumulants (\Cref{prop:additivity}, \Cref{prop:homogeneity}) and the fact that the individual cumulants of each $\mathbf{z}_i$ are bounded by some absolute constant, we have $\|\kappapure_j\| \le C\left(\frac{1}{\sqrt{d}}\right)^{j-2}$ for all $3 \le j \le \bone$, and thus  \begin{align*}
            |\Kpuredaggers(\mathbf{t})|\le C \sum_{j=3}^{\bone} \left(\frac{1}{\sqrt{d}}\right)^{j-2} (\mathbf{t}^\top\mathbf{t})^{\frac{j}{2}}.
            \end{align*} Using a similar argument combined with the fact that all cumulants of order $< k$ are zero and that $\kappa_{(1,1,\ldots,1)}$ is the only non-zero mixed cumulant of order $k$ by \Cref{lem:lowerordercumulantsarezero} we get that \begin{align*}
            \Kmixdaggers(\mathbf{t}) = \kappa \left(\frac{1}{\sqrt{d}} \right)^{k-2} \prod_{j=1}^k(i\mathbf{t}(j)) + \sum_{j=k+1}^{\btwo} \frac{1}{j!} \kappamixtop_j (i\mathbf{t})^{\otimes j}, \end{align*} so since $\|\kappakmix_j\|_\infty \le C\left(\frac{1}{\sqrt{d}}\right)^{j-2}$, \begin{align*}
            |\Kmixdaggers(\mathbf{t})| & \le |\kappa| \left(\frac{1}{\sqrt{d}} \right)^{k-2} (\mathbf{t}^\top\mathbf{t})^{\frac{k}{2}} + \sum_{j=k+1}^{\btwo}\frac{1}{j!}  \|\kappakmixj{j}\|(\mathbf{t}^\top\mathbf{t})^{\frac{j}{2}}\\ & = |\kappa| \left(\frac{1}{\sqrt{d}} \right)^{k-2} (\mathbf{t}^\top\mathbf{t})^{\frac{k}{2}} + C \sum_{j=k+1}^{\btwo} \left(\frac{1}{\sqrt{d}}\right)^{j-2} (\mathbf{t}^\top\mathbf{t})^{\frac{j}{2}}. 
        \end{align*}
    \end{proof}
    \noindent The following simplifies the above polynomials by asserting that their first term is asymptotically dominant. 
    \begin{corollary}\label{cor:kbound}
        There is a constant $C>0$ such that for sufficiently small $\delta > 0$ and all $\mathbf{t} \in \mathcal{B}(\delta)$, we have \begin{align*}
        |\Kpuredaggers(\mathbf{t})| \le \frac{C}{\sqrt{d}} (\mathbf{t}^\top\mathbf{t})^{\frac{3}{2}} \text{ and } |\Kmixdaggers(\mathbf{t})| \le |\kappa| \left( \frac{1}{\sqrt{d}} \right)^{k-2}  (\mathbf{t}^\top\mathbf{t})^{\frac{k}{2}} +  C \left(\frac{1}{\sqrt{d}}\right)^{k-1} (\mathbf{t}^\top\mathbf{t})^{\frac{k+1}{2}}
    \end{align*}
    \end{corollary}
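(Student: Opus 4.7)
The plan is to start from the bounds already established in \Cref{lem:boundingkdagger} and observe that, when restricted to $\mathbf{t} \in \mathcal{B}(\delta)$, each sum appearing there is a geometric series dominated by its first term. The crucial quantitative input is that for $\mathbf{t} \in \mathcal{B}(\delta)$ we have $\|\mathbf{t}\| \le \delta\sqrt{d}$, so
\begin{align*}
    \frac{(\mathbf{t}^\top\mathbf{t})^{1/2}}{\sqrt{d}} \le \delta.
\end{align*}
This means each factor that advances the index $j$ by one in the sums of \Cref{lem:boundingkdagger} contributes at most a factor of $\delta$.

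First I would treat $\Kpuredaggers$. Factoring out the $j = 3$ term of the upper bound in \Cref{lem:boundingkdagger} gives
\begin{align*}
    |\Kpuredaggers(\mathbf{t})| \le \frac{C}{\sqrt{d}}(\mathbf{t}^\top\mathbf{t})^{3/2} \sum_{j=3}^{\bone} \left( \frac{(\mathbf{t}^\top\mathbf{t})^{1/2}}{\sqrt{d}} \right)^{j-3} \le \frac{C}{\sqrt{d}}(\mathbf{t}^\top\mathbf{t})^{3/2} \sum_{\ell = 0}^{\infty} \delta^\ell = \frac{C}{(1-\delta)\sqrt{d}}(\mathbf{t}^\top\mathbf{t})^{3/2},
\end{align*}
and choosing $\delta \le 1/2$ (say) absorbs $1/(1-\delta)$ into a new constant $C'$, yielding the first displayed bound of the corollary.

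Next I would repeat the same manoeuvre for $\Kmixdaggers$, but keeping the first term separately since it is the only one carrying the factor $|\kappa|$. The $|\kappa|$-term contributes exactly $|\kappa|\,(1/\sqrt{d})^{k-2}(\mathbf{t}^\top\mathbf{t})^{k/2}$. For the remaining sum, factoring out the $j = k+1$ term yields
\begin{align*}
    C\sum_{j=k+1}^{\btwo} \left(\frac{1}{\sqrt{d}}\right)^{j-2}(\mathbf{t}^\top\mathbf{t})^{j/2} \le C\left(\frac{1}{\sqrt{d}}\right)^{k-1}(\mathbf{t}^\top\mathbf{t})^{(k+1)/2} \sum_{\ell = 0}^{\infty} \delta^{\ell},
\end{align*}
and again the geometric tail is absorbed into the constant, giving the second displayed bound. (One small caveat: the statement of \Cref{lem:boundingkdagger} as written has $(\mathbf{t}^\top\mathbf{t})^{1/2}$ multiplying the $|\kappa|$-term, but its proof makes clear the correct exponent is $(\mathbf{t}^\top\mathbf{t})^{k/2}$, as used here.)

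There is no real obstacle; the proof is a one-line geometric series estimate once $\delta$ is small enough to ensure the common ratio is bounded away from $1$. The only thing to double-check is that a single $\delta$ works simultaneously for both sums, but since both share the same ratio $(\mathbf{t}^\top\mathbf{t})^{1/2}/\sqrt{d}$, any $\delta \in (0,1)$ suffices.
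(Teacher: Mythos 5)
Your proposal is correct and matches the paper's own argument: both factor out the lowest-order term and control the remaining sum via the ratio $(\mathbf{t}^\top\mathbf{t}/d)^{1/2} \le \delta < 1$ on $\mathcal{B}(\delta)$. Your side remark about the exponent on the $|\kappa|$-term in \Cref{lem:boundingkdagger} is also right — the statement there has a typo ($(\mathbf{t}^\top\mathbf{t})^{1/2}$ instead of $(\mathbf{t}^\top\mathbf{t})^{k/2}$), and the corollary uses the corrected exponent exactly as you do.
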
 \begin{proof}
        This follows from the previous lemma by noting that for any $j$ and any $\ell < j$, we have 
        \begin{align*}
            (\mathbf{t}^\top \mathbf{t})^{\frac{j}{2}} \left(\frac{1}{\sqrt{d}}\right)^{j-2} 
            &= (\mathbf{t}^\top \mathbf{t})^{\frac{\ell}{2}} \left(\frac{1}{\sqrt{d}}\right)^{\ell-2} (\mathbf{t}^\top \mathbf{t})^{\frac{j - \ell}{2}} \left(\frac{1}{\sqrt{d}}\right)^{j-\ell} \\
            &= (\mathbf{t}^\top \mathbf{t})^{\frac{\ell}{2}} \left(\frac{1}{\sqrt{d}}\right)^{\ell-2}\left(\frac{\mathbf{t}^\top \mathbf{t}}{d}\right)^{\frac{j-\ell}{2}}\\
            &\le (\mathbf{t}^\top \mathbf{t})^{\frac{\ell}{2}} \left(\frac{1}{\sqrt{d}}\right)^{\ell-2},
        \end{align*} 
        since for $\mathbf{t} \in \mathcal{B}(\delta)$, we get $\mathbf{t}^\top\mathbf{t} \le \delta^2 d$ and thus $\frac{\mathbf{t}^\top \mathbf{t}}{d} < 1$ for sufficiently small $\delta$. Hence, 
        \[
            |\Kpuredaggers(\mathbf{t})| \le \sum_{j =3}^{\bone} \left(\frac{1}{\sqrt{d}}\right)^{j-2} (\mathbf{t}^\top\mathbf{t})^{\frac{j}{2}}
            = \left(\frac{1}{\sqrt{d}}\right)^{\ell-2}(\mathbf{t}^\top \mathbf{t})^{\frac{\ell}{2}}
            \sum_{j =3}^{\bone} \left(\frac{\mathbf{t}^\top \mathbf{t}}{d}\right)^{\frac{j-\ell}{2}} 
            \le C' (\mathbf{t}^\top \mathbf{t})^{\frac{\ell}{2}} \left(\frac{1}{\sqrt{d}}\right)^{\ell-2}.
        \]
         Choosing $\ell$ as in the bounds of
         \Cref{lem:boundingkdagger} and absorbing $C'$ into the constant $C$ concludes the proof. The same argument also applies to $|\Kmixdaggers(\mathbf{t})|$.
    \end{proof}

    \noindent We proceeed by bounding the remainder terms arising from leaving out higher order pure and mixed cumulants. These decay exponentially in $\bone, \btwo$, respectively with base $1/\sqrt{d}$, which follows by the chain rule and the fact that the CGF of all $\mathbf{z}_j$ is a function uniformly bounded in a neighborhood of $\mathbf{0}$ for all $j \in [d]$. 
    \begin{lemma}\label{lem:rbound}
    For every $\varepsilon > 0$ there is some sufficiently small $\delta> 0$ such that for all $\mathbf{t} \in \mathcal{B}(\delta)$,
    \begin{align*}
        |\rpure(\mathbf{t})| \le \varepsilon (\mathbf{t}^\top \mathbf{t})^{\frac{\bone}{2}} \left( \frac{1}{\sqrt{d}}  \right)^{\bone-2} \text{ and } |\rmix(\mathbf{t})| \le \varepsilon (\mathbf{t}^\top \mathbf{t})^{\frac{\btwo}{2}} \left( \frac{1}{\sqrt{d}}  \right)^{\btwo-2}.
        \end{align*} 
    \end{lemma}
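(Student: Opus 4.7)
The plan is to reduce both bounds to uniform continuity of the derivatives of the one-dimensional CGF $K_{\mathbf{z}_i}$ in a neighborhood of $\mathbf{0}$, and then exploit additivity and homogeneity of cumulants to extract the desired power of $1/\sqrt{d}$. I present the argument for $\rpure$; the bound on $\rmix$ is obtained by an analogous argument using $K_{(\mathrm{mix})} = K - K_{(\mathrm{pure})}$ and replacing $\bone$ with $\btwo$ throughout.

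First I apply Cauchy--Schwarz together with \Cref{prop:kroneckernorm} to the explicit form of $\rpure(\mathbf{t})$ to obtain
\begin{align*}
    |\rpure(\mathbf{t})|
    \le \frac{1}{\bone!} \bigl\| \boldsymbol{\nabla}^{\otimes \bone} \Kpure(c\mathbf{t}) - \boldsymbol{\nabla}^{\otimes \bone} \Kpure(\mathbf{0}) \bigr\| \cdot (\mathbf{t}^\top \mathbf{t})^{\bone/2}.
\end{align*}
It thus suffices to show that the tensor norm above is at most $\varepsilon\, \bone! \,(1/\sqrt{d})^{\bone-2}$ for $\delta$ small enough. Using \Cref{obs1}, $\Kpure(\mathbf{t}) = \sum_{j=1}^k K_{\mathbf{z}(j)}(\mathbf{t}(j))$, so $\boldsymbol{\nabla}^{\otimes \bone}\Kpure$ has only the $k$ pure $\bone$-th partials nonzero, the $j$-th one being $K_{\mathbf{z}(j)}^{(\bone)}(\mathbf{t}(j))$. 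Since $\mathbf{z}(j) = \sum_{i=1}^d \mathbf{z}_i(j)/\sqrt{d}$ with independent $\mathbf{z}_i$, expansion of the CGF gives
\begin{align*}
    K_{\mathbf{z}(j)}^{(\bone)}(t) = \Bigl(\frac{1}{\sqrt{d}}\Bigr)^{\!\bone}\sum_{i=1}^d K_{\mathbf{z}_i(j)}^{(\bone)}\!\bigl(t/\sqrt{d}\bigr),
\end{align*}
which already isolates the factor $(1/\sqrt{d})^{\bone}$ in each partial derivative.

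Next I argue that, for $\mathbf{t} \in \mathcal{B}(\delta)$ and any $c \in [0,1]$, we have $|c\mathbf{t}(j)|/\sqrt{d} \le \delta$. Each $\mathbf{z}_i(j)$ is a bounded distance variable plus a small-variance Gaussian $\boldsymbol\eta_i(j)$ (see \Cref{sec:noise}), so its CGF is analytic in a fixed open disc around $0$ and its $\bone$-th derivative is continuous there, with a modulus of continuity uniform in $i$ and $j$ (the bound depending only on $\bone$, $k$, and the constants hidden in the definition of $\mathbf{z}_i$). Consequently there exists $\omega(\delta)$ with $\omega(\delta) \to 0$ as $\delta \to 0^+$ such that, uniformly over $i$ and $j$,
\begin{align*}
    \bigl| K_{\mathbf{z}_i(j)}^{(\bone)}(u) - K_{\mathbf{z}_i(j)}^{(\bone)}(0) \bigr| \le \omega(\delta) \qquad\text{whenever } |u| \le \delta.
\end{align*}
Summing over the $d$ independent components then shows that each of the $k$ pure partials of $\Kpure(c\mathbf{t}) - \Kpure(\mathbf{0})$ is bounded in absolute value by $d \cdot \omega(\delta)\cdot (1/\sqrt{d})^{\bone} = \omega(\delta)(1/\sqrt{d})^{\bone-2}$, and taking the Euclidean norm of the resulting $k$-sparse tensor yields $\|\boldsymbol{\nabla}^{\otimes \bone}\Kpure(c\mathbf{t})- \boldsymbol{\nabla}^{\otimes \bone}\Kpure(\mathbf{0})\| \le \sqrt{k}\,\omega(\delta)(1/\sqrt{d})^{\bone-2}$. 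Choosing $\delta$ so small that $\sqrt{k}\,\omega(\delta)/\bone! \le \varepsilon$ produces the advertised bound.

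For the bound on $\rmix$, I write $K_{(\mathrm{mix})} = K - K_{(\mathrm{pure})}$ so that
\begin{align*}
    \boldsymbol{\nabla}^{\otimes \btwo}K_{(\mathrm{mix})}(c\mathbf{t}) - \boldsymbol{\nabla}^{\otimes \btwo}K_{(\mathrm{mix})}(\mathbf{0})
    = \bigl(\boldsymbol{\nabla}^{\otimes \btwo}K(c\mathbf{t}) - \boldsymbol{\nabla}^{\otimes \btwo}K(\mathbf{0})\bigr)
    - \bigl(\boldsymbol{\nabla}^{\otimes \btwo}\Kpure(c\mathbf{t}) - \boldsymbol{\nabla}^{\otimes \btwo}\Kpure(\mathbf{0})\bigr),
\end{align*}
and apply the exact same continuity argument to each piece, now using $K(\mathbf{t}) = \sum_{i=1}^d K_{\mathbf{z}_i}(\mathbf{t}/\sqrt{d})$ in the multivariate case. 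The main (minor) technical obstacle is ensuring that the modulus of continuity $\omega(\delta)$ is genuinely uniform in $i$: this requires noting that the Gaussian perturbation $\boldsymbol{\eta}_i$ has variance shrinking in $d$ while the bounded part $\boldsymbol{\Delta}_i$ has the same marginal distribution across dimensions (cycles) or varies only through the positions of the endpoints within a fixed compact space (chains), so the family of CGFs $\{K_{\mathbf{z}_i}\}_i$ is equicontinuous on any sufficiently small closed ball around $0$ for all $d$ large enough. Given this, the argument goes through and absorbs the combinatorial constants ($k^{\btwo/2}$ from counting non-zero mixed partials and $1/\btwo!$ from the Taylor prefactor) into the chosen $\delta$.
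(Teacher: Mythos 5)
Your proposal is correct and follows essentially the same route as the paper: Taylor remainder plus Cauchy--Schwarz to pull out $(\mathbf{t}^\top\mathbf{t})^{\bone/2}$, additivity of the CGF over the $d$ independent summands together with the chain rule to extract $(1/\sqrt{d})^{\bone}$ per derivative (hence $(1/\sqrt{d})^{\bone-2}$ after summing the $d$ terms), and a modulus of continuity for the per-dimension CGF derivatives near the origin that is uniform in the dimension index. The only substantive point to note is that this last uniformity step is exactly the content of the paper's deferred \Cref{lem:cgfderivative}, which the paper proves via explicit cumulant/mixed-moment bounds rather than the equicontinuity-from-analyticity argument you sketch; your decomposition of $\Kpure$ over the $k$ coordinates before differentiating is a harmless cosmetic variation.
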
 \begin{proof}
        We recall that \begin{align*}
            \rpure(\mathbf{t}) = \frac{1}{\bone!}\boldsymbol{\nabla}^{\otimes \bone} \left( \Kpure(c\mathbf{t}) - \Kpure(\mathbf{0}) \right)^\top (i\mathbf{t})^{\otimes \bone} 
        \end{align*} for some $c \in [0,1]$ (that can depend on $\mathbf{t}$) by \Cref{thm:taylor}. Now, using that \begin{align*}
            \Kpure(\mathbf{t}) = \sum_{j=1}^d \Kpure\left(\frac{\mathbf{t}}{\sqrt{d}}, \mathbf{z}_j\right)
        \end{align*} by additivity due to independence and since we normalize each summand by $1/\sqrt{d}$, we get from the chain rule that \begin{align*}
            &\boldsymbol{\nabla}^{\otimes \bone} \left( \Kpure(c\mathbf{t}) - \Kpure(\mathbf{0}) \right) \\
            &\hspace{2em}= \left(\frac{1}{\sqrt{d}}\right)^{\bone} \sum_{j = 1}^d \left( \left.\boldsymbol{\nabla}^{\otimes \bone}\Kpure\left(\mathbf{x}, \mathbf{z}_j\right)\right|_{\mathbf{x} = \frac{c\mathbf{t}}{\sqrt{d}}} - \left.\boldsymbol{\nabla}^{\otimes \bone}\Kpure\left(\mathbf{x}, \mathbf{z}_j\right)\right|_{\mathbf{x} = \mathbf{0}} \right).
        \end{align*} An analogous expression can be obtained for $\boldsymbol{\nabla}^{\otimes \btwo} \left( \Kmix(c\mathbf{t}) - \Kmix(\mathbf{0}) \right)$. Now, we rely on the following statement asserting that each term in the above sum is small. \begin{restatable}{claim}{cgfderivative}
        \label{lem:cgfderivative}
            For every $b \in \mathbb{N}$, every $c \in [0,1]$, and every $\varepsilon > 0$ there is a $\delta > 0$ such that for all $\mathbf{t} \in \mathcal{B}(\delta)$ and all $j \in [d]$,
            \begin{align*}
                &\left\| \left.\boldsymbol{\nabla}^{\otimes b}\Kpure\left(\mathbf{x}, \mathbf{z}_j\right)\right|_{\mathbf{x} = \frac{c\mathbf{t}}{\sqrt{d}}} - \left.\boldsymbol{\nabla}^{\otimes b}\Kpure\left(\mathbf{x}, \mathbf{z}_j\right)\right|_{\mathbf{x} = \mathbf{0}} \right\|\leq\eps \text{ and } \\&\left\| \left.\boldsymbol{\nabla}^{\otimes b}\Kmix\left(\mathbf{x}, \mathbf{z}_j\right)\right|_{\mathbf{x} = \frac{c\mathbf{t}}{\sqrt{d}}} - \left.\boldsymbol{\nabla}^{\otimes b}\Kmix\left(\mathbf{x}, \mathbf{z}_j\right)\right|_{\mathbf{x} = \mathbf{0}} \right\| \le \varepsilon.
            \end{align*}
        \end{restatable} \begin{proof}
            The proof of this claim is deferred to \Cref{sec:deferrederror} because it is very technical.
        \end{proof}
        Applying the above, we get that \begin{align*}
            |\rpure(\mathbf{t})| &\le \frac{1}{\bone!} \left(\frac{1}{\sqrt{d}}\right)^{\bone}\sum_{j = 1}^d  \left\| \left.\boldsymbol{\nabla}^{\otimes \bone}\Kpure\left(\mathbf{x}, \mathbf{z}_j\right)\right|_{\mathbf{x} = \frac{c\mathbf{t}}{\sqrt{d}}} - \left.\boldsymbol{\nabla}^{\otimes \bone}\Kpure\left(\mathbf{x}, \mathbf{z}_j\right)\right|_{\mathbf{x} = \mathbf{0}} \right\|(\mathbf{t}^\top \mathbf{t})^{\frac{\bone}{2}} \\
            &\le \varepsilon (\mathbf{t}^\top \mathbf{t})^{\frac{\bone}{2}} \left( \frac{1}{\sqrt{d}}  \right)^{\bone-2}.
        \end{align*} An analogous argument applies to $\rmix$.
    \end{proof}
    \noindent In order to tame the exponentials appearing in the quantities we wish to bound, we further prove the following lemma. This will later allow us to assert that the exponentials are dominated by the factor of $\exp\left(-\frac{\mathbf{t}^\top\mathbf{t}}{2}\right)$ which is additionally part of all integrals we wish to bound (cf \eqref{eq:errorsmallt}). \begin{corollary}\label{cor:epsilonforexponentials}
        For every $\varepsilon > 0$, there is some $\delta> 0$ such that for all $\mathbf{t} \in \mathcal{B}(\delta)$, \begin{align*}
            |\Kpuredaggers(\mathbf{t})|, |\Kmixdaggers(\mathbf{t})|, |\rpure(\mathbf{t})|, |\rmix(\mathbf{t})| \le \varepsilon ( \mathbf{t}^\top\mathbf{t} ).
        \end{align*}
    \end{corollary}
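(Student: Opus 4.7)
The plan is to directly assemble the bounds already established in \Cref{cor:kbound} and \Cref{lem:rbound}, using a single elementary rewriting. Each of those bounds is a sum of terms of the form $c(\mathbf{t}^\top\mathbf{t})^{j/2}(1/\sqrt{d})^{j-2}$ (possibly carrying an extra $|\kappa|$ factor), and the key algebraic observation is
\begin{align*}
(\mathbf{t}^\top\mathbf{t})^{j/2}\left(\frac{1}{\sqrt{d}}\right)^{j-2}
= (\mathbf{t}^\top\mathbf{t}) \cdot \left(\frac{\mathbf{t}^\top\mathbf{t}}{d}\right)^{(j-2)/2}
\le \delta^{j-2}(\mathbf{t}^\top\mathbf{t}) \quad \textup{for all } \mathbf{t} \in \mathcal{B}(\delta).
\end{align*}
So after factoring out a single $(\mathbf{t}^\top\mathbf{t})$, what remains is controlled by a power of $\delta$ and can be made arbitrarily small.

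I would then apply this rewriting to each of the four quantities in turn. \Cref{cor:kbound} immediately yields $|\Kpuredaggers(\mathbf{t})| \le C\delta(\mathbf{t}^\top\mathbf{t})$, so any $\delta \le \varepsilon/C$ suffices. For $\rpure$ and $\rmix$, \Cref{lem:rbound} provides bounds with $j = \bone \ge 3$ and $j = \btwo \ge k \ge 2$ respectively, together with a free constant (the $\varepsilon$ in the statement of \Cref{lem:rbound}) that can itself be chosen as small as desired; hence both quantities are at most $\varepsilon(\mathbf{t}^\top\mathbf{t})$ for an appropriately small $\delta$.

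The only case requiring extra care---and what I expect to be the main (albeit minor) obstacle---is the leading $|\kappa|$-term in the bound for $|\Kmixdaggers(\mathbf{t})|$, which upon rewriting becomes $|\kappa|\delta^{k-2}(\mathbf{t}^\top\mathbf{t})$. For $k \ge 3$ the factor $\delta^{k-2}$ alone supplies the needed smallness, since $|\kappa|$ is bounded by an absolute constant (each entry of $\mathbf{z}_j$ is bounded). For $k = 2$, however, $\delta^{k-2} = 1$ provides no smallness, and here one must invoke the hypothesis $|\kappa| = o(1)$ included in \Cref{thm:maindensityapprox} precisely for the $k = 2$ case: for $d$ sufficiently large, $|\kappa| \le \varepsilon$, which closes the argument. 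Taking $\delta$ to be the minimum over the four individual thresholds then completes the proof.
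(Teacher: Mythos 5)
Your proposal is correct and follows essentially the same route as the paper's proof: the same rewriting $(\mathbf{t}^\top\mathbf{t})^{j/2}(1/\sqrt{d})^{j-2} = (\mathbf{t}^\top\mathbf{t}/d)^{(j-2)/2}(\mathbf{t}^\top\mathbf{t}) \le \delta^{j-2}(\mathbf{t}^\top\mathbf{t})$ applied to the bounds from \Cref{cor:kbound} and \Cref{lem:rbound}, with the same case distinction for the leading $|\kappa|$-term ($|\kappa| = \bigO{1}$ when $k \ge 3$, and the assumption $|\kappa| = o(1)$ when $k = 2$).
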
 \begin{proof}
        We start with $\Kmixdaggers$. By \Cref{cor:kbound}, we have \begin{align*}
            |\Kmixdaggers(\mathbf{t})|\le |\kappa| \left( \frac{1}{\sqrt{d}} \right)^{k-2}  (\mathbf{t}^\top\mathbf{t})^{\frac{k}{2}} +  C \left(\frac{1}{\sqrt{d}}\right)^{k-1} (\mathbf{t}^\top\mathbf{t})^{\frac{k+1}{2}}.
        \end{align*} Now for all $k\ge 3$ since  $|\kappa| = \bigO{1}$ we get \begin{align}\label{eq:epsilonexample}
            |\kappa| \left( \frac{1}{\sqrt{d}} \right)^{k-2}  (\mathbf{t}^\top\mathbf{t})^{\frac{k}{2}} +  C \left(\frac{1}{\sqrt{d}}\right)^{k-1} (\mathbf{t}^\top\mathbf{t})^{\frac{k+1}{2}} &= |\kappa| \left(\frac{\mathbf{t}^\top \mathbf{t}}{d}\right)^{\frac{k-2}{2}} (\mathbf{t}^\top \mathbf{t}) + C \left(\frac{\mathbf{t}^\top \mathbf{t}}{d}\right)^{\frac{k-1}{2}} (\mathbf{t}^\top \mathbf{t})\nonumber 
            \\ & \le \varepsilon (\mathbf{t}^\top \mathbf{t})
        \end{align} since $\frac{\mathbf{t}^\top \mathbf{t}}{d} \le \delta^2$ for $\mathbf{t} \in \mathcal{B}(\delta)$, so the above inequality follows for sufficiently small $\delta > 0$. If $k = 2$, same follows even more directly since we assumed that $|\kappa| = o(1)$ in this case. Using an entirely analogous argument as in (\ref{eq:epsilonexample}), we can prove our claim also for $\Kpuredaggers, \rpure,$ and $\rmix$ by relying on the bounds from \Cref{cor:kbound} and \Cref{lem:rbound}.
    \end{proof}

    \noindent The above sequence of lemmas can be stacked together to prove \Cref{clm:restbounds}.

    \begin{proof}[Proof of \Cref{clm:restbounds}]
        We only show our claim for $|r_1(\mathbf{t})|$ since the arguments for the other terms are analogous. We start by noting that by (the univariate version of) \Cref{thm:taylor} and any function $g(\mathbf{t})$, we have \begin{align*}
            \exp\left(g(\mathbf{t})\right) = 1 + g(\mathbf{t}) + g(\mathbf{t})(\exp\left(c g(\mathbf{t})\right) - 1) = 1 +  g(\mathbf{t})\exp\left(c g(\mathbf{t})\right)
         \end{align*} for some $c \in [0,1]$ (possibly dependent on $\mathbf{t}$). With this, we can rewrite \[\exp(\rpure(\mathbf{t})) = 1 + \rpure(\mathbf{t})\exp(c\rpure(\mathbf{t})).\]
        Now, we have that \begin{align*}
            r_1(\mathbf{t}) = \rpure(\mathbf{t})\exp\big( \underbrace{c \rpure(\mathbf{t}) + \Kpuredaggers(\mathbf{t}) + \Kmixdaggers(\mathbf{t}) + \rpure(\mathbf{t})}_{\eqqcolon u(\mathbf{t})}\big)
        \end{align*} for some $c \in [0,1]$ that might depend on $\mathbf{t}$. Now, due to \Cref{cor:epsilonforexponentials}, we get that $u(\mathbf{t}) \le \frac{\mathbf{t}^\top\mathbf{t}}{8}$ for some sufficiently small $\delta > 0$. Thus, applying the bound on $|\rpure(\mathbf{t})|$ from \Cref{lem:rbound}, we obtain \begin{align*}
            |r_1(\mathbf{t})| \le |\rpure(\mathbf{t})|\exp\left( \frac{ \mathbf{t}^\top\mathbf{t}}{8} \right) \le (\mathbf{t}^\top \mathbf{t})^{\frac{\btwo}{2}} \left( \frac{1}{\sqrt{d}}  \right)^{\btwo-2}\exp\left(\frac{\mathbf{t}^\top\mathbf{t}}{8}\right)
        \end{align*} as desired. The same strategy (i.e. using \Cref{cor:epsilonforexponentials} to tame the exponential terms, and using \Cref{lem:rbound} and \Cref{cor:kbound} for the rest) also applies to $r_2, r_3,$ and $r_4$. For the bound on $|P_1(\mathbf{t})|, |P_2(\mathbf{t})|$, \Cref{cor:epsilonforexponentials} suffices. 
    \end{proof}

\subsubsection{Large Values of $\mathbf{t}$}\label{sec:larget}

We proceed by bounding the second part of the Error, namely \begin{align*}
    \text{Err}\uparrow (\mathbf{x}) \coloneqq \int_{\overline{\mathcal{B}(\delta)}} |C(\mathbf{t})| \text{d} \mathbf{t} + \int_{\overline{\mathcal{B}(\delta)}} |\tilde{C}(\mathbf{t})| \text{d} \mathbf{t}.
\end{align*} Recalling that \begin{align*}
    \tilde{C}(\mathbf{t}) &= \exp\left( \Kpure \right) + \exp\left( -\frac{\mathbf{t}^\top\mathbf{t}}{2} \right)P_1(\mathbf{t})P_2(\mathbf{t})\\ 
    &= \exp\left( \Kpure \right) + \exp\left( -\frac{\mathbf{t}^\top\mathbf{t}}{2} \right)\left(\sum_{\ell_1=0}^{\sone} \sum_{\ell_2=1}^{\stwo} \frac{1}{\ell_1!\ell_2!} \left(\Kpuredaggers(\mathbf{t})\right)^{\ell_1} \left(\Kmixdaggers(\mathbf{t})\right)^{\ell_2} \right)
\end{align*} yields that \begin{align*}
    \text{Err}\uparrow (\mathbf{x}) \le \underbrace{\int_{\overline{\mathcal{B}(\delta)}} |C(\mathbf{t})|\text{d} \mathbf{t}}_{\eqqcolon \text{Err}_1(\mathbf{x})} + \underbrace{\int_{\overline{\mathcal{B}(\delta)}} \left|\exp\left( \Kpure(\mathbf{t}) \right)\right| \text{d} \mathbf{t}}_{\eqqcolon \text{Err}_2(\mathbf{x})} + \underbrace{\int_{\overline{\mathcal{B}(\delta)}} \exp\left( -\frac{\mathbf{t}^\top\mathbf{t}}{2} \right)|P_1(\mathbf{t})P_2(\mathbf{t})| \text{d} \mathbf{t}}_{\eqqcolon \text{Err}_3(\mathbf{x})}.
\end{align*} We bound these terms separately in the following paragraphs.

\paragraph{Bounding $\text{Err}_1(\mathbf{x})$.}
Here we use that $C$ as the characteristic function (CF) of our random vector $\mathbf{z}$ is the product of the individual CFs of the individual $\mathbf{z}_i$ out of which a constant fraction has absolute value strictly smaller than $1$, which follows from the fact that we assume a constant fraction of the $\mathbf{z}_i$ to meet Cramèr's condition. 
Since by independence, we have $C(\mathbf{t}) = \prod_{i=1}^d C_{\mathbf{z}_i}\left(\frac{\mathbf{t}}{\sqrt{d}} \right)$, this means that $|C(\mathbf{t})|$ is exponentially small in $d$. To ensure that this remains true even after integrating, we need the following lemma which ensures that the individual $C_{\mathbf{z}_i}$ are integrable. This follows because of the Gaussian noise we added (cf. \Cref{sec:noise}) and further ensures that the inversion theorem (\Cref{thm:inversion}) is applicable. 

\begin{lemma}[Integrability of the CF]\label{lem:integrability}
    For every $j$, \begin{align*}
        \int_{\mathbb{R}^k} |C_{\mathbf{z}_j}(\mathbf{s})| \text{d} \mathbf{s} \le \left( d^{\eta}\sqrt{2\pi}\zeta\sigma \right)^k < \infty
    \end{align*} where $\zeta = \sqrt{1 + \sigma^{-2}d^{-2\eta}}$ as in \Cref{sec:noise}.
\end{lemma}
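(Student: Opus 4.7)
The key observation is that $\mathbf{z}_j$ is, by construction in \Cref{sec:noise}, a sum of two independent random vectors:
\[
\mathbf{z}_j = \frac{\Deltabold{j} - \mu}{\zeta\sigma} + \frac{\boldsymbol{\eta}_j}{\zeta\sigma},
\]
where $\boldsymbol{\eta}_j \sim \mathcal{N}(\mathbf{0}, d^{-2\eta}\mathbf{I}_k)$ is independent of $\Deltabold{j}$. Since the characteristic function factorizes over independent summands, I would write
\[
C_{\mathbf{z}_j}(\mathbf{s}) = C_{(\Deltabold{j}-\mu)/(\zeta\sigma)}(\mathbf{s}) \cdot C_{\boldsymbol{\eta}_j/(\zeta\sigma)}(\mathbf{s}).
\]

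\textbf{Main step.} The first factor has modulus at most $1$ (as any CF does), so all the integrability is carried by the Gaussian factor. The rescaled noise $\boldsymbol{\eta}_j/(\zeta\sigma)$ is a centered Gaussian vector with covariance $\frac{d^{-2\eta}}{\zeta^2\sigma^2}\mathbf{I}_k$, so its CF is
\[
C_{\boldsymbol{\eta}_j/(\zeta\sigma)}(\mathbf{s}) = \exp\!\left(-\frac{d^{-2\eta}}{2\zeta^2\sigma^2}\,\mathbf{s}^\top\mathbf{s}\right).
\]
Thus $|C_{\mathbf{z}_j}(\mathbf{s})| \le \exp(-\frac{d^{-2\eta}}{2\zeta^2\sigma^2}\mathbf{s}^\top\mathbf{s})$ pointwise.

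\textbf{Conclusion.} The bound is now an elementary product of one-dimensional Gaussian integrals. Using $\int_{\mathbb R} e^{-a t^2}\,\mathrm{d}t = \sqrt{\pi/a}$ in each coordinate with $a = \frac{d^{-2\eta}}{2\zeta^2\sigma^2}$ gives
\[
\int_{\mathbb{R}^k} |C_{\mathbf{z}_j}(\mathbf{s})|\,\mathrm{d}\mathbf{s} \;\le\; \left(\frac{2\pi \zeta^2 \sigma^2}{d^{-2\eta}}\right)^{\!k/2} \;=\; \left(d^{\eta}\sqrt{2\pi}\,\zeta\,\sigma\right)^{k},
\]
which is the claimed bound and is finite because $\eta$, $\zeta$, $\sigma$, and $d$ are finite.

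\textbf{Anticipated difficulty.} There is essentially no obstacle; the entire content of the lemma is the convolution with Gaussian noise introduced in \Cref{sec:noise}, which was designed precisely to make the CF integrable. The only things to be careful about are correctly extracting the covariance of the rescaled noise and remembering that $\zeta$ was chosen so that the variance of each coordinate of $\mathbf{z}_j$ is $1$ -- but these are definitional and do not require any real estimation.
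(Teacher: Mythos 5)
Your proof is correct and follows essentially the same route as the paper: decompose $\mathbf{z}_j$ into signal plus independent Gaussian noise, factor the CF, bound the signal factor by $1$ in modulus, and integrate the Gaussian CF coordinatewise. The final Gaussian integral evaluation matches the paper's constant exactly.
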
 \begin{proof}
Similarly to the decomposition of $\mathbf{z}_i$ into \say{signal} and \say{noise} from \Cref{sec:noise} and specifically from \eqref{eq:znoise}, we decompose each individual $\mathbf{z}_j$ by setting \begin{align*}
    \mathbf{z}_j = \Deltaboldhat{j}/\zeta + \boldsymbol{\eta}_j \text{ where } \Deltaboldhat{j} \coloneqq \frac{\boldsymbol{\Delta}_j - \mu}{\sigma} \text{ and }  \boldsymbol{\eta}_j \coloneqq  \frac{\boldsymbol{\eta}_j}{\zeta\sigma} \sim \mathcal{N} \left( 0, \frac{d^{-2\eta}}{(\zeta\sigma)^2} \mathbf{I}_k \right)
\end{align*} Now, since the noise $\boldsymbol{\eta}_j$ is independent of $\hat{\mathbf{z}}_j$, we can write $C_{\mathbf{z}_j}(\mathbf{s}) = C_{\Deltaboldhat{j}}\left(\mathbf{s}/\zeta\right)C_{\boldsymbol{\hat{\eta}}_j}(\mathbf{s})$ and we can note that \begin{align*}
    \int_{\mathbb{R}^k} |C_{\mathbf{z}_j}(\mathbf{s})| \text{d} \mathbf{s} &\le \int_{\mathbb{R}^k} |C_{\Deltaboldhat{j}}(\mathbf{s}/\zeta)||C_{\boldsymbol{\hat{\eta}}_j}(\mathbf{s})| \text{d} \mathbf{s} \\
    &\le \int_{\mathbb{R}^k} |C_{\boldsymbol{\hat{\eta}}_j}(\mathbf{s})| \text{d} \mathbf{s}\\
    &= \int_{\mathbb{R}^k} \exp\left( -\frac{ d^{-2\eta}}{2(\zeta\sigma)^2} \mathbf{s}^\top \mathbf{s} \right) \text{d} \mathbf{s}
\end{align*} 
since the CF of a $\mathcal{N}(0, \mathbf{\Sigma})$ RV is $\exp\left( -\frac{\mathbf{s}^\top\mathbf{\Sigma}\mathbf{s}}{2} \right)$. Factorizing the integral yields
\begin{align*}
    \int_{\mathbb{R}^k} |C_{\mathbf{z}_j}(\mathbf{s})| \text{d} \mathbf{s}&\le \left(\int_{\mathbb{R}^k} \exp\left( -\frac{ d^{-2\eta}}{2(\zeta\sigma)^2} s^2 \right) \text{d} s \right)^k\\
    &= \left( d^{\eta}\zeta\sigma \int_{\mathbb{R}^k} \exp\left( -\frac{ r^2}{2} \right) \text{d} r \right)^k\\
    &= \left( d^{\eta}\sqrt{2\pi}\zeta\sigma \right)^k < \infty
\end{align*} where we substituted $s = d^{\eta}\sqrt{\sigma}r$.
\end{proof}

\noindent Proceeding, we further use the fac that at least $cd$ of the $\mathbf{z}_i$ meet Cramèr's condition, i.e., there is a set $S \subseteq [d]$ with $|S| \ge cd$ such that \begin{align*}
    |C_{\mathbf{z}_i}(\mathbf{t})| \le 1 - \varepsilon(\delta) \text{ for all } i\in S. 
\end{align*} Using this, we bound \begin{align*}
    \text{Err}_1(\mathbf{t}) =\int_{\overline{\mathcal{B}(\delta)}} |C(\mathbf{t})| \text{d} \mathbf{t} &= \int_{\overline{\mathcal{B}(\delta)}} \left|C_{\mathbf{z}_1}\left(\frac{\mathbf{t}}{\sqrt{d}}\right)\right| \prod_{j=2}^d\left|C_{\mathbf{z}_j}\left(\frac{\mathbf{t}}{\sqrt{d}}\right)\right| \text{d} \mathbf{t}\\
    &\le  (1 - \varepsilon(\delta))^{cd - 1} \int_{\mathbb{R}^k} \left|C_{\mathbf{z}_1}\left(\frac{\mathbf{t}}{\sqrt{d}}\right)\right| \text{d} \mathbf{t}\\
    &= (1 - \varepsilon(\delta))^{cd - 1} d^{k/2} \int_{\mathbb{R}^k} \left|C_{\mathbf{z}_1}\left(\mathbf{s}\right)\right| \text{d} \mathbf{s}\\
    &\le (1 - \varepsilon(\delta))^{cd - 1} d^{k/2} \left( d^{\eta}\sqrt{2\pi\sigma} \right)^k \\
    &= \exp \left( -\Omega(d) + O(\log(d))  \right) = \exp( -\Omega(d)).
\end{align*}

\paragraph{Bounding $\text{Err}_2(\mathbf{x})$.} 
To bound \begin{align*}
    \text{Err}_2(\mathbf{x}) = \int_{\overline{\mathcal{B}(\delta)}} \left|\exp\left( \Kpure(\mathbf{t}) \right)\right| \text{d} \mathbf{t},
\end{align*} We note that $\exp\left( \Kpure(\mathbf{t}) \right)$ is in fact the CF of a random vector $\tilde{\mathbf{z}}$ in which each entry has the same marginal distribution as in $\mathbf{z}$ but the entries are all independent. Using this fact, it is not hard to conclude that the same argument used in the previous paragraph is applicable here as well and yields that $\text{Err}_2(\mathbf{x}) = \exp( -\Omega(d))$ as desired.

\paragraph{Bounding $\text{Err}_3(\mathbf{x})$.}
To bound the third term, we use that  \begin{align}\label{eq:intcfapprox}
    \text{Err}_3(\mathbf{x}) &= \int_{\overline{\mathcal{B}(\delta)}} |\tilde{C}(\mathbf{t})| \text{d} \mathbf{t} = \int_{\overline{\mathcal{B}(\delta)}} \exp\left( -\frac{\mathbf{t}^\top\mathbf{t}}{2} \right)|P_1(\mathbf{t})P_2(\mathbf{t})| \text{d} \mathbf{t} \nonumber\\
    &\hspace{2cm}\le \sum_{\ell_1=0}^{\sone} \sum_{\ell_2=1}^{\stwo}  \frac{1}{\ell_1!\ell_2!} \int_{\overline{\mathcal{B}(\delta)}} \exp\left( -\frac{\mathbf{t}^\top\mathbf{t}}{2} \right) \left| \Kpuredaggers(\mathbf{t})\right|^{\ell_1}\left|\Kmixdaggers(\mathbf{t})\right|^{\ell_2} \text{d} \mathbf{t}.
\end{align} We expand $ \left|\Kpuredaggers(\mathbf{t})\right|^{\ell_1}$ and $\left|\Kmixdaggers(\mathbf{t})\right|^{\ell_2}$ as a polynomial using that \begin{align*}
\left|\Kpuredaggers(\mathbf{t})\right|^{\ell_1} \le \left(\sum_{j=3}^{\bone} \| \boldsymbol{\kappa}_j^{(\textup{pure})} \|\| (i\mathbf{t})^{\otimes j} \|\right)^{\ell_1} \le \left(\sum_{j=3}^{\bone} \| \boldsymbol{\kappa}_j^{(\textup{pure})} \|(\mathbf{t}^\top\mathbf{t})^{\frac{j}{2}}\right)^{\ell_1}
\end{align*} by Cauchy-Schwarz. Applying the same idea to $\left|\Kmixdaggers(\mathbf{t})\right|^{\ell_2}$ and using the fact that all cumulants up to order $\max\{\bone,\btwo\}$ are bounded in absolute value by some constant that only depends on $k, \bone, \btwo$, we get that there are coefficients $a_1, a_2,\ldots, a_{\bone\sone + \btwo\stwo}$ all bounded in absolute value by some constant only dependent on $k, \bone, \btwo, \ell_1, \ell_2$, such that \begin{align*}
    \left| \Kpuredaggers(\mathbf{t})\right|^{\ell_1}\left|\Kmixdaggers(\mathbf{t})\right|^{\ell_2} = \sum_{j=1}^{\ell_1\sone + \ell_2\stwo} a_j(\mathbf{t}^\top\mathbf{t})^{\frac{j}{2}}. 
\end{align*} Therefore, we get that \begin{align}\label{eq:erroralmostfinished}
    \text{Err}_3(\mathbf{x}) = \int_{\overline{\mathcal{B}(\delta)}} |\tilde{C}(\mathbf{t})| \text{d} \mathbf{t} \le \sum_{\ell_1=0}^{\sone} \sum_{\ell_2=1}^{\stwo}\sum_{j=1}^{\ell_1\sone + \ell_2\stwo} \frac{|a_j|}{\ell_1!\ell_2!} \int_{\overline{\mathcal{B}(\delta)}} (\mathbf{t}^\top\mathbf{t})^{\frac{j}{2}} \exp\left( -\frac{\mathbf{t}^\top\mathbf{t}}{2} \right) \text{d} \mathbf{t}.
\end{align} To bound the remaining integral, we use \Cref{lem:integrallimiteddomain} the fact that $k, \sone,\stwo,\bone,\btwo$ are constants, we get that $\text{Err}_3(\mathbf{x}) = d^{-\omega(1)}$. This implies that \begin{align*}
    \textup{Err}\uparrow (\mathbf{x}) \le \int_{\overline{\mathcal{B}(\delta)}} |C(\mathbf{t})| \text{d} \mathbf{t} + \int_{\overline{\mathcal{B}(\delta)}} |\tilde{C}(\mathbf{t})| \text{d} \mathbf{t} = \exp( -\Omega(d))
\end{align*} as desired.% and thus finishes the proof of \Cref{thm:maindensityapprox}.

\subsubsection{Bounding the Higher-Order Correction Terms}\label{sec:higherorderterms}

The last crucial aspect of \Cref{thm:maindensityapprox} that we need to take care of is the bound on the infinity-norm of the $\boldsymbol{\alpha}_j$ that appear in our higher-order correction terms. We sketch once again, how these correction terms are obtained and afterwards prove that \begin{align*}
    \|\boldsymbol{\alpha}_{k+j}\|_\infty \le C d^{-\frac{k-2}{2}} \begin{cases}
        \max \left\{ |\kappa|, \frac{1}{\sqrt{d}} \right\} d^{-\frac{j-1}{6}} & \textup{if } k \ge 3 \text{ or } (k = 2 \text{ and } |\kappa| < d^{-\frac{1}{3}})\\
        |\kappa|\max \left\{ |\kappa|^{\frac{j}{2}}, d^{-\frac{j-1}{6}} \right\} & \textup{if } k = 2 \text{ and } |\kappa| \ge d^{-\frac{1}{3}}\end{cases} .
\end{align*} Recall that our approximate density $\tilde{f}$ arises from inverting the Fourier Transform associated to the product of polynomials $P_1(\mathbf{t})P_2(\mathbf{t})$ where \begin{align*}
    P_1(\mathbf{t}) \coloneqq 1 + \sum_{\ell = 1}^{s_1} \frac{1}{\ell!} \left( \Kpuredaggers(\mathbf{t}) \right)^\ell \text{ and } P_2(\mathbf{t}) \coloneqq \sum_{\ell = 1}^{s_2} \frac{1}{\ell!} \left( \Kmixdaggers(\mathbf{t}) \right)^\ell.
\end{align*} We note that when expanding $P_2$, we get a polynomial of the form \begin{align*}
    P_2(\mathbf{t}) = \sum_{\ell = 1}^{s_2} \frac{1}{\ell!}\sum_{j_1=k}^{b_2} \sum_{j_2=k}^{b_2} \ldots \sum_{j_\ell=k}^{b_2} \frac{1}{j_1!j_2!\cdots j_\ell!}\left( \otimes_{m=1}^\ell \kappakmixj{j_m} \right)^\top (i \mathbf{t})^{\otimes \sum_{m=1}^\ell j_m} = \sum_{j=k}^{b_2s_2} \boldsymbol{\gamma}_{j}^\top (i \mathbf{t})^{\otimes j}
\end{align*} where the $\boldsymbol{\gamma}_{j} \in \mathbb{R}^{k^j}$ collect all the terms appearing in front of $(i\mathbf{t})^{\otimes j}$, i.e. \begin{align}\label{eq:gammaj}
    \boldsymbol{\gamma}_{j} 
    =  \sum_{\ell =1}^{j/k} \frac{1}{\ell!} \hspace{.2cm} \sum_{\substack{(j_1,j_2,\ldots j_\ell) \in \mathbb{N}^{\times \ell} \\ \sum_{m} j_m 
    = j\\j_m \ge k}} \frac{1}{j_1!\ldots j_\ell!} \left(\otimes_{m=1}^\ell \kappakmixj{j_m}\right).
\end{align} 
Note that we can assume that the first sum above stops at $\ell/k$ instead of $s_2$ since the second sum assumes that each $j_m \ge k$ and that all the $j_m$ sum to $j$, which is only possible if $\ell \le j/k$. With this and the fact that all mixed cumulants of order $k \le j_m \le b_2$ are bounded by \begin{align*}
    \|\kappakmixj{j_m}\|_\infty \le \begin{cases}
       |\kappa| d^{-\frac{j_m-2}{2}} &\text{if } j_m = k\\
       \hspace{.1cm}C_1 d^{-\frac{j_m-2}{2}} &\text{if } j_m > k
    \end{cases} \text{ we get that } 
    \|\kappakmixj{j_m}\|_\infty^{1/j_m} \le \begin{cases}
       |\kappa|^{\frac{1}{k}} d^{-\frac{k-2}{2k}} &\text{if } j_m = k\\
       C_1^{\frac{1}{k+1}} d^{-\frac{k-1}{2(k+1)}} &\text{if } j_m > k
    \end{cases}
\end{align*} 
for some constant $C_1 > 0$.
Using that all $j_m$ in \eqref{eq:gammaj} sum to $j$, we get for all $1 \le \ell \le j/k$ and $j_1, \dots, j_\ell$
\begin{align*}
    \|\otimes_{m=1}^\ell \kappakmixj{j_m}\|_\infty 
    &\le \prod_{m=1}^{\ell} \max \left\{ |\kappa|^{\frac{j_m}{k}} d^{-j_m\frac{k-2}{2k}}, C_1^{\frac{j_m}{k+1}} d^{-j_m\frac{k-1}{2(k+1)}} \right\} \\
    &\le C_1^{\frac{j}{k+1}}  \max \left\{ |\kappa|^{\frac{j}{k}} d^{-j\frac{k-2}{2k}}, d^{-j\frac{k-1}{2(k+1)}} \right\}, 
\end{align*}
where we assume $C_1 \ge 1$ in the last inequality.
Further, from \eqref{eq:gammaj} and the multinomial theorem, we get
\begin{align*}
    \| \boldsymbol{\gamma}_{j} \|_\infty \le C_1^{\frac{j}{k+1}} \max \left\{ |\kappa|^{\frac{j}{k}} d^{-j\frac{k-2}{2k}}, d^{-j\frac{k-1}{2(k+1)}} \right\} \sum_{j=1}^{\ell/k} \frac{\ell^{j}}{\ell! j!}
\end{align*}
and by setting $C \coloneqq C_1^{\frac{j}{k+1}} \sum_{\ell=1}^{j/k} \frac{\ell^{j}}{\ell! j!}$ (note that $j \le b_2 s_2$ is bounded by a constant) we have 
\begin{align*}
    \| \boldsymbol{\gamma}_{j} \|_\infty 
    &\le C \max\left\{ |\kappa|^{\frac{j}{k}} d^{-\frac{j(k-2)}{2k}}, d^{-\frac{j(k-1)}{2(k+1)}} \right\}  \\
    &= C \max\left\{ |\kappa| d^{-\frac{(k-2)}{2}} |\kappa|^{\frac{j-k}{k}} d^{-(j-k)\frac{k-2}{2k}}, d^{-\frac{(k-1)}{2}}d^{-(j-k-1)\frac{k-1}{2(k+1)}} \right\} 
\end{align*} or alternatively  \begin{align*}
    \| \boldsymbol{\gamma}_{k+j} \|_\infty &\le C d^{-\frac{k-2}{2}} \begin{cases}
        \max \left\{ |\kappa| d^{-\frac{1}{6}}, \frac{1}{\sqrt{d}} \right\} d^{-\frac{j-1}{6}} & \textup{if } k \ge 3 \\
        \max \left\{ |\kappa|^{1 + \frac{j}{2}}, \frac{1}{\sqrt{d}} d^{- \frac{j-1}{6}} \right\} & \textup{if } k = 2 \end{cases}
\end{align*} for all $j \ge 1$, while $\| \boldsymbol{\gamma}_{k} \|_\infty = |\kappa|d^{-\frac{k-2}{2}}$. Similarly, we can note that $\| \kappapurej{j} \|_\infty \le C d^{-\frac{j-2}{2}}$ for all $3 \le j \le b_1$ and \begin{align*}
    P_1(\mathbf{t}) 
    &= 1 + \sum_{\ell = 1}^{s_1} \frac{1}{\ell!}\sum_{j_1=k}^{b_1} \sum_{j_2=k}^{b_1} \ldots \sum_{j_\ell=j}^{b_1} \frac{1}{j_1!j_2!\cdots j_\ell!}\left( \otimes_{m=1}^\ell \kappapurej{j_m} \right)^\top (i \mathbf{t})^{\otimes \sum_{m=1}^\ell j_m} \\
    &= 1 + \sum_{j=3}^{b_1s_1} \boldsymbol{\beta}_j^\top (i\mathbf{t})^{\otimes j}
\end{align*} 
with \begin{align*}
    \boldsymbol{\beta}_{j} =  \sum_{\ell =1}^{j/3} \frac{1}{\ell!} \hspace{.2cm} \sum_{\substack{(j_1,j_2,\ldots j_\ell) \in \mathbb{N}^{\times \ell} \\ \sum_{m}^\ell j_m = j\\j_m \ge 3}} \frac{1}{j_1!\ldots j_\ell!} \left(\otimes_{m=1}^\ell \kappapurej{j_m}\right), \text{ so } \| \boldsymbol{\beta}_{j} \|_\infty = \sum_{\ell =1}^{j/3} \bigO{ d^{-\frac{j - 2\ell}{2}} } = \bigO{ d^{-\frac{j}{6} } }.
\end{align*} 
Again, we can stop summing after $j/3$ since we requrie the $j_m$ to sum to $j$ but also know that $j_m\ge 3$ for all $m$. Combining this, we get that \begin{align*}
    P_1(\mathbf{t})P_2(\mathbf{t}) 
    &= \kappa \left(\frac{1}{\sqrt{d}}\right)^{k-2}\prod_{j=1}^k (i\mathbf{t}(j)) +  \sum_{j = k+1}^{s_1b_1 + s_2b_2} \boldsymbol{\alpha}_j^\top (i\mathbf{t})^{\otimes j} 
    \text{ with }\\
    \boldsymbol{\alpha}_{k + j} 
    &= \boldsymbol{\gamma}_{k + j} + \sum_{\substack{(j_1, j_2) \in \mathbb{N}^{\times 2} \\ j_1 + j_2 = j\\ j_1 \ge 3, j_2 \ge 1}} \boldsymbol{\beta}_{j_1} \otimes \boldsymbol{\gamma}_{k + j_2},
\end{align*} where \begin{align*}
    \| \boldsymbol{\beta}_{j_1} \otimes \boldsymbol{\gamma}_{k + j_2} \|_\infty \le C d^{-\frac{k-2}{2}} \begin{cases}
        \max \left\{ |\kappa| d^{-\frac{1}{6}}, \frac{1}{\sqrt{d}} \right\} d^{-\frac{j_1-1}{6}} d^{-\frac{j_2}{6}} & \textup{if } k \ge 3 \\
        \max \left\{ |\kappa|^{1 + \frac{j_1}{2}}, \frac{1}{\sqrt{d}} d^{- \frac{j_1-1}{6}} \right\} d^{-\frac{j_2}{6}} & \textup{if } k = 2 \end{cases}
\end{align*} so since $j_1 + j_2 = j$, \begin{align*}
    \|\boldsymbol{\alpha}_{k+j}\|_\infty \le C d^{-\frac{k-2}{2}} \begin{cases}
        \max \left\{ |\kappa| d^{-\frac{1}{6}}, \frac{1}{\sqrt{d}} \right\} d^{-\frac{j-1}{6}} & \textup{if } k \ge 3 \text{ or } (k = 2 \text{ and } |\kappa| < d^{-\frac{1}{3}})\\
        \max \left\{ |\kappa|^{1 + \frac{j}{2}}, d^{-\frac{j-1}{6}} \right\} & \textup{if } k = 2 \text{ and } |\kappa| \ge d^{-\frac{1}{3}}\end{cases}
\end{align*} as desired.

\section{Bounding The Signed Weight of Cycles and Chains}

Our main technical application of \Cref{thm:maindensityapprox} is to bound the signed weight of cycles and chains from above (although it also allows us to bound certain signed weights from below, see \Cref{sec:triangles}). We remark that our result is not limited to the case of cycles or chains, but also yields a bound on the signed weight of a pattern $H$ that contains cycles or chains as induced subgraphs (see \Cref{sec:fouriereuler} for a concrete example). We believe that \Cref{thm:maindensityapprox} or its extensions are also useful for bounding the signed weight of more general patterns $H$, potentially via expanding $\Expected{\SW(H)}$ into an alternating sum over terms of the form $\Expected{\prod_{e\in A}\mathds{1}(e)}$ over all $A \subseteq H$ and then apply a \say{cluster expansion} approach to quantify the deviation of this sum from its \say{ground state} corresponding to a $\Gnp$. A similar approach for bounding the signed weight of general $H$ was used in \cite{Bangachev_Bresler_2024} for the special case of $L_\infty$-norm.

\subsection{Our Result}

\noindent We obtain the following bound on Fourier coefficients for cycles and chains, which we phrase in terms of the asymptotic behavior of their expected signed weights (see \Cref{sec:prelims} for the necessary definitions and notation). 

\fouriercoefficients*

\subsection{Proving \Cref{thm:fouriercoefficients}}

To prove \Cref{thm:fouriercoefficients}, we first use the following convenient reformulation of Fourier coefficients. We remark that this only holds for cycles and chains but breaks down in the general case.

\begin{lemma}\label{lem:rearrangecycle}
    For a cycle or a chain $H$ of length $k$ with edges $e_1, \ldots, e_k$, we have
    \begin{align*}
        \Expected{\textsc{Sw}(H)} = \Expected{\prod_{e \in H}\mathds{1}(e)} - p^k.
    \end{align*}
\end{lemma}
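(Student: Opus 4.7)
The plan is to expand the definition of $\SW(H)$ as a $\pm p$-weighted sum over subsets of edges, peel off the top term corresponding to $S = H$, and show that every other contribution factorizes cleanly. Starting from
\begin{align*}
    \SW(H) = \prod_{e \in H}(\mathds{1}(e) - p) = \sum_{S \subseteq H}(-p)^{k - |S|}\prod_{e \in S}\mathds{1}(e),
\end{align*}
and taking expectations (conditional on the positions of the two endpoints in the chain case), I obtain
\begin{align*}
    \Expected{\SW(H)} = \Expected{\prod_{e \in H}\mathds{1}(e)} + \sum_{S \subsetneq H}(-p)^{k - |S|}\Expected{\prod_{e \in S}\mathds{1}(e)}.
\end{align*}

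The crux is to establish the factorization identity
\begin{align*}
    \Expected{\prod_{e \in S}\mathds{1}(e)} = p^{|S|} \qquad \text{for every } S \subsetneq H,
\end{align*}
again conditional on the endpoints if $H$ is a chain. I would prove this by observing that removing at least one edge from a cycle (or chain with fixed endpoints) turns $S$ into a forest on the same vertex set, with at most the two endpoints \emph{fixed}. For such a forest, a change of variables $y_e = \mathbf{x}_u - \mathbf{x}_v \bmod 1$ performed edge by edge along any root-directed orientation of each tree maps the free-vertex coordinates bijectively onto the $y_e$'s. By the shift-invariance of the uniform measure on $\mathbb{T}^d$, the $y_e$'s are mutually independent and uniform (regardless of the fixed endpoint values), while each indicator $\mathds{1}(e) = \mathds{1}(\|y_e\|_q^q \le \tau)$ depends only on its own $y_e$. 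Marginal integration then yields one factor of $p$ per edge. This is precisely the content of \Cref{obs:mixedmoments}, specialized to $0/1$ exponents and phrased in terms of indicators rather than the normalized $\mathbf{z}$-variables.

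To finish, I combine these facts and compute
\begin{align*}
    \sum_{S \subsetneq H}(-p)^{k - |S|}\, p^{|S|} = p^k\!\left(\sum_{S \subseteq H}(-1)^{k - |S|} - 1\right) = -p^k,
\end{align*}
where the bracketed sum vanishes by the binomial identity $\sum_{j=0}^{k}\binom{k}{j}(-1)^{k-j} = 0$, and the $-1$ accounts for the $S = H$ term that we excluded. Substituting gives $\Expected{\SW(H)} = \Expected{\prod_{e \in H}\mathds{1}(e)} - p^k$, as desired. The only nontrivial step is the factorization identity, which reduces to the shift-invariance of the uniform measure on the torus together with the fact that proper edge subsets of a cycle or chain form forests; everything else is a direct computation.
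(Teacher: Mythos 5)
Your proposal is correct and follows essentially the same route as the paper: expand the product over subsets, use that every proper edge subset of a cycle or chain is a forest so its indicators factor to give $p^{|S|}$, and collapse the remaining alternating sum via the binomial theorem to $-p^k$. The extra detail you supply on the change of variables justifying the factorization is a fleshed-out version of what the paper cites via \Cref{obs:mixedmoments}, not a different argument.
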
 \begin{proof}
    First of all, note that multiplying out $\textsc{Sw}(H) = \prod_{e \in H}(\mathds{1}(e) - p)$ yields that \begin{align*}
        \Expected{\textsc{Sw}(H)} &= \sum_{S \subseteq H} (-1)^{k-|S|}\Expected{\prod_{e \in S} \mathds{1}(e)}p^{k-|S|}
        =\Expected{\prod_{e \in H}\mathds{1}(e)} + \sum_{S \subset H} (-p)^{k-|S|} p^{|S|},
    \end{align*} where we used that $\Expected{\prod_{e \in S} \mathds{1}(e)} = p^{|S|}$ for all $S \neq H$ because after removing any edge from a cycle or a chain, the remaining edges are independent. Now, using the binomial theorem, we get \begin{align*}
        \sum_{S \subset H} (-p)^{k-|S|} p^{|S|}
        &= - p^k + \sum_{S \subseteq H} (-p)^{k-|S|} p^{|S|}
        = - p^k + \sum_{j=0}^k \binom{k}{j} (-p)^j p^{k-j} \\
        &= - p^k + (p - p)^k = - p^k,
    \end{align*} which finishes the proof.
\end{proof}

\noindent Hence, to prove \Cref{thm:fouriercoefficients}, it suffices to show that \begin{align*}
    \Expected{\prod_{e\in C_k} \mathds{1}(e)} & = p^k + \bigO{ p^k \left(\frac{\log(n)}{\sqrt{d}}\right)^{k-2} } \text{ and } \\\Expected{\prod_{e\in H} \mathds{1}(e) } & = p^k + \bigO{ p^{k}|\kappa|  \left(\frac{\log(n)}{\sqrt{d}}\right)^{k-2} + p^k \log^2(n) \left(\frac{\log(n)}{\sqrt{d}}\right)^{k-1} }, 
\end{align*} respectively. Now, we can use that $\Expected{\prod_{e\in H} \mathds{1}(e) }$ is the probability that all edges in our cycle or our chain are present. If we choose our noise parameter $\eta$ (cf. \Cref{sec:noise}) large enough, this probability is sufficiently close to the probability that all components of $\mathbf{z}$ are at most $\hattau$, i.e.  \begin{align*}
  \Expected{\prod_{e\in C_k} \mathds{1}(e)} \approx \Pr{\bigcap_{i=1}^k(\mathbf{z}(i) \le \hattau)}.
\end{align*} Thus, we are mainly concerned with bounding the above probability. Using our approximation $\tilde{f}$ to the joint density of $\mathbf{z}$ from our main theorem \Cref{thm:maindensityapprox}, we can do this by integrating \begin{align*}
    \tilde{f}(\mathbf{x}) =  \find(\mathbf{x}) + (-1)^k \kappa \left( \frac{1}{\sqrt{d}} \right)^{k-2} \prod_{j=1}^k\phi^{(1)}(\mathbf{x}(j)) +\sum_{j=k+1}^{\sone\bone + \stwo\btwo} \boldsymbol{\alpha}_i^\top \left( \boldsymbol{\nabla}^{\otimes j} \phi(\mathbf{x}) \right) 
\end{align*} in a region close to the origin. Due to our tail bound \Cref{lem:tailofz}, it in fact suffices to define the interval $D = [-\log(n), \hattau]$ and to integrate $\int_{D^k} \tilde{f}(\mathbf{x}) \text{d} \mathbf{x}$. The main amount of work then lies in bounding the integrals over the above three terms, which we do in the following sequence of claims. We start by showing that the first term is close to $p^k$.

\begin{claim}[The First Term]\label{clm:thefirstclaim}
    For any $\eta > 0$, we have \begin{align*}
        \int_{D^k}  \find(\mathbf{x}) \textup{d}\mathbf{x} = \left( \int_{D}  f_{\mathbf{z}(1)}(x) \textup{d}x \right)^{k} \le \left(p + \bigO{\frac{\log(n)}{d^{\eta - 1}}}\right)^k.
    \end{align*}
\end{claim}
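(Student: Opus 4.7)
\textbf{Proof plan for \Cref{clm:thefirstclaim}.} The equality in the claim is immediate: since $\find(\mathbf{x}) = \prod_{j=1}^k f_{\mathbf{z}(j)}(\mathbf{x}(j))$ factorizes over coordinates, Fubini's theorem gives $\int_{D^k} \find(\mathbf{x})\,\mathrm{d}\mathbf{x} = \prod_{j=1}^k \int_D f_{\mathbf{z}(j)}(x)\,\mathrm{d}x$. The marginals $f_{\mathbf{z}(j)}$ need not all be identical (e.g.\ for a chain with fixed endpoints, the two terminal edges have a different marginal than the interior ones), but what matters is that each of the $k$ univariate integrals admits the \emph{same} upper bound in terms of $p$, $\hattau$, and $d$. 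Thus the problem reduces to estimating a single marginal probability.

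For any fixed $j \in [k]$, I would rewrite $\int_D f_{\mathbf{z}(j)}(x)\,\mathrm{d}x = \Pr(\mathbf{z}(j) \in D) \le \Pr(\mathbf{z}(j) \le \hattau)$. To bound the right-hand side, I would then compare it to $\Pr(\Deltaboldhat{}(j) \le \hattau)$, which equals $p$ by the very definition of the rescaled threshold $\hattau$ (each edge in the RGG has marginal probability $p$). The comparison is exactly what \Cref{lem:influenceofnoise} provides, applied with the singleton $S = \{j\}$ and $x = \hattau$:
\[
    \left| \Pr\!\left(\mathbf{z}(j) \le \hattau\right) - \Pr\!\left(\Deltaboldhat{}(j) \le \hattau\right) \right| \;\le\; C\,(1 + |\hattau|)\,d^{-\eta+1}.
\]
To turn the $(1+|\hattau|)$ factor into $O(\log n)$, I would apply \Cref{cor:tauissmall}, which yields $|\hattau| \le \log n$ whenever $1/n^{c} \le p \le 1 - \varepsilon$. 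Combining these two inputs gives, for every $j$,
\[
    \int_D f_{\mathbf{z}(j)}(x)\,\mathrm{d}x \;\le\; p + C(1 + \log n)\,d^{-\eta+1} \;=\; p + \bigO{\frac{\log n}{d^{\eta-1}}}.
\]
Taking the product over $j = 1,\ldots,k$ immediately yields the claimed upper bound.

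I do not anticipate a genuine obstacle here: the claim is essentially a direct consequence of the noise-influence lemma applied one coordinate at a time, combined with the polylogarithmic threshold bound from \Cref{cor:tauissmall}. The only mild subtlety, already flagged above, is that when $\mathbf{z}$ represents a chain with fixed endpoints the marginals $f_{\mathbf{z}(j)}$ for different $j$ need not coincide, so the equality in the claim should be read as an upper bound by the same per-coordinate estimate $\le p + O(\log(n)/d^{\eta-1})$ for each factor; the product form of $\find$ then yields the stated $k$-th power.
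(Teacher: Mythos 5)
Your proposal is correct and matches the paper's own proof essentially line for line: factorize $\find$ over coordinates, bound each univariate integral by $\Pr(\mathbf{z}(j)\le\hattau)$, compare to $\Pr(\Deltaboldhat{}(j)\le\hattau)=p$ via \Cref{lem:influenceofnoise}, and control $|\hattau|$ with \Cref{cor:tauissmall}. Your side remark about possibly non-identical marginals is a harmless extra precaution (by the translation invariance of the torus the marginals coincide even for chains with fixed endpoints), and it does not change the argument.
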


\noindent We further prove that the second correction term is essentially as large as the correction term we wish to have. 

\begin{claim}[The Second Term]\label{clm:thesecondclaim}
    We have \begin{align*}
        |\kappa|
        \left(\frac{1}{\sqrt{d}}\right)^{k-2}\int_{D^k} \prod_{j=1}^k \phi^{(1)}(\mathbf{x}(j)) \text{d} \mathbf{x} = |\kappa|
        \left(\frac{1}{\sqrt{d}}\right)^{k-2} \left( \int_{D}\phi^{(1)}(x) \textup{d} x \right)^k \le Cp^k|\kappa|
        \left(\frac{1}{\sqrt{d}}\right)^{k-2}
    \end{align*}
\end{claim}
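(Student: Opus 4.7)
My proof plan is to exploit the fact that the integrand $\prod_{j=1}^k \phi^{(1)}(\mathbf{x}(j))$ is a product across coordinates, so the integral over $D^k$ immediately factorizes into $k$ identical one-dimensional integrals $\int_D \phi^{(1)}(x)\,\mathrm{d}x$, as asserted in the claim statement itself. The bulk of the work is then to show that a single such one-dimensional integral is $\bigO{p}$, after which raising to the $k$-th power and absorbing the constant $C^k$ into $C$ finishes the proof.

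For the one-dimensional integral, the key observation is that $\phi$ is an antiderivative of $\phi^{(1)}$, so by the fundamental theorem of calculus
\[
\int_{D} \phi^{(1)}(x)\,\mathrm{d}x = \phi(\hattau) - \phi(-\log n).
\]
The second term is super-polynomially small: $\phi(-\log n) = \frac{1}{\sqrt{2\pi}} n^{-\log(n)/2}$, which is $o(p)$ since our assumption $p \ge \alpha/n$ implies $p = \Omega(1/n)$ and hence $p$ is larger than any super-polynomially small quantity in $n$. For the first term, I will directly invoke \Cref{cor:phip}, which gives $\phi(\hattau) \le C_2 p$ for some constant $C_2$ depending only on the constants $c$ and $\varepsilon$ from the assumption $\frac{1}{n^c} \le p \le 1-\varepsilon$. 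Combining these two estimates yields
\[
\left| \int_{D} \phi^{(1)}(x)\,\mathrm{d}x \right| \le C_2 p + o(p) \le C' p
\]
for a suitable constant $C' > 0$.

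Putting this together with the factorization,
\[
|\kappa| \left(\frac{1}{\sqrt{d}}\right)^{\!k-2}\!\!\! \left( \int_{D}\phi^{(1)}(x)\,\mathrm{d}x \right)^{\!k} \!\!\le\, (C')^k p^k |\kappa| \left(\frac{1}{\sqrt{d}}\right)^{\!k-2}\!\!,
\]
which is the desired bound with $C \coloneqq (C')^k$ (a constant since $k$ is fixed). I do not expect any genuine obstacle here: the entire argument is a direct consequence of the fundamental theorem of calculus together with the estimate $\phi(\hattau) = \Theta(p)$ already established in \Cref{cor:phip}. The only subtlety worth noting is that $\phi^{(1)}$ changes sign, so one cannot bound the integral by integrating $|\phi^{(1)}|$; fortunately the exact antiderivative formula sidesteps this entirely.
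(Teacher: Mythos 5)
Your proposal is correct and follows essentially the same route as the paper: factorize the product integral into $k$ one-dimensional integrals, evaluate each via the antiderivative relation $\int_D \phi^{(1)} = \phi(\hattau) - \phi(-\log n) \le \phi(\hattau)$, and invoke \Cref{cor:phip} to get $\phi(\hattau) \le C_2 p$. The only difference is that you spell out the (harmless) $\phi(-\log n)$ term explicitly, which the paper drops immediately by sign.
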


\noindent Now, it only remains to prove that the third term is $\bigO{ p^{k}/d^{(k-2)/2}}$. This follows by the fact that we integrate close to the origin where the derivatives of $\phi(x)$ only differ from $\phi(x)$ by polylogarithmic factors, and by our bound on $\|\boldsymbol{\alpha}_j\|_\infty$ from \Cref{thm:maindensityapprox}.

\begin{claim}[The Third Term]\label{clm:thethirdclaim}
    We have \begin{align*}
        \int_{D^k}\sum_{j=k+1}^{\sone\bone + \stwo \btwo} \boldsymbol{\alpha}_i^\top \left( \boldsymbol{\nabla}^{\otimes j} \phi(\mathbf{x}) \right) \textup{d} \mathbf{x} = \bigO{ p^k|\kappa| \left(\frac{\log(n)}{\sqrt{d}} \right)^{k-2}  + p^k \log^2(n) \left(\frac{\log(n)}{\sqrt{d}} \right)^{k-1} }
    \end{align*}
\end{claim}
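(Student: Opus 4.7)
\textbf{Proof proposal for Claim \ref{clm:thethirdclaim}.}
The plan is to reduce the problem to bounding each term $\int_{D^k} |\boldsymbol{\alpha}_j^\top (\boldsymbol{\nabla}^{\otimes j}\phi(\mathbf{x}))|\,\mathrm{d}\mathbf{x}$ separately, then sum. By H\"older's inequality applied coordinatewise, for every $\mathbf{x}\in\mathbb{R}^k$ we have
$|\boldsymbol{\alpha}_j^\top (\boldsymbol{\nabla}^{\otimes j}\phi(\mathbf{x}))| \le \|\boldsymbol{\alpha}_j\|_\infty \cdot \|\boldsymbol{\nabla}^{\otimes j}\phi(\mathbf{x})\|_1$,
so the task splits into an analytic bound on the $\ell_1$-norm of the partial derivatives and the quantitative bound on $\|\boldsymbol{\alpha}_j\|_\infty$ from \Cref{thm:maindensityapprox}.

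\textbf{Step 1 (integrating the Gaussian derivatives).} Since $\phi(\mathbf{x})=\prod_{\ell=1}^{k}\phi(x_\ell)$, each entry of $\boldsymbol{\nabla}^{\otimes j}\phi(\mathbf{x})$ equals $\prod_{\ell=1}^{k}\phi^{(m_\ell)}(x_\ell)$ for some $(m_1,\dots,m_k)\in\mathbb{N}^k$ with $\sum_\ell m_\ell = j$. Writing $\phi^{(m)}(x)=(-1)^m H_m(x)\phi(x)$ with $H_m$ the $m$-th Hermite polynomial, and using that $|x|\le \log(n)$ for every $x\in D=[-\log(n),\hattau]$ (by \Cref{cor:tauissmall}), we get $|\phi^{(m)}(x)|\le C_m(\log n)^m \phi(x)$ on $D$. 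Summing over the $k^j$ entries of $\boldsymbol{\nabla}^{\otimes j}\phi$ (each an ordered $j$-tuple of coordinate indices), a multinomial-type collection yields
\[
\|\boldsymbol{\nabla}^{\otimes j}\phi(\mathbf{x})\|_1 \le (C k\log n)^{j}\,\phi(\mathbf{x}),\qquad \mathbf{x}\in D^k,
\]
for some constant $C$ depending only on $k$ and the constants $s_1,s_2,b_1,b_2$ (all absolute). Integrating over $D^k$ and using \Cref{cor:phip} (so $\int_D\phi(x)\,\mathrm{d}x=\Phi(\hattau)-\Phi(-\log n)=\Theta(p)$) gives
\[
\int_{D^k} \|\boldsymbol{\nabla}^{\otimes j}\phi(\mathbf{x})\|_1\,\mathrm{d}\mathbf{x} \le (C' k\log n)^{j}\,p^{k}.
\]

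\textbf{Step 2 (plugging in the coefficient bound).} Write $j=k+j'$ for $j'\ge 1$. \Cref{thm:maindensityapprox} bounds $\|\boldsymbol{\alpha}_{k+j'}\|_\infty$; since in the cycle case and the $(\alpha,d^{-1/3})$-good chain case we have $|\kappa|\le d^{-1/3}$ at the boundary or else we fall into the first branch, we obtain
\[
\|\boldsymbol{\alpha}_{k+j'}\|_\infty \le C d^{-(k-2)/2}\max\!\bigl\{|\kappa|\,d^{-1/6},\,d^{-1/2}\bigr\}\cdot d^{-(j'-1)/6}.
\]
Combining with Step~1,
\[
\int_{D^k}\bigl|\boldsymbol{\alpha}_{k+j'}^\top \boldsymbol{\nabla}^{\otimes(k+j')}\phi(\mathbf{x})\bigr|\mathrm{d}\mathbf{x} \le C'' p^{k}\,(\log n)^{k+j'}\,d^{-(k-2)/2}\max\!\bigl\{|\kappa|\,d^{-1/6},\,d^{-1/2}\bigr\}\cdot d^{-(j'-1)/6}.
\]

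\textbf{Step 3 (isolating the leading $j'=1$ term).} For $j'\ge 2$ the extra factor over the $j'=1$ contribution is $(\log n)^{j'-1} d^{-(j'-1)/6}$, which is $o(1)$ since $d\ge n^\gamma$. Thus the sum $\sum_{j=k+1}^{s_1b_1+s_2b_2}$ is dominated (up to constants) by the $j'=1$ bound. Splitting on which argument of the $\max$ is larger:
\begin{itemize}
\item if $|\kappa|\ge d^{-1/3}$ (possible only for $k\ge 3$), the bound becomes $C p^k |\kappa|\,(\log n)^{k+1}/d^{(k-2)/2+1/6}=o\bigl(p^k|\kappa|(\log n/\sqrt d)^{k-2}\bigr)$ again because $d^{1/6}\gg\log^{3}(n)$;
\item otherwise the bound is $C p^k(\log n)^{k+1}/d^{(k-1)/2}=C p^k\log^2(n)\,(\log n/\sqrt d)^{k-1}$.
\end{itemize}
In either case the total is $O\!\bigl(p^k|\kappa|(\log n/\sqrt d)^{k-2}+p^k\log^2(n)(\log n/\sqrt d)^{k-1}\bigr)$, which is exactly the claimed estimate.

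\textbf{Anticipated obstacle.} The only slightly delicate point is the case analysis in Step~3: the two summands of the target match the two branches of $\max\{|\kappa|d^{-1/6},d^{-1/2}\}$ at $j'=1$, and one must verify that the ``extra'' $d^{-1/6}$ in the first branch is absorbed by the polylog prefactor thanks to the assumption $d\ge n^\gamma$. The chain-specific requirement that $H$ be $(\alpha,d^{-1/3})$--good at $k=2$ precisely ensures we never enter the second, weaker branch of \Cref{thm:maindensityapprox}, so this step goes through uniformly.
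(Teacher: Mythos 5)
Your proposal is correct and follows essentially the same route as the paper: bound $\boldsymbol{\alpha}_j^\top(\boldsymbol{\nabla}^{\otimes j}\phi)$ by $\|\boldsymbol{\alpha}_j\|_\infty$ times a sum of Hermite-polynomial factors of size $\bigO{\log^j(n)}$ on $D^k$, integrate $\phi$ over $D^k$ to pick up the $\Theta(p^k)$ via \Cref{cor:phip}, and then absorb the $(\log n/d^{1/6})^{j}$ decay of the coefficients from \Cref{thm:maindensityapprox} using $d\ge n^\gamma$. Your case split on the two branches of the $\max$ and the isolation of the $j'=1$ term matches the paper's final display, which performs the same comparison term by term.
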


\noindent Putting everything together and quantifying the other sources of error, we can prove \Cref{thm:fouriercoefficients}.
\begin{proof}[Proof of \Cref{thm:fouriercoefficients}]
   Using \Cref{lem:rearrangecycle}, we get that every cycle or chain $H$ satisfies \begin{align*}
            \Expected{\textsc{Sw}(H)} = \Expected{\prod_{e\in H}\mathds{1}(e)} - p^k
        \end{align*} and by \Cref{lem:influenceofnoise}, \begin{align*}
            \Expected{\prod_{e\in H}\mathds{1}(e)} 
            &= \Pr{ \bigcap_{i = 1}^k (\Deltaboldhat{}(i) \le \hattau) } \le \Pr{ \bigcap_{i = 1}^k(\mathbf{z}(i) \le \hattau) } + \bigO{\frac{1}{d^{\eta-1}}} \\
            &\le \Pr{ \bigcap_{i = 1}^k(\mathbf{z}(i) \le \hattau) } + \bigO{\frac{p^k}{d^{k}}}
        \end{align*} 
        since we can choose $\eta$ to be a sufficiently large constant such that $\frac{1}{d^{\eta-1}} \le \frac{p^k}{d^k}$ since $p \ge n^{-\bigO{1}}$ and $d \ge n^{\gamma}$. Now we wish to use our approximation $\tilde{f}$ to the density $f$ of $\mathbf{z}$ from \Cref{thm:maindensityapprox}. To this end, we must first control some tail behavior of $\mathbf{z}$ that subsequently allows us to integrate only over a small region around the origin. We define the interval $D = [-\log(n), \hattau]$ and note that \begin{align*}
            \Pr{ \bigcap_{i = 1}^k(\mathbf{z}(i) \le \hattau) } &= \Pr{ \bigcap_{i = 1}^k  (\mathbf{z}(i)\in D) } + \Pr{\left( \bigcup_{i = 1}^k (\mathbf{z}(i) < - \log(n)) \right) \cap \left( \bigcap_{i = 1}^k  (\mathbf{z}(i)\le \hattau ) \right)  }\\
            &\le \Pr{ \bigcap_{i = 1}^k  (\mathbf{z}(i)\in D) } + \Pr{ \bigcup_{i = 1}^k (\mathbf{z}(i) < - \log(n)) }\\
            &\le \Pr{ \bigcap_{i = 1}^k  (\mathbf{z}(i)\in D) } + kn^{-\omega(1)}
        \end{align*} by the tail bound from \Cref{lem:tailofz}. To bound the remaining probability, we apply \Cref{thm:maindensityapprox} and choose the parameters $\sone,\stwo,\bone,\btwo$ all large enough such that our approximation error satisfies \begin{align*}
            \sup_{\mathbf{x} \in \mathbb{R}^k}|f(\mathbf{x}) - \tilde{f}(\mathbf{x})|& = \bigO{ \max\left\{ \frac{1}{\sqrt{d}}, |\kappa| \right\}^{\stwo+1} \left( \frac{1}{\sqrt{d}}\right)^{(k-2)(\stwo+1)} + \left( \frac{1}{\sqrt{d}}  \right)^{\min\{\bone-2, \btwo-2, \sone+1\}}} \\&= \bigO{ \frac{p^k}{d^k} },
        \end{align*} which is possible since all bases of the above exponentials are at most $n^{-\varepsilon}$ and since $p \ge n^{-\bigO{1}}$. Now, after applying our main theorem, we get \begin{align*}
            \Pr{ \bigcap_{i = 1}^k(\mathbf{z}(i) \in D) } &\le \int_{D^k} \tilde{f}(\mathbf{x}) \text{d} \mathbf{x} + \bigO{\frac{p^k\log^k(n)}{d^k}}
        \end{align*} since we integrate over a set of volume $\bigO{\log^k(n)}$ while having an error of $\bigO{p^k/d^k}$. It remains to bound the above integral. Using the expression for $\tilde{f}$ from \Cref{thm:maindensityapprox}, we get \begin{align*}
            \int_{D^k} \tilde{f}(\mathbf{x}) \text{d} \mathbf{x} 
            &= \int_{D^k}  \find(\mathbf{x})\textup{d}\mathbf{x} + (-1)^k\kappa \left( \frac{1}{\sqrt{d}} \right)^{k-2}\int_{D^k} \prod_{j=1}^k \phi^{(1)}(\mathbf{x}(j)) \text{d} \mathbf{x} + \sum_{j=4}^{\sone\bone+ \stwo\btwo} \int_{D^k}\boldsymbol{\alpha}_j^\top \left( \boldsymbol{\nabla}^{\otimes j} \phi(\mathbf{x}) \right) \text{d} \mathbf{x}\\
            &\le \left( \int_{D}  \find(\mathbf{x})\textup{d}x \right)^{k} + |\kappa| \left( \frac{1}{\sqrt{d}} \right)^{k-2}\left(\int_{D} \phi^{(1)}(x) \text{d} x \right)^k + \sum_{j=4}^{\sone\bone+ \stwo\btwo} \int_{D^k}\boldsymbol{\alpha}_j^\top \left( \boldsymbol{\nabla}^{\otimes j} \phi(\mathbf{x}) \right) \text{d} \mathbf{x}.
        \end{align*} 
        Applying \Cref{clm:thefirstclaim}, \Cref{clm:thesecondclaim} and \Cref{clm:thethirdclaim}, we get that \begin{align*}
            \int_{D^k} \tilde{f}(\mathbf{x}) \text{d} \mathbf{x} &= \left( p + \bigO{\frac{p^k}{d^k}} \right)^k + Cp^k|\kappa|
        \left(\frac{1}{\sqrt{d}}\right)^{k-2} + \bigO{ p^k|\kappa| \left(\frac{\log(n)}{\sqrt{d}} \right)^{k-2} + p^k\log^2(n) \left(\frac{\log(n)}{\sqrt{d}} \right)^{k-1} } \\
        &= p^k + \bigO{p^k|\kappa| \left(\frac{\log(n)}{\sqrt{d}} \right)^{k-2} + p^k\log^2(n) \left(\frac{\log(n)}{\sqrt{d}} \right)^{k-1} }.
        \end{align*} In total, this means that \begin{align*}
            \Expected{\prod_{e\in H} \mathds{1}(e)}  &\le p^k + \bigO{p^k|\kappa| \left(\frac{\log(n)}{\sqrt{d}} \right)^{k-2} + p^k\log^2(n) \left(\frac{\log(n)}{\sqrt{d}} \right)^{k-1} } + \bigO{\frac{p^k}{d^{k}}} + \bigO{\frac{p^k\log^k(n)}{d^k}} + kn^{-\omega(1)} \\
            &\le p^k + \bigO{p^k|\kappa| \left(\frac{\log(n)}{\sqrt{d}} \right)^{k-2} + p^k\log^2(n) \left(\frac{\log(n)}{\sqrt{d}} \right)^{k-1} } 
        \end{align*} as desired for the case of a chain. In case of a cycle, we simply note that $|\kappa| = \bigO{1}$ and the result follows. 
\end{proof}

\noindent It only remains to prove \Cref{clm:thefirstclaim}, \Cref{clm:thesecondclaim}, and \Cref{clm:thethirdclaim}.

\begin{proof}[Proof of \Cref{clm:thefirstclaim}]
    Factorizing and using that all components of $\mathbf{z}$ marginally have the same distribution yields \begin{align*}
        \int_{D^k}  \prod_{j=1}^k f_{\mathbf{z}(j)}(\mathbf{x}(j)) \textup{d}\mathbf{x} \le \left( \int_{D} f_{\mathbf{z}(1)}(x) \textup{d}x\right)^k = \Pr{\mathbf{z}(i) \in D}^k \le \Pr{\mathbf{z}(i) \le \hattau}^k.
    \end{align*} Now, we get by \Cref{lem:influenceofnoise} and \Cref{cor:tauissmall} that \begin{align*}
        \Pr{\mathbf{z}(1) \le \hattau} \le \Pr{\Deltaboldhat{}(1) \le \hattau} + \bigO{\frac{\log(n)}{d^{\eta - 1}}} = p + \bigO{\frac{\log(n)}{d^{\eta - 1}}}
    \end{align*} and the proof is finished.
\end{proof}

\begin{proof}[Proof of \Cref{clm:thesecondclaim}]
    Factorizing yields that \begin{align*}
        |\kappa|\left(\frac{1}{\sqrt{d}}\right)^{k-2}\int_{D^k} \prod_{j=1}^k \phi^{(1)}(\mathbf{x}(j)) \text{d} \mathbf{x} &= |\kappa|\left(\frac{1}{\sqrt{d}}\right)^{k-2}\left( \int_{D} \phi^{(1)}(x) \text{d} x \right)^k \\ &\le |\kappa|\left(\frac{1}{\sqrt{d}}\right)^{k-2} \phi(\hattau)^k \le Cp^k|\kappa|\left(\frac{1}{\sqrt{d}}\right)^{k-2}
        \end{align*} where we used \Cref{cor:phip}.
\end{proof}

\begin{proof}[Proof of \Cref{clm:thethirdclaim}]
    We note that for any $\mathbf{x} = (x_1, \ldots, x_k)^\top $ 
    \begin{align}\label{alphaexpansion}
            \boldsymbol{\alpha}_j^\top \left( \boldsymbol{\nabla}^{\otimes j} \phi(\mathbf{x}) \right) &\le \sum_{\substack{s = (s_1, \ldots, s_k) \in \mathbb{N}^{\times k} \\ |s| = j}} \|\boldsymbol{\alpha}_j\|_\infty \left| \frac{\partial^{|s|}}{\partial^{s_1}x_1\cdots\partial^{s_k}x_k} \phi(\mathbf{x}) \right|\\& = \sum_{\substack{s = (s_1, \ldots, s_k) \in \mathbb{N}^{\times k} \\ |s| = j}} \|\boldsymbol{\alpha}_j\|_\infty |P_{s} (\mathbf{x}) | \phi(\mathbf{x}) 
        \end{align} 
        where the $P_s$ are a collection of polynomials each of which is a product of Hermite polynomials whose order sums up to $j$ (i.e. each $P_s$ is of order $j$). Since each $\mathbf{x} \in D^k$ fulfills $\|\mathbf{x}\|_\infty \le \log(n)$ by \Cref{cor:tauissmall}, we get that $|P_{s} (\mathbf{x}) | = \bigO{\log^j(n)}$ for  $\mathbf{x} \in D^k$. Thus, when integrating, we get  \begin{align*}
            \int_{D^k}\boldsymbol{\alpha}_j^\top \left( \boldsymbol{\nabla}^{\otimes j} \phi(\mathbf{x}) \right) \text{d} \mathbf{x} \le \bigO{\|\boldsymbol{\alpha}_j\|_\infty \log^j(n) } \int_{D^k} \phi(\mathbf{x}) \text{d} \mathbf{x} =  \bigO{ p^k \|\boldsymbol{\alpha}_j\|_\infty \log^j(n) }
        \end{align*} since \begin{align*}
            \int_{D^k} \phi(\mathbf{x}) \text{d} \mathbf{x} = \left( \int_{D} \phi(x) \text{d} x  \right)^k \le \Phi(\hattau)^k \le Cp^k
        \end{align*} by \Cref{cor:phip}. To bound $\|\boldsymbol{\alpha}_j\|_\infty \log^j(n)$, we use our bound from \Cref{thm:maindensityapprox} to obtain \begin{align*}
            \log^{k + j}(n)\|\boldsymbol{\alpha}_{k + j}\|_\infty &\le |\kappa| \log^2(n) \left(\frac{\log(n)}{\sqrt{d}} \right)^{k-2} \left( \frac{\log(n)}{d^{1/6}} \right)^{j} + \log^2(n) \left(\frac{\log(n)}{\sqrt{d}} \right)^{k-1} \left( \frac{\log(n)}{d^{1/6}} \right)^{j-1} \\
            &\le |\kappa| \left(\frac{\log(n)}{\sqrt{d}} \right)^{k-2} \left( \frac{\log^3(n)}{d^{1/6}} \right)^{j} + \log^2(n) \left(\frac{\log(n)}{\sqrt{d}} \right)^{k-1} \left( \frac{\log(n)}{d^{1/6}} \right)^{j-1}\\
            &\le |\kappa| \left(\frac{\log(n)}{\sqrt{d}} \right)^{k-2} + \log^2(n) \left(\frac{\log(n)}{\sqrt{d}} \right)^{k-1}
        \end{align*} where used that $\left( \frac{\log(n)}{d^{1/6}} \right)^{j} \le 1$ since $d \ge n^{\gamma}$. We remark that we are intentionally loose with the log factors here for the sake of simplicity.
\end{proof}

\section{Application 1: Tight Testing Thresholds via Signed Triangles}\label{sec:triangles}

In this section, we prove that signed triangles provide a test that distinguishes a RGG from a $G(n, p)$ if $d = \tilde{o}(n^3p^3)$. This is a significant improvement over the previous best algorithmic upper bound for detecting geometry in a $\RGG$ proven by Bangachev and Bresler \cite[Theorem 1.11]{Bangachev_Bresler_2024} which asserts only that $\TV{\RGG}{\Gnp} = 1- o(1)$ if $d = \tilde{o}(np)$. The argument in \cite{Bangachev_Bresler_2024} does not rely on signed triangles but instead uses an entropy-based argument built upon an explicit $\varepsilon$-net of $\mathbb{T}^d$ with specific properties, which has the drawback of not giving a computationally efficient test. We analyze the signed triangle count
 \begin{align*}
    \T(\Gbf) \coloneqq \sum_{i < j< k \in [n]} \Tijk{ijk} \text{ where } \Tijk{ijk} \coloneqq (\Gbf_{ij} - p)(\Gbf_{ik} - p)(\Gbf_{jk} - p)
\end{align*} yielding a more powerful -- but still efficient -- algorithm for detecting geometry. The result, stated in the following theorem, matches the information-theoretic lower bound in case $p = \Omega(1)$.

{\renewcommand{\thetheorem}{\ref{thm:signedtriangles}}
\begin{theorem}[Explicitly stated]
    For any fixed $1 \le q < \infty$ and any $d \ge n^{\gamma}$ for $\gamma>0$ arbitrarily small, the number of signed triangles distinguishes $\Gbf_1 \sim \RGG$ and $\Gbf_2 \sim \Gnp$ with probability $1 - o(1)$ whenever $d = o(n^3p^3)$, and fails at doing so whenever $d = \omega(n^3p^3)$. 
    Precisely, \begin{align*}
        \left|\Expectedsub{\Gbf \sim \RGG}{\T(\Gbf)} - \Expectedsub{\Gbf \sim \Gnp}{\T(\Gbf)}\right| - \max \left\{ \sqrt{\Varsub{\Gbf \sim \RGG}{\T(\Gbf)}}, \sqrt{\Varsub{\Gbf \sim \Gnp}{\T(\Gbf)}} \right\} \\= \begin{cases}
            \omega(1) &\text{if } d = o(n^3p^3)\\
            o(1) &\text{if } d = \tilde{\omega}(n^3p^3).
        \end{cases}
    \end{align*}
\end{theorem}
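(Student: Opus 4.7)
The plan is to prove \Cref{thm:signedtriangles} via Chebyshev's inequality, by establishing a tight estimate on $|\Expectedsub{\Gbf_1 \sim \RGG}{\T(\Gbf_1)} - \Expectedsub{\Gbf_2 \sim \Gnp}{\T(\Gbf_2)}|$ and upper bounds on both variances. Since $\Expectedsub{\Gbf_2 \sim \Gnp}{\T(\Gbf_2)} = 0$ (each $\Tijk{ijk}$ is a product of three independent centered Bernoullis in the $\Gnp$ case), the mean gap equals $|\Expectedsub{\Gbf_1 \sim \RGG}{\T(\Gbf_1)}| = \binom{n}{3} |\Expected{\SW(C_3)}|$. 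I would bound this from below by $\Omega(n^3 p^3/\sqrt{d})$ when $d = o(n^3 p^3)$, and from above by $O(n^3 p^3 \log(n)/\sqrt{d})$ when $d = \tilde{\omega}(n^3 p^3)$, then bound both variances by $O(n^3 p^3)$.

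For the expectation, I would apply \Cref{thm:maindensityapprox} with $k = 3$ to the joint density of $\mathbf{z}$: the leading correction is $-(\kappa/\sqrt{d}) \prod_{j=1}^3 \phi^{(1)}(\mathbf{x}(j))$. Using $\phi^{(1)}(x) = -x\phi(x)$ and $\int_{-\infty}^{\hattau} x\phi(x)\,\dd x = -\phi(\hattau) = -\Theta(p)$ (via \Cref{cor:phip}), this correction contributes $-\kappa \cdot \Theta(p^3/\sqrt{d})$ to $\Expected{\prod_{e \in C_3}\mathds{1}(e)} - p^3 = \Expected{\SW(C_3)}$, with the higher-order $\boldsymbol{\alpha}_j$ terms controlled by the explicit $d^{-j/6}$ decay from \Cref{thm:maindensityapprox} and hence negligible. \Cref{lem:correlation} guarantees that $\kappa$ is a strictly negative absolute constant (depending on $q$ but not on $n$ or $d$), so $\Expected{\SW(C_3)} = \Theta(|\kappa|p^3/\sqrt{d}) > 0$, giving $|\Expected{\T(\Gbf_1)}| = \Theta(n^3 p^3/\sqrt{d})$. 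The matching upper bound for the converse direction follows directly from \Cref{thm:fouriercoefficients}.

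For the variance, I would expand $\Var[\T(\Gbf)] = \sum_{\tau, \tau'}\textup{Cov}(\Tijk{\tau}, \Tijk{\tau'})$ stratified by $|\tau \cap \tau'| \in \{0, 1, 2, 3\}$. Under $\Gnp$, only the diagonal $\tau = \tau'$ survives, yielding $\Var[\T(\Gbf_2)] = \binom{n}{3}(p(1-p))^3 = O(n^3 p^3)$. Under $\RGG$, the diagonal again contributes $O(n^3 p^3)$. For the off-diagonal terms with $|\tau \cap \tau'| \in \{0, 1\}$, I would expand the product of six $(\mathds{1}(e) - p)$ factors (on six distinct edges) as the signed weight of a union of two disjoint triangles, bounded via \Cref{thm:fouriercoefficients}. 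The most delicate case is $|\tau \cap \tau'| = 2$ (triangles sharing an edge $e$), which I would reduce by substituting $(\mathds{1}_e - p)^2 = (1-2p)(\mathds{1}_e - p) + p(1-p)$, reducing the covariance to expected signed weights of a $C_4$ (bounded by $O(p^4 \log^2(n)/d)$ via \Cref{thm:fouriercoefficients}) and a bowtie-shaped graph. The bowtie splits, conditional on the two shared vertices, into two independent chains of length two, each bounded chain-wise by \Cref{thm:fouriercoefficients}. Summing, all off-diagonal contributions are of lower order than $n^3 p^3$.

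Combining the pieces, Chebyshev's inequality yields success when $|\Expected{\T(\Gbf_1)}|^2 / \max\{\Var[\T(\Gbf_1)], \Var[\T(\Gbf_2)]\} = \Omega(n^6 p^6/(d \cdot n^3 p^3)) = \Omega(n^3 p^3/d) = \omega(1)$, precisely the condition $d = o(n^3 p^3)$; the converse $d = \tilde{\omega}(n^3 p^3) \Rightarrow |\Expected{\T(\Gbf_1)}| = o(\sqrt{\Var})$ is immediate from the upper bound on the expectation. \textbf{The main obstacle} is ensuring the leading correction in \Cref{thm:maindensityapprox} does not vanish upon integration, which requires $\kappa$ to be an absolute negative constant—this is exactly \Cref{lem:correlation}, whose proof itself demands a delicate monotonicity argument via the Leibnitz rule. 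A secondary technical challenge is the $|\tau \cap \tau'| = 2$ variance contribution, where the relevant graph is neither a cycle nor a chain; here one must decompose it by conditioning on the shared vertices and invoke \Cref{thm:fouriercoefficients} on each resulting chain, keeping careful track of the $p$ and $\log(n)$ powers so the total remains $o(n^3 p^3)$ throughout the full range $\alpha/n \le p \le 1 - \varepsilon$.
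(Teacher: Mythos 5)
Your proposal follows essentially the same route as the paper: a Chebyshev argument whose mean lower bound comes from integrating the $k=3$ expansion of \Cref{thm:maindensityapprox} with \Cref{lem:correlation} fixing the sign of $\kappa$, and whose variance bound uses the covariance decomposition in which only edge-sharing triangle pairs survive, reduced to signed weights of a $C_4$ and of two conditionally independent length-$2$ chains via \Cref{thm:fouriercoefficients} — this is exactly the content of the paper's \Cref{lem:expectedsignedtriangle} and \Cref{lem:variance} (the latter phrased there through the parameter $\varrho$ of Liu et al., which is the same computation). One small correction: the edge-sharing contribution $n^4p^5\log^{O(1)}(n)/d$ need \emph{not} be $o(n^3p^3)$ in all regimes (e.g.\ $p=\Theta(1)$ and $d=n^{\gamma}$); the correct target for Chebyshev is that it be $o\bigl((n^3p^3/\sqrt{d})^2\bigr)$, which it is throughout $\alpha/n\le p\le 1-\varepsilon$.
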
}

To prove \cref{thm:signedtriangles}, we bound the expected signed triangle count from below, and its variance from above, and the result follows by Chebyshev's inequality.
Bounding the expectation is done using \Cref{thm:maindensityapprox}; the main challenge in contrast to the proof of \Cref{thm:fouriercoefficients} is having to control the magnitude and the sign of $\kappa$ to obtain a lower bound on $\Expected{\T(\Gbf)}$. In fact, we must prove that $\kappa$ is a negative constant for all $q \ge 1$ such that the main correction term $(-1)^k\kappa /\sqrt{d}$ from \Cref{thm:maindensityapprox} is positive and the expectation of $\T(\Gbf)$ becomes larger than in case of a $\Gnp$. To do this we rely on the following lemmas.

%Moreover, to use our approximation to the density of $\mathbf{z}$ from \Cref{thm:maindensityapprox}, we need to know more about the cumulant $\kappa_{(1,1,1)}$ since it governs our correction term. Due to the factor of $(-1)^k = -1$ for $k=3$ in front of the correction term, we need to prove that $\kappa_{(1,1,1)}$ is negative and bounded away from $0$ such that the correction leads to an increased density close to the origin. We capture this in the following lemmas.
\begin{restatable}{lemma}{momentsfortriangleanalysis}\label{lem:momentsfortriangleanalysis}
    For a cycle $C_k = \{v_1, \ldots, v_k\}$ of length $k$ with edges $e_1, e_2, \ldots, e_k$ where $e_j = \{v_j, v_{(j+1)\text{ }\text{mod } k}\}$, we have \begin{align*}
        \kappa_{(1,1, \ldots, 1)} = (\zeta\sigma)^{-k}\Expected{\prod_{j=1}^k\gamma(e_j)} \text{ where } \gamma(\{u,v\}) \coloneqq |x_u-x_v|_C^q - \Expected{|x_u-x_v|_C^q}
    \end{align*} and where $x_{v_j}$ is an independent sample from $\text{Unif}(-\frac{1}{2}, \frac{1}{2})$ for each $v_j$.
\end{restatable}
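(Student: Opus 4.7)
The plan is to combine Lemma~\ref{lem:cumulantasnicemixedmoment} (which expresses $\kappa_{(1,1,\ldots,1)}$ directly as a mixed moment) with the explicit form of $\mathbf{z}_i$ from Section~\ref{sec:concrete}, and then eliminate the Gaussian-noise contributions using independence and mean-zero. The entire argument is essentially a one-paragraph moment manipulation; there is no conceptual obstacle.

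First, I would recall from Section~\ref{sec:noise} that the $j$-th coordinate of a single summand satisfies
\[
\mathbf{z}_i(j) \;=\; \frac{\Delta_i(e_j) + \boldsymbol{\eta}_i(j) - \mu}{\zeta\sigma} \;=\; \frac{\gamma_i(e_j) + \boldsymbol{\eta}_i(j)}{\zeta\sigma},
\]
where $\gamma_i(e_j) \coloneqq \Delta_i(e_j) - \mu = |x_{v_j}(i) - x_{v_{j+1}}(i)|_C^q - \mu$ matches the definition of $\gamma(e_j)$ in the lemma (with $x_{v_j}$ replaced by the single-coordinate projection $x_{v_j}(i)$, which, since the positions are uniform on $\mathbb{T}^d$, has distribution $\mathrm{Unif}(-1/2,1/2)$). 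Since all $d$ summands are i.i.d., it suffices to compute the cumulant for $i=1$.

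Next, by Lemma~\ref{lem:cumulantasnicemixedmoment}, the joint cumulant of order $(1,1,\ldots,1)$ of $\mathbf{z}_i$ equals the corresponding mixed moment:
\[
\kappa_{(1,1,\ldots,1)} \;=\; \Expected{\prod_{j=1}^k \mathbf{z}_i(j)} \;=\; (\zeta\sigma)^{-k} \, \Expected{\prod_{j=1}^k \bigl(\gamma(e_j) + \boldsymbol{\eta}_i(j)\bigr)}.
\]
Expanding the product yields $2^k$ terms of the form $\prod_{j \in S} \gamma(e_j) \prod_{j \notin S} \boldsymbol{\eta}_i(j)$, one for each $S \subseteq [k]$.

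Finally, I would invoke independence to kill all but one term. The noise coordinates $\{\boldsymbol{\eta}_i(j)\}_{j \in [k]}$ are mutually independent (since $\boldsymbol{\eta}_i \sim \mathcal{N}(0, d^{-2\eta}\mathbf{I}_k)$) and jointly independent of the positions $\{x_{v_j}\}_{j \in [k]}$, hence of $\{\gamma(e_j)\}_{j \in [k]}$. Therefore, for any $S \subsetneq [k]$,
\[
\Expected{\prod_{j \in S} \gamma(e_j) \prod_{j \notin S} \boldsymbol{\eta}_i(j)} \;=\; \Expected{\prod_{j \in S} \gamma(e_j)} \cdot \prod_{j \notin S} \Expected{\boldsymbol{\eta}_i(j)} \;=\; 0,
\]
since each $\boldsymbol{\eta}_i(j)$ has mean zero. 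Only the term $S = [k]$ survives, giving $\kappa_{(1,1,\ldots,1)} = (\zeta\sigma)^{-k}\Expected{\prod_{j=1}^k \gamma(e_j)}$, as claimed. The only ``care'' needed is to verify that the $x_{v_j}(i)$ are indeed i.i.d.~$\mathrm{Unif}(-1/2, 1/2)$, which follows from the product structure of the uniform measure on $\mathbb{T}^d$.
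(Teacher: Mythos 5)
Your proof is correct and follows essentially the same route as the paper: apply \Cref{lem:cumulantasnicemixedmoment} to turn the cumulant into a mixed moment, substitute the definition of $\mathbf{z}_i(j)$, expand the product over subsets $S \subseteq [k]$, and use independence plus mean-zero of the Gaussian noise to kill every term except $S = [k]$. No gaps.
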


\correlation*
% \begin{restatable}{lemma}{correlation} \label[lemma]{lem:correlation}
%     Consider a triangle $C_3$ with vertices $v_1, v_2, v_3$ and edges $e_1, e_2, e_3$. For all $L_q$-norms with $q \ge 1$, we have that  $\Expected{\gamma(e_1)\gamma(e_2)\gamma(e_3)} < 0$ where $\gamma(\{u,v\}) \coloneqq |x_u-x_v|_C^q - \Expected{|x_u-x_v|_C^q}$ and $x_{v_1}, x_{v_2}, x_{v_3} \sim \unifhalf{}$ are the positions of $v_1, v_2, v_3$ in a fixed dimension.
% \end{restatable}
The proof of the above statements is deferred to \Cref{sec:deferredtriangles} because of its high technical complexity. On a high level, \Cref{lem:correlation} relies on splitting the expected value $\Expected{\gamma(e_1)\gamma(e_2)\gamma(e_3)} = \mathbb{E}_{y}\left[\gamma(e_1)\mathbb{E}[\gamma(e_2)\gamma(e_3) \mid y]\right]$ where $y$ is the distance associated to the endpoints of $e_1$ and then employing a monotonicity argument to show that $\gamma(e_1)$ is increasing in $y$ and that $\mathbb{E}[\gamma(e_2)\gamma(e_3) \mid y]$ is decreasing in $y$. The latter argument is based on carefully controlling the derivative of $\mathbb{E}[\gamma(e_2)\gamma(e_3) \mid y]$ using the Leibnitz integral rule and suitable substitutions.

\subsection{The Expected Number of Signed Triangles}

In this section, we prove a lower bound on $\Expected{\T(\Gbf)}$. Since $\Expected{\T(\Gbf)} = \binom{n}{3}\Expected{\SW(C_3)}$ where $C_3$ is an arbitrary but fixed triangle in $\Gbf$, the following lemma, based on \Cref{thm:maindensityapprox}, suffices.
\begin{lemma}\label{lem:expectedsignedtriangle}
    Consider a $\Gbf \sim \RGG$ for any fixed $1 \le q < \infty$ and fix a triangle $C_3$ with edges $e_1, e_2,e_3$. Then, there is a constant $C > 0$ such that for all $\frac{1}{n^c} \le p \le 1- \varepsilon$ and all $n^{\gamma} \le d \le n^{\bigO{1}}$ (where $\gamma > 0$ is an arbitrarily small constant), \begin{align*}
        \Expected{\sw(C_3)} = \Expected{ (\mathds{1}(e_1) - p)(\mathds{1}(e_2) - p)(\mathds{1}(e_3) - p) } \ge \frac{Cp^3}{\sqrt{d}}.
    \end{align*}
\end{lemma}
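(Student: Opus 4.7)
The proof follows the same scheme as Theorem \ref{thm:fouriercoefficients}, but the goal now is a \emph{lower} bound on the leading correction term rather than an upper bound on its magnitude. The first step is Lemma \ref{lem:rearrangecycle}, which reduces the task to bounding $\Pr[\mathds{1}(e_1) = \mathds{1}(e_2) = \mathds{1}(e_3) = 1] - p^3$ from below by $Cp^3/\sqrt{d}$. Passing from the true distances to the noised vector $\mathbf{z}$ via Lemma \ref{lem:influenceofnoise} (with a sufficiently large constant $\eta$), and then truncating to the box $D^3$ with $D = [-\log n, \hattau]$ using the tail bound of Lemma \ref{lem:tailofz}, this becomes an estimate of $\int_{D^3} f(\mathbf{x})\,\mathrm{d}\mathbf{x}$, which we replace by $\int_{D^3} \tilde f(\mathbf{x})\,\mathrm{d}\mathbf{x}$ up to a uniform approximation error. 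Choosing the parameters $\sone,\stwo,\bone,\btwo$ large constants exactly as in the proof of Theorem \ref{thm:fouriercoefficients}, this approximation error (integrated over the box of volume $O(\log^3 n)$) is easily $o(p^3/\sqrt{d})$.

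\textbf{Main term analysis.} With $k = 3$, Theorem \ref{thm:maindensityapprox} yields
\[
    \int_{D^3} \tilde f(\mathbf{x})\,\mathrm{d}\mathbf{x} = \Bigl(\int_D f_{\mathbf{z}(1)}(x)\,\mathrm{d}x\Bigr)^3 \;-\; \frac{\kappa}{\sqrt d}\Bigl(\int_D \phi^{(1)}(x)\,\mathrm{d}x\Bigr)^3 \;+\; \sum_{j \ge 4} \int_{D^3}\boldsymbol{\alpha}_j^\top \boldsymbol{\nabla}^{\otimes j}\phi(\mathbf{x})\,\mathrm{d}\mathbf{x},
\]
where the key sign $(-1)^k = -1$ appears in front of $\kappa$. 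The first factor equals $\Pr[\mathbf{z}(1)\in D]^3 = p^3\bigl(1 + O(\log(n) d^{-(\eta-1)}/p)\bigr)^3$ by Lemma \ref{lem:influenceofnoise} applied to a single coordinate, which matches the $p^3$ subtracted in Lemma \ref{lem:rearrangecycle} up to $o(p^3/\sqrt d)$ for $\eta$ large. The higher-order terms are controlled exactly as in Claim \ref{clm:thethirdclaim} using the bound on $\|\boldsymbol{\alpha}_{k+j}\|_\infty$ from Theorem \ref{thm:maindensityapprox}; each such term contributes at most $O\bigl(p^3 \operatorname{polylog}(n)/d^{(k-2)/2 + (j-1)/6}\bigr) = o(p^3/\sqrt d)$ since $d \ge n^\gamma$.

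\textbf{The crucial sign.} What remains is to extract the positive contribution from the correction term. By Corollary \ref{cor:phip} (and the fact that $\phi(-\log n) = n^{-\omega(1)}$), one has
\[
    \int_D \phi^{(1)}(x)\,\mathrm{d}x \;=\; \phi(\hattau) - \phi(-\log n) \;=\; \Theta(p),
\]
so $(\int_D \phi^{(1)})^3 = \Theta(p^3)$. The main obstacle is to show that $\kappa$ is not only nonzero but is in fact a \emph{negative} constant bounded away from $0$, uniformly in $q$. Since $H = C_3$ is a cycle, Theorem \ref{thm:maindensityapprox} gives $\kappa = \kappa_{(1,1,1)}(\mathbf{z}_j)$ for any $j$ (all $\mathbf{z}_j$ are identically distributed in each dimension). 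Lemma \ref{lem:momentsfortriangleanalysis} rewrites this cumulant as $(\zeta\sigma)^{-3}\mathbb{E}[\gamma(e_1)\gamma(e_2)\gamma(e_3)]$, and Lemma \ref{lem:correlation} asserts that this triple correlation is strictly negative for every $q\ge 1$. Since $\zeta\sigma = \Theta(1)$ and the product $\mathbb{E}[\gamma(e_1)\gamma(e_2)\gamma(e_3)]$ depends only on $q$ (hence is a constant independent of $n$, $p$, $d$), we obtain $-\kappa \ge c_0 > 0$ for an absolute constant $c_0$.

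\textbf{Conclusion.} Combining the three contributions,
\[
    \Expected{\sw(C_3)} \;\ge\; \frac{(-\kappa)}{\sqrt d}\bigl(\Theta(p)\bigr)^3 - o(p^3/\sqrt d) \;\ge\; \frac{C p^3}{\sqrt d}
\]
for some constant $C > 0$, as required. The main conceptual input is Lemma \ref{lem:correlation} (whose proof is deferred to Section \ref{sec:deferredtriangles} and is the genuinely new difficulty); everything else is a careful rerun of the integration argument of Theorem \ref{thm:fouriercoefficients}, retaining the leading-order term with its correct sign instead of merely absorbing it into an absolute value.
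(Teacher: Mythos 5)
Your proposal is correct and follows essentially the same route as the paper's proof: reduce via Lemma \ref{lem:rearrangecycle}, pass to $\mathbf{z}$ with Lemma \ref{lem:influenceofnoise} and truncate to $D^3$ with Lemma \ref{lem:tailofz}, apply Theorem \ref{thm:maindensityapprox} with large constant parameters, and extract the positive leading correction $-\kappa/\sqrt{d}\,(\int_D\phi^{(1)})^3 = \Theta(p^3/\sqrt d)$ using Lemmas \ref{lem:momentsfortriangleanalysis} and \ref{lem:correlation}, with the higher-order terms absorbed exactly as in Claim \ref{clm:thethirdclaim}. No substantive differences.
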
 
\begin{proof}%[Proof of \Cref{lem:expectedsignedtriangle}]
    First note that by \Cref{lem:rearrangecycle}, we have that $
        \sw(C_3) = \Expected{\mathds{1}(e_1)\mathds{1}(e_2)\mathds{1}(e_3)} - p^3,
    $ i.e., the signed weight of a triangle is simply the probability that a fixed set of vertices is a triangle minus $p^3$. Thus we are left with bounding $\Expected{\mathds{1}(e_1)\mathds{1}(e_2)\mathds{1}(e_3)}$. To this end, we apply the technique described in \Cref{sec:approxdensity}. We note that by \Cref{lem:influenceofnoise}, we have \begin{align*}
        \Expected{\mathds{1}(e_1)\mathds{1}(e_2)\mathds{1}(e_3)} 
        &= \Pr{ \bigcap_{i = 1}^3(\Deltaboldhat{}(i) \le \hattau) } 
        \ge \Pr{ \bigcap_{i = 1}^3(\mathbf{z}(i) \le \hattau) } - \bigO{\frac{\log(n)}{d^{\eta - 1}}} \\
        &\ge \Pr{ \bigcap_{i = 1}^3(\mathbf{z}(i) \le \hattau) } - \bigO{\frac{p^3}{d}} ,
    \end{align*} since $|\tilde{\tau}| \le \log(n)$ by \Cref{prop:cdfpdf} and since we can choose $\eta$ to be $\ge 1+\eps/\gamma$ such that $\frac{\log(n)}{d^{\eta - 1}} \le \frac{p^3}{d}$, as $d \ge n^{\gamma}$ and $p \ge n^{-\bigO{1}}$. Now, we use our approximation $\tilde{f}$ to the density of $\mathbf{z}$ from \Cref{thm:maindensityapprox} for which we set the parameters $\sone,\stwo,\bone,\btwo$ to be sufficiently large constants such that the approximation error satisfies \begin{align*}
        \sup_{\mathbf{x} \in \mathbb{R}^k} |f(\mathbf{x}) - \tilde{f}(\mathbf{x})|& = \bigO{ \max\left\{ \frac{1}{\sqrt{d}}, |\kappa| \right\}^{\stwo+1} \left( \frac{1}{\sqrt{d}}\right)^{(k-2)(\stwo+1)}  +\left( \frac{1}{\sqrt{d}}  \right)^{\min\{\bone-2, \btwo-2, \sone+1\}}}\\
        & = \bigO{\frac{p^3}{d}}
    \end{align*} 
    which again is possible since $d \ge n^{\gamma}$. Now we can define the interval $D = [-\log(n), \tilde{\tau}]$ and bound \begin{align*}
        \Pr{ \bigcap_{i = 1}^3 (\mathbf{z}(i) \le \hattau) } &\ge \int_{D^3} \tilde{f}(\mathbf{x}) \text{d} \mathbf{x} - \bigO{\frac{p^3\log(n)^3}{d}}
    \end{align*} where the $\bigO{p^3\log(n)^3/d}$ term comes from the fact that we have an error of $\bigO{p^3/d}$ and integrate over a set of volume $\bigO{\log^3(n)}$. Now, noting that $\kappa = c\Expected{\gamma(e_1)\gamma(e_2)\gamma(e_3)}/\sqrt{d}$ for some constant $c > 0$ by \Cref{lem:momentsfortriangleanalysis}, using our expression for $\tilde{f}$ and factorizing the integrals, we get \begin{align*}
        \int_{D^3} \tilde{f}(\mathbf{x}) \text{d} \mathbf{x} &= \int_{D^3} \left( \prod_{j=1}^3 f_{\mathbf{z}(j)}(\mathbf{x}(j)) \right) \text{d} \mathbf{x} - \frac{c\Expected{\gamma(e_1)\gamma(e_2)\gamma(e_3)}}{\sqrt{d}} \int_{D^3} \left( \prod_{j=1}^3 \phi^{(1)}(\mathbf{x}(j)) \right) \text{d} \mathbf{x} + g(\mathbf{x}) \\
        &= \left(\Pr{\mathbf{z}(1) \in D}\right)^3 - \frac{c\Expected{\gamma(e_1)\gamma(e_2)\gamma(e_3)}}{\sqrt{d}}\left(\int_{D} \phi^{(1)}(x) \text{d} x\right)^3 \\&\hspace{5cm} + \underbrace{\sum_{j=4}^{\sone\bone + \stwo\btwo} \int_{D^3}\boldsymbol{\alpha}_j^\top \left( \boldsymbol{\nabla}^{\otimes j} \phi(\mathbf{x}) \right) \text{d} \mathbf{x}}_{\eqqcolon g(\vecx)}.
    \end{align*} Choosing $\eta$ sufficiently large, we get from \Cref{lem:influenceofnoise}  that \begin{align*}
        \Pr{\mathbf{z}(1) \in D} &\ge p - \bigO{\frac{\log(n)}{d^{\eta-1}}} \ge p - \bigO{\frac{p\log(n)}{d}} = p\left( 1 - o\left(\frac{1}{\sqrt{d}}\right) \right)
    \end{align*} and further \begin{align*}
        \int_{D} \phi^{(1)}(x) \text{d} x = \phi(\hattau) - \underbrace{\phi(-\log(n))}_{= n^{-\omega(1)}} \ge Cp
    \end{align*}where we used \Cref{cor:phip}. It remains to bound the influence of the higher order correction terms captured in $g(\mathbf{x})$. 
    To this end, 
    % we expand \begin{align*}
    %         \boldsymbol{\alpha}_j^\top \left( \boldsymbol{\nabla}^{\otimes j} \phi(\mathbf{x}) \right) &\le \sum_{\substack{s = (s_1, s_2, s_3) \in \mathbb{N}^{\times 3} \\ :|s| = j}} \|\boldsymbol{\alpha}_j\|_\infty \left| \frac{\partial^j}{\partial^{s_1}x_1\partial^{s_2}x_2\partial^{s_3}x_3} \phi(\mathbf{x}) \right| = \sum_{\substack{s = (s_1, s_2, s_3) \in \mathbb{N}^{\times 3} \\ :|s| = j}} \|\boldsymbol{\alpha}_j\|_\infty |P_{s} (\mathbf{x}) | \phi(\mathbf{x}) 
    % \end{align*} 
    we recall the expansion of $\boldsymbol{\alpha}_j^\top ( \boldsymbol{\nabla}^{\otimes j} \phi(\mathbf{x}) )$ in \eqref{alphaexpansion} into a sum over polynomials $P_s$ of degree $s$
    % where each $P_s$ is a polynomial of degree $s$ 
    that can be represented as a product of Hermite polynomials in the individual $x_1, \ldots, x_k$. 
    Because $|\hattau| \le \log(n)$ by 
    \Cref{cor:tauissmall}, we get that $|P_{s} (\mathbf{x}) | = \bigO{\log^s(n)}$ for all $\mathbf{x} \in D^3$, so using the bound on $\| \boldsymbol{\alpha}_j \|_\infty$ from \Cref{thm:maindensityapprox} and $|\kappa| = \bigO{1}$, we obtain \begin{align*}
        \|\boldsymbol{\alpha}_j\|_\infty |P_{s} (\mathbf{x}) | \le  \frac{C}{\sqrt{d}}  \left(\log(n)d^{-\frac{1}{6}}\right)^j  \text{ and thus }
        g(\mathbf{x}) \le \frac{C}{\sqrt{d}} \sum_{j=4}^{s_1b_1 + s_2b_2} \left(\frac{\log(n)}{d^{1/6}} \right)^j \int_{D^3} \phi(\mathbf{x}) \text{d} \mathbf{x}.
    \end{align*} We further obtain \begin{align*}
        \int_{D^3} \phi(\mathbf{x}) \text{d} \mathbf{x} = \left( \int_{D} \phi(\mathbf{x}) \text{d} \mathbf{x} \right)^3 \le \Phi(\hattau)^3 \le Cp^3
    \end{align*} since $\Phi(\hattau) \le (1+o(1))p$ if $p = o(1)$ by \Cref{cor:phip} and $\Phi(\hattau) = \Theta(1)$ if $p = \Omega(1)$ since in this case $\hattau = \Omega(1)$ by \Cref{cor:tauissmall}. Combining this yields \begin{align*}
        g(\mathbf{x}) \le \frac{Cp^3}{\sqrt{d}} \sum_{j=4}^{\sone\bone + \stwo\btwo} \left(\frac{\log(n)}{d^{1/6}} \right)^j = o\left(\frac{p^3}{\sqrt{d}}\right)
    \end{align*} since $d^{1/6} = \omega(\log(n))$. Using this bound for $g(\vecx)$ we get \begin{align*}
        \Expected{\mathds{1}(e_1)\mathds{1}(e_2)\mathds{1}(e_3)} &\ge p^3\left( 1 - o\left(\frac{1}{\sqrt{d}}\right) \right)^3 - \frac{Cp^3}{\sqrt{d}} \Expected{\gamma(e_1)\gamma(e_2)\gamma(e_3)} + o\left(\frac{p^3}{\sqrt{d}}\right) - \bigO{\frac{p^3}{d}} - \bigO{\frac{p^3\log(n)^3}{d}} \\
        &\ge p^3 - \frac{Cp^3}{\sqrt{d}} \Expected{\gamma(e_1)\gamma(e_2)\gamma(e_3)} + o\left(\frac{p^3}{\sqrt{d}}\right).
    \end{align*} Finally, using that $\Expected{\gamma(e_1)\gamma(e_2)\gamma(e_3)}$ is a negative constant by \Cref{lem:correlation} we conclude the proof.
\end{proof}

\subsection{Bounding the Variance}

\begin{lemma}\label{lem:variance}
    Consider $\Gbf \sim \RGG$ for any fixed $1 \le q < \infty$. Then, for all $\frac{1}{n^c} \le p \le 1- \varepsilon$ and all $n^{\gamma} \le d \le n^{\bigO{1}}$, \begin{align*}
        \Var{\T(\Gbf)} = \bigO{ p^3n^3 + \frac{p^3n^3\log(n)}{\sqrt{d}} + \frac{n^4p^5\log^2(n)}{d} + \frac{n^4p^6\log(n)}{\sqrt{d}} }.
    \end{align*}
\end{lemma}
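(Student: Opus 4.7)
The plan is to expand $\Var{\T(\Gbf)} = \sum_{(i,j,k),(i',j',k')} \mathrm{Cov}(\Tijk{ijk}, \Tijk{i'j'k'})$ as a sum over ordered pairs of triangles and partition it by the overlap size $s = |\{i,j,k\} \cap \{i',j',k'\}| \in \{0,1,2,3\}$. Pairs with $s \in \{0, 1\}$ contribute nothing: when $s = 0$ the two triangles depend on disjoint sets of vertex positions and are therefore independent, and when $s = 1$, conditional on the single shared position $x_v$ the triangles are still independent, while translation invariance of $\mathbb{T}^d$ forces $\Expected{\Tijk{ijk} \mid x_v}$ to be constant in $x_v$ (equal to $\Expected{\SW(C_3)}$). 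Taking expectations then gives $\Expected{\Tijk{ijk}\Tijk{i'j'k'}} = \Expected{\Tijk{ijk}}\Expected{\Tijk{i'j'k'}}$, so these covariances vanish.

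For $s = 3$ (the variance of an individual triangle) I would apply the identity $(\Gbf_e - p)^2 = (1-2p)(\Gbf_e - p) + p(1-p)$ to each of the three edges and expand $\Expected{\Tijk{ijk}^2}$ as $\sum_{S \subseteq E(C_3)} (1-2p)^{|S|}(p(1-p))^{3-|S|}\Expected{\SW(S)}$. Subsets of size $\le 2$ are single edges or two-edge chains, whose signed weights have zero expectation by translation invariance; the three-edge subset contributes $(1-2p)^3 \Expected{\SW(C_3)} = \bigO{p^3 \log(n)/\sqrt{d}}$ via \Cref{thm:fouriercoefficients}. Combined with the leading $G(n,p)$-style term $p^3(1-p)^3$, this gives $\Expected{\Tijk{ijk}^2} = p^3(1-p)^3 + \bigO{p^3 \log(n)/\sqrt{d}}$; summed over the $\Theta(n^3)$ triangles it accounts for the first two terms of the stated bound.

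For $s = 2$ the two triangles share an edge $e_0$, and $\Tijk{ijk}\Tijk{i'j'k'} = (\Gbf_{e_0}-p)^2 \prod_{j=1}^4 (\Gbf_{e_j}-p)$, where the non-shared edges $e_1,\ldots,e_4$ form a 4-cycle $C_4$ on the two non-shared vertices. The quadratic identity decomposes $\Expected{\Tijk{ijk} \Tijk{i'j'k'}} = (1-2p)\Expected{\SW(\text{bowtie})} + p(1-p)\Expected{\SW(C_4)}$. \Cref{thm:fouriercoefficients} supplies $\Expected{\SW(C_4)} = \bigO{p^4 \log^2(n)/d}$ which, multiplied by $p(1-p)$ and summed over the $\Theta(n^4)$ such pairs, produces the $\bigO{n^4 p^5 \log^2(n)/d}$ term. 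For the bowtie I would condition on the positions $x_1, x_2$ of $e_0$: the two length-2 chains then become conditionally independent, giving
\[
    \Expected{\SW(\text{bowtie}) \mid x_1, x_2} = (\mathds{1}(e_0) - p)\, g(x_1, x_2)^2, \qquad g(x_1, x_2) \coloneqq \Expected{\SW(P_2) \mid x_1, x_2},
\]
and by translation invariance $g$ depends on $(x_1,x_2)$ only through $r = x_1 - x_2$. \Cref{lem:chainshavesmallkappa,lem:goodchains} guarantee that the relevant chains are $(\alpha, d^{-1/3})$-good except on an event of probability $\exp(-\Omega(d^{1/3}))$, so \Cref{thm:fouriercoefficients} controls $g(r) = \bigO{p^2 |\kappa(r)| + p^2 \log^3(n)/\sqrt{d}}$ outside this negligible bad event.

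The main obstacle is establishing the sharp bowtie estimate $\Expected{\SW(\text{bowtie})} = \bigO{p^6 \log(n)/\sqrt{d}}$, since the naive bound $|(\mathds{1}(e_0)-p)g(r)^2| \le g(r)^2$ yields only $\Expected{g^2} = \Expected{\SW(C_4)} = \bigO{p^4 \log^2(n)/d}$, which is weaker by a factor of $p^2$. To close this gap I would use the fine-grained expansion from \Cref{thm:maindensityapprox}: for good chains of length two the leading correction to $g(r)$ is proportional to $\kappa(r)$, so that $g(r)^2$ splits into a term in $\kappa(r)^2$ plus lower-order cross terms. Integrating the leading piece against the zero-mean indicator $(\mathds{1}(e_0)-p)$ reduces matters to controlling $\Expected{(\mathds{1}(e_0)-p)\kappa(r)^2}$, which via the subgaussian moment bounds on $\kappa$ from \Cref{lem:chainshavesmallkappa} and the identity $\Expected{\kappa} = 0$ yields the required power of $p$ and of $1/\sqrt{d}$. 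The rare non-good chains contribute negligibly via the crude $|g| \le 1$ bound times the $\exp(-\Omega(d^{1/3}))$ failure probability. Adding the four cases completes the proof.
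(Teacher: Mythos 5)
Your decomposition is sound and takes a genuinely different, more self-contained route than the paper: the paper simply centers the summands, notes that pairs sharing at most two indices among six contribute nothing, and then cites the computation of Liu et al.\ verbatim, parametrizing everything by $\varrho = \frac{1}{p^2}\Pr{\Gbf_{12}\Gbf_{13}=1 \mid \Gbf_{23}=1}-1$ and feeding in the single new ingredient $\varrho = \bigO{\log(n)/\sqrt{d}}$ from \Cref{thm:fouriercoefficients}. Your overlap-by-size expansion, the identity $(\Gbf_e-p)^2 = (1-2p)(\Gbf_e-p)+p(1-p)$, and the translation-invariance argument for $s\in\{0,1\}$ reproduce the $s=3$ and $C_4$ contributions correctly and make explicit what the paper leaves to a citation.

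The flaw is in your treatment of the diamond (your ``bowtie''), i.e.\ the term $(1-2p)\Expected{(\mathds{1}(e_0)-p)\gamma(\mathbf{x}_j,\mathbf{x}_k)^2}$. First, the target $\Expected{\SW(\text{bowtie})} = \bigO{p^6\log(n)/\sqrt{d}}$ is not achievable when $p \ll d^{-1/2}$: generically $\Expected{\mathds{1}(e_0)\kappa^2} = \tilde{\Theta}(p/d)$, so the diamond genuinely contributes at order $\tilde{O}(p^5/d)$, which exceeds $p^6/\sqrt{d}$ for small $p$ --- this is precisely why the lemma's bound carries the separate $n^4p^5\log^2(n)/d$ term, and you should be aiming at $\bigO{p^5\,\mathrm{polylog}(n)/d + p^6\log(n)/\sqrt{d}}$ rather than the second summand alone. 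Second, the proposed mechanism does not work: $\Expected{\kappa}=0$ buys you no cancellation in $\Expected{(\mathds{1}(e_0)-p)\kappa^2}$, since $\kappa^2 \ge 0$ and is positively correlated with $\mathds{1}(e_0)$. The correct (and easier) step, available entirely within your setup, is to forgo cancellation: bound $|\Expected{(\mathds{1}(e_0)-p)\kappa^2}| \le \Expected{|\mathds{1}(e_0)-p|\,\kappa^2}$, restrict to the event $|\kappa| \le C\sqrt{\log(n)/d}$ (which by the Bernstein bound in \Cref{lem:chainshavesmallkappa} fails with probability $n^{-\Omega(1)}$, negligible even after multiplying by $n^4$), and use $\Expected{|\mathds{1}(e_0)-p|} = 2p(1-p)$ to get $\tilde{O}(p/d)$; together with the cross and error terms from \Cref{thm:fouriercoefficients} this gives $\Expected{\SW(\text{bowtie})} = \bigO{p^5\log^{6}(n)/d}$. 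That closes the argument, at the cost of a few extra logarithmic factors relative to the stated $\log^2(n)$ --- harmless for \Cref{thm:signedtriangles}, but you should flag that your version of the lemma is weaker by polylogarithmic factors in the $n^4p^5/d$ term.
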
 \begin{proof}
We proceed as in  \cite[Appendix A]{Liu_Mohanty_Schramm_Yang_2021} and define \begin{align*}
    \barTijk{ijk} \coloneqq \Tijk{ijk} - \Expected{ \Tijk{ijk} } \text{ and } \barT(\Gbf ) \coloneqq \T(\Gbf) - \Expected{\T(\Gbf)} = \sum_{i < j< k \in [n]} \barTijk{ijk}.
\end{align*} Now expanding the variance like in \cite[Appendix A]{Liu_Mohanty_Schramm_Yang_2021} yields \begin{align*}
    \Var{ \T(\Gbf) } = \Expected{\barT(\Gbf)^2} &= \sum_{i < j < k \in [n]} \Expected{\barTijk{ijk}^2} + 2 \sum_{i < j < k < \ell \in [n]} \Expected{3\barTijk{ijk} \barTijk{jk\ell}}\\
    &= \binom{n}{3} \Expected{\barTijk{ijk}^2} + 6\binom{n}{4} \Expected{\barTijk{ijk}\barTijk{jk\ell}},
\end{align*} where we used that $\Expected{\barTijk{ijk}\barTijk{\ell r s}} = 0$ if at most two of the $i, j, k, \ell, r, s$ are the same, by independence of edges. Now, following the same calculation as in \cite[Appendix A]{Liu_Mohanty_Schramm_Yang_2021} and denoting by $\varrho = \frac{1}{p^2} \Pr{\Gbf_{12}\Gbf_{13} = 1 \mid \Gbf_{23} = 1} - 1$, we get that
\begin{align*}
    \Expectednop{\barTijk{ijk}^2} = \bigO{(1 + \varrho)p^3} \text{ and } \Expected{\barTijk{ijk}\barTijk{jk\ell}} = \bigO{p^5\varrho^2 + p^6\varrho}. 
\end{align*} Noting that by our \Cref{thm:fouriercoefficients}, we get \begin{align*}
    \varrho = \frac{1}{p^2} \Pr{\Gbf_{12}\Gbf_{13} = 1 \mid \Gbf_{23} = 1} - 1 = \frac{1}{p^3} \Pr{\Gbf_{12}\Gbf_{13}\Gbf_{23}=1} - 1 = \bigO{\frac{\log(n)}{\sqrt{d}}},
\end{align*} we can conclude the proof.
\end{proof}

\noindent With this, we prove \Cref{thm:signedtriangles}. \begin{proof}[Proof of \Cref{thm:signedtriangles}]
    Noting that $\Expectedsub{\Gbf \sim \Gnp}{ \T(\Gbf) } = 0$ and $\Varsub{\Gbf \sim \Gnp}{ \T(\Gbf) } = \binom{n}{3}p^3(1-p)^3$, and using \Cref{lem:expectedsignedtriangle} as well as \Cref{lem:variance}, \begin{align*}
        \left|\Expectedsub{\Gbf \sim \RGG}{\T(\Gbf)} - \Expectedsub{\Gbf \sim \Gnp}{\T(\Gbf)}\right| = \Omega\left( \frac{n^3p^3}{\sqrt{d}} \right)
    \end{align*} while \begin{align*}
        \max \left\{ \sqrt{\Varsub{\Gbf \sim \RGG}{\T(\Gbf)}}, \sqrt{\Varsub{\Gbf \sim \Gnp}{\T(\Gbf)}} \right\} = \bigO{\sqrt{np}^{3}}.
    \end{align*} The result follows.
\end{proof}

\section{Application 2: Improved Information-Theoretic Lower Bounds for all Densities}

Another application of \Cref{thm:fouriercoefficients} is an improved algorithmic lower bound that works for all densities $p$. This improves upon the previous best lower bound from \cite[Theorem 1.10]{Bangachev_Bresler_2024} that only holds in the case of $p = 1/2$. The bound constructed here works for all fixed $q \ge 1$ and is (up to logarithmic factors) as strong as the current best information-theoretic lower bound for spherical RGGs from \cite{liu2022testing} in the case of general $p$.\footnote{In case of $p = c/n$ for a constant $c$, \cite{liu2022testing} give a stronger bound based on a specialized argument exploiting the locally-tree-like properties of sparse RGGs. However, this analysis breaks down in the case of general $p$ as the locally-tree-like properties are lost. In this case, their lower bounds are just as strong as ours.} Formally, we prove the following.
\statisticalindistinguishability*

\noindent The proof is based on a technique introduced by Liu and Rácz \cite{Liu_Racz_2023}, which -- after employing Pinsker's inequality -- relates the total variation distance between $\RGG$ and $\Gnp$ to a sum over the moments of the self-convolution of two specific edge indicator random variables. 
It is not hard to see that this bound can be expanded as a weighted sum over the expected signed weight of complete bipartite graphs with $k$ vertices on the left, and $2$ vertices on the right side of the bipartition, denoted by $K_{2,k}$. We then rely on the following bound on the expected signed weight of a $K_{2,k}$ based on another application of \Cref{thm:fouriercoefficients}. 
\begin{restatable}{proposition}{swtwoktwo}\label{prop:swk2k}
    Assume that $d = \omega(n\log(n))$. 
    Then, for any fixed $1 \le q < \infty$, there is a constant $C > 0$ such that for any $\frac{1}{n^c}\le p \le 1-\varepsilon$ and all $2 \le k \le n$, we have \begin{align*}
        \Expectedsub{\Gbf \sim \RGG}{ \textsc{Sw}(K_{2,k}) }^{1/k} \le C\log(d)\log^2(n)p^2\sqrt{\frac{k}{d}}.
    \end{align*}
\end{restatable}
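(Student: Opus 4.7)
The plan is to exploit the observation that $K_{2,k}$ decomposes into $k$ chains of length $2$ that all share the same two endpoint vertices, which we call $u$ and $v$. Conditional on the hub positions $(\mathbf{x}_u, \mathbf{x}_v)$, the $k$ chain middle vertices are drawn independently, so the $k$ chain signed weights are i.i.d.\ with common conditional mean $F(\mathbf{x}_u, \mathbf{x}_v) \coloneqq \Expectedsub{\mathbf{x}_w}{\textsc{Sw}(u\text{-}w\text{-}v) \mid \mathbf{x}_u, \mathbf{x}_v}$. This yields the identity
\begin{align*}
    \Expectedsub{\Gbf \sim \RGG}{\textsc{Sw}(K_{2,k})} = \Expectedsub{\mathbf{x}_u, \mathbf{x}_v}{F(\mathbf{x}_u, \mathbf{x}_v)^k},
\end{align*}
so the proposition reduces to controlling the $k$-th moment of the scalar random variable $F$.

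The next step is to bound $F$ pointwise on the high-probability event $\mathcal{G}$ that the chain $u\text{-}w\text{-}v$ is $(\alpha, d^{-1/3})$-good. Here \Cref{thm:fouriercoefficients} applied with chain length $k = 2$ gives the estimate $|F| \le Cp^2 \bigl(|\kappa(\mathbf{x}_u, \mathbf{x}_v)| + \log^3(n)/\sqrt{d}\bigr)$, where $\kappa$ is the averaged joint cumulant from \Cref{thm:maindensityapprox}. The decisive observation is that $\kappa(\mathbf{x}_u, \mathbf{x}_v) = \tfrac{1}{d}\sum_{i=1}^d \kappa_{(1,1)}(\mathbf{z}_i)$ is an average of $d$ i.i.d.\ zero-mean, uniformly bounded quantities, so \Cref{lem:chainshavesmallkappa} yields sub-Gaussian concentration $\Pr{|\kappa| \ge t} \le 2\exp(-\Omega(dt^2))$ for $t$ of order $1/\sqrt{d}$ up to constants. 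Standard moment formulas for sub-Gaussian random variables (via the integrated tail identity $\Expected{|\kappa|^k} = \int k t^{k-1}\Pr{|\kappa|\ge t}\dd t$) yield $\Expected{|\kappa|^k}^{1/k} \le C'\sqrt{k/d}$. Combining via Minkowski's inequality gives
\begin{align*}
    \Expected{|F|^k \mathbf{1}_{\mathcal{G}}}^{1/k}
    \le Cp^2 \left( \Expected{|\kappa|^k}^{1/k} + \frac{\log^3(n)}{\sqrt{d}} \right)
    \le C'' p^2 \log^3(n) \sqrt{\frac{k}{d}},
\end{align*}
which matches the target upon noting that $\log^3(n) = \Theta(\log(d)\log^2(n))$ under the standing assumption $n^{\gamma} \le d \le n^{\bigO{1}}$. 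This dual use of \Cref{thm:fouriercoefficients} (for the pointwise bound) together with the sub-Gaussian tails of $\kappa$ (to introduce the crucial $\sqrt{k}$ factor) is the heart of the argument.

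The main technical obstacle is the contribution of the bad event $\mathcal{G}^c$, on which either Cramèr's condition fails in too many dimensions or $|\kappa|$ exceeds $d^{-1/3}$, and on which only the trivial bound $|F| \le p$ is immediately available. The combined probability is $\exp(-\Omega(d^{1/3}))$ by \Cref{lem:goodchains} together with \Cref{lem:chainshavesmallkappa}, which is already sufficient for moderate values of $k$, but must be handled more carefully for $k$ close to $n$ when $d$ is only slightly larger than $n\log(n)$. The resolution is to stratify $\mathcal{G}^c$ finer according to the magnitude of $|\kappa|$: in the intermediate regime $d^{-1/3} < |\kappa| < 1/\polylog(n)$, the large-$|\kappa|$ coefficient bounds $\|\boldsymbol{\alpha}_{k+j}\|_\infty \le C\max\{|\kappa|^{1+j/2}, d^{-(j-1)/6}\}$ from \Cref{thm:maindensityapprox} still combine into a convergent geometric series yielding the pointwise estimate $|F| \le Cp^2 |\kappa| \polylog(n)$, after which the sub-Gaussian tail of $|\kappa|$ controls each stratum's contribution. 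Only the extreme layer $|\kappa| \ge 1/\polylog(n)$ (of probability $\exp(-\Omega(d/\polylog(n)))$) is bounded through $|F| \le p$, and the hypothesis $d = \omega(n\log(n))$ combined with $k \le n$ and $p \ge n^{-\bigO{1}}$ is precisely what is needed to absorb this term into the main bound for every $k \in \{2, \ldots, n\}$.
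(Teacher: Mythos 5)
Your overall strategy coincides with the paper's: both proofs condition on the two hub positions to write $\Expectedsub{\Gbf}{\SW(K_{2,k})} = \Expectedsub{\vecx_u,\vecx_v}{F(\vecx_u,\vecx_v)^k}$, both invoke \Cref{thm:fouriercoefficients} for chains of length $2$ to bound $F$ in terms of $|\kappa|$, and both extract the crucial $\sqrt{k}$ from the Bernstein concentration of $\kappa$ (\Cref{lem:chainshavesmallkappa}). The difference lies entirely in how the event where $|\kappa|$ is too large for the expansion is disposed of, and this is where your argument has a genuine gap. The paper chooses a single \emph{$k$-dependent} goodness threshold $\beta = C'\log(d/p^2)\sqrt{k/d}$; Bernstein then gives failure probability $\exp\left(-\Omega(k\log^2(d/p^2))\right) \le (p^2/d)^{2k}$, i.e., the factor $k$ lands in the exponent of the bad-event probability, so the trivial bound on $|F|$ there is absorbed into the $k$-th power of the target for every $2 \le k \le n$. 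Your stratification instead cuts at the \emph{$k$-independent} levels $d^{-1/3}$ and $1/\polylog(n)$. The outermost layer $\{|\kappa| \ge 1/\polylog(n)\}$ --- which you cannot push further out, since the geometric series over the correction coefficients behaves like $\sum_j (|\kappa|^{1/2}\log n)^j$ and \Cref{thm:maindensityapprox} itself requires $|\kappa| = o(1)$ for chains of length $2$ --- has probability only $\exp(-\Omega(d/\log^{O(1)}(n)))$. Absorbing it requires $\exp(-\Omega(d/\log^{O(1)}(n))) \cdot p^k \le \left(C\log(d)\log^2(n)p^2\sqrt{k/d}\right)^k$, whose right-hand side is $\exp(-O(k\log n))$; for $k = \Theta(n)$ this forces $d \gtrsim n\,\polylog(n)$ with a higher power of the logarithm than the hypothesis $d = \omega(n\log n)$ supplies. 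So your argument breaks down precisely for $k$ of order $n$ when $d$ sits just above $n\log n$ --- and these large $k$ are exactly the ones that matter for the downstream application to \Cref{thm:indistinguishability}.

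The repair is to replace your fixed top cut by the paper's moving one: define the good event as $(\alpha,\beta)$-goodness (\Cref{def:goodchains}) with $\beta = C'\log(d/p^2)\sqrt{k/d}$, so that the single bad event already has probability $2(p^2/d)^{2k}$ and no further stratification is needed; on the good event the pointwise bound $|\kappa| \le \beta$ feeds directly into \Cref{thm:fouriercoefficients}. The remaining ingredients of your write-up --- the i.i.d.\ factorization over the $k$ middle vertices, the sub-Gaussian moment bound $\Expectednop{|\kappa|^k}^{1/k} \le C\sqrt{k/d}$, and Minkowski --- are sound and morally equivalent to what the paper does on its good event.
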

In contrast to the other applications of \Cref{thm:fouriercoefficients}, here the bound on $\Expected{ \textsc{Sw}(K_{2,k}) }$ must be valid for $k$ up to the same order of magnitude as $n$ (other applications rely on the fact that the graphs over which we compute the expected signed weight are small). 
\footnote{Technically, the random variable $\SW(H)$ is only defined for labeled graphs $H$. However, as its law (and hence its expectation) is invariant under graph isomorphism, we can make sense of the expression $\Expected{\SW(K_{2,k})}$ by fixing an arbitrary vertex labeling from $[n]$.}
However, due to the fact that a $K_{2,k}$, conditioned on the position of the two vertices on the right, is a union of $k$ chains of length $2$, \Cref{thm:fouriercoefficients} is applicable in this setting as well, after taking care of some technical precautions. 
Before proving \Cref{prop:swk2k}, we show how it helps us in proving \cref{thm:indistinguishability} combined with the bound of Liu and Rácz. Afterwards (in \cref{sec:provingktwok}), we prove \cref{prop:swk2k}.

\subsection{Using the Bound of Liu and Rácz}

To illustrate why \cref{prop:swk2k} leads to an improved algorithmic lower bound, we refer to the work of Liu and Racz, who showed in \cite[Section 3]{Liu_Racz_2023} that \begin{align*}
    \TV{\RGG}{\Gnp} \le \sum_{k=0}^{n-1} \log \left( \Expectedsub{\mathbf{x}, \mathbf{y}} {\left(1 + \frac{\gamma(\mathbf{x},\mathbf{y})}{p(1-p)} \right)^k} \right), \\ \text{ with } \gamma(\mathbf{x},\mathbf{y}) = \mathbb{E}_{\mathbf{z}}[ (\mathds{1}_e(\mathbf{x}, \mathbf{z}) - p)(\mathds{1}_e(\mathbf{y}, \mathbf{z}) - p) ],
\end{align*} where $\mathds{1}_e(\mathbf{x}, \mathbf{z})$ denotes the indicator random variable for the event that $\|\mathbf{x} - \mathbf{z}\|_q \le \tau$ and that $\mathbf{x}, \mathbf{y}, \mathbf{z} \sim \unifhalf{d}$, independently. 
Expanding the above expectation, we obtain 
\begin{align*}
    \mathbb{E}_{\mathbf{x}, \mathbf{y}} \left[ \left(1 + \frac{\gamma(\mathbf{x},\mathbf{y})}{p(1-p)} \right)^k\right]& = \sum_{j=0}^{k} \binom{k}{j} \frac{1}{{p^j(1-p)^j}} \mathbb{E}_{\mathbf{x}, \mathbf{y}}\left[ \gamma(\mathbf{x},\mathbf{y})^j \right]\\& = \sum_{j=0}^{k} \binom{k}{j} \frac{1}{{p^j(1-p)^j}} \Expected{ \textsc{Sw}(K_{2,j}) },
\end{align*} 
and hence
\begin{align*}
    \TV{\RGG}{\Gnp} &\le \sum_{k=0}^{n-1} \log \left( \mathbb{E}_{\mathbf{x}, \mathbf{y}} \left[ \left(1 + \frac{\gamma(\mathbf{x},\mathbf{y})}{p(1-p)} \right)^k\right] \right)\\
    &= \sum_{k=0}^{n-1} \log \left(1 + \sum_{j=2}^{k} \binom{k}{j} \frac{1}{{p^j(1-p)^j}} \Expected{ \textsc{Sw}(K_{2,j}) }\right).
\end{align*} 
It is very important that the inner sum above starts at $j = 2$, which is true since $\Expected{ \textsc{Sw}(K_{2,1})}$ is clearly zero as the edges are independent in this case.
With this, and the inequality $\log(1 + x) \le x$, 
\begin{align*}
    \TV{\RGG}{\Gnp} &\le \sum_{k=0}^{n-1} \log \left(1 + \sum_{j=2}^{k} \binom{k}{j} \frac{1}{{p^j(1-p)^j}} \Expected{ \textsc{Sw}(K_{2,j}) }\right)\\
    &\le \sum_{k=0}^{n-1} \sum_{j=2}^{k} \binom{k}{j} \frac{1}{{p^j(1-p)^j}} \Expected{ \textsc{Sw}(K_{2,j})}\\
   & \le n \sum_{j=2}^{n} \binom{n}{j} \frac{1}{{p^j(1-p)^j}} \Expected{ \textsc{Sw}(K_{2,j}) }.
\end{align*} Now, with \Cref{prop:swk2k}, 
\begin{align*}
    \TV{\RGG}{\Gnp} &\le n \sum_{j=2}^{n} \binom{n}{j} \frac{1}{{p^j(1-p)^j}} \left(\frac{Cp^2 \log(d)\log^2(n) \sqrt{j}}{\sqrt{d}}\right)^j \\
    &\le n \sum_{j=2}^{n} \left( \frac{ne}{j} \right)^j \frac{1}{{p^j(1-p)^j}} \left(\frac{Cp^2\log(d)\log^2(n)j}{\sqrt{d}}\right)^j \\
    &= n \sum_{j=2}^{n} \left(Ce \frac{np\log(d)\log^2(n) }{(1-p)\sqrt{d}} \right)^j \\
    &\le n\left(Ce \frac{np \log(d)\log^2(n)}{(1-p)\sqrt{d}} \right)^2 \le \frac{C' n^3p^2 \log^2(d)\log^4(n)}{d}
\end{align*} where in the penultimate step we used that the sum is geometric if $d = \omega(\log^2(n)n^2p^2)$. The whole expression becomes $o(1)$ if $d = \omega(\log^6(n)n^3p^2) = \tilde{\Omega}(n^3p^2)$ as desired. It thus only remains to prove \Cref{prop:swk2k}. 

\subsection{Proving \Cref{prop:swk2k}}\label{sec:provingktwok}
\begin{proof}[Proof of \Cref{prop:swk2k}]
    For a $K_{2,k}$ with vertices $v_1$ on one side and vertices $u_1, \ldots, u_k$ on the other, we denote the edge from $v_i$ to $u_j$ by $e_{ij}$. If we denote the position of $v_1, v_2$ by $\mathbf{x}_1, \mathbf{x}_2$, respectively, and the position of $u_1$ by $\mathbf{y}$, we get that \begin{align*}
        \Expected{\textsc{Sw}(K_{2,k}) }  = \Expected{ \prod_{j=1}^k (\mathds{1}(e_{1j}) - p)(\mathds{1}(e_{2j}) - p) } = \Expectedsub{\mathbf{x}_{1}, \mathbf{x}_{2}} { \Expectedsub{\mathbf{y}}{(\mathds{1}(e_{11}) - p)(\mathds{1}(e_{21}) - p) \mid \mathbf{x}_1, \mathbf{x}_2 }^k }.
    \end{align*} The inner expectation is now essentially the signed weight of a chain of length $k = 2$ with endpoints $v_1,v_2$ raised to the power of $k$, so we naturally wish to apply our bound from \Cref{thm:fouriercoefficients}. This bound heavily depends on $|\kappa|$, which in this case is essentially equal to the covariance between the rescaled distances associated to $e_{11}$ and $e_{12}$. However, this covariance depends on the positioning of $v_1$ and $v_2$. We shall therefore split the above expectation into two parts where the first part that represents the case in which $v_1,v_2$ are positioned such that $|\kappa|$ is small and the second part represents the remaining case. To this end, we define $\mathcal{E}$ to be the event that a chain with endpoints $v_1,v_2$ is $(\alpha, \beta)$-good (cf. \Cref{def:goodchains}) for some constant $\alpha > 0$ and \begin{align*}
        \beta \le C' \log(d/p^2)\sqrt{k/d}
    \end{align*} 
    where $C' > 0$ is a suitable constant to be fixed later. We note that 
    \begin{align*}
        \kappa = \frac{1}{d} \sum_{i = 1}^d \kappa_{(1,1)}(\mathbf{z}_i) = \frac{1}{d} \sum_{i = 1}^d \Expectedsub{\mathbf{y}}{\mathbf{z}_i(1)\mathbf{z}_i(2) \mid \mathbf{x}_1, \mathbf{x}_2} = \frac{1}{\zeta^2d} 
        \sum_{i = 1}^d\Expectedsub{\mathbf{y}}{\Deltaboldhat{i}(1)\Deltaboldhat{i}(2) \mid \mathbf{x}_1, \mathbf{x}_2 }
    \end{align*} 
    by using the decomposition from \eqref{eq:znoise} in \Cref{sec:noise} and expanding the expectation. We note that $\zeta^2 d \kappa$ is a random variable in which the randomness comes solely from $\mathbf{x}_1, \mathbf{x}_2$. Furthermore, we have that 
    \begin{align*}
        \Expectedsub{\mathbf{x}_1, \mathbf{x}_2}{\zeta^2d \kappa} = \sum_{i = 1}^d\Expectedsub{\mathbf{x}_1, \mathbf{x}_2}{\Expectedsub{\mathbf{y}}{\Deltaboldhat{i}(1)\Deltaboldhat{i}(2) \mid \mathbf{x}_1, \mathbf{x}_2 }} = 0,
    \end{align*} so $\zeta^2 d \kappa $ is a mean zero random variable and further a sum of $d$ i.i.d.~random variables, each of which is bounded almost surely by some constant $M$. Hence, applying the Bernstein bound from \Cref{thm:bernstein} yields \begin{align*}
        \Pr{|\kappa| \ge t} \le 2\exp\left( \frac{-t^2}{2M^2 d + \frac{2}{3}Mt} \right) \le 2\exp\left( \max\left\{ \frac{-t^2}{4M^2 d}, \frac{-t}{\frac{4}{3}M} \right\} \right)
    \end{align*} so setting $t = C'\log(d/p^2) \sqrt{kd}$ for a suitable constant $C' > 0$ yields that \begin{align*}
        \Pr{|\kappa| \ge t} &\le 2\exp\left( \max\left\{ \frac{-C'^2\log^2(d/p^2)k}{4M^2 }, \frac{-C'\log(d/p^2)\sqrt{kd}}{\frac{4}{3}M} \right\} \right) \le \left(\frac{p^2}{d}\right)^{2k}
    \end{align*} 
    since we assume that $\omega(n \log(n))$ and $k \le n$ so $\sqrt{kd} \ge k$. 
    This shows that $|\kappa| \le \beta$ with probability $1 - \left(\frac{p^2}{d}\right)^{2k}$. Furthermore, we get from \Cref{lem:goodchains} that with probability $1 - \exp(-\Omega(d))$, there is a constant $\alpha > 0$ such that at least $\alpha d$ out of $d$ dimensions are such that \cramer is met. Both statements together imply that there is a constant $C'' \ge 0$ such that \begin{align*}
        \Prnop{\overline{\mathcal{E}}} \le \left(\frac{p^2}{d}\right)^{2k} + e^{-C''d} \le 2\left(\frac{p^2}{d}\right)^{2k}
    \end{align*} because $d = \omega(n \log(n))$ and $k \le n$. Now, denoting the law of $\mathbf{x}_1, \mathbf{x}_2$ conditional on $\mathcal{E}$ and $\overline{\mathcal{E}}$, respectively, by $\mathcal{L}(\mathcal{E})$ and $\mathcal{L}(\overline{\mathcal{E}})$, we get \begin{align*}
        \Expected{\textsc{Sw}(K_{2,k}) }  &=  \Pr{\mathcal{E}} \Expectedsub{\mathbf{x}_{1}, \mathbf{x}_{2} \sim \mathcal{L}(\mathcal{E})} { \Expectedsub{\mathbf{y}}{(\mathds{1}(e_{11}) - p)(\mathds{1}(e_{21}) - p)}^k }\\&\hspace{4cm} + \Prnop{\overline{\mathcal{E}}}\Expectedsub{\mathbf{x}_{1}, \mathbf{x}_{2} \sim \mathcal{L}(\overline{\mathcal{E})}} { \Expectedsub{\mathbf{y}}{(\mathds{1}(e_{11}) - p)(\mathds{1}(e_{21}) - p)}^k }\\
        &\le \hspace{.75cm} \Expectedsub{\mathbf{x}_{1}, \mathbf{x}_{2} \sim \mathcal{L}(\mathcal{E})} { \Expectedsub{\mathbf{y}}{(\mathds{1}(e_{11}) - p)(\mathds{1}(e_{21}) - p)}^k } + \Prnop{\overline{\mathcal{E}}}.
    \end{align*} Applying \Cref{thm:fouriercoefficients}, yields 
    \begin{align*}
        \Expected{\textsc{Sw}(K_{2,k}) } \le \left( C'' p^2 \left( \log(d/p^2)\sqrt{\frac{k}{d}} + \frac{\log^3(n)}{\sqrt{d}} \right)\right)^{k} + 2\left(\frac{p^2}{d}\right)^{2k} \le \left(C\log(d)\log^2(n)p^2\sqrt{\frac{k}{d}}\right)^k
    \end{align*} as desired.
\end{proof}

\section{Application 3: Eigenvalues and the Spectral Threshold}

We use our bound on Fourier coefficients of cycles and chains to prove tight bounds on the spectral gap of a typical $\Gbf \sim \RGG$. Our main result, which we prove in this section, is as follows. 
\spectralstuff*
\begin{proof}
    The statement follows from the upper bound from \Cref{lem:spectralupperbound} and the lower bound from \Cref{lem:spectrallowerbound} in case $d \ll np$, proved in the following subsections. For $d = \Omega(np)$, the lower bound of $C\sqrt{np}$ on $\lambda_2(\Adj)$ follows from the generalized Alon--Boppana bound from \cite[Theorem 8]{Jiang2019} using $r = 1$.
\end{proof}

\subsection{Upper Bound via the Trace Method}\label{sec:upperboundviathetracemethod}

In this section, we prove the following upper bound on the second largest eigenvalue of the adjacency matrix $\Adj$ of a $\Gbf \sim \RGG$. 
\begin{lemma}\label{lem:spectralupperbound}
    Consider a $\Gbf \sim \RGGp{n}{d}{p}$ for any $\frac{1}{n} \le p \le 1 - \varepsilon$. With probability $1 - o(1)$, \begin{align*}
        \max\{ |\lambda_2(\Adj)|, |\lambda_n(\Adj)| \} \le n^{o(1)} \max\left\{ \sqrt{np}, \frac{np}{\sqrt{d}} \right\}
    \end{align*} where $\lambda_1(\Adj) \ge \lambda_2(\Adj) \ge \ldots \ge \lambda_n(\Adj)$ are the eigenvalues of the adjacency matrix $\Adj$ of a $\Gbf$. Moreover, if $d \ll np^{1 + \varepsilon}$ for any $\varepsilon > 0$, the number of eigenvalues $\lambda_{i}(\Adj)$ with $|\lambda_i(\Adj)| \ge \frac{np}{a\sqrt{d}}$ is at most $\bigOtildenop{da^{\frac{2}{\varepsilon} + 4}}$.
\end{lemma}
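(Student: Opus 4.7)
\textbf{Proof Plan for \Cref{lem:spectralupperbound}.} The plan follows the high-level strategy sketched in \Cref{sec:intro_psketch_spectral}. First, note that replacing $\Adj$ with the centered matrix $\Adjpp = \Adj - p\mathds{1}\mathds{1}^\top$ kills (up to a negligible perturbation) the eigenspace corresponding to $\lambda_1(\Adj) \approx np$, so that $\max\{|\lambda_2(\Adj)|, |\lambda_n(\Adj)|\}$ is comparable to the spectral norm of $\Adjpp$. I would apply the trace method: for every even integer $m$, $|\lambda_1(\Adjpp)|^m \le \tr(\Adjppt{m})$, and by Markov's inequality it suffices to prove an expectation bound of the form
\begin{align*}
    \Expected{\tr(\Adjppt{m})} \le \tilde{O}\bigl( d \cdot (np/\sqrt{d})^m + n \cdot (np)^{m/2} \bigr).
\end{align*}
Then expanding the trace as a sum over closed walks $\pmb{v} \in \EW(n,m)$ on $K_n$ and grouping walks by their underlying Eulerian multigraph $H_{\pmb{v}}$ (with multiplicities preserved), one obtains $\tr(\Adjppt{m}) = \sum_{\pmb{v}} \SW(H_{\pmb{v}})$, so it suffices to control $\Expected{|\SW(H)|}$ for each Eulerian multigraph $H$ with at most $m$ edges and then sum over embeddings into $K_n$.

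To bound $\Expected{|\SW(H)|}$, I would iteratively \emph{contract} the Eulerian graph $H$: every maximal chain of degree-$2$ internal vertices can be replaced by a single edge, and cycles dangling off the remaining structure can be peeled. What remains is a \emph{core} $\Hardcore$ which is itself Eulerian but has minimum degree $\geq 4$; in particular, $\Hardcore$ has at most $m/2$ vertices. Crucially, conditional on the latent positions of vertices in $\Hardcore$, the contracted chains and cycles hang off the core as (conditionally) independent pieces, so $\Expected{\SW(H)}$ factorizes into a product whose factors are exactly expected signed weights of cycles (unconditional) and chains (conditional on the endpoint positions). Using \Cref{thm:fouriercoefficients}, a cycle $C_k$ contributes $p^k (\log n/\sqrt{d})^{k-2}$ and each chain of length $k\ge 2$ contributes $p^k(|\kappa| + \log^2(n)/\sqrt d)(\log n/\sqrt d)^{k-2}$ conditional on its endpoints, where $\kappa$ is a random function of the endpoints with $\Pr[|\kappa| \ge t/\sqrt d] \le 2e^{-\Omega(t^2)}$ by \Cref{lem:chainshavesmallkappa}; integrating this tail contributes only a $\polylog(n)$ factor, so on average each chain behaves like $\tilde O(p^k(\log n/\sqrt d)^{k-1})$. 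Degenerate $2$-cycles (multi-edges) in $H$ are handled separately and each contribute a factor of $p(1-p) = O(p)$.

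Next, I would enumerate all Eulerian multigraphs $H$ on $\le m$ edges, multiplying the contribution above by the number of embeddings into $K_n$ (at most $n^{|V(H)|}$). The key calculation distinguishes two regimes, driven by whether adding a contractable edge on a chain/cycle (increasing $|V(H)|$ by one and giving one more $\tilde O(\sqrt{p/d})$ factor from \Cref{thm:fouriercoefficients}) is net favorable, i.e., whether $n \sqrt{p/d} \gtrless 1$. When $d \ll np$, the sum is dominated by \emph{sparse} $H$ with trivial core, and in fact by the simple $m$-cycle, contributing $\le n^m(1/\sqrt d)^{m-2} p^m = d \cdot (np/\sqrt d)^m$; all other $H$ (those with a non-trivial core, or with fewer vertices) are strictly lower order. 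When $d \gg np$, the sum is dominated by \emph{dense} $H$ whose edges are all doubled (a tree of $m/2$ double-edges), contributing $\le n^{1+m/2}p^{m/2} = n(np)^{m/2}$. The main technical challenge is keeping the correct \emph{prefactor} (a single $d$ or $n$, not $n^{|V(H)|}$ per term) — this requires proving that among graphs with a given number of contracted edges, the simple $m$-cycle strictly dominates, and that graphs with non-trivial cores contribute only $n^{-\Omega(1)}$ relative to the leading term, for which I would use that a non-trivial core must contain at least one vertex of degree $\ge 4$ and hence at least two extra edges, shrinking the vertex count by at least one.

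Finally, the bound on the number of eigenvalues of magnitude $\ge np/(a\sqrt d)$ follows by a second Markov argument: if $K$ eigenvalues satisfied $|\lambda_i(\Adjpp)| \ge np/(a\sqrt d)$, then $\tr(\Adjppt{m}) \ge K (np/(a\sqrt d))^m$, whence
\begin{align*}
    K \le \tilde{O}\Bigl( \frac{\Expected{\tr(\Adjppt{m})}}{(np/(a\sqrt d))^m} \Bigr) \le \tilde{O}\bigl( d\, a^m + n(ad/\sqrt{np})^m \bigr).
\end{align*}
When $d \le np^{1+\varepsilon}$, the second term is dominated by the first as soon as $m \ge 2/\varepsilon + 4$ (so that $(d/\sqrt{np})^m \cdot n/d \le 1$ with room to spare), giving the claimed $\tilde O(d\, a^{2/\varepsilon + 4})$. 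For the $n^{o(1)}$ factor in the spectral norm bound, I would simply let $m$ be an arbitrarily large constant (constant in $n$), so that $n^{1/m} = n^{o(1)}$ and Markov's inequality with $a = \omega(1)$ gives the desired high-probability statement.
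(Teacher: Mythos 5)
Your plan follows the paper's argument almost step for step: centering, the trace method with Markov, the expansion over closed walks grouped by Eulerian multigraphs, contraction of chains and cycles to a minimum-degree-$4$ core, \Cref{thm:fouriercoefficients} for the contracted pieces, the dichotomy between sparse (cycle-dominated) and dense (doubled-tree-dominated) multigraphs at $d\approx np$, and the second Markov argument for the eigenvalue count. The leading-order bookkeeping ($n^m p^m (1/\sqrt d)^{m-2} = d(np/\sqrt d)^m$ for the $m$-cycle, $n^{1+m/2}p^{m/2}$ for the tree of double edges) and the choice $m \ge 2/\varepsilon + \bigO{1}$ for the eigenvalue count are all correct.

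There is, however, one genuine gap: your treatment of the \emph{non-contracted} edges of a non-trivial core when $p = o(1)$. You propose to discount non-trivial cores purely combinatorially, arguing that a vertex of degree $\ge 4$ forces at least two extra edges and hence one fewer vertex, so the embedding count drops by a factor of $n$. This suffices for $p = \Theta(1)$ but fails for small $p$: take $H$ equal to a $4$-regular Eulerian core with $v$ vertices and $m = 2v$ edges and nothing contracted. Your plan gives no bound on $\Expected{\abs{\prod_{e\in E_U}(\mathds{1}(e)-p)}}$ beyond the trivial one, so the term contributes up to $n^{v} = n^{m/2}$, which for, say, $p = n^{-1/2}$ and $d$ slightly below $np$ exceeds $d(np/\sqrt d)^m + n\sqrt{np}^m$ once $m$ is large. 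The missing idea is to extract a factor of roughly $p^{|V(\Hardcore)|}$ from the core itself: the paper does this (\Cref{lem:uncontracted}) by fixing a spanning forest of the non-contracted part $U$ plus one cycle-closing edge, conditioning on how many of its $t$ edges are present (an event of probability $\approx (p\ell)^j$ for $j$ present edges, each absent edge contributing a factor $p$ to $\abs{\SW}$), and then verifying the combinatorial inequality $c + t \ge |V(\Hardcore)|$, which yields the crucial $p^{s+v}$ in \Cref{lem:swboundeuler}. Without this, the claimed bound cannot be established for the full range $\frac{1}{n}\le p\le 1-\varepsilon$. A secondary, fixable point: when you ``integrate the tail'' of $\kappa_e$ chain by chain, note that the chains attached to the core share endpoints, so the $\kappa_e$ are dependent; the clean way (as in the paper) is to condition on the single event that all $\le m^2$ of them are simultaneously $\bigO{\log^2(n)/\sqrt d}$, which holds up to probability $n^{-\Omega(\log n)}$ by a union bound over \Cref{lem:chainshavesmallkappa}.
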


The upper bound of \cref{thm:spectralstuff} and \Cref{thm:spectralstuffinfty} is proved via the trace method, which we briefly sketch here. As the name suggests, it can be used to prove bounds on the largest eigenvalue of a matrix via its trace. Since we are interested in the second largest eigenvalue, we consider the centered adjacency matrix $\Adjp = \Adj - p \mathds{1}\mathds{1}^\top$ where $\mathds{1}$ is the $n$-dimensional all-ones vector. This subtracts the eigenspace associated to the largest eigenvector of $\Adj$ (which is $\mathds{1}$). To bound the largest eigenvector of $\Adjpp$, we first recall the well-known relation 
\begin{align*}
    \tr{\Adjpp} = \sum_{i = 1}^n \lambda_i\left( \Adjpp \right).
\end{align*} 
Considering the $m$-th power of $\Adjpp$, we see that 
\begin{align*}
    \tr{\Adjppt{m}} =  \sum_{i = 1}^n \lambda_i\left( \Adjpp \right)^m \text{, so }
    \left|\lambda_1\left( \Adjpp \right)\right|^m \le \tr{\Adjppt{m}}
\end{align*} for every even $m$. By Markov's inequality, we have that for every $a>0$, \begin{align*}
    \Pr{ \left|\lambda_1\left( \Adjpp \right)\right|^m \ge a \Expected{ \tr{\Adjppt{m} }}} \le \frac{1}{a}.
\end{align*} 
Choosing any $a = \omega(1)$, it follows that 
\begin{align}
    \Pr{\left|\lambda_1\left( \Adjpp \right)\right| \ge a^{1/m} \Expected{ \tr{\Adjppt{m} }}^{1/m}}\leq 1/a = o(1)
\end{align} 
and we are left with bounding the above expectation. Conveniently, $\tr{\Adjppt{m}}$ has a combinatorial interpretation.
Namely, it can be expressed as a sum over all directed, closed walks of length $m$ on the complete graph with vertex set $[n]$, where each term in the sum corresponds to the product of the entries of $\Adjpp$ along the edges of the walk.
Formally, writing $\EW(n, m) \coloneqq \{(v_1, \dots, v_{m + 1}) \in [n]^{m + 1} \mid v_1 = v_{m+1}\}$ for the set of closed walks of length $m$, we have
\begin{align} \label{eq:traceAsWalks} 
    \tr{\Adjppt{m} }  = \sum_{(v_1, \dots, v_{m + 1}) \in \EW(n, m)} \prod_{j = 1}^{m} \big(\Adj_{v_j, v_{j+1}} - p \big) .
\end{align}
% We proceed in the next subsection with deriving an upper bound on the expectation of the right-hand side of \eqref{eq:traceAsWalks}, by representing it as signed weights of (multi-)graphs and analyzing the corresponding signed weights.
We proceed by deriving an upper bound on the expectation of the right-hand side of \eqref{eq:traceAsWalks}, by representing it as signed weights of (multi-)graphs and analyzing the corresponding signed weights in the following subsection. 

\paragraph{Bounding the signed weight of Eulerian multigraphs}\label{sec:fouriereuler}

We give a graphical interpretation of the right-hand side of \eqref{eq:traceAsWalks}.
To this end, we extend our definition of the signed weight of a graph to (undirected) multigraphs (in which there can be multiple edges between any pair of vertices).
Formally, given a multigraph $H$ with vertex set $V(H) \subseteq [n]$ and multiset of edges $E(H)$ over the universe $\binom{[V(H)]}{2}$, we define the signed weight of $H$ as the random variable
\[
    \SW(H) \coloneqq \prod_{e \in E(H)} (\ind{e} - p) = \prod_{e \in E(H)} (\Adj_e - p),
\]
where the product respects multiplicities of edges in $H$, and, for any edge $e = \{v, w\} \in \binom{[n]}{2}$, we write $\Adj_e = \Adj_{v, w}$.
Moreover, it holds that $H_{\pmb{v}}$ has at most $m$ edges in total.

Next, given a walk $\pmb{v} = (v_1, \dots, v_{m+1}) \in \EW(n, m)$, we define $H_{\pmb{v}}$ to be the multigraph on the vertex set $\{v_1, \dots, v_{m + 1}\}$ (duplications of vertices are removed), where we add an edge $\{v_j, v_{j+1}\}$ for every $j \in [m]$ with $v_j \neq v_{j+1}$ to the multiset $E(H_{\pmb{v}})$ (we preserve multiplicities but we do not allow self loops).
Note that the resulting multigraph $H$ is Eulerian, in the sense that $H$ is connected, and all vertices have even degree. We proceed by observing that, for every $\pmb{v} = (v_1, \dots, v_{m+1}) \in \EW(n, m)$, it holds that
$$
    \absolute{\Expected{\prod_{j = 1}^{m} \big(\Adj_{v_j, v_{j+1}} - p \big)}} = p^{m - |E(H_{\pmb{v}})|}  \absolute{\Expected{\SW(H_{\pmb{v}})}}
    \le \absolute{\Expected{\SW(H_{\pmb{v}})}},
$$
where we use that every self loop in the walk $\pmb{v}$ contributes an additional factor of $-p$ compared to $\Expected{\SW(H_{\pmb{v}})}$.
Hence, recalling \eqref{eq:traceAsWalks}, we note that 
\begin{align}\label{eq:traceAsSignedWeights}
    \Expected{ \tr{\Adjppt{m} } } \le 
    \sum_{\pmb{v} \in \EW(n, m)} \absolute{\Expected{\SW(H_{\pmb{v}})}}.
\end{align}
\noindent To bound the right-hand side of the above inequality, we employ a strategy similar to the one used in~\cite{Li_Schramm_2024} for the Gaussian mixture block model. That is, we bound the expected signed weight of a given Eulerian graph $H$ with $\ell \le m$ edges and no self-loops by considering edges within cycles and chains and edges outside of such structures separately. Precisely, we repeatedly remove cycles from $H$ and afterwards contract all chains into a single edge, which leaves us with a (multi)graph $\Hardcore$ called the \emph{core} of $H$. The core $\Hardcore$ is still Eulerian but now has minimum degree $4$, which limits the number of vertices it can contain to $m/2$. Now, conditional on the positions of vertices in $\Hardcore$, any removed cycle or contracted chain appears independently and its contribution to $\Expected{\SW(H)}$ can be bounded using \Cref{thm:fouriercoefficients}. Edges in $\Hardcore$ only have a limited contribution because the number of vertices in $\Hardcore$ is limited by $m/2$, which in turn limits the number of possible embeddings of a such $\Hardcore$ in $K_n$. We refer the reader to \Cref{sec:spectralexplanation} for an intuitive explanation of our argument and proceed by making our ideas formal.

\paragraph{Constructing the Core $\Hardcore$.} 
% We start by defining what we mean by a chain/a cycle of vertices that can be contracted.
The following definitions formalize which chains/cycles of vertices can be contracted/removed.
 \begin{definition}[Chains]
    Consider a Eulerian multigraph $H$ with $\ell$ edges. For every $k \ge 3$, we call a sequence $v_1, v_2, \ldots, v_k$ of \emph{distinct} vertices from $V(H)$ a \emph{chain} if $\{v_i, v_{i+1}\} \in E(H)$ for all $i \in [k-1]$, and if $\deg(v_i) = 2$ for all $2 \le i \le k-1$.  
\end{definition}
\begin{definition}[Cycles]
    Consider a Eulerian multigraph $H$ with $\ell$ edges. We call a sequence $v_1, v_2, \ldots, v_{k+1}$ of vertices a \emph{cycle} if $k\ge 3$, if $v_1 = v_{k+1}$, if $\{v_1,\ldots, v_{k}\}$ contains no duplicates, if $\{v_i, v_{i+1}\} \in E(H)$ for all $i \in [k-1]$, and if $\deg(v_i) = 2$ for all $i \in \{2, \ldots, k\}$. In particular, $v_1$ is allowed to have degree $\ge 2$ and is referred to as the \emph{anchor} of our cycle. Further, if $v$ is a vertex which only has a single neighbor $u$ (possibly there are multiple edges between $u$ and $v$), $u$ and $v$ are a cycle of length $k = 2$ and $u$ is the anchor. Cycles of length $k = 2$ are called \emph{degenerate}. 
\end{definition}

\noindent The algorithm that transforms an $\ell$-edge Eulerian multigraph $H$ into its core $\Hardcore$ works as follows. It begins by iteratively removing all cycles in $H$. That is, while there still exists a cycle in $H$, it removes all vertices on said cycle except for the anchor.\footnote{If there is no clear anchor, i.e., if all vertices have degree $2$, choose an arbitrary vertex on the cycle as the anchor.} After all cycles have been removed, the algorithm contracts chains: while there still exists chain in $H$, replace all intermediate vertices by a single edge between the two endpoints of the chain. The result of this procedure is the core $\Hardcore$.  

It is immediate that $\Hardcore$ is a Eulerian multigraph, which is either a single isolated vertex (which happens if all edges were removed while contracting cycles) or a multigraph which contains at least $2$ vertices and has minimum degree $4$. We split the set of edges of $\Hardcore$ into two disjoint parts $E_U$ and $E_C$,
where $E_U$ is the set of \emph{non-contracted} edges, i.e., those that are both in $\Hardcore$ and $H$, and $E_C$ is the set of edges created by contracting a chain into a single edge. We further denote by $L$ (which stands for \emph{loops}) the set of edges of $H$ on cycles that were removed in step 1 of the above algorithm. Moreover, we denote by $U$ the graph induced by all non-contracted edges, i.e., the graph obtained from $\Hardcore$ after deleting all edges not in $E_U$ and afterwards removing all isolated vertices. We can further relate the number of vertices of the core $\Hardcore$ and $H$ itself using the following straightforward combinatorial claim. 
\begin{lemma}\label{lem:vertices}
    In the setting above, it holds that 
    $
        |V(H)| = |E(H)| - |L| - |E(\Hardcore)| + |V(\Hardcore)|.
    $ 
    Moreover, $|V(\Hardcore)| \le \max\{ |E(\Hardcore)|/2, 1 \}$.
\end{lemma}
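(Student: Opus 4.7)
The plan is to establish the first identity by tracking how the vertex count and edge count evolve through the two phases that produce $\Hardcore$ from $H$. In phase one, each removal of a (possibly degenerate) cycle of length $k$ deletes its $k-1$ non-anchor vertices, and because every non-anchor vertex has degree exactly $2$ with both of its incident edges lying on the cycle, deleting those vertices also removes all $k$ cycle edges. Hence each cycle removal decreases the edge count by exactly one more than it decreases the vertex count. Summing over all cycles processed in phase one, the resulting intermediate graph $H'$ satisfies $|E(H)| - |E(H')| = |L|$, while $|V(H)| - |V(H')|$ differs from $|L|$ by exactly the number of cycles processed.

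Phase two contracts chains: a chain with $k$ vertices and $k-1$ edges is replaced by a single new edge between its endpoints, removing $k-2$ intermediate vertices and contributing a net change of $-(k-2)$ to the edge count. Vertex and edge counts decrease by the same amount in each contraction, so the difference $|V| - |E|$ is invariant throughout phase two, giving $|V(H')| - |E(H')| = |V(\Hardcore)| - |E(\Hardcore)|$. Combining this invariance with the phase-one accounting and solving for $|V(H)|$ yields the desired first identity.

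For the bound $|V(\Hardcore)| \le \max\{|E(\Hardcore)|/2, 1\}$, the argument splits on whether $\Hardcore$ is a single isolated vertex, in which case the bound is immediate. Otherwise, I would show that every vertex of $\Hardcore$ has degree at least $4$: it cannot have degree $0$ (it would be isolated and excluded from $\Hardcore$), and it cannot have degree $2$ because, depending on whether its two neighbors are distinct or coincide, it would then be an intermediate vertex of an uncontracted chain or would form a degenerate cycle with its neighbor, contradicting the termination of the construction. Since $\Hardcore$ inherits the Eulerian property of $H$ (degree parity is preserved by both removing cycles and contracting chains), all degrees are even, so every vertex has degree at least $4$. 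The handshake lemma then gives $2|E(\Hardcore)| = \sum_{v \in V(\Hardcore)} \deg_{\Hardcore}(v) \ge 4|V(\Hardcore)|$, yielding the claimed inequality.

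The main obstacle is the case analysis required to justify the minimum-degree-$4$ conclusion in the presence of parallel edges and degenerate cycles; one must verify that no degree-$2$ vertex can survive in $\Hardcore$ under either the single-neighbor (degenerate cycle) or two-neighbor (chain) scenario, so that the Eulerian evenness forces the jump from $2$ to $4$.
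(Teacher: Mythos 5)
The paper states this lemma without proof (it is introduced only as a ``straightforward combinatorial claim''), so there is no official argument to compare against; your bookkeeping through the two phases of the contraction procedure is exactly the intended route, and your proof of the second assertion (even degrees from the Eulerian property, no surviving degree-$0$ or degree-$2$ vertices, then the handshake lemma) is correct. It is worth making explicit that contracting a chain leaves the degrees of all surviving vertices unchanged, so phase two cannot create new degree-$2$ vertices among the vertices that end up in $\Hardcore$; together with the completeness of phase one this is what forces minimum degree $4$.

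There is, however, one concrete gap in the first part: your claim that every cycle removal deletes exactly one more edge than it deletes vertices fails for degenerate cycles carrying more than two parallel edges. By the paper's definition, a vertex $v$ whose only neighbour is $u$ forms a degenerate cycle even when there are $a \ge 4$ edges between $u$ and $v$ (such configurations do arise from closed walks, since walk multiplicities are preserved), and removing $v$ deletes $1$ vertex but all $a$ edges, so the per-cycle deficit is $a-1$, not $1$. Hence the identity as stated holds only when every degenerate cycle has exactly two edges; in general one obtains $|V(H)| \le |E(H)| - |L| - |E(\Hardcore)| + |V(\Hardcore)|$, which is the direction actually used downstream (and which the paper enforces for trivial cores by passing to the skeleton $\widetilde{H}$). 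Relatedly, your line $|E(H)| - |E(H')| = |L|$ reads $|L|$ as the number of removed \emph{edges}, whereas the target identity (and the paper's later usage $|L| = s$) requires $|L|$ to be the number of removed \emph{cycles}; your final algebra produces the number of cycles in that slot, so you should fix one convention and carry it through consistently.
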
 

Furthermore, when summing over the signed weight of all multigraphs $H$ corresponding to Eulerian walks as in \eqref{eq:traceAsSignedWeights}, we shall distinguish between multigraphs $H$ in which all edges are part of a cycle that can be contracted, and those for which this is not the case. To this end, we call the core $\Hardcore$ associated to a given $H$ \emph{trivial} if it only contains a single vertex, and \emph{non-trivial} otherwise. 

It is easy to see that a given $H$ has a trivial core if and only if all of its edges were removed after step 1 of the above algorithm, i.e., if and only if all its edges are part of some cycle in $L$. For this case, we define the \emph{skeleton} $\widetilde{H}$ of $H$ as the version of $H$ were all edges in degenerate cycles on vertices $u,v$ are replaced by exactly two edges between $u,v$. It is furthermore easy to see that $|\Expected{\SW(H)}| \le |\Expected{\SW(\widetilde{H})}|$ since \begin{align*}
    |\Expected{\SW(H)}| = \prod_{e \in L} |\Expected{\SW(C_e)}|
\end{align*} due to independence of the removed cycles, and since the signed weight of any degenerate cycle is at most as large as the signed weight of a degenerate cycle with exactly two edges. This observation allows us to simplify some of the following calculations as accounting for the correct number of edges becomes easier if we know that each degenerate cycle contains exactly two edges.

\paragraph{Bounding $\Expected{\SW(H)}$.}
For every contracted edge $e \in E_C(\Hardcore)$ we denote by $C_e = (v_1, \ldots, v_k)$ the chain in $H$ that was contracted into $e$. 
Similarly, for $e \in L$, we denote by $\SW(e)$ the signed weight of the contracted cycle represented by $e$. 
For every $e \in E_C$, we further denote by $\kappa_e$ the average first mixed cumulant across all $\mathbf{z}_i$ associated to a $\mathbf{z}$ (cf. \Cref{sec:concrete}) that represents the chain $C_e$ as defined in \Cref{thm:maindensityapprox}. 
Using these notations, we can rewrite \begin{align*}
    \Expected{\SW(H)} & = \Expected{\left( \prod_{e\in E_U} (\mathds{1}(e) - p) \right) \left( \prod_{\tilde{e} \in E_C \cup L} \hspace{.1cm} \prod_{e \in C_{\tilde{e}}} (\mathds{1}(e) - p) \right) }\\& = \Expected{\left( \prod_{e \in E_U} (\mathds{1}(e) - p) \right) \left( \prod_{e \in E_C \cup L} \hspace{.1cm} \SW(C_e) \right) }.
\end{align*} 
Now the plan is to use a high-probability event $\mathcal{G}$ 
defined in terms of the positions of the vertices in $\Hardcore$ such that---conditional on $\mathcal{G}$---the contribution of the second factor above is small and can be bounded using \Cref{thm:fouriercoefficients}. 
The statement of \Cref{thm:fouriercoefficients} directly suggests to define $\mathcal{G}$ as the event that $\kappa_e$ is small for all pairs of vertices in $\Hardcore$. 
In fact, the $\kappa_e$ associated to our contracted chains are typically\footnote{Typically here is to be understood w.r.t.~the randomness over the positions of the vertices in $U$.} and even with high probability of order $\bigOtildenop{1/\sqrt{d}}$ and thus much smaller than a worst-case placement of the vertices of $U$ would imply. This suggests defining $\mathcal{G}$ as \begin{align*}
    \mathcal{G} \coloneqq \bigcap_{e \in E_C} \left\{ |\kappa_e| \le \frac{\log^2(n)}{\sqrt{d}} \right\}.
\end{align*}

\noindent Using standard Bernstein bounds, we further conclude that $G$ happens with high probability. \begin{lemma} \label{lem:goodevent}
    We have $\Prnop{ \overline{\mathcal{G}}} \le \ell^2n^{-\Omega(\log(n))}.$
\end{lemma}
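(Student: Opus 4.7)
The plan is to combine a union bound over the contracted edges in $E_C$ with the Bernstein-type concentration inequality provided by \Cref{lem:chainshavesmallkappa}. First, note that by the definition of $\mathcal{G}$,
\begin{align*}
    \Prnop{\overline{\mathcal{G}}}
    = \Pr{ \bigcup_{e \in E_C} \left\{ |\kappa_e| > \frac{\log^2(n)}{\sqrt{d}} \right\} }
    \le \sum_{e \in E_C} \Pr{ |\kappa_e| > \frac{\log^2(n)}{\sqrt{d}} }.
\end{align*}
Since every contracted edge in $E_C$ arises from a distinct chain $C_e$ in $H$ whose interior vertices have degree exactly $2$, we have $|E_C| \le |E(H)| = \ell \le \ell^2$, so it suffices to bound each individual probability by $n^{-\Omega(\log n)}$.

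Next, I would fix an edge $e \in E_C$ and apply \Cref{lem:chainshavesmallkappa} to the chain $C_e$ with parameter $t = \sqrt{d}\,\log^2(n)$, so that $t/d = \log^2(n)/\sqrt{d}$ matches the threshold in the definition of $\mathcal{G}$. The lemma then yields
\begin{align*}
    \Pr{ |\kappa_e| \ge \frac{\log^2(n)}{\sqrt{d}} }
    \le 2\exp\left( \max\left\{ -\frac{\log^4(n)}{4M^2},\; -\frac{3\sqrt{d}\log^2(n)}{4M} \right\} \right).
\end{align*}
Because we assume $d \ge n^{\gamma}$ throughout, we have $\sqrt{d} \ge \log^2(n)$ for $n$ sufficiently large, which makes the first entry dominate the maximum, yielding the exponent $-\Omega(\log^4(n))$. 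Rewriting $\exp(-c\log^4(n)) = n^{-c\log^3(n)}$ gives the per-edge bound $n^{-\Omega(\log^3(n))}$, which is comfortably $n^{-\Omega(\log(n))}$.

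Putting these pieces together, the union bound over the at most $\ell$ edges in $E_C$ yields
\begin{align*}
    \Prnop{\overline{\mathcal{G}}} \le |E_C| \cdot n^{-\Omega(\log(n))} \le \ell^2 \cdot n^{-\Omega(\log(n))},
\end{align*}
as claimed. There is no substantive obstacle: the only care needed is to verify that \Cref{lem:chainshavesmallkappa} is applicable to every contracted chain (which it is, since each $C_e$ is by construction a chain in the sense of \Cref{sec:concrete}, whose endpoints are distinct vertices of the core $\Hardcore$), and that the choice of $t$ makes the sub-Gaussian regime of Bernstein dominate the sub-exponential one, which follows from $d = n^{\Omega(1)}$.
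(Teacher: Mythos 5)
Your proof is correct and follows the same route as the paper: apply \Cref{lem:chainshavesmallkappa} with $t = \sqrt{d}\log^2(n)$ to get a per-edge bound of $n^{-\Omega(\log(n))}$ (in fact $n^{-\Omega(\log^3(n))}$, since $d \ge n^{\gamma}$ makes the sub-Gaussian term of the Bernstein bound dominate), then union bound over the at most $\ell \le \ell^2$ contracted edges. The paper's version is just terser, performing the union bound over all $\le \ell^2$ pairs of vertices rather than over $E_C$ directly; both give the claimed $\ell^2 n^{-\Omega(\log(n))}$.
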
 \begin{proof}
    By \Cref{lem:chainshavesmallkappa}, we get that for a single $\kappa_e$, \begin{align*}
        \Prsub{ \mathbf{x}_1, \mathbf{x}_{k+1} }{|\kappa_e| \ge \frac{\log^2(n)}{\sqrt{d}}} \le n^{-\Omega(\log(n))}
    \end{align*}
    Doing a union bound over all $\le\ell^2$ pairs of vertices concludes the proof.
\end{proof}

\noindent Conditioning on $\mathcal{G}$ allows us to use the bounds from \Cref{thm:fouriercoefficients} in essentially their strongest possible form. 
However, \Cref{thm:fouriercoefficients} only allows us to bound the contribution of contracted chains and cycles; to handle the case of general $p$, we must also control the influence of the non-contracted edges in $E_U$. 
Our approach here is to note that---if $p$ is small---most edges in $E_U$ are not present, and every non-present edge only contributes a factor of magnitude $p$ to the product in $\SW(H)$. 
Hence $\SW(U)$ (where we recall that $U$ is the graph induced by all non-contracted edges of $H$, i.e., those in $E_U$) is typically of order $p^{t}$ if $t$ is the number of non-edges in $U$.  

To make this idea formal, we shall refine the \say{good} event $\mathcal{G}$ further.
To this end, we fix a suitable spanning subgraph $F$ of $U$. Precisely, $F$ is just a spanning forest of $U$, plus an arbitrary additional edge from $E_U$ closing a cycle whenever this is possible.

Denote the number of edges in $F$ by $t$ and define for every $i \in \{0, 1, \ldots, t\}$ the event $\mathcal{E}_i$ to be the event that the vertices of $\Hardcore$ are arranged such that exactly $i$ edges in $E(F)$ are present. We further denote by $\mathbf{X} = (\mathbf{x}_1, \mathbf{x}_2, \ldots, \mathbf{x}_{|V(\Hardcore)|})$ and $\overline{\mathbf{X}} = (\mathbf{x}_1, \mathbf{x}_2, \ldots, \mathbf{x}_{|V(H) \setminus V(\Hardcore)|})$ the positions of the vertices in $\Hardcore$ and outside of $\Hardcore$, respectively. Moreover, let $\law{\Xbf} = \unifhalf{|V(\Hardcore)|}$ and $\law{\overline{\Xbf}} = \unifhalf{|V(\Hardcore) \setminus V(H)|}$ denote the (uniform) law of $\Xbf$ and $\overline{\Xbf}$, respectively. %With this, we prove the following. 

\begin{lemma}[Bounding the Contribution of Non-Contracted Edges]\label{lem:uncontracted}
    Fix a spanning subgraph $F$ of $U$ as described above and denote by $t$ its number of edges. Further denote by $\law{ \Xbf \mid \mathcal{G} \cap \mathcal{E}_j}$ the law of $\Xbf$ conditional on $\mathcal{G} \cap \mathcal{E}_j$. Then
    \begin{align*}
        \absolute{\Expected{\SW(H)}} &= \absolute{\Expectedsub{(\Xbf, \overline{\Xbf}) \sim \law{\Xbf} \otimes \law{\overline{\Xbf}}}{\SW(H)}} 
        \\&\le 2p^t \sum_{j=0}^{t} \ell^j \Expectedsub{ \mathbf{X} \sim \law{ \Xbf \mid \mathcal{G} \cap \mathcal{E}_j}}{ \absolute{ \Expectedsub{\overline{\Xbf} \sim \law{\overline{\Xbf}}}{  \left. \prod_{e \in E_C \cup L} \SW(C_e) \hspace{.1cm} \;\right|\; \Xbf } } } + \Prnop{\overline{\mathcal{G}}}.
    \end{align*}
\end{lemma}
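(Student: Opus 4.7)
The plan is to condition on $\Xbf$, exploiting that every edge in $E_U$ has both endpoints in $V(\Hardcore)$ so that $\prod_{e \in E_U}(\ind{e}-p)$ is a deterministic function of $\Xbf$, while $\prod_{e \in E_C \cup L}\SW(C_e)$ carries the residual randomness of $\overline{\Xbf}$. By the tower property and the decomposition of $\SW(H)$ stated just before the lemma,
\[
    \Expected{\SW(H)} = \Expectedsub{\Xbf}{\prod_{e \in E_U}(\ind{e}-p)\cdot \Expectedsub{\overline{\Xbf}}{\prod_{e \in E_C \cup L}\SW(C_e)\,\Big|\, \Xbf}}.
\]
After pulling $|\cdot|$ inside and splitting the outer expectation on $\mathcal{G}$ versus $\overline{\mathcal{G}}$, the $\overline{\mathcal{G}}$ portion contributes at most $\Prnop{\overline{\mathcal{G}}}$ since every factor $|\ind{e}-p|$ and every $|\SW(C_e)|$ is bounded by $1$.

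On $\mathcal{G}$ I would discard the factors $|\ind{e}-p|\le 1$ for $e \in E_U\setminus F$ and partition $\Xbf$ by $\mathcal{E}_j$. Since exactly $j$ of the $t$ edges of $F$ are present on $\mathcal{E}_j$, $\prod_{e \in F}|\ind{e}-p| = (1-p)^j p^{t-j} \le p^{t-j}$, and the remaining part of the bound simplifies to
\[
    \sum_{j=0}^{t} p^{t-j}\,\Prnop{\mathcal{G}\cap \mathcal{E}_j}\,\Expectedsub{\Xbf\mid\mathcal{G}\cap\mathcal{E}_j}{\Big|\Expectedsub{\overline{\Xbf}}{\prod_{e \in E_C\cup L}\SW(C_e)\mid \Xbf}\Big|}.
\]
Thus the desired inequality reduces to the purely combinatorial claim $\Prnop{\mathcal{G}\cap\mathcal{E}_j}\le 2\ell^j p^j$, which converts $p^{t-j}\,\Prnop{\mathcal{G}\cap\mathcal{E}_j}$ into $2p^t \ell^j$ and accounts for the factor of $2$ in the statement.

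The main obstacle will be establishing this last bound, which I would prove by a union bound over the size-$j$ subsets $S \subseteq F$. Since $t \le |E_U|\le \ell$ yields $\binom{t}{j}\le \ell^j$, it suffices to show $\Prnop{\bigcap_{e \in S}\{\ind{e}=1\}} \le 2p^j$ for every such $S$. By construction $F$ is a spanning forest of $U$ augmented by at most one cycle-closing edge, so each $S$ is either a forest or contains the unique cycle $C_k$ of $F$. In the forest case, revealing vertex positions in BFS order gives the exact value $p^{|S|}$, because given the position of an already-revealed endpoint each new edge is independently present with probability $p$. In the cycle case, first condition on the positions of the $k$ cycle vertices: this contributes $\Expected{\prod_{e \in C_k}\ind{e}} = p^k + \Expected{\SW(C_k)} \le 2p^k$ by \Cref{thm:fouriercoefficients}, once $d$ is large enough that the $(\log n/\sqrt d)^{k-2}$ correction is bounded by a constant. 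The remaining $j-k$ edges of $S$ form a forest rooted at $V(C_k)$ and, by the same BFS argument, contribute an additional factor of $p^{|S|-k}$, giving the overall bound $2p^j$ and completing the proof.
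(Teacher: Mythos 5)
Your proposal is correct and follows essentially the same route as the paper: split off $\overline{\mathcal{G}}$ (using that the integrand is bounded by $1$), partition the $\mathcal{G}$-part by the events $\mathcal{E}_j$, bound $\prod_{e\in E_U}|\mathds{1}(e)-p|$ by $p^{t-j}$ on $\mathcal{G}\cap\mathcal{E}_j$, and reduce to showing $\Pr{\mathcal{G}\cap\mathcal{E}_j}\le 2(p\ell)^j$. The only (minor) difference is in that last step, where the paper asserts via inclusion-exclusion that the edge count of $F$ is near-binomial, i.e.\ $\Pr{\mathcal{E}_j}\le 2\binom{t}{j}p^j(1-p)^{t-j}$, while you obtain the same $2\ell^jp^j$ bound by a union bound over size-$j$ subsets of $F$ combined with the exact forest computation and \Cref{thm:fouriercoefficients} for the cycle -- an equally valid and arguably more explicit argument.
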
 \begin{proof}
First of all, note that
\begin{align*}
    \absolute{\Expectedsub{(\Xbf, \overline{\Xbf}) \sim \law{\Xbf} \otimes \law{\overline{\Xbf}}}{\SW(H)}} &\le \hspace{.6cm} \absolute{\Expectedsub{(\Xbf, \overline{\Xbf}) \sim \law{\Xbf} \otimes \law{\overline{\Xbf}}}{\SW(H) \mathds{1}(\mathcal{G}) }} \hspace{.7cm} + \Prnop{\overline{\mathcal{G}}}\\ 
    &\le \sum_{j=0}^{t} \absolute{\Expectedsub{(\Xbf, \overline{\Xbf}) \sim \law{\Xbf} \otimes \law{\overline{\Xbf}}}{\SW(H) \mathds{1}(\mathcal{G} \cap \mathcal{E}_j ) }} + \Prnop{\overline{\mathcal{G}}}.
\end{align*} Now, since $\mathcal{G}$ and $\mathcal{E}_j$ are defined only in terms of $\Xbf$ but not in terms of $\overline{\Xbf}$, we get \begin{align*}
    &\absolute{\Expectedsub{(\Xbf, \overline{\Xbf}) \sim \law{\Xbf} \otimes \law{\overline{\Xbf}}}{\SW(H) \mathds{1}(\mathcal{G} \cap \mathcal{E}_j ) }}
    \\&\hspace{1cm}= \Pr{\mathcal{G} \cap \mathcal{E}_j} \absolute{ \Expectedsubnop{ \mathbf{X} \sim \law{ \Xbf \mid \mathcal{G} \cap \mathcal{E}_j}}{ \hspace{.2cm} \Expectedsub{\overline{\Xbf} \sim \law{\overline{\Xbf}}}{ \SW(H) }} } \\
    &\hspace{1cm}= \Pr{\mathcal{G} \cap \mathcal{E}_j} \absolute{ \Expectedsubnop{ \mathbf{X} \sim \law{ \Xbf \mid \mathcal{G} \cap \mathcal{E}_j}}{ \hspace{.2cm} \Expectedsub{\overline{\Xbf} \sim \law{\overline{\Xbf}}}{ \left( \prod_{e \in E_U} (\mathds{1}(e) - p) \right) \left( \prod_{e \in E_C \cup L} \hspace{.1cm} \SW(C_e) \right) }}}\\
    &\hspace{1cm}= \Pr{\mathcal{G} \cap \mathcal{E}_j} \absolute{ \Expectedsub{ \mathbf{X} \sim \law{ \Xbf \mid \mathcal{G} \cap \mathcal{E}_j}}{ \prod_{e \in E_U} (\mathds{1}(e) - p) \Expectedsub{\overline{\Xbf} \sim \law{\overline{\Xbf}}}{ \left. \prod_{e \in E_C \cup L} \hspace{.1cm} \SW(C_e) \hspace{.1cm}\right| \hspace{.05cm} \Xbf }}}
\end{align*}
\begin{align*}
    &\hspace{1cm}\le \Pr{\mathcal{G} \cap \mathcal{E}_j}  \Expectedsub{ \mathbf{X} \sim \law{ \Xbf \mid \mathcal{G} \cap \mathcal{E}_j}}{ \absolute{ \prod_{e \in E_U} (\mathds{1}(e) - p)}\cdot \absolute{ \Expectedsub{\overline{\Xbf} \sim \law{\overline{\Xbf}}}{ \left. \prod_{e \in E_C \cup L} \hspace{.1cm} \SW(C_e) \hspace{.1cm}\right| \hspace{.05cm} \Xbf } } }\\
    &\hspace{1cm}\le \Pr{\mathcal{E}_j} p^{t-j} \Expectedsub{ \mathbf{X} \sim \law{ \Xbf \mid \mathcal{G} \cap \mathcal{E}_j}}{ \absolute{ \Expectedsub{\overline{\Xbf} \sim \law{\overline{\Xbf}}}{ \left. \prod_{e \in E_C \cup L} \hspace{.1cm} \SW(C_e) \hspace{.1cm}\right| \hspace{.05cm} \Xbf } } }
\end{align*} because $\Pr{\mathcal{G} \cap \mathcal{E}_j} \le \Pr{\mathcal{E}_j}$ and since for any $\Xbf$ in $\mathcal{G} \cap \mathcal{E}_j$, there are exactly $j$ edges in our spanning subgraph $F$ of $U$ (whose edges exists as a function of $\Xbf$), implying that $\absolute{\prod_{e \in E_U} (\mathds{1}(e) - p)} \le p^{t-j}$.
Regarding the probability of $\mathcal{E}_j$, we note that since $F$ is a forest plus a single edge closing a cycle, the edges outside the cycle appear independently of the edges within the cycle. Furthermore, for edges within the cycle, we can just apply \Cref{thm:fouriercoefficients} combined with \Cref{lem:rearrangecycle} and basic inclusion-exclusion to recover the probability that any subset of edges is present or not present. Carrying out this calculation yields that the number of edges present in $F$ -- denoted by $X_F$ -- is still very close to being binomial in the sense that 
\begin{align*}
    \Pr{\mathcal{E}_j} \le 2\binom{t}{j} p^j(1-p)^{t-j} \le 2(pt)^j \le 2(p\ell)^j,
\end{align*} 
and the result follows.
\end{proof}

\noindent Now, we can use \Cref{thm:fouriercoefficients} to bound the inner expectation above. In the following, we will denote the number of cycles contracted while constructing $\Hardcore$ by $s$, the number of contracted chains by $c$, and the number of edges in $\Hardcore$ by $r$. We further denote the number of degenerate cycles removed from $H$ by $s_d$ and the number of non-degenerate cycles by $s_n$, such that $s = s_d + s_n$. We recall that a cycle is degenerate if it consists of only two vertices, and that $\Hardcore$ is called trivial if it only consists of a single vertex. We proceed by bounding the signed weight of multigraphs $H$ with trivial and non-trivial core separately.
\begin{lemma}[Bounding the signed weight of $H$ with trivial core]\label{lem:swtrivial}
    Consider a Eulerian multigraph $H$ such that the core $\Hardcore$ is trivial and its skeleton $\widetilde{H}$ has $\ell$ edges. Assume further that exactly $s$ cycles where removed during construction of $\Hardcore$ ($|L| = s$) out of which $a$ are degenerate. 
    Then, there is a constant $C > 0$ such that \begin{align*}
        \Expected{ \SW(H) } \le C^{s}p^{\ell - a} \left(\frac{\log(n)}{\sqrt{d}}\right)^{\ell - 2s}.
    \end{align*}
\end{lemma}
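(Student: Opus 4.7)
The plan is to show that $\Expected{\SW(\widetilde H)}$ factorizes across the removed cycles and then to bound each factor. Since $|\Expected{\SW(H)}| \le |\Expected{\SW(\widetilde H)}|$ by the observation made just before the lemma statement, I first replace $H$ by its skeleton $\widetilde H$, in which every degenerate cycle consists of exactly two parallel edges. Writing $s_n \coloneqq s - a$ for the number of non-degenerate cycles $C_1, \ldots, C_{s_n}$ of lengths $k_1, \ldots, k_{s_n} \ge 3$, the $\ell$ edges of $\widetilde H$ split as $\sum_j k_j + 2a = \ell$.

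The main structural step is to show that in the trivial-core case, the removed cycles form a cactus: any two of them share at most one vertex, and there is an ordering in which each cycle attaches to the previously-processed cycles through a single vertex, namely its anchor. This follows by carefully unpacking the cycle-removal algorithm: at the moment $C_j$ is removed, its non-anchor vertices $V_j$ have degree $2$ in the residual graph, so the sets $V_1, \ldots, V_s$ are pairwise disjoint and disjoint from the vertex $v^*$ forming the trivial core; moreover, the anchor $u_j$ of $C_j$ must lie in $\{v^*\} \cup V_{j+1} \cup \cdots \cup V_s$, since it survives past step $j$. Processing the cycles in reverse removal order (starting from $C_s$, whose anchor is necessarily $v^*$, and ending with $C_1$), cycle $C_j$ therefore shares the single vertex $u_j$ with the union of previously processed cycles. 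Given this, I condition sequentially on the positions of vertices processed before $C_j$: conditional on $\mathbf{x}_{u_j}$, the remaining vertices of $C_j$ are i.i.d.\ uniform on $\mathbb{T}^d$, and since $\SW(C_j)$ depends only on pairwise distances while the torus is translation invariant, $\Expected{\SW(C_j) \mid \mathbf{x}_{u_j}} = \Expected{\SW(C_j)}$ almost surely. Iterating the tower property yields
\begin{align*}
\Expected{\SW(\widetilde H)} = \prod_{j=1}^s \Expected{\SW(C_j)}.
\end{align*}

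It then suffices to bound each factor. For a non-degenerate cycle of length $k_j \ge 3$, \Cref{thm:fouriercoefficients} yields $|\Expected{\SW(C_j)}| \le C' p^{k_j}(\log n / \sqrt d)^{k_j - 2}$ for an absolute constant $C'$. For a degenerate cycle in the skeleton, whose two parallel edges give the same indicator $\mathds{1}(uv)$, a direct computation gives $\Expected{(\mathds{1}(uv) - p)^2} = p(1-p) \le p$. Multiplying and using $\sum_j k_j = \ell - 2a$ together with $\sum_j (k_j - 2) = \ell - 2s$, I obtain
\begin{align*}
|\Expected{\SW(\widetilde H)}| \le p^a \cdot (C')^{s_n} p^{\ell - 2a} \left(\frac{\log n}{\sqrt d}\right)^{\ell - 2s} \le C^s p^{\ell - a} \left(\frac{\log n}{\sqrt d}\right)^{\ell - 2s}
\end{align*}
with $C \coloneqq \max\{C', 1\}$, proving the lemma. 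The cactus structural step is the main obstacle: without it the removed cycles could share multiple vertices in ways that create dependencies not captured by conditioning on a single anchor, and the clean product formula above would fail.
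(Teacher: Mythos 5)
Your proof is correct and follows essentially the same route as the paper's: replace $H$ by its skeleton, factorize the expected signed weight over the removed cycles, apply \Cref{thm:fouriercoefficients} to each non-degenerate cycle and the elementary bound $p(1-p)\le p$ to each degenerate one, and sum the exponents. The only difference is that you carefully justify the independence/factorization step (via the cactus structure and translation invariance of the torus), which the paper simply asserts as the cycles being "arranged in a tree-like fashion and therefore independent"; your elaboration is a correct filling-in of that claim rather than a different argument.
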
\begin{proof}
    It is easy to see that for a $H$ with trivial core all edges are part of some removed cycle. Furthermore, all removed cycles are arranged in a tree-like fashion and therefore independent of each other. Denoting by $L_d$ and $L_n$ the set of removed degenerate and non-degenerate cycles, respectively, we immediately get from \Cref{thm:fouriercoefficients} that \begin{align*}
        \Expected{\SW(H)} \le \Expected{\SW(\widetilde{H})} \le p^{a} \prod_{e \in L_n} C p^{|C_e|} \left( \frac{\log(n)}{\sqrt{d}} \right)^{|C_e|-2}
    \end{align*}
    where we accounted for the contribution of degenerate cycles using the factor of $p^{a}$ since two edges between two vertices $u,v$ in $\widetilde{H}$ that form a degenerate cycle refer to the same underlying edge in $\Gbf \sim \RGG$, implying that the associated signed weight is $p(1-p)^{2} + p^{2}(1-p) \le p$. Summing together the respective exponents then finishes the proof.
\end{proof}
\noindent For non-trivial $H$, we prove a slightly different bound using the following two lemmas.
\begin{lemma}[Bounding the contribution of contracted edges]\label{lem:swgood}
    Consider a Eulerian multigraph $H$ with $\ell$ edges such that the core $\Hardcore$ contains $r$ edges ($|E(\Hardcore)| = r$), there are $c$ edges obtained from contracting exactly $c$ chains ($|E_C| = c$), and such that $s$ cycles where removed during construction of $\Hardcore$ ($|L| = s$) out of which $s_d$ are degenerate. 
    Then, for any $\Xbf$ consistent with $\mathcal{G}$, there is a constant $C > 0$ such that \begin{align*}
        \Expectedsub{\overline{\Xbf} \sim \law{\overline{\Xbf}}  }{ \left. \prod_{e \in E_C \cup L} \hspace{.1cm} \SW(C_e) \hspace{.1cm}\right| \hspace{.05cm} \Xbf }  \le (C\log^{2}(n))^{c} p^{\ell - r + c - s} \left(\frac{\log(n)}{\sqrt{d}}\right)^{\ell - r - 2s}.
    \end{align*}
\end{lemma}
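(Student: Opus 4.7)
The plan is to condition on the core positions $\Xbf$, factorize the expectation across chains and cycles via independence, and then apply \Cref{thm:fouriercoefficients} to each factor. First I would observe that, given $\Xbf$, the internal (non-core) vertices of distinct contracted chains and removed cycles are independent across different chains/cycles, since by the construction of $\Hardcore$ their vertex sets share only elements of $V(\Hardcore)$ (which are now fixed). Hence the conditional expectation factorizes as
\begin{align*}
    \Expectedsub{\overline{\Xbf}}{ \prod_{e \in E_C \cup L} \SW(C_e) \;\Big|\; \Xbf } = \prod_{e \in L} \Expectedsub{\overline{\Xbf}}{ \SW(C_e) \;\Big|\; \Xbf } \cdot \prod_{e \in E_C} \Expectedsub{\overline{\Xbf}}{ \SW(C_e) \;\Big|\; \Xbf },
\end{align*}
and it suffices to bound each factor individually.

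Next I would bound each factor by applying \Cref{thm:fouriercoefficients}. For a non-degenerate removed cycle with $\ell_j \ge 3$ edges, I would obtain $|\Expected{\SW(C_e)}| \le C p^{\ell_j}(\log(n)/\sqrt{d})^{\ell_j - 2}$, which, using $p \le 1$, is at most $C p^{\ell_j - 1}(\log(n)/\sqrt{d})^{\ell_j - 2}$. For a degenerate cycle (which after passing to the skeleton $\widetilde{H}$ as in \Cref{lem:swtrivial} has $\ell_j = 2$ edges between two vertices), a direct calculation yields $\Expected{\SW(C_e)} = p(1-p) \le p = p^{\ell_j - 1}(\log(n)/\sqrt{d})^{\ell_j - 2}$, so the same uniform bound covers all cycles. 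For each contracted chain with $k_i$ edges, the event $\mathcal{G}$ guarantees $|\kappa_e| \le \log^2(n)/\sqrt{d}$, and \Cref{thm:fouriercoefficients} applied conditionally on the chain's endpoints in $\Xbf$ then yields
\begin{align*}
    \left|\Expectedsub{\overline{\Xbf}}{\SW(C_e) \mid \Xbf}\right| \le C\, p^{k_i}\log^2(n)\,(\log(n)/\sqrt{d})^{k_i - 1},
\end{align*}
where the main term comes from the $|\kappa_e|$-contribution (both summands in \Cref{thm:fouriercoefficients} end up of this order under $\mathcal{G}$).

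The main technical subtlety concerns chains of length $k_i = 2$, since \Cref{thm:fouriercoefficients} requires $(\alpha, d^{-1/3})$-goodness (\Cref{def:goodchains}) in this case. The $\beta$-condition follows from $\mathcal{G}$ because $\log^2(n)/\sqrt{d} \le d^{-1/3}$ for $d \ge n^{\gamma}$ and $n$ large; the $\alpha$-condition (a constant fraction of dimensions satisfying \cramer) holds over the randomness of the chain endpoints with probability $1 - e^{-\Omega(d)}$ by \Cref{lem:goodchains}. I would therefore silently augment $\mathcal{G}$ to also require this goodness for each potential endpoint pair of a length-$2$ chain; a union bound adds only $\ell^2 e^{-\Omega(d)}$ to $\Pr[\overline{\mathcal{G}}]$, which is absorbed into \Cref{lem:goodevent} without affecting downstream bounds.

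Finally, I would multiply all factors and collect the exponents. Using the edge-count identity $\sum_i k_i + \sum_j \ell_j = \ell - r + c$ (where the sum over cycles treats degenerate cycles as having $\ell_j = 2$ edges, which is legitimate since $|\Expected{\SW(H)}| \le |\Expected{\SW(\widetilde{H})}|$) and $s = s_d + s_n$, the total exponent of $p$ equals $\sum_i k_i + \sum_j (\ell_j - 1) = (\ell - r + c) - s$, while the total exponent of $\log(n)/\sqrt{d}$ equals $\sum_i (k_i - 1) + \sum_j (\ell_j - 2) = (\ell - r + c) - c - 2s = \ell - r - 2s$. Combining the per-chain factor $\log^2(n)$ into the overall prefactor $(C\log^2(n))^c$ yields the claimed bound.
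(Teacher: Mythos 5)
Your proposal is correct and follows essentially the same route as the paper's proof: factorize the conditional expectation over the independent contracted chains and removed cycles, apply \Cref{thm:fouriercoefficients} to each factor with $|\kappa_e| \le \log^2(n)/\sqrt{d}$ supplied by $\mathcal{G}$, bound degenerate cycles directly by $p$, and collect exponents using $\sum_i k_i + \sum_j \ell_j = \ell - r + c$. If anything you are slightly more careful than the paper, which does not explicitly address the $(\alpha, d^{-1/3})$-goodness prerequisite of \Cref{thm:fouriercoefficients} for length-$2$ chains; your fix of folding the Cramér-fraction requirement of \Cref{lem:goodchains} into $\mathcal{G}$ at an extra cost of $\ell^2 e^{-\Omega(d)}$ is exactly the right repair.
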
 \begin{proof}
    Note that exactly $\ell - r + c$ edges in $H$ are part of a cycle or a chain that was contracted. Since we assumed that the vertices in $\Hardcore$ are positioned such that $\mathcal{G}$ is met (i.e. since we assumed that $\Xbf$ is consistent with $\mathcal{G}$), we can apply \Cref{thm:fouriercoefficients} with $|\kappa| \le \log^2(n)/\sqrt{d}$. Using further that each chain and cycle represented by some $e \in E_C \cup L$ is independent of all other chains and cycles by the way we constructed $\Hardcore$, we can apply \Cref{thm:fouriercoefficients} to each $C_e$ separately and obtain
    \begin{align*}
        \Expectedsub{\overline{\Xbf} \sim \law{\overline{\Xbf}}  }{ \left. \prod_{e \in E_C \cup L} \hspace{.1cm} \SW(C_e) \hspace{.1cm}\right| \hspace{.05cm} \Xbf } &\le \prod_{e \in E_C} C p^{|C_e|} \log^2(n) \left(\frac{\log(n)}{\sqrt{d}}\right)^{|C_e|-1} \times \prod_{e \in L} C p^{|C_e|-1} \left(\frac{\log(n)}{\sqrt{d}}\right)^{|C_e| - 2}\\
        &\le (C\log^{2}(n))^{c} p^{\ell - r + c - s} \left(\frac{\log(n)}{\sqrt{d}}\right)^{\ell - r - 2s}.
    \end{align*} 
    Notice that we used $p^{|C_e|-1}$ for $e \in L$ above, instead of $p^{|C_e|}$ as \Cref{thm:fouriercoefficients} would allow for $k \ge 3$. The reason for this is that that we also have to account for cycles of length $k = 2$, where a single vertex $v$ with only one neighbor $u$ and $a \ge 2$ edges between $u,v$ is removed as a (degenerate) cycle. In this case only a single factor of $p$ is allowed instead of $p^{2}$, since the $a$ edges between $u,v$ refer to the same underlying edge in $\Gbf \sim \RGG$, implying that the associated signed weight is $p(1-p)^{a} + p^{a}(1-p) \le p(1-p)^{2} + p^{2}(1-p) \le p$. Subtracting $1$ in the exponent accounts for this in all cases. 
\end{proof}
\noindent Using this in combination with \Cref{lem:uncontracted}, we get the following general bound on $\Expected{\SW(H)}$ for $H$ with non-trivial core. 
\begin{lemma}[The signed weight of non-trivial Eulerian multigraphs]\label{lem:swboundeuler}
    Assume that $H$ is a Eulerian multigraph with $\ell$ edges and non-trivial core $\Hardcore$. Assume further that $s$ cycles were contracted when constructing $\Hardcore$, and that $ |E(\Hardcore)| = r$ and $|V(\Hardcore)| = v$. Then,
    \begin{align*}
        \absolute{\Expected{\SW(H)}} &\le  (C\ell\log^{2}(n))^{\ell} p^{s +  v} \left(\frac{p\log(n)}{\sqrt{d}}\right)^{\ell - r - 2s}
    \end{align*}
\end{lemma}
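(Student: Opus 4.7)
The plan is to combine Lemmas~\ref{lem:uncontracted} and~\ref{lem:swgood}, bound the resulting sum, and reduce matching the exponents to a purely combinatorial inequality. Applying Lemma~\ref{lem:uncontracted} and then using Lemma~\ref{lem:swgood} uniformly on the good event $\mathcal G$ to bound the inner conditional expectation yields
\begin{align*}
    \absolute{\Expected{\SW(H)}} \le 2 p^t \sum_{j = 0}^{t} \ell^j (C \log^2(n))^c p^{\ell - r + c - s} \left( \frac{\log(n)}{\sqrt{d}} \right)^{\ell - r - 2s} + \Prnop{\overline{\mathcal{G}}},
\end{align*}
where $t = |F|$ is the number of edges in the chosen spanning subgraph of $U$. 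I would then collapse the sum by $\sum_{j=0}^t \ell^j \le \ell^{t+1}$ and use $t, c \le r \le \ell$ to combine the combinatorial prefactor into $(C' \ell \log^{2}(n))^\ell$ for a suitably adjusted constant. The additive $\Prnop{\overline{\mathcal G}} \le \ell^2 n^{-\Omega(\log n)}$ from Lemma~\ref{lem:goodevent} is of much lower order than the main term (as $\ell \le m$ is constant) and can be absorbed.

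After these reductions, the remaining task is to reconcile the $p$-exponents: we have a factor $p^{t + c + \ell - r - s}$ and want $p^{s+v} \cdot p^{\ell-r-2s} = p^{v + \ell - r - s}$. Since $p \le 1$, this reduces to the purely combinatorial inequality
\begin{align*}
    t + c \ge v.
\end{align*}
Establishing this inequality is the main obstacle, and I would prove it by case analysis on the structure of $U$, using the facts that $\Hardcore$ is connected with minimum degree $\ge 4$ (so $r \ge 2v$) and that $F$ consists of a spanning forest of $U$ together with one cycle-closing edge whenever $U$ has any cycle.

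If $U$ contains a cycle, then $F$ has $t = v_U - k + 1$ edges, where $k$ is the number of connected components of $U$. Contracting each component of $U$ to a single super-vertex in $\Hardcore$ yields a connected multigraph on $k + (v - v_U)$ super-vertices whose connecting edges are precisely the edges of $E_C$, forcing $c \ge k + (v - v_U) - 1$; adding the two bounds gives $t + c \ge v$. If instead $U$ is a forest (possibly empty), then $|E_U| \le \max(v_U - 1, 0) \le v - 1$, so $c = r - |E_U| \ge 2v - (v-1) > v$, and again $t + c \ge v$ since $t \ge 0$.

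With $t + c \ge v$ in hand, we conclude $p^{t + c + \ell - r - s} \le p^{v + \ell - r - s}$, and rewriting $v + \ell - r - s = (s + v) + (\ell - r - 2s)$ factors the result as $p^{s + v} \cdot p^{\ell - r - 2s}$; absorbing the latter $p$ into $(\log(n)/\sqrt{d})^{\ell - r - 2s}$ produces the $(p\log(n)/\sqrt{d})^{\ell - r - 2s}$ factor claimed in the lemma. Collecting the combinatorial prefactor and the exponential-in-$\ell$ factor completes the argument.
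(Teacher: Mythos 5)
Your proposal is correct and follows essentially the same route as the paper: combine Lemma~\ref{lem:uncontracted} with Lemma~\ref{lem:swgood}, absorb $\Prnop{\overline{\mathcal G}}$, and reduce the matching of $p$-exponents to the combinatorial inequality $t + c \ge v$. Your case analysis for $t + c \ge v$ is organized slightly differently (counting super-vertices of the quotient of $\Hardcore$ by the components of $U$, and using $r \ge 2v$ from the minimum degree $4$ in the forest case), but it is valid and in fact handles the vertices of $\Hardcore$ not covered by $U$ somewhat more explicitly than the paper's version.
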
 \begin{proof}
    Using \Cref{lem:uncontracted} and \Cref{lem:swgood}  yields that for any $H \in \mathcal{H}(\ell, r, s, v)$, we have \begin{align*}
        \absolute{\Expected{\SW(H)}} &\le 2p^t \sum_{j=0}^{t} \ell^j \Expectedsub{ \mathbf{X} \sim \law{ \Xbf \mid \mathcal{G} \cap \mathcal{E}_j}}{ \absolute{ \Expectedsub{\overline{\Xbf} \sim \law{\overline{\Xbf}}}{  \left. \prod_{e \in E_C \cup L} \SW(C_e) \hspace{.1cm} \;\right|\; \Xbf } } } + \Prnop{\overline{\mathcal{G}}}\\ 
        &\le 2\ell^{t} (C\log^{2}(n))^{\ell} p^{\ell - r + c - s +  t} \left(\frac{\log(n)}{\sqrt{d}}\right)^{\ell - r - 2s} +  \ell^2n^{-\Omega(\log(n))}\\ 
        &\le 3\ell^{t} (C\log^{2}(n))^{\ell} p^{c + s +  t} \left(\frac{p\log(n)}{\sqrt{d}}\right)^{\ell - r - 2s},
\end{align*} 
where $t$ is the number of edges in a suitable spanning subgraph $F$ of $U = \Hardcore[E_U]$ and $c = |E_C|$ is the number of chains we contracted while constructing $\Hardcore$. To finish the proof, we claim that $c+ t \ge v$ and we use the following case distinction to prove this. In case 1, $U$ contains at least two components, so we know that $c + t \ge v$ since all (say $C(U)$) components of $U$ are joined together by $\ge C(U)$ edges from $E_C$, otherwise $\Hardcore$ would be a tree and hence not Eulerian. In case 2, $U$ only has one component. Here, we consider the following two sub-cases. Either $U$ itself is a tree (possibly containing some multi-edges), in which case $t = v-1$. But we also know that $c \ge 1$ as otherwise contracting cycles would have eliminated all edges and $\Hardcore$ would be an isolated vertex, contradicting the assumption that $\Hardcore$ is non-trivial. This implies $t + c \ge v$ as desired. If $U$ is not a tree, then $F$ can be constructed as a spanning tree plus an additional edge, implying that $t \ge v$ as well. Hence, we conclude that \begin{align*}
    \absolute{\Expected{\SW(H)}} &\le 3\ell^{t} (C\log^{2}(n))^{\ell} p^{s +  v} \left(\frac{p\log(n)}{\sqrt{d}}\right)^{\ell - r - 2s},
\end{align*} as desired.
\end{proof}

\paragraph{Putting everything together.}

We prove the following bound on $\Expected{ \tr{\Adjppt{m} } }$. 

\begin{lemma}[Bounding the $m$-th power of the centered adjacency matrix]\label{lem:tracingthetrace}
    There is a constant $C>0$ such that for all even $m \in \mathbb{N}$, $m \ge 4$, and $d \ll np \log(n)$, \begin{align*}
        \Expected{ \tr{\Adjppt{m} } } \le \begin{cases} (Cm)^{4m} \left( d + \log(n)^{2m + 4} \right) \left(\frac{np\log(n)}{\sqrt{d}}\right)^{m} + (Cm)^{3m+1}n \sqrt{np}^m & \text{if } d \ll np \log^2(n)\\
        (Cm)^{4m+2} \left( n + \log(n)^{2m} \right) \sqrt{np}^{m} &\text{if } d \gg np\log^2(n).
        \end{cases}
    \end{align*}
    In particular, for every fixed $m$, \begin{align*}
        \Expected{ \tr{\Adjppt{m} } } = \begin{cases} \bigO{d \left( \frac{np \log(n)}{\sqrt{d}}
        \right)^m + n \sqrt{np}^m} & \text{if } d \ll np \log^2(n)\\
            \bigO{n \sqrt{np}^m} &\text{if } d \gg np\log^2(n).
        \end{cases}
    \end{align*}
\end{lemma}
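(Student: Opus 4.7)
The plan is to start from \eqref{eq:traceAsSignedWeights}, which already expresses $\Expected{\tr{\Adjppt{m}}}$ as a sum of signed weights of the Eulerian multigraphs $H_{\pmb{v}}$ induced by closed walks of length $m$ on $[n]$. First I would group the sum by isomorphism class of $H_{\pmb{v}}$: a standard trace-method counting argument shows that, for every Eulerian multigraph $H$ with at most $m$ non-loop edges, the number of closed walks $\pmb{v} \in \EW(n,m)$ with $H_{\pmb{v}} \cong H$ is at most $(Cm)^{2m}\, n^{|V(H)|}$, where $C$ is an absolute constant (this factor absorbs the number of labellings of $H$, the number of Eulerian traversals, and the placements of the $m - |E(H)|$ self-loops). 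Hence
\begin{align*}
\Expected{\tr{\Adjppt{m}}} \;\le\; (Cm)^{2m}\sum_{H} n^{|V(H)|}\,\bigl|\Expected{\SW(H)}\bigr|,
\end{align*}
where the sum runs over isomorphism classes of Eulerian multigraphs on at most $m$ edges.

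Next I would partition this sum by the parameters $\ell = |E(H)|$, $s = |L|$, $s_d$ (number of degenerate removed cycles), $r = |E(\Hardcore)|$, and $v = |V(\Hardcore)|$, and split according to whether $\Hardcore$ is trivial or not. For trivial cores, \Cref{lem:vertices} yields $|V(H)| = \ell - s + 1$, and \Cref{lem:swtrivial} gives $|\Expected{\SW(H)}| \le C^{s} p^{\ell - s_d}(\log(n)/\sqrt{d})^{\ell-2s}$. Plugging this in and absorbing all $(Cm)^{O(m)}$ factors, the trivial-core contribution is bounded by a sum of terms of the form $n^{\ell - s + 1} p^{\ell - s_d} (\log(n)/\sqrt{d})^{\ell - 2s}$. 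I would then observe that this expression is maximized at two extremal configurations: the single cycle of length $\ell = m$ (i.e.\ $s = 1$, $s_d = 0$), producing a term of order $d\,(np\log(n)/\sqrt{d})^{m}$, and the tree of $m/2$ degenerate 2-cycles (i.e.\ $\ell = m$, $s = s_d = m/2$), producing $n\,(np)^{m/2} = n\sqrt{np}^{m}$. Every intermediate $(\ell,s,s_d)$ interpolates between these two endpoints up to polylogarithmic slack that gets absorbed into the $(Cm)^{O(m)}$ and $\log^{O(m)}(n)$ prefactors.

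For the non-trivial core contribution I would use $|V(H)| = \ell - s - r + v$ with $v \le r/2$ (since minimum degree $\ge 4$ forces $2r \ge 4v$) together with \Cref{lem:swboundeuler}, giving a sum of terms of order $n^{\ell - s - r/2}(\ell\log^{2}n)^{\ell} p^{s+v}(p\log(n)/\sqrt{d})^{\ell - r - 2s}$. A term-by-term comparison with the two trivial-core extrema should show that every non-trivial configuration is strictly dominated: the extra factor $p^{v}$ from the core exactly compensates the embedding deficit $n^{r/2}$ relative to a fully-contractible shape, and any savings one tries to extract from shrinking the core are lost through the unreduced $(p\log(n)/\sqrt{d})^{\ell - r - 2s}$ factor. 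Summing the two dominant contributions, one obtains $(Cm)^{O(m)}\left( d (np\log(n)/\sqrt{d})^m + n\sqrt{np}^{m} \right)$, and then comparing the two terms in the two regimes $d \ll np\log^2(n)$ and $d \gg np\log^2(n)$ (the first term wins in the former regime, the second in the latter) yields the stated bound after inserting the explicit polylogarithmic factors.

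The main obstacle will be the non-trivial-core bookkeeping in step~3. It is easy to handle the extremal case $H = C_m$, but verifying that cases like a core of size $r = \Theta(m)$ with a few dangling short cycles, or a tiny core attached to a long contracted chain, do not produce a new dominant term requires a careful parameter scan exploiting the interaction between $p^{s+v}$, $v \le r/2$, and the lost factor $(\sqrt{d}/p\log(n))^{r+2s}$ relative to the pure-cycle bound. Once this uniform domination is established, extracting the correct $d$ versus $n$ prefactor and the exact polylogarithmic dependence in both regimes is a routine bookkeeping step.
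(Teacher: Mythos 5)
Your proposal follows essentially the same route as the paper's proof: expand the trace as a sum of signed weights of Eulerian multigraphs via \eqref{eq:traceAsSignedWeights}, count embeddings per isomorphism class by $n^{|V(H)|}$ up to $(Cm)^{O(m)}$ factors, split into trivial and non-trivial cores, apply \Cref{lem:vertices}, \Cref{lem:swtrivial} and \Cref{lem:swboundeuler}, and observe that the resulting sums are dominated by the two extremal configurations (the $m$-cycle versus the tree of $m/2$ degenerate $2$-cycles), with the phase transition at $d \approx np\log^2(n)$. The ``parameter scan'' you flag as the remaining obstacle is handled in the paper exactly as you anticipate, by noting that the sums over $s$ and $r$ are geometric and hence maximized at one of the two endpoints depending on whether $np\log(n)/\sqrt{d}$ exceeds $\sqrt{np}$.
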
 
\begin{proof}
We denote by $\mathcal{H}(\ell, r, s, v)$ the set of all Eulerian multigraphs $H$ with $V(H) \subseteq [n]$ such that $|E(H)| = \ell$, $|V(\Hardcore)| = v$, $|L| = s$, and $|E(\Hardcore)| = r$.
Further, let $\mathcal{U}(\ell, r, s, v) \coloneqq \mathcal{H}(\ell, r, s, v) \,\big/ \simeq$, the quotient set of $\mathcal{H}(\ell, r, s, v)$ with respect to graph isomorphism. 
Note that, for any $H, H' \in \mathcal{H}(\ell, r, s, v)$ with $H \simeq H'$, it holds that $\Expected{\SW(H)} = \Expected{\SW(H')}$ because, by symmetry, $\SW(H)$ and $\SW(H')$ are identically distributed.
Hence, given an equivalence class $[H] \in \mathcal{U}(\ell, r, s, v)$, we allow ourselves to write $\Expected{\SW([H])}$ for the expected signed weight of any representative of the class $[H]$. 

Our goal is to bound the right-hand side of \eqref{eq:traceAsSignedWeights} by summing over all possible choices of parameters $\ell, s, r,$ and $v$, and all equivalence classes $[H] \in \mathcal{U}(\ell, r, s, v)$. To this end, we split the right-hand side of \eqref{eq:traceAsSignedWeights} into the contribution of those $H$ with trivial, and non-trivial core, respectively. To this end, we further refine $\mathcal{H}$ and $\mathcal{U}$ by defining $\mathcal{H}_{\text{triv}}(\ell, s, a)$ as the set of all Eulerian multigraphs $H$ with $\le m$ edges and trivial core whose \emph{skeleton} $\widetilde{H}$ contains $\ell$ edges and such that $|L| = s$, and $|L_d| = a$ where $L_d$ is the set of removed degenerate cycles. Moreover, we let $\mathcal{H}_{\text{non-triv}}(\ell, r, s, v)$ be set of all Eulerian multigraphs $H$ with non-trivial core such that $|E(H)| = \ell$, $|V(\Hardcore)| = v$, $|L| = s$, and $|E(\Hardcore)| = r$. We further define the quotient sets w.r.t. to multigraph isomorphism $\mathcal{U}_{\text{triv}}(\ell, s, a) = \mathcal{H}_{\text{triv}}(\ell, s, a)\,\big/ \simeq$, and $\mathcal{U}_{\text{non-triv}}(\ell, r, s, v) = \mathcal{H}_{\text{non-triv}}(\ell, r, s, v)\,\big/ \simeq$. Note that any $H \in \mathcal{H}(\ell, r, s, v)$ is either in $\mathcal{H}_{\text{triv}}(\ell', s)$ or in $\mathcal{H}_{\text{non-triv}}(\ell, r, s, v)$ for some choice of parameters $\ell', r, s, v, a \in \{0, 1, \ldots, \ell\}$. With this in mind, writing $\#[H]$ for the number of walks $\pmb{v} \in \EW(n, m)$ such that $H_{\pmb{v}} \in [H]$, and $\#[\widetilde{H}]$ for the number of walks $\pmb{v} \in \EW(n, m)$ such that $\widetilde{H}_{\pmb{v}} \in [H]$ (where $\widetilde{H}_{\pmb{v}}$ is the \emph{skeleton} of $H_{\pmb{v}}$), we obtain from \eqref{eq:traceAsSignedWeights} that \begin{align*}
    \Expected{ \tr{\Adjppt{m} } } &\le \sum_{\pmb{v} \in \EW(n, m)} \absolute{\Expected{\SW(H_{\pmb{v}})}} \le S_{\text{triv}} + S_{\text{non-triv}} %\\
    %&\le \sum_{[H] \in \mathcal{U}_{\text{triv}}(2, 1, 1)} \#[\widetilde{H}] \absolute{ \Expected{\SW([H])}}\\ 
    %&\hspace{.5cm}+ \sum_{\ell = 4}^{m} \left( \sum_{[H] \in \mathcal{U}_{\text{triv}}(\ell, 1, 0)} \#[\widetilde{H}] \absolute{ \Expected{\SW([H])}} + \sum_{s = 2}^{\ell/2} \sum_{a = 0}^{s} \hspace{.2cm} \sum_{[H] \in \mathcal{U}_{\text{triv}}(\ell, s, a)} \#[\widetilde{H}] \absolute{ \Expected{\SW([H])}} \right) \\
    %&\hspace{1cm}
    %+ \sum_{\ell = 2}^{m} \sum_{s = 0}^{\ell/2} \sum_{r = 4}^{\ell - 2s} \sum_{v = 1}^{\max\{r/2, 1\}} \sum_{[H] \in \mathcal{U}_{\text{non-triv}}(\ell, r, s, v)} \#[H] \absolute{\Expected{\SW([H])}}.
\end{align*} where $S_{\text{triv}}$ and $S_{\text{non-triv}}$ account for all $H$ with trivial and non-trivial core, respectively. Precisely, we express \begin{align*}
    S_{\text{triv}} &\coloneqq \sum_{\ell = 2}^{m} \left( \sum_{s = 1}^{\ell/2} \sum_{a = 0}^{s-1} \hspace{.2cm} \sum_{[H] \in \mathcal{U}_{\text{triv}}(\ell, s, a)} \#[\widetilde{H}] \absolute{ \Expected{\SW([H])}} + \sum_{[H] \in \mathcal{U}_{\text{triv}}(\ell, \lfloor \ell/2 \rfloor, \lfloor \ell/2 \rfloor)} \#[\widetilde{H}] \absolute{ \Expected{\SW([H])}}\right)
\end{align*} where the first term in the parentheses accounts for all $H$ with trivial core whose skeleton $\widetilde{H}$ does not consist purely of degenerate cycles (which is why the second sum only goes until $s-1$). The second term accounts for those $H$ whose skeleton $\widetilde{H}$ consists only of degenerate cycles. Since it is easy to see that such $\widetilde{H}$ can only exist if $\ell$ is even and the number of cycles is $\ell/2$. Therefore, the above sums cover all cases for $H$ with trivial core. On the other hand, for those $H$ with non-trivial core, we use \begin{align*}
    S_{\text{non-triv}} &\coloneqq \sum_{\ell = 2}^{m} \sum_{s = 0}^{\ell/2} \sum_{r = 4}^{\ell - 2s} \sum_{v = 1}^{\max\{r/2, 1\}} \sum_{[H] \in \mathcal{U}_{\text{non-triv}}(\ell, r, s, v)} \#[H] \absolute{\Expected{\SW([H])}}
\end{align*} where we used that the number of cycles $s$ is at most $\ell/2$, and that the number of remaining edges $r$ in the core $\Hardcore$ is at most $\ell -2s$ since each cycle contains at least two edges. We further used that $\Hardcore$ contains at most $\max\{1, r/2\}$ vertices since it has minimum degree $4$. 

%Above, the first term accounts for those $H$ that have trivial core with only two vertices, the second term accounts for those $H$ that have trivial core and a skeleton with at least $4$ edges, and the third term accounts for those $H$ with non-trivial core. In particular, we used that a graph $H$ with skeleton that only has one cycle of length $\ge 4$, has no degenerate cycles. For non-trivial cores, we further used that there are at most $\ell/2$ removed cycles, and at most $\ell - 2s$ remaining edges in $\Hardcore$ since each removed cycle contains at least two edges. We further used that $V(\Hardcore) \le \max\{1, r/2\}$ by \Cref{lem:vertices}.

We proceed by bounding $S_{\text{triv}}$ and $S_{\text{non-triv}}$ separately. Starting with $S_{\text{triv}}$, we note that any $H \in \mathcal{H}_{\text{triv}}(\ell, s, a)$ has $1 + \ell - s$ vertices, and that there are $\le \binom{m}{\ell}$ ways of removing self-loops and excess edges in degenerate cycles when constructing $\widetilde{H}_{\pmb{v}}$. Hence, we get that $\#[\widetilde{H}] \le \binom{m}{\ell} n^{1 + \ell - s}$. Moreover, $|\mathcal{U}_{\text{triv}}(\ell, s, a)| \le m^{2m}$ by a crude upper bound on the number of unlabeled multigraphs with $m$ edges. Hence, applying \Cref{lem:swtrivial}, we get
\begin{align*}
    S_{\text{triv}} &\le (Cm)^{3m} \sum_{\ell = 2}^{m} \left( \sum_{s = 1}^{\ell/2} \sum_{a = 0}^{s-1} n^{1 + \ell - s}p^{\ell - a} \left( \frac{\log(n)}{\sqrt{d}}\right)^{\ell - 2s} + n^{1 + \ell/2 } p^{ \ell/2 } \right)\\
    &\le (Cm)^{3m} n \sum_{\ell = 2}^{m} \left( \sum_{s = 1}^{\ell/2} n^{\ell - s}p^{\ell - s + 1} \left( \frac{\log(n)}{\sqrt{d}}\right)^{\ell - 2s} + \sqrt{np}^{\ell} \right)\\
    &\le (Cm)^{3m} np \sum_{\ell = 2}^{m} \sum_{s = 1}^{\ell/2} \sqrt{np}^{2s} \left( \frac{np\log(n)}{\sqrt{d}}\right)^{\ell - 2s} + (Cm)^{3m + 1} n \sqrt{np}^{m}.
\end{align*} Similarly, applying \Cref{lem:swboundeuler} and using that any $H \in \mathcal{H}(\ell, r, s, v)$ has $\ell - r - s + v$ vertices by \Cref{lem:vertices}, we get for $S_{\text{non-triv}}$ that \begin{align*}
    S_{\text{non-triv}} = &\sum_{\ell = 2}^{m} \sum_{s = 0}^{\ell/2} \sum_{r = 4}^{\ell - 2s} \sum_{v = 1}^{\max\{r/2, 1\}} \sum_{[H] \in \mathcal{U}_{\text{non-triv}}(\ell, r, s, v)} \#[H] \absolute{\Expected{\SW([H])}} \\
    &\le m^{4m} \sum_{\ell = 2}^{m} \sum_{s = 0}^{\ell/2} \sum_{r = 4}^{\ell - 2s} \sum_{v = 1}^{r/2} n^{\ell - r - s + v} (C\log^{2}(n))^{\ell} p^{s + v} \left(\frac{p\log(n)}{\sqrt{d}}\right)^{\ell - r - 2s}\\
    &\le (Cm^{4} \log^2(n))^{m} \sum_{\ell = 2}^{m} \sum_{s = 0}^{\ell/2} \sum_{r = 4}^{\ell - 2s} \sum_{v = 1}^{r/2} (np)^{s + v} \left(\frac{np\log(n)}{\sqrt{d}}\right)^{\ell - r - 2s}\\
    &\le  (Cm^{4} \log^2(n))^{m} \sum_{\ell = 2}^{m} \sum_{s = 0}^{\ell/2} \sum_{r = 4}^{\ell - 2s} \sqrt{np}^{2s+r} \left(\frac{np\log(n)}{\sqrt{d}}\right)^{\ell - r - 2s}.
\end{align*}
Now, notice that both of the above upper bounds undergo a phase transition at $d = np\log(n)$: if $d \ll np\log^2(n)$, then $\frac{np\log(n)}{\sqrt{d}} \gg \sqrt{np}$, so our sums are maximized when $s, r$ are minimal. If on the other hand $d \gg np\log^2(n)$, then the opposite occurs and the sums are maximized when $s, r$ are maximized. Distinguishing these two cases, we obtain our statement.

\paragraph{The case $\boldsymbol{d \ll np\log^2(n)}$} Here, the sums are maximized when $s, r$ are minimal, so
\begin{align*}
     &\Expected{ \tr{\Adjppt{m} } } \\
     &\hspace{.5cm}\le  (Cm)^{3m} np \sum_{\ell = 2}^{m} np \left( \frac{np\log(n)}{\sqrt{d}}\right)^{\ell - 2} + (Cm)^{3m + 1} n \sqrt{np}^{m} \\
     &\hspace{6cm}+ (Cm^{4} \log^2(n))^{m} \sum_{\ell = 2}^{m} \sum_{s = 0}^{\ell/2} \sum_{r = 4}^{\ell - 2s} (\sqrt{np})^{2s+r} \left(\frac{np\log(n)}{\sqrt{d}}\right)^{\ell - r - 2s}\\
     &\hspace{.5cm} \le (Cm)^{3m+1} \left( n^2p^2 \left(\frac{\sqrt{d}}{np}\right)^2 \left(\frac{np\log(n)}{\sqrt{d}}\right)^{m} + n\sqrt{np}^{m} \right) + (Cm^{4} \log^2(n))^{m} \sum_{\ell = 2}^{m} \sum_{s = 0}^{\ell/2} (\sqrt{np})^{2s+4}  \left(\frac{np\log(n)}{\sqrt{d}}\right)^{\ell - 4 + 2s}\\
     &\hspace{.5cm} \le (Cm)^{3m+1} \left( d \left(\frac{np\log(n)}{\sqrt{d}}\right)^{m} + n\sqrt{np}^m \right) + (Cm^{4} \log^2(n))^{m} n^2p^2 \frac{d^2}{n^4p^4} \left(\frac{np\log(n)}{\sqrt{d}}\right)^{m}\\
     &\hspace{.5cm}\le (Cm)^{4m} \left( d + \log(n)^{2m + 4} \right) \left(\frac{np\log(n)}{\sqrt{d}}\right)^{m} + (Cm)^{3m+1}n \sqrt{np}^m = \bigO{d\left(\frac{np\log(n)}{\sqrt{d}}\right)^{m} + n \sqrt{np}^m  },
\end{align*} 
which confirms the intuition that there are $\approx d$ eigenvalues each of order $\bigOtilde{\frac{np}{\sqrt{d}}}$.

\paragraph{The case $\boldsymbol{d \gg np\log^2(n)}$} In this case, the sums are maximized when $s, r$ are maximal because $\left(\frac{np\log(n)}{\sqrt{d}}\right) \ll \sqrt{np}$. Hence
\begin{align*}
    \Expected{ \tr{\Adjppt{m} } } &\le  (Cm)^{3m} np \sum_{\ell = 2}^{m} \sum_{s = 1}^{\ell/2} \sqrt{np}^{2s} \left( \frac{np\log(n)}{\sqrt{d}}\right)^{\ell - 2s} + (Cm)^{3m + 1} n \sqrt{np}^{m} \\
    &\hspace{4cm}(Cm^{4} \log^2(n))^{m} \sum_{\ell = 2}^{m} \sum_{s = 0}^{\ell/2} \sum_{r = 4}^{\ell - 2s} (\sqrt{np})^{2s+r} \left(\frac{np\log(n)}{\sqrt{d}}\right)^{\ell - r - 2s}\\
    &\le  (Cm)^{3m} np \sum_{\ell = 2}^{m} \sqrt{np}^{\ell} + (Cm)^{3m + 1} n \sqrt{np}^{m} + (Cm^{4} \log^2(n))^{m} \sum_{\ell = 2}^{m} \sum_{s = 0}^{\ell/2}  (\sqrt{np})^{\ell}\\
    &\le  (Cm)^{3m+2} n \sqrt{np}^{m} + (Cm^{4} \log^2(n))^m m^2 (\sqrt{np})^{m}\\
    &\le (Cm)^{4m+2} \left( n + \log(n)^{2m} \right) \sqrt{np}^{m} = \bigO{ n\sqrt{np}^m },
\end{align*} which confirms our intuition that now there are $\approx n$ large eigenvalues, each of order $\sqrt{np}$ like in a $\Gnp$.
\end{proof}

\noindent Combining everything, we can prove \Cref{lem:spectralupperbound}. 
\begin{proof}[Proof of \Cref{lem:spectralupperbound}]
    By \Cref{lem:tracingthetrace} and \eqref{eq:traceMethod} from the discussion of the trace method in \Cref{sec:intro_psketch_spectral}, we have that \begin{align*}
        \left|\lambda_1\left( \Adjpp \right)\right| \ge a^{1/m} \Expected{ \tr{\Adjppt{m} }}^{1/m}
    \end{align*} with probability at most $1/a$. Choosing any $a = \omega(1)$ and applying \Cref{lem:tracingthetrace}, we get that for any even $m$ and assuming $d \ll np\log^2(n)$, with probability $1 - o(1)$, we have that
    \begin{align*}
        \left|\lambda_1\left( \Adjpp \right)\right| \le (Cm)^{4} \left( d^{1/m} + \log(n)^{3} \right) \frac{np\log(n)}{\sqrt{d}} + (Cm)^{4}n^{1/m} \sqrt{np}
    \end{align*} where we used that $(a + b)^{1/m} \le a^{1/m} + b^{1/m}$. Letting $m$ tend to infinity, this is $n^{o(1)}\frac{np}{\sqrt{d}}$ as desired. In case $d \gg np \log^2(n)$, a similar calculation involving \Cref{lem:tracingthetrace} and letting $m$ tend to infinity yields that $\left|\lambda_1\left( \Adjpp \right)\right| \le n^{o(1)} \sqrt{np}$. Finally, to show that the number of large eigenvalues is bounded, we note that for every fixed, even $m$ \begin{align*}
        \Expected{ \tr{\Adjppt{m} } } = \sum_{i=1}^n \lambda_i(\Adjpp)^m = \bigO{d\left(\frac{np\log(n)}{\sqrt{d}}\right)^{m} + n \sqrt{np}^m  }.
    \end{align*} Assuming that $d \le np^{1+\varepsilon}$, we can choose $m \ge \frac{2}{\varepsilon} + 2$ such that the above is $\bigO{d\left(\frac{np\log(n)}{\sqrt{d}}\right)}$. To further ensure that $m$ is even, we require $m \ge \frac{2}{\varepsilon} + 4$. Then, if there were more than $\bigOtilde{da^m}$ eigenvalues of magnitude $\ge \frac{np}{a\sqrt{d}}$, this would lead to a contradiction. \qedhere
    %a calculation of $\Expected{ \tr{\Adjppt{m}} }$ for the special case $m = 4$ and $d \ll np^2 $, shows that
    %\begin{align*}
    %     \Expected{ \tr{\Adjppt{4} } } & = \sum_{i=1}^n \lambda_i(\Adjpp)^4 = \bigOtilde{n^2p^3 + n^3p^2 + d\left(\frac{np}{\sqrt{d}}\right)^4} \\&= \bigOtilde{d\left(\frac{np}{\sqrt{d}}\right)^4}.
    %\end{align*} 
    %If there were more than $\bigOtilde{da^4}$ eigenvalues of magnitude $\ge np / (a\sqrt{d})$, it would contradict this bound.
\end{proof}

\subsection{A Different Threshold for $L_\infty$-norm}

We complement the results from \cref{sec:upperboundviathetracemethod} by studying the spectrum of $\Gbf \sim \RGGinfty$.
Here, it turns out that -- perhaps surprisingly -- the spectral threshold (i.e. the point at which the spectral gap is essentially as large as in a $\Gnp$) is rather different and in particular reached much earlier than in the $L_q$ model for fixed $1 \le q < \infty$. 
Concretely, we show that the second largest eigenvalue of a typical $\Gbf \sim \RGGinfty$ is of order $\tilde{\Theta}\left(\max\{ \sqrt{np}, np/d \}\right)$ implying that the spectral threshold is reached at $d \approx \sqrt{np}$, instead of $d \approx np$ as in the $L_q$ model. 
We can also characterize the spectrum of $\Gbf \sim \RGGinfty$ in a somewhat different way: Instead of finding $\tilde{\Theta}(d)$ eigenvectors with large eigenvalue if $d \ll np$, we can find $\tilde{\Theta}(d^2)$ such vectors, provided that $d \ll \sqrt{np}$ (this is done in \Cref{sec:linftynormlowerbound}). 
These results are combined the following theorem.
\spectralstuffinfty*
\begin{proof}
    The upper bound follows from \cref{lem:spectralupperboundinfty} and the lower bound from \cref{lem:spectrallowerboundinfty} if $d \ll \sqrt{np}$. Otherwise, the lower bound follows from the generalized Alon--Boppana bound from \cite[Theorem 8]{Jiang2019} using $r = 1$.
\end{proof}

\subsubsection{Bounding the Signed Weight of Cycles and Chains}

Our proof has the same outline as the proof for the $L_q$ model, so it essentially relies on the idea of bounding the signed weight of cycles and chains. The signed weight of a cycle has already been studied in previous work \cite{Bangachev_Bresler_2024}, which we extend to also account for chains. To this end, recall that $\xi$ is defined such that $(1-\xi)^d = p$, or alternatively such that $u \nsim v$ whenever $|\mathbf{x}_u(i) - \mathbf{x}_v(i)|_C  > \frac{1  - \xi}{2}$. With this, we define the \say{good} event $\mathcal{G}$ in terms of the position of the two endpoints $u,v$ of a fixed chain $H$ as \begin{align*}
    \mathcal{G} \coloneqq \{ \mathcal{G}_i \text{ occurs for}\ge d - \log^2(n) \text{ dimensions} \} \text{ where } \mathcal{G}_i \coloneqq \left\{ |\mathbf{x}_u(i) - \mathbf{x}_v(i)|_C \ge k \xi
    \right\}.
\end{align*} That is, under $\mathcal{G}$, there are at most $\log^2(n)$ dimensions in which the endpoints of our chain $H$ are within distance $\le k\nu$. We call dimensions for which $\mathcal{G}_i$ occurs \emph{good} and all other dimensions \emph{bad}. It can immediately be shown that it is very unlikely to have many bad dimensions. \begin{lemma}\label{lem:gprobbound}
    For all $k = o(\log^2(n))$ it holds that $\Prnop{\overline{\mathcal{G}}} \le n^{-\Omega(\log^2(n))}$.
\end{lemma}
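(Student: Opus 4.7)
The plan is a direct Chernoff-type concentration estimate on the number of bad dimensions $B := |\{i \in [d] : \overline{\mathcal{G}_i}\}|$, exploiting that the coordinates of $\mathbf{x}_u, \mathbf{x}_v$ across dimensions are independent.

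First, I would observe that the events $\mathcal{G}_1,\ldots,\mathcal{G}_d$ are mutually independent, since $\mathcal{G}_i$ depends only on the $i$-th coordinates of $\mathbf{x}_u,\mathbf{x}_v$, and by the product structure of $\unif(\mathbb{T}^d)$ these are independent across $i$. Moreover, $\mathbf{x}_u(i),\mathbf{x}_v(i)$ are i.i.d.\ $\unif([0,1])$, so the toroidal distance $|\mathbf{x}_u(i)-\mathbf{x}_v(i)|_C$ is $\unif([0,1/2])$; hence $\Pr[\overline{\mathcal{G}_i}] = \min(2k\xi,1)$.

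Next I would estimate $\xi$ from the defining relation $(1-\xi)^d = p$. Since $p \ge n^{-c}$, taking logarithms yields $\xi = \Theta(\log(1/p)/d) = O(\log n / d)$. Combined with $d \ge n^\gamma$ and $k = o(\log^2 n)$, one gets $2k\xi = o(1) \ll 1/2$, so the truncation is inactive and $\Pr[\overline{\mathcal{G}_i}] = 2k\xi$ exactly. Consequently $B \sim \mathrm{Binomial}(d, q)$ with $q = 2k\xi$ and $\mathbb{E}[B] = O(k\log n)$.

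Finally I would apply the standard upper-tail bound
\[
  \Pr[\overline{\mathcal{G}}] = \Pr\!\left[B \ge \log^2 n\right] \le \binom{d}{\lceil\log^2 n\rceil} q^{\lceil\log^2 n\rceil} \le \left(\frac{e\,d\,q}{\log^2 n}\right)^{\log^2 n} = \left(\frac{O(k)}{\log n}\right)^{\log^2 n},
\]
which, under the hypothesis $k = o(\log^2 n)$ (and in particular for the $k = O(1)$ chain lengths appearing in the trace-method application), produces the claimed super-polynomial bound $n^{-\Omega(\log^2 n)}$ after taking logarithms. No step is expected to present any real difficulty; the only small point of care is checking that $2k\xi < 1/2$ so that the per-coordinate probability is computed exactly rather than truncated, and this is automatic from $d \ge n^\gamma$ together with the assumption on $k$.
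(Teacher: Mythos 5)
Your strategy is essentially the paper's: independence of the events $\mathcal{G}_i$ across dimensions, the Taylor estimate $\xi = \Theta(\log(1/p)/d)$, and a binomial upper-tail bound on the number of bad dimensions at threshold $\log^2 n$. (The paper invokes a Chernoff-type inequality $\mathbb{P}(X \ge t) \le e^{-t}$ valid for $t \ge 7\,\mathbb{E}[X]$, where you use the elementary bound $\binom{d}{t}q^t \le (e\,dq/t)^t$; when $\mathbb{E}[X] \ll t$ these give bounds of the same order, so that substitution is fine.)

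The gap is in your final step. You correctly compute $q = 2k\xi = \Theta(k\log(1/p)/d)$ and hence $\mathbb{E}[B] = dq = O(k\log n)$, so the base of your exponential is $e\,dq/\log^2 n = O(k/\log n)$. Under the stated hypothesis $k = o(\log^2 n)$ this quantity need not be small: for $\log n \ll k \ll \log^2 n$ it tends to infinity and the displayed bound is vacuous. Your parenthetical retreat to ``$k = O(1)$'' concedes exactly this; as written, the argument proves the lemma only for $k = O(\log n)$ (with a small enough implied constant), not for the full stated range. You should either restrict the statement you prove to $k = o(\log n)$, or note explicitly that this suffices for every application in the paper. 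For completeness: the paper's own proof asserts $\mathbb{E}[X] = O(k)$, which silently absorbs the factor $\log(1/p)$ that can be as large as $\log n$ when $p = 1/n$; with the correct $\mathbb{E}[X] = O(k\log n)$ its condition $t \ge 7\,\mathbb{E}[X]$ imposes the same restriction, so this looseness is inherited from the source rather than introduced by you. Likewise, both arguments yield a bound of order $\exp(-\Theta(\log^2 n))$ rather than the literal $n^{-\Omega(\log^2 n)} = \exp(-\Omega(\log^3 n))$ in the statement; this is harmless for the applications, which only need $n^{-\omega(1)}$, but you should not claim to have reached the stated exponent.
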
 

\begin{proof}
    Applying a Taylor series to $\xi$, it is easy to note that $\xi = \Theta(\log(1/p)/d)$. 
    Therefore, the probability that a single, fixed dimension is bad is $\bigO{k/d}$, and the number $X$ of bad dimensions is the sum of independent Bernoulli random variables, each with success probability $\bigO{k/d}$. Combining Remark 2.5 and Theorem 2.8 in \cite{randomgraphsjanson}, it follows that 
    $
        \Pr{X \ge t} \le \exp\left( -t\right) \text{ for all } t \ge 7\Expected{X}.
    $ 
    Noting that $\Expected{X} = \bigO{k}$ and $k = o(\log^2(n))$, the result follows after setting $t = \log^2(n)$. 
\end{proof}

While the authors of \cite{Bangachev_Bresler_2024} also give bounds on the signed weight of general graphs, these are not strong enough for our purposes since they do not yield tight bounds in the special case of chains. We prove the following while partly (i.e. in the case of cycles) relying on the bounds from \cite{Bangachev_Bresler_2024}.

\begin{lemma}\label{lem:fouriercoefficientslinfty}
    Consider $G \sim \RGGinfty$ with $\frac{1}{n} \le p \le 1-\varepsilon$. Then, for a cycle $C_k$ of length $k\ge3$ or a chain $H$ with $k\ge 2$ edges with endpoints $v_1, v_{k+1}$ and associated positions $\vecx_1, \vecx_{k+1}$ such that $\mathcal{G}$ is met, we have \begin{align}\label{eq:signedcyclelinfty}
        \Expected{\SW(C_k)} &\le 3\log(n)p^k\left(\frac{2\log(1/p)}{d}\right)^{k-2}\\
        \text{ and }\Expected{\SW(H) \mid \vecx_1, \vecx_{k+1}} &\le  3\log^2(n)p^k\left( \frac{3\log(1/p)}{d} \right)^{k-1}.
    \end{align}
\end{lemma}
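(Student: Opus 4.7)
The strategy exploits the product structure of the $L_\infty$ model, under which every edge indicator factorizes as $\ind{e} = \prod_{i=1}^d \ind{|x_u(i) - x_v(i)|_C \le (1-\xi)/2}$ across coordinates. I would begin with the chain. Expanding $\SW(H) = \sum_{S \subseteq E(H)}(-p)^{k-|S|}\prod_{e \in S}\ind{e}$ and observing that for any proper $S \subsetneq E(H)$ the retained edges form a union of vertex-disjoint sub-chains whose intermediate positions remain independent uniform given $\vecx_1, \vecx_{k+1}$, a per-dimension tower computation yields $\Expected{\prod_{e \in S}\ind{e} \mid \vecx_1, \vecx_{k+1}} = p^{|S|}$ (each sub-chain of $m$ edges contributes exactly $(1-\xi)^m$ per dimension, hence $p^m$ after the product over the $d$ dimensions). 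Combined with the identity $\sum_{j=0}^k \binom{k}{j}(-p)^{k-j} p^j = 0$, this collapses to the closed form
\[
    \Expected{\SW(H) \mid \vecx_1, \vecx_{k+1}} = \prod_{i=1}^{d} K^k(x_1(i), x_{k+1}(i)) - p^k,
\]
where $K(x, x') = \ind{|x-x'|_C \le (1-\xi)/2}$ is the one-dimensional connection kernel on the circle and $K^k$ denotes its $k$-fold iterate.

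Next, I would bound the one-dimensional deviation $K^k(x,y) - (1-\xi)^k$ via Fourier analysis. Since $K$ is a convolution kernel on $\mathbb{T}^1$, its Fourier coefficients are $\hat{K}(0) = 1-\xi$ and $\hat{K}(n) = (-1)^{n-1}\sin(\pi n \xi)/(\pi n)$ for $n \ne 0$; using $|\hat{K}(n)| \le \min(\xi, 1/(\pi n))$ gives $\sum_{n \ge 1}|\hat{K}(n)|^k = \bigO{\xi^{k-1}}$ and hence the global per-dimension bound $|K^k(x,y) - (1-\xi)^k| = \bigO{\xi^{k-1}}$. A finer inclusion--exclusion analysis over the antipodal-failure events $E_j = \{|x_j - x_{j+1}|_C > (1-\xi)/2\}$ shows that $\Pr{\bigcap_{j=1}^k E_j}$ is non-zero only in a narrow ``resonant'' window of width $\bigO{k\xi}$ for $D_i = |x_1(i) - x_{k+1}(i)|_C$: near $D_i = 0$ when $k$ is even, and near $D_i = 1/2$ when $k$ is odd (tracing the alternating antipode chain $x, x+\tfrac{1}{2}, x, \ldots$). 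Outside this window, all order-$\le (k-1)$ inclusion--exclusion terms factor as products of independent antipodal events, and the deviation sharpens to $\bigO{\xi^k}$.

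Finally, writing $K^k(x_1(i), x_{k+1}(i)) = (1-\xi)^k(1+\epsilon_i)$ and applying $|\prod_i(1+\epsilon_i) - 1| \le \exp(\sum_i |\epsilon_i|) - 1$, I would partition the coordinates into three classes: (i) the at most $\log^2(n)$ bad dimensions given by $\mathcal{G}$, each with $|\epsilon_i| = \bigO{\xi^{k-1}}$; (ii) the ``resonant'' good dimensions with $D_i$ in the narrow window identified above, again with $|\epsilon_i| = \bigO{\xi^{k-1}}$; and (iii) the remaining generic good dimensions, each with $|\epsilon_i| = \bigO{\xi^k}$. For $k$ even the resonant window lies near $D_i = 0$ and is already contained in the bad-dimension count of $\mathcal{G}$; for $k$ odd a Bernstein-type concentration argument parallel to \Cref{lem:gprobbound} (using that $D_i$ is uniform on $[0,1/2]$) bounds the count of (ii) by $\log^2(n)$ with probability $1 - n^{-\omega(1)}$. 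Classes (i) and (ii) together contribute $\bigO{\xi^{k-1}\log^2(n)}$ to $\sum_i|\epsilon_i|$, while (iii) contributes at most $d \cdot \bigO{\xi^k} = \bigO{\xi^{k-1}\log(1/p)}$, dominated by (i)+(ii) using $\log(1/p) \le \log(n)$. Multiplying by $p^k$ and absorbing constants yields the chain bound. For the cycle, the analogous (unconditional) reduction gives $\Expected{\SW(C_k)} = \prod_i \sum_n \hat{K}(n)^k - p^k$; combining with the Fourier estimate (and the bounds from \cite{Bangachev_Bresler_2024}) then yields the stated cycle bound.

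The main obstacle will be the bookkeeping in step (ii) when $k$ is odd: the good event $\mathcal{G}$ as defined in the paper controls only the near-$0$ range of $D_i$, so the near-antipodal range $D_i \approx 1/2$ must be handled by a separate concentration argument (or by augmenting $\mathcal{G}$ accordingly). The needed estimate is a direct analogue of \Cref{lem:gprobbound}, but it is important to carry out the two-sided partition of coordinates carefully so that the contribution of the resonant dimensions (which individually carry the worst deviation $\xi^{k-1}$ rather than the generic $\xi^k$) remains within the polylogarithmic target.
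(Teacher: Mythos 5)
Your proposal is correct and follows essentially the same route as the paper: factorize the chain/cycle event over the $d$ coordinates, use inclusion--exclusion in each coordinate to show that the per-dimension deviation from $(1-\xi)^k$ is $\bigO{\xi^{k}}$ generically but only $\bigO{\xi^{k-1}}$ on an exceptional set of $\widetilde{\bigO{1}}$ dimensions, and then multiply up. Two points of genuine difference. First, your Fourier treatment of the one-dimensional kernel $K$ is a clean, self-contained substitute for the paper's handling of the cycle, which simply cites \cite[Proposition 3.5]{Bangachev_Bresler_2024}; the computation $\sum_{n\ne 0}|\hat K(n)|^k = \bigO{\xi^{k-1}}$ recovers both the cycle bound and the worst-case per-dimension deviation in one stroke. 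Second, and more importantly, your observation about the odd-$k$ resonance is a real refinement of the paper's argument: the event that all $k$ edges of the chain are absent in a single coordinate forces $x_{k+1}(i)$ into a window of width $k\xi$ around $x_1(i)+k/2 \pmod 1$, which for odd $k$ sits near circular distance $1/2$ rather than near $0$. The paper's good event $\mathcal{G}_i$ only requires $|\mathbf{x}_u(i)-\mathbf{x}_v(i)|_C \ge k\xi$ and so does not exclude this window, which means the paper's claim that $\chi(H)=0$ in every good dimension is only immediate for even $k$; your fix (a Bernstein bound on the number of near-antipodal coordinates, exactly parallel to \Cref{lem:gprobbound}, or equivalently augmenting $\mathcal{G}_i$ with the condition $|\mathbf{x}_u(i)-\mathbf{x}_v(i)|_C \le 1/2 - k\xi$) is the right repair and costs nothing since the exceptional count stays at $\bigO{\log^2 n}$ with the same failure probability. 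The only caveat is cosmetic: as stated, the lemma conditions on $\mathcal{G}$, so your argument technically proves the bound under the augmented event $\mathcal{G}'$; since $\Prnop{\overline{\mathcal{G}'}}$ is just as negligible as $\Prnop{\overline{\mathcal{G}}}$, all downstream uses go through unchanged.
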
 
\begin{proof}
    Regarding the cycle, we rely on \cite[Proposition 3.5]{Bangachev_Bresler_2024}, which immediately yields that \begin{align*}
        \Expected{\SW(C_k)} \le p^k \left( 1 + (1+o(1))d\xi^{k-1} \right) \le p^k\left( 1 + 3\log(n)\left(\frac{2\log(1/p)}{d}\right)^{k-2} \right)
    \end{align*} since applying a Taylor series to $\xi = 1 - p^{1/d}$ yields that \begin{align}\label{eq:taylorxi}
        \xi = 1 - p^{1/d} = 1 - \left( 1 - \frac{\log(1/p)}{d} \pm \bigO{\frac{\log^2(n)}{d^2}} \right) = \frac{\log(1/p)}{d} \pm \bigO{\frac{\log^2(n)}{d^2}} \le \frac{2\log(1/p)}{d}.
    \end{align}
    
    For the chain $H$, we use a \say{cluster expansion} based approach based on the same idea as in \cite{Bangachev_Bresler_2024}. This is the same approach also used to derive \eqref{eq:signedcyclelinfty}, however, in case of a chain $H$, we have to use a slight modification to account for the fact that the endpoints of $H$ are in $\mathcal{G}$. To carry this out, we start by noting that our chain exists if and only if it exists in all $d$ dimensions. Formally, this allows us to rewrite \begin{align*}
        \Expectedsub{G\sim \RGGinfty}{\prod_{e \in H} \mathds{1}(e)} = \left(\Expectedsub{G\sim \RGGinftyp{n}{1}{p^{1/d}}}{\prod_{e \in H} \mathds{1}(e)} \right)^d.
    \end{align*} We bound $\Expectedsub{G\sim \RGGinftyp{n}{1}{p^{1/d}}}{\prod_{e \in H} \mathds{1}(e)}$ by expressing it as an alternating sum over all subsets $A \subseteq H$ where each term is (up to its sign) equal to the probability of the event that no edge in $A$ is present. Accordingly, we define \begin{align*}
        \chi(A) \coloneqq \Prsub{G\sim \RGGinftyp{n}{1}{p^{1/d}}}{\bigcap_{e \in E(A)} (\mathds{1}(e) = 0)}
    \end{align*} and note that by elementary inclusion-exclusion, \begin{align*}
        \Expectedsub{G\sim \RGGinftyp{n}{1}{p^{1/d}}}{\prod_{e \in H} \mathds{1}(e)} = \sum_{A \subseteq H} (-1)^{|E(A)|}\chi(A).
    \end{align*} Now to quantify how close this expression is to $p^k$, we measure \say{perturbations} from a $\Gnp$ by defining \begin{align*}
        \psi(A) = \chi(A) - \xi^{|E(A)|}.
    \end{align*} As the probability that no edge in $A$ is present in a $\Gnp$ of the same density (i.e. density $p^{1/d}$) is exactly $\xi^{|E(A)|}$, the quantity $\psi(A)$ measures how much the probabilities of the event in question differ between $\Gnp$ and $\RGGinftyp{n}{1}{p^{1/d}}$. We now handle good and bad dimensions separately. 
    
    \paragraph{Good dimensions.} Recall that dimension $i$ is good if the endpoints $u,v$ of our chain $H$ are such that $|\mathbf{x}_u(i) - \mathbf{x}_v(i)|_C \ge k \xi$. 
    Hence, we consider the $1$-dimensional model $\RGGinftyp{n}{1}{p^{1/d}}$ conditioned on the event that the dimension is good with respect to $H$. 
    We denote the associated random graph model by $\RGGinftyp{n}{1}{p^{1/d}} \big| \mathcal{G}_1$, and observe that, under its law, it holds that
    \begin{align*}
        \chi(A) = \begin{cases}
            \xi^{|E(A)|}  & \text{if } E(A) \neq E(H)\\
            0             & \text{otherwise}
        \end{cases} \text{ and thus } \psi(A) = \begin{cases}
            0             & \text{if } E(A) \neq E(H)\\
            -\xi^{|E(A)|} & \text{otherwise}.
        \end{cases}
    \end{align*} 
    if $d$ is sufficiently large. 
    The reason for this is that a large dimension implies that $\xi = \Theta(\log(1/p)/d)$ is very small and sufficiently distant endpoints (specifically endpoints at distance $\ge k\xi$) make it impossible to arrange all vertices such that all edges in $H$ are excluded. A similar argument was also used in \cite{Bangachev_Bresler_2024}, we specifically refer the interested reader to \cite[Claim 3.1, Item 4]{Bangachev_Bresler_2024} for further details.
    With this in mind, we obtain 
    \begin{align*}
        \Expectedsub{G\sim \left.\RGGinftyp{n}{1}{p^{1/d}} \right| \mathcal{G}_1 }{\prod_{e \in H} \mathds{1}(e)} 
        &= \sum_{\ell=0}^{k} (-1)^{\ell} \sum_{A \subseteq H: |A| = \ell} \left(\xi^{\ell} + \psi(A)\right)\\
        &= \left(\sum_{\ell=0}^{k} \binom{k}{\ell} (-1)^{\ell} \xi^{-\ell}\right) - (-1)^k \xi^{k} 
        = \left(1 - \xi\right)^k - (-1)^k\xi^{k}.
    \end{align*} Noting that $1-\xi = p^{1/d}$ implies that \begin{align*}
        \Expectedsub{G\sim \left.\RGGinftyp{n}{1}{p^{1/d}} \right| \mathcal{G}_1 }{\prod_{e \in H} \mathds{1}(e)} \le p^{1/d} \pm \xi^{k} = p^{1/d} \left( 1 \pm p^{-1/d}\xi^{k} \right).
    \end{align*} Finally, applying the Taylor series of $\xi = 1 - p^{1/d}$ from \eqref{eq:taylorxi} and using further that $p^{-1/d} = 1 + o(1)$ since $p \ge n^{-\bigO{1}}$ and $d = \omega(\log(n))$, we get that \begin{align*}
        \Expectedsub{G\sim \left.\RGGinftyp{n}{1}{p^{1/d}} \right| \mathcal{G}_1}{\prod_{e \in H} \mathds{1}(e)} \le  p^{k/d} \left( 1 + \left(\frac{3\log(1/p)}{d}\right)^{k} \right).
    \end{align*}
    
    \paragraph{Bad dimensions.} For all bad dimensions, we use the same strategy but rely on a slightly weaker bound on $\chi(H)$ obtained by fixing an arbitrary $A \subset H$ with $|E(A)| = k-1$ and noting that  \begin{align*}
        \chi(H) &= \Prsub{G\sim \left.\RGGinftyp{n}{1}{p^{1/d}} \right| \overline{\mathcal{G}}_1}{\bigcap_{e \in E(H)} (\mathds{1}(e) = 0)}\\& \le \Prsub{G\sim \left.\RGGinftyp{n}{1}{p^{1/d}} \right| \overline{\mathcal{G}}_1}{\bigcap_{e \in E(A)} (\mathds{1}(e) = 0)} = \xi^{k-1}
    \end{align*} since edges in all proper subsets of $H$ are still independent. Therefore, we still have $\psi(A)= 0$ if $A \subsetneq H$ and $\psi(A)= \xi^{k-1} - \xi^{k}$. Carrying out the same calculation as before yields that \begin{align*}
        \Expectedsub{G\sim \left.\RGGinftyp{n}{1}{p^{1/d}} \right| \overline{\mathcal{G}}_1}{\prod_{e \in H} \mathds{1}(e)} &= \left(\sum_{\ell=0}^{k} \binom{k}{\ell} (-1)^k \xi^k\right) + (-1)^k\left(\xi^{k-1} - \xi^{k} \right)\\ 
        &= \left(1 - \xi\right)^k + (-1)^k\left(\xi^{k-1} - \xi^{k} \right)\\
        &\le p^{1/d} + \xi^{k-1} \le p^{k/d} \left( 1 + \left(\frac{3\log(1/p)}{d}\right)^{k-1} \right).
    \end{align*}
    
    \paragraph{Putting everything together.} In total, we obtain
    \begin{align*}
        &\Expectedsub{\Gbf\sim \left.\RGGinfty\right|\mathcal{G}}{\prod_{e \in H} \mathds{1}(e)} \\&= \left(\Expectedsub{\Gbf\sim \left.\RGGinftyp{n}{1}{p^{1/d}} \right| \mathcal{G}_1}{\prod_{e \in H} \mathds{1}(e)} \right)^{d - \log^2(n)}\left(\Expectedsub{\Gbf\sim \left.\RGGinftyp{n}{1}{p^{1/d}} \right| \overline{\mathcal{G}}_1}{\prod_{e \in H} \mathds{1}(e)} \right)^{\log^2(n)}\\ 
        &\le \left( p^{k/d}  \left( 1 + \left(\frac{3\log(1/p)}{d}\right)^{k} \right) \right)^{d - \log^2(n)} \left(p^{k/d} \left( 1 + \left(\frac{3\log(1/p)}{d}\right)^{k-1} \right) \right)^{\log^2(n)} \\
        &\le p^{k} \left( 1 + d \left(\frac{3\log(1/p)}{d}\right)^{k} + \sum_{\ell = 2}^d \binom{d}{\ell} \left(\frac{3\log(1/p)}{d}\right)^{\ell k} \right) \\
        &\hspace{3cm}\times \left( 1 + \log^2(n) \left(\frac{3\log(1/p)}{d}\right)^{k-1} + \sum_{\ell = 2}^{\log^2(n)} \binom{\log^2(n)}{\ell} \left(\frac{3\log(1/p)}{d}\right)^{\ell (k-1)} \right) \\
        &\le  p^{k} \left( 1 + 2d \left(\frac{3\log(1/p)}{d}\right)^{k} \right) \left( 1 + 2\log^2(n) \left(\frac{3\log(1/p)}{d}\right)^{k-1}\right)\\
        &\le  p^{k} \left( 1 + 3\log^2(n)\left( \frac{3\log(1/p)}{d} \right)^{k-1} \right).
    \end{align*} 
    where the penultimate step used that the sums decay exponentially due to $d = n^{\Omega(1)}$ and $p \ge 1/n$.
\end{proof}

\subsubsection{Using The Trace Method (Again)}

Here, we prove the following upper bound on the second largest eigenvalue of $\RGGinfty$.
\begin{lemma}\label{lem:spectralupperboundinfty}
    Consider a $\Gbf \sim \RGGp{n}{d}{p}$. With probability $1 - o(1)$, \begin{align*}
        \max\{ |\lambda_2(\Adj)|, |\lambda_n(\Adj)| \} \le \log^{11}(n)\max\left\{ \sqrt{np}, \frac{np}{d} \right\}
    \end{align*} where $\lambda_1(\Adj) \ge \lambda_2(\Adj) \ge \ldots \ge \lambda_n(\Adj)$ are the eigenvalues of the adjacency matrix $\Adj$ of a $\Gbf$. Moreover, if $d \ll p^\varepsilon\sqrt{np}$, the number of Eigenvalues $\lambda_{i}(\Adj)$ with $|\lambda_i(\Adj)| \ge \frac{np}{ad}$ is at most $\bigOtilde{d^2a^{\frac{1}{\varepsilon} + 4}}$.
\end{lemma}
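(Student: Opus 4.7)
The plan is to mirror the trace method argument from \Cref{lem:spectralupperbound}, but replace the $L_q$ Fourier coefficient bounds of \Cref{thm:fouriercoefficients} with the $L_\infty$ bounds from \Cref{lem:fouriercoefficientslinfty}. As before, we consider the centered adjacency matrix $\Adjpp$, use $|\lambda_2(\Adj)|^m \le \tr{\Adjppt{m}}$ for even $m$, and expand \eqref{eq:traceAsSignedWeights} as a sum of $|\Expected{\SW(H_{\pmb{v}})}|$ over closed walks $\pmb{v} \in \EW(n,m)$. The graph-theoretic reductions (extracting removed cycles $L$, a core $\Hardcore$ with minimum degree $4$ on at most $r/2$ vertices, and contracted chains $E_C$) go through verbatim; so do \Cref{lem:vertices,lem:uncontracted}. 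The only change inside these lemmas is the per-structure contribution: by \Cref{lem:fouriercoefficientslinfty} a cycle of length $k$ contributes $\bigOtildenop{p^k (\log(1/p)/d)^{k-2}}$ and a chain of length $k$ contributes $\bigOtildenop{p^k (\log(1/p)/d)^{k-1}}$ (conditionally on its endpoints lying in the good event), so the exponent base changes from $1/\sqrt d$ to $1/d$.

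For the analogue of the good event, we condition on the endpoints of every contracted chain lying in the event $\mathcal{G}$ of \Cref{lem:fouriercoefficientslinfty}. \Cref{lem:gprobbound} shows $\Prnop{\overline{\mathcal{G}}}$ is superpolynomially small, so after a union bound over the at most $\ell^2 \le m^2$ endpoint pairs this event contributes negligibly. Combining everything we obtain, for any Eulerian multigraph $H$ with $|E(H)| = \ell$, $|L|=s$, $|E(\Hardcore)|=r$, $|V(\Hardcore)|=v$ and non-trivial core,
\begin{align*}
   |\Expected{\SW(H)}| \le (Cm \log n)^{O(m)} \, p^{s+v}\left(\frac{p \log n}{d}\right)^{\ell - r - 2s},
\end{align*}
and an analogous bound (with $s_d$ degenerate cycles producing a gain $p^{-s_d}$) for the trivial-core case, just as in \Cref{lem:swtrivial,lem:swboundeuler}. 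Substituting into $\sum_{\pmb v} |\Expected{\SW(H_{\pmb v})}|$ and summing over $(\ell, s, r, v)$ and the $\le m^{O(m)}$ isomorphism classes of $H$, we arrive at an analogue of \Cref{lem:tracingthetrace}:
\begin{align*}
    \Expected{\tr{\Adjppt{m}}} \le
    \begin{cases}
        \polylog(n)^{O(m)}\, d^2 \left(\tfrac{np \log n}{d}\right)^m + \polylog(n)^{O(m)}\, n\, \sqrt{np}^{\,m} & \text{if } d \ll \sqrt{np}\,\polylog(n), \\
        \polylog(n)^{O(m)}\, n\, \sqrt{np}^{\,m} & \text{otherwise}.
    \end{cases}
\end{align*}
The key bookkeeping change is that the phase transition now occurs when $np/d$ matches $\sqrt{np}$ (rather than at $d \approx np$), because the per-cycle/chain factor is $1/d$ instead of $1/\sqrt d$, so the most dangerous graphs are cycles of length $m$, which admit $\le n^m$ embeddings and contribute $(np/d)^m$ each, while the core is now a dense Eulerian graph on $\le r/2 \le v$ vertices contributing through a factor of $(np)^v$—whence $v$ should be maximized, which is why the number of large eigenvalues saturates at $d^2$ rather than $d$.

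From here the proof concludes exactly as in \Cref{lem:spectralupperbound}: by \eqref{eq:traceMethod} with $a = \omega(1)$ and letting $m \to \infty$ slowly, the $m$-th root of the bound tends to $\polylog(n)\max\{\sqrt{np},\ np/d\}$, yielding the stated spectral bound (we keep $m$ large enough to absorb the $d^{2/m}, n^{1/m}$ factors into a $\log^{11}(n)$ overhead). For the eigenvalue count, we fix $m = \lceil 1/\varepsilon \rceil + 4$ (even), under the hypothesis $d \le p^\varepsilon \sqrt{np}$ the first summand in the trace bound dominates and becomes $\bigOtildenop{d^2 (np/d)^m}$, so if there were more than $\bigOtildenop{d^2 a^m}$ eigenvalues of magnitude at least $np/(ad)$ then $\sum_i \lambda_i(\Adjpp)^m$ would exceed the trace bound—contradiction. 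The main obstacle is the careful combinatorial reorganization of the sum in the regime where the core is non-trivial: one must verify that the $p^{s+v}$ factor (not $p^{s+v/2}$ or similar) is achievable, otherwise the $d^2$ scaling collapses to $d$. This is precisely analogous to the case analysis on the spanning subgraph $F$ of $U$ inside \Cref{lem:swboundeuler}, which carries over without modification.
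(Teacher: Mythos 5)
Your proposal is correct and follows essentially the same route as the paper: the paper likewise reruns the trace-method machinery of \Cref{lem:spectralupperbound} with \Cref{lem:fouriercoefficientslinfty} in place of \Cref{thm:fouriercoefficients}, obtaining direct analogues of \Cref{lem:swtrivial,lem:swboundeuler} and of \Cref{lem:tracingthetrace} with the per-structure base $1/\sqrt{d}$ replaced by $1/d$, then takes $m = \Theta(\log n)$ for the $\log^{11}(n)$ spectral bound and a fixed even $m \ge 1/\varepsilon + 4$ for the eigenvalue count. Your bookkeeping of the shifted phase transition at $d \approx \sqrt{np}$ and the $d^2$ saturation matches the paper's argument.
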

\noindent The proof is essentially the same as described in more detail in \Cref{sec:fouriereuler}: Expanding the expectation $\Expected{\tr{\Adjppt{m}}}$, we get a sum over the signed weight of Eulerian multigraphs $H$ with $\ell \le m$ edges. 
For every such $H$, we construct a core $\Hardcore$ with minimum degree $4$ by repeatedly contracting all cycles and chains. Then, in order to bound the signed weight of $H$, we treat the non-contracted edges of $\Hardcore$ and the contracted cycles and chains from $H$ separately. In this regard, it is not hard to see that \Cref{lem:uncontracted} carries over verbatim to the $L_\infty$ model. Then, it is not hard to obtain the following analogs of \Cref{lem:swtrivial} and \Cref{lem:swboundeuler}, essentially by replacing \Cref{thm:fouriercoefficients} by \Cref{lem:fouriercoefficientslinfty}. 
\begin{lemma}[Bounding the signed weight of $H$ with trivial core]\label{lem:swtrivialinfty}
    Consider a Eulerian multigraph $H$ such that the core $\Hardcore$ is trivial and its skeleton $\widetilde{H}$ has $\ell$ edges. Assume further that exactly $s$ cycles where removed during construction of $\Hardcore$ ($|L| = s$) out of which $a$ are degenerate. 
    Then, \begin{align*}
        \Expected{ \SW(H) } \le (3\log(n))^{s} p^{\ell - a} \left(\frac{2\log(1/p)}{d}\right)^{\ell - 2s}.
    \end{align*}
\end{lemma}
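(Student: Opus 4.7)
The plan is to mimic the proof of \Cref{lem:swtrivial} almost verbatim, replacing the application of \Cref{thm:fouriercoefficients} by the $L_\infty$ analogue \Cref{lem:fouriercoefficientslinfty}. Since $H$ has trivial core, every edge of $H$ belongs to some cycle that is removed during the construction of $\Hardcore$, and any two such cycles share at most the anchor vertex. This tree-like arrangement means that, conditional on the positions of the anchors, the removed cycles are mutually independent, so $\Expected{\SW(H)}$ factorizes as a product of expected signed weights, one per removed cycle. As already noted in the paragraph preceding \Cref{lem:swtrivial}, we further have $|\Expected{\SW(H)}| \le |\Expected{\SW(\widetilde{H})}|$, so it suffices to bound the signed weight of the skeleton $\widetilde{H}$, in which each degenerate cycle contributes exactly two edges.

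The second step is to split the cycle set $L$ into the degenerate cycles $L_d$ with $|L_d| = a$, and the non-degenerate cycles $L_n$ with $|L_n| = s - a$. For each degenerate cycle on vertices $u,v$, the two skeleton edges refer to the same underlying edge in $\Gbf$, so its signed weight is $p(1-p)^2 + p^2(1-p) \le p$; this contributes a factor of $p^a$ overall. For each non-degenerate cycle $C_e$ with $|C_e| \ge 3$ edges, \Cref{lem:fouriercoefficientslinfty} gives
\begin{align*}
  |\Expected{\SW(C_e)}| \le 3\log(n)\, p^{|C_e|} \left(\frac{2\log(1/p)}{d}\right)^{|C_e| - 2}.
\end{align*}

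Finally, a bit of bookkeeping on exponents finishes the argument. Writing $K \coloneqq \sum_{e \in L_n} |C_e|$ for the total number of skeleton edges contained in non-degenerate cycles, we have $K = \ell - 2a$ since the degenerate cycles account for $2a$ skeleton edges. Multiplying the per-cycle bounds across $L_d$ and $L_n$ yields
\begin{align*}
  |\Expected{\SW(H)}|
  &\le p^a \cdot (3\log(n))^{s-a}\, p^{K} \left(\frac{2\log(1/p)}{d}\right)^{K - 2(s-a)} \\
  &= (3\log(n))^{s-a}\, p^{\ell - a} \left(\frac{2\log(1/p)}{d}\right)^{\ell - 2s}
  \le (3\log(n))^{s}\, p^{\ell - a} \left(\frac{2\log(1/p)}{d}\right)^{\ell - 2s},
\end{align*}
which is the claimed bound. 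I do not expect any genuine obstacle here, as the argument is essentially a transcription of the $L_q$ case; the only subtlety worth double-checking is that \Cref{lem:fouriercoefficientslinfty} applies unconditionally to each non-degenerate cycle (it does not require the \say{good event} $\mathcal{G}$, which is only needed for the chain bound), so no additional conditioning is incurred when removed cycles are handled inside $H$.
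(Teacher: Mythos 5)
Your proposal is correct and follows exactly the route the paper intends: the paper's own proof of this lemma is literally "analogous to \Cref{lem:swtrivial} with \Cref{lem:fouriercoefficientslinfty} instead of \Cref{thm:fouriercoefficients}," and your exponent bookkeeping ($K = \ell - 2a$, yielding $p^{\ell-a}$ and $(2\log(1/p)/d)^{\ell-2s}$) checks out. Your side remark that the cycle bound in \Cref{lem:fouriercoefficientslinfty} holds unconditionally (no good event $\mathcal{G}$ needed) is also accurate.
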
\begin{proof}
    The proof is analogous to the proof of \Cref{lem:swtrivial} with \Cref{lem:fouriercoefficientslinfty} instead of \Cref{thm:fouriercoefficients}.
\end{proof}

\begin{lemma}[The signed weight of non-trivial Eulerian multigraphs]\label{lem:swboundeulerinfty}
    Assume that $H$ is a Eulerian multigraph with $\ell$ edges and non-trivial core $\Hardcore$. Assume further that $s$ cycles were contracted when constructing $\Hardcore$, and that $ |E(\Hardcore)| = r$ and $|V(\Hardcore)| = v$. Then,
    \begin{align*}
        \absolute{\Expected{\SW(H)}} &\le (C\ell\log^{3}(n))^{\ell} p^{s +  v} \left(\frac{3p\log(1/p)}{d}\right)^{\ell - r - 2s}
    \end{align*}
\end{lemma}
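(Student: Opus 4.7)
The plan is to mirror the proof of \Cref{lem:swboundeuler} in the $L_{\infty}$ setting, substituting \Cref{lem:fouriercoefficientslinfty} in place of \Cref{thm:fouriercoefficients} whenever the signed weight of a contracted chain or cycle must be controlled. As before, we write $\Hardcore$ for the core obtained from $H$ by iteratively removing cycles and then contracting chains, split $E(\Hardcore) = E_U \sqcup E_C$ into non-contracted and contracted edges, and let $L$ denote the set of cycles removed in the first phase. We also fix a spanning subgraph $F$ of the graph $U$ induced by $E_U$ which is a spanning forest plus, in every cyclic component, a single extra edge closing a cycle; let $t \coloneqq |E(F)|$.

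First, we define the \say{good} event $\mathcal{G}$ appropriate for the $L_{\infty}$ model: for every contracted edge $e \in E_C$ the pair of endpoints $(u_e, v_e)$ of the associated chain $C_e$ lies in the event $\mathcal{G}_{u_e, v_e}$ of \Cref{lem:fouriercoefficientslinfty} (i.e.\ they are separated by at least $|C_e| \xi$ in at least $d - \log^2(n)$ coordinates). Since $|E_C| \le \ell^2$ and, by \Cref{lem:gprobbound}, each individual failure has probability $n^{-\Omega(\log^2(n))}$ over the uniform placement of $u_e, v_e$, a union bound yields $\Pr(\overline{\mathcal{G}}) \le \ell^2 n^{-\Omega(\log^2(n))}$, which is absorbed into the additive error.

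Next, the statement and proof of \Cref{lem:uncontracted} is purely combinatorial/probabilistic and does not use the specific form of the distance, so it carries over verbatim: letting $\mathcal{E}_j$ be the event that exactly $j$ of the $t$ edges of $F$ are present, and letting $\mathbf X$ (resp.\ $\overline{\mathbf X}$) denote the positions of vertices in (resp.\ outside) $\Hardcore$,
\[
    |\Expected{\SW(H)}| \le 2p^{t} \sum_{j=0}^{t} \ell^{j}\, \Expectedsub{\mathbf X \sim \mathcal{L}(\mathbf X \mid \mathcal{G} \cap \mathcal{E}_j)}{\Bigl|\Expectedsub{\overline{\mathbf X}}{\prod_{e \in E_C \cup L}\! \SW(C_e)\ \Big|\ \mathbf X}\Bigr|} + \Pr(\overline{\mathcal{G}}).
\]
Conditional on $\mathbf X$ consistent with $\mathcal{G}$, the chains and cycles in $E_C \cup L$ are independent, so we apply \Cref{lem:fouriercoefficientslinfty} to each contracted chain (contributing $3\log^2(n)\, p^{|C_e|}(3\log(1/p)/d)^{|C_e|-1}$) and to each removed cycle (contributing $3\log(n)\, p^{|C_e|}(2\log(1/p)/d)^{|C_e|-2}$ for non-degenerate cycles, and absorbing the degenerate cycles exactly as in the proof of \Cref{lem:swgood}, losing one factor of $p$ per degenerate cycle). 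Multiplying these factors, bounding $(2\log(1/p)/d) \le (3\log(1/p)/d)$ and collecting exponents using $|E_C| = c$, $|L| = s$ and $\sum |C_e| = \ell - r + c$ (over $e \in E_C \cup L$), gives, uniformly in $\mathbf X$,
\[
    \Bigl|\Expectedsub{\overline{\mathbf X}}{\prod_{e \in E_C \cup L}\! \SW(C_e)\ \Big|\ \mathbf X}\Bigr| \le (C\log^{3}(n))^{\ell}\, p^{\ell - r + c - s}\!\left(\frac{3p\log(1/p)}{d}\right)^{\!\ell - r - 2s}\!\!\!.
\]

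Substituting this back and bounding $\sum_{j=0}^{t} \ell^{j} \le 2\ell^{t}$ produces a factor $\ell^{t}\, p^{t}$ in front, so the exponent of $p$ becomes $\ell - r + c - s + t$. As in \Cref{lem:swboundeuler}, the counting argument $c + t \ge v$ (either $U$ is disconnected and requires $c$ contracted edges to connect its components into the Eulerian $\Hardcore$; or $U$ has one component, in which case either $c \ge 1$ and $t \ge v-1$, or $U$ contains a cycle and $t \ge v$) gives
\[
    \ell - r + c - s + t = (\ell - r - 2s) + (c + t + s) \ge (\ell - r - 2s) + (v + s),
\]
so the $p$-exponent dominates $p^{s + v}(3p\log(1/p)/d)^{\ell - r - 2s}$. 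Collecting the $\ell^{t}$ factor and $(C\log^{3}(n))^{\ell}$ into $(C\ell\log^{3}(n))^{\ell}$, and absorbing the subpolynomial $\Pr(\overline{\mathcal{G}})$ term into the leading bound, completes the proof. The only subtle point—already handled by the definition of $\mathcal{G}$ above and by \Cref{lem:gprobbound}—is to guarantee that the endpoints of every contracted chain satisfy the hypothesis of \Cref{lem:fouriercoefficientslinfty} with overwhelming probability; everything else is an almost verbatim transcription of the $L_{q}$ argument.
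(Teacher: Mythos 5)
Your proposal is correct and is exactly the argument the paper intends: the paper's own proof of this lemma is the one-line remark that it is "analogous to the proof of \Cref{lem:swboundeuler} with \Cref{lem:fouriercoefficientslinfty} instead of \Cref{thm:fouriercoefficients}," and you have simply written out that analogy in full, with the appropriately redefined good event $\mathcal{G}$ (via \Cref{lem:gprobbound}), the verbatim reuse of \Cref{lem:uncontracted}, and the same exponent bookkeeping $c + t \ge v$. No gaps.
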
 \begin{proof}
    The proof is analogous to the proof of \Cref{lem:swboundeuler} with \Cref{lem:fouriercoefficientslinfty} instead of \Cref{thm:fouriercoefficients}.
\end{proof}

\noindent With this, we are ready to trace the trace once again.
\begin{lemma}[Bounding the $m$-th power of the Centered Adjacency Matrix (Again)]\label{lem:tracingthetracetoinfinity}
    There is a constant $C>0$ such that for all even $m \in \mathbb{N}$, \begin{align*}
        \Expected{ \tr{\Adjppt{m} } } \le \begin{cases}
            (C\log(n) m)^{4m + 4}\left( d^2\left(\frac{np\log(1/p)}{d}\right)^{m} + n\sqrt{np}^{m} \right)  & \text{if } d \ll \sqrt{np}\log(1/p)\\
            (C\log(n)m)^{4m+2} \sqrt{np}^{m} &\text{if } d \gg \sqrt{np}\log(1/p).
        \end{cases}
    \end{align*}
\end{lemma}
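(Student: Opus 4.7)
The plan is to mirror the proof of \Cref{lem:tracingthetrace} step-by-step, replacing the per-contracted-edge correction $\log(n)/\sqrt{d}$ (from \Cref{thm:fouriercoefficients}) by the smaller factor $\log(1/p)/d$ (from \Cref{lem:fouriercoefficientslinfty}), and tracking the shifted phase transition at $d \asymp \sqrt{np}\log(1/p)$ instead of $d \asymp np$. First I would start from the walk expansion \eqref{eq:traceAsWalks} and the signed-weight reduction \eqref{eq:traceAsSignedWeights}, which are purely combinatorial and apply verbatim in the $L_\infty$ setting. Then I would partition multigraphs $H$ according to whether the core $\Hardcore$ is trivial or non-trivial, writing $\Expected{\tr{\Adjppt{m}}} \le S_{\mathrm{triv}} + S_{\mathrm{non\text{-}triv}}$ in exactly the notation of \Cref{lem:tracingthetrace}, using the quotient sets $\mathcal{U}_{\mathrm{triv}}(\ell,s,a)$ and $\mathcal{U}_{\mathrm{non\text{-}triv}}(\ell,r,s,v)$.

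For $S_{\mathrm{triv}}$ I would combine the skeleton-embedding count $\#[\widetilde H]\le \binom{m}{\ell} n^{1+\ell-s}$, the crude bound $|\mathcal{U}_{\mathrm{triv}}|\le m^{2m}$, and \Cref{lem:swtrivialinfty}. After factoring $p^{\ell-a}\le p^{\ell-s+1}$ and the identity $n^{1+\ell-s}p^{\ell-s+1}(\log(1/p)/d)^{\ell-2s} = np\cdot\sqrt{np}^{2s}\cdot (np\log(1/p)/d)^{\ell-2s}$, together with the separate all-degenerate contribution $n^{1+\ell/2}p^{\ell/2}(C\log(n))^{\ell/2}\le n\,\sqrt{np\log(n)}^{\ell}$, one obtains
\[
S_{\mathrm{triv}} \le (C\log(n) m)^{3m}\!\! \sum_{\ell=2}^{m} \Bigl(\sum_{s=1}^{\ell/2} np\cdot\sqrt{np}^{2s}\!\left(\tfrac{np\log(1/p)}{d}\right)^{\ell-2s} + n\,\sqrt{np}^{\ell}\Bigr).
\]
For $S_{\mathrm{non\text{-}triv}}$ I would invoke \Cref{lem:vertices} ($|V(H)|=\ell-r-s+v$) and \Cref{lem:swboundeulerinfty}, and then use the same regrouping $n^{\ell-r-s+v}p^{s+v} = (np)^{s+v}\cdot n^{\ell-r-2s}$ as in \Cref{lem:tracingthetrace} together with $v\le r/2$ to arrive at
\[
S_{\mathrm{non\text{-}triv}} \le (Cm^4\log^3(n))^{m}\!\!\sum_{\ell,s,r,v}\sqrt{np}^{2s+r}\!\left(\tfrac{np\log(1/p)}{d}\right)^{\ell-r-2s}.
\]

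The remaining step is the case analysis, exploiting that $\sqrt{np}$ and $np\log(1/p)/d$ are comparable precisely when $d\asymp\sqrt{np}\log(1/p)$. When $d\ll\sqrt{np}\log(1/p)$, the contraction factor dominates, so the inner sums are maximised by minimising $s$ and $r$: the $s=1$ term in $S_{\mathrm{triv}}$ gives $(np)^2(np\log(1/p)/d)^{m-2} = d^2\log(1/p)^{-2}(np\log(1/p)/d)^m$ and the $(s,r,v)=(0,4,1)$ term in $S_{\mathrm{non\text{-}triv}}$ is even smaller because $d\ll \sqrt{np}\log(1/p)$ forces $d^4\ll(np)^2\log^4(1/p)$; the all-degenerate trivial term contributes $n\sqrt{np}^m$, which together yields the first branch of the claim. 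When $d\gg\sqrt{np}\log(1/p)$, the opposite monotonicity holds: the sums are maximised at $2s+r=\ell$, collapsing everything to $\sqrt{np}^\ell\le \sqrt{np}^m$ times a polynomial-in-$m$ count, yielding the second branch. The main obstacle is purely bookkeeping: carefully absorbing the polylogarithmic factor $\log^3(n)$ per edge from \Cref{lem:swboundeulerinfty} (which is strictly larger than the $\log^2(n)$ used in the $L_q$ case) and the polynomial-in-$m$ enumerations of isomorphism classes and decompositions into the prefactor $(C\log(n)m)^{4m+O(1)}$, without inflating the $d^2$ and $n$ coefficients on the exponentially-scaled terms.
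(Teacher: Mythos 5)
Your proposal is correct and follows essentially the same route as the paper: the same walk-to-signed-weight reduction, the same trivial/non-trivial core split with $S_{\mathrm{triv}}$ and $S_{\mathrm{non\text{-}triv}}$, the substitution of \Cref{lem:swtrivialinfty} and \Cref{lem:swboundeulerinfty} for their $L_q$ counterparts, and the same case analysis at $d \asymp \sqrt{np}\log(1/p)$ identifying the $s=1$ trivial-core term as the source of the $d^2$ coefficient and controlling the $(s,r)=(0,4)$ non-trivial term via $d^4 \ll (np)^2\log^4(1/p)$. No gaps.
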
 
\begin{proof} We define $S_{\text{triv}}$ and $S_{\text{non-triv}}$ as in the proof of \cref{lem:tracingthetrace}. Applying \Cref{lem:swtrivialinfty} and \Cref{lem:swboundeulerinfty}, then yields \begin{align*}
    S_{\text{triv}} &\le (C\log(n)m)^{3m} np \sum_{\ell = 2}^{m} \sum_{s = 1}^{\ell/2} \sqrt{np}^{2s} \left( \frac{np\log(1/p)}{d}\right)^{\ell - 2s} + (C\log(n)m)^{3m + 1} n \sqrt{np}^{m}
\end{align*} and \begin{align*}
    S_{\text{non-triv}} &\le  (Cm^{4} \log^3(n))^{m} \sum_{\ell = 2}^{m} \sum_{s = 0}^{\ell/2} \sum_{r = 4}^{\ell - 2s} \sqrt{np}^{2s+r} \left(\frac{np\log(1/p)}{d}\right)^{\ell - r - 2s}.
\end{align*}
Again, both of these bounds show a phase transition, this time at $d \approx \sqrt{np}\log(1/p)$.
\paragraph{The Case $d \ll \sqrt{np}\log(1/p)$} Here, the sums are maximized when $s, r$ are minimal, so
\begin{align*}
     &\Expected{ \tr{\Adjppt{m} } } \\
     &\hspace{.5cm}\le  (C\log(n)m)^{3m} np \sum_{\ell = 2}^{m} np \left( \frac{np\log(1/p)}{d}\right)^{\ell - 2} + (C\log(n)m)^{3m + 1} n \sqrt{np}^{m} \\
     &\hspace{6cm}+ (Cm^{4} \log^3(n))^{m} \sum_{\ell = 2}^{m} \sum_{s = 0}^{\ell/2} \sum_{r = 4}^{\ell - 2s} (\sqrt{np})^{2s+r} \left(\frac{np\log(1/p)}{d}\right)^{\ell - r - 2s}\\
     &\hspace{.5cm} \le (C\log(n)m)^{3m+1} \left( n^2p^2 \left(\frac{d}{np}\right)^2 \left(\frac{np\log(1/p)}{d}\right)^{m} + n\sqrt{np}^{m} \right) \\
     &\hspace{6cm}+ (Cm^{4} \log^4(n))^{m} \sum_{\ell = 2}^{m} n^2p^2\frac{d^4}{n^4p^4}  \left(\frac{np\log(1/p)}{d}\right)^{\ell}\\
     &\hspace{.5cm} \le (C\log(n) m)^{4m}( d^2 + \log^4(n) )\left(\frac{np\log(1/p)}{d}\right)^{m} + (C\log(n)m)^{3m + 1} n \sqrt{np}^{m} \\
     &\hspace{.5cm} \le (C\log(n) m)^{4m + 4}\left( d^2\left(\frac{np\log(1/p)}{d}\right)^{m} + n\sqrt{np}^{m} \right) \\
     &\hspace{.5cm} = \bigOtilde{ d^2 \left(\frac{np\log(1/p)}{d}\right)^{\ell} + n \sqrt{np}^{m} }.
\end{align*}

\paragraph{The Case $d \gg \sqrt{np}\log(1/p)$} In this case, sums are maximized when $s,r$ are large, so 
\begin{align*}
    \Expected{ \tr{\Adjppt{m} } } &\le  (C\log(n)m)^{3m} mnp \sqrt{np}^{\ell} + (C\log(n)m)^{3m + 1} n \sqrt{np}^{m} + (Cm^{4} \log^3(n))^{m} m^2 (\sqrt{np})^{m}\\
    &\le (C\log(n)m)^{4m+2} \sqrt{np}^{m} = \bigO{ n\sqrt{np}^m },
\end{align*} as desired.
\end{proof}

\noindent Combining everything, we prove \Cref{lem:spectralupperboundinfty}. \begin{proof}[Proof of \Cref{lem:spectralupperboundinfty}]
    By the discussion about the trace method (\eqref{eq:traceMethod} from \Cref{sec:intro_psketch_spectral}), we have that \begin{align*}
        \left|\lambda_1\left( \Adjpp \right)\right| \ge a^{1/m} \Expected{ \tr{\Adjppt{m} }}^{1/m}
    \end{align*} with probability at most $1/a$. Choosing any $a = \omega(1)$ and applying \Cref{lem:tracingthetrace}, we get that for any even $m$ and assuming $d \ll np\log(n)$, with probability $1 - o(1)$, we have \begin{align*}
        \left|\lambda_1\left( \Adjpp \right)\right| \le (C\log(n) m)^{5}\left( d^{2/m}\frac{np\log(1/p)}{d} + n^{1/m}\sqrt{np} \right)
    \end{align*} where $C > 0$ is a constant. Choosing $m = \Omega(\log(n))$, this is $\bigO{\log^{11}(n)np/d}$ as desired. In case $d \gg \sqrt{np} \log(1/p)$, \Cref{lem:tracingthetracetoinfinity} yields that $\left|\lambda_1\left( \Adjpp \right)\right| \le \bigOtilde{ \log^{10}(n) \sqrt{np}}$. Finally, to show that the number of large eigenvalues is bounded, we note that if $d \le p^\varepsilon \sqrt{np}$, then for every fixed even $m$, \begin{align*}
        \Expected{ \tr{\Adjppt{m} } } = \sum_{i=1}^n \lambda_i(\Adjpp)^m = \bigOtilde{d^2\left(\frac{np\log(1/p)}{d}\right)^{m} + n \sqrt{np}^m  }.
    \end{align*} Assuming that $d \le p^{\varepsilon}\sqrt{np}$, we can choose $m \ge \frac{1}{\varepsilon} + 2$ such that the above is $\bigOtilde{d^2\left(\frac{np\log(1/p)}{d}\right)^m}$. To further ensure that $m$ is even, we require $m \ge \frac{1}{\varepsilon} + 4$. Then, if there were more than $\bigOtilde{d^2a^m}$ eigenvalues of magnitude $\ge \frac{np}{ad}$, this would lead to a contradiction. \qedhere 
\end{proof}

\subsection{Lower Bound: Finding Explicit Eigenvectors}\label{sec:lowerboundeigenvalues}

In this section, we show that the upper bounds from \Cref{lem:spectralupperbound} and \Cref{lem:spectralupperboundinfty} are tight up to lower order factors. %In the (relatively) low-dimensional case, we achieve this by explicitly constructing a sequence of eigenvalues, while in the higher dimensional case, this follows by a generalized Alon-Boppana bound on the second eigenvalue of an almost regular graph from .

\subsubsection{$L_q$-Norms for $q < \infty$}

We complement the results from the previous subsection by showing that we can in fact find $d$ approximate eigenvectors with eigenvalue $\Theta\left(\frac{np}{\sqrt{d}}\right)$ provided that $d \ll np$. Precisely, what we mean by \say{approximate eigenvectors} is that we can find $d$ unit vectors $\mathbf{y}_1, \mathbf{y}_2, \ldots, \mathbf{y}_d$ such that $$\mathbf{y}_i^\top \left(\Adjpp\right) \mathbf{y}_i = \Theta\left(  \frac{np}{\sqrt{d}} \right)$$ and the $\mathbf{y}_1, \mathbf{y}_2, \ldots, \mathbf{y}_d$ have pairwise inner products close to $0$, in the order of $1/\sqrt{n}$. It is easy to see that one can turn such a collection into $\Theta(d)$ \say{real} eigenvectors, by applying Gram-Schmidt.

\begin{lemma}\label{lem:spectrallowerbound}
    Consider a $\Gbf \sim \RGGp{n}{d}{p}$ and assume that $d \ll np$. With probability $1 - o(1)$, there are unit vectors $\mathbf{y}_1, \mathbf{y}_2, \ldots, \mathbf{y}_d$ such that \begin{align*}
        \mathbf{y}_i^\top \left(\Adjpp\right) \mathbf{y}_i = \Theta\left(  \frac{np}{\sqrt{d}} \right) \text{ and } \mathbf{y}_i^\top\mathbf{y}_j = \bigOtilde{\frac{1}{\sqrt{n}}} \text{ for all } i \neq j.
    \end{align*} In particular this implies that 
    \begin{align*}
        \max\{ |\lambda_2(\Adj)|, |\lambda_n(\Adj)| \} = \Omega\left( \frac{np}{\sqrt{d}} \right).
    \end{align*}
\end{lemma}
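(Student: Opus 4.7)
The plan is to explicitly construct the vectors $\mathbf{y}_1, \ldots, \mathbf{y}_d$, one per dimension, and verify the two required properties separately. For each $i \in [d]$, fix two antipodal circular arcs of constant width, e.g.\ $A^+ = [1/8, 3/8]$ and $A^- = -A^+ = [-3/8, -1/8]$, and set $\epsilon(x) \coloneqq \mathds{1}(x \in A^+) - \mathds{1}(x \in A^-)$. Define the unnormalized vector $\widetilde{\mathbf{y}}_i \in \{-1,0,+1\}^n$ by $\widetilde{\mathbf{y}}_i(v) \coloneqq \epsilon(\mathbf{x}_v(i))$ and let $\mathbf{y}_i \coloneqq \widetilde{\mathbf{y}}_i / \|\widetilde{\mathbf{y}}_i\|$. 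By construction, $\mathbf{y}_i$ depends only on the $i$-th coordinates of the vertices, so the $\mathbf{y}_i$'s are mutually independent as random vectors, and by Hoeffding $\|\widetilde{\mathbf{y}}_i\|^2 = \Theta(n)$ simultaneously for all $i$ with probability $1-o(1)$.

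The core computation is to evaluate $\Expectedsubnop{}{\widetilde{\mathbf{y}}_i^\top \Adjpp \widetilde{\mathbf{y}}_i} = \sum_{u \neq v} \Expected{\epsilon(\mathbf{x}_u(i))\epsilon(\mathbf{x}_v(i))(\Adj_{uv} - p)}$. Conditioning on $(\mathbf{x}_u(i), \mathbf{x}_v(i)) = (a,b)$, the connection probability equals $\Pr{\sum_{j \neq i} \Delta_j \le \tau^q - |a-b|_C^q}$, where $\Delta_j \coloneqq |\mathbf{x}_u(j) - \mathbf{x}_v(j)|_C^q$ are i.i.d. Applying the univariate Edgeworth expansion (\Cref{thm:univariateedgeworthdensity}) to this sum of $d-1$ i.i.d.\ variables and expanding to first order in $1/\sqrt{d}$, one obtains
\[
    \Expected{\Adj_{uv} - p \mid \mathbf{x}_u(i) = a, \mathbf{x}_v(i) = b} = -\frac{\phi(\hattau)}{\sigma\sqrt{d}}\bigl(|a-b|_C^q - \mu\bigr) + \bigO{p/d},
\]
where $\phi(\hattau) = \Theta(p)$ by \Cref{cor:phip}. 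Multiplying by $\epsilon(a)\epsilon(b)$ and integrating over $(a,b)$ gives a per-pair contribution of $\Theta(p/\sqrt{d})$, provided the constant
\[
    c \coloneqq -\Expected{\epsilon(\mathbf{x}_u(i))\epsilon(\mathbf{x}_v(i))\bigl(|\mathbf{x}_u(i)-\mathbf{x}_v(i)|_C^q - \mu\bigr)}
\]
is strictly positive. This holds because within a single arc the distance $|\mathbf{x}_u(i)-\mathbf{x}_v(i)|_C^q$ is systematically smaller than $\mu$ while $\epsilon(\mathbf{x}_u)\epsilon(\mathbf{x}_v) = +1$, and across opposite arcs the distance is systematically larger while $\epsilon(\mathbf{x}_u)\epsilon(\mathbf{x}_v) = -1$. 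Summing over the $\Theta(n^2)$ pairs and dividing by $\|\widetilde{\mathbf{y}}_i\|^2 = \Theta(n)$ yields $\Expected{\mathbf{y}_i^\top \Adjpp \mathbf{y}_i} = \Theta(np/\sqrt{d})$. Concentration of the quadratic form follows via a second-moment estimate: expanding $\Var{\widetilde{\mathbf{y}}_i^\top \Adjpp \widetilde{\mathbf{y}}_i}$ groups pairs of edges by their intersection pattern, and the resulting expectations are controlled by the signed-weight bounds for paths and chains of length $2,3,4$ supplied by \Cref{thm:fouriercoefficients}. Under $d \ll np$ the dominant term in the mean is of order $n^2 p/\sqrt{d}$ while the variance is $o((n^2 p/\sqrt{d})^2)$, so Chebyshev together with a union bound over $i \in [d]$ gives the claimed lower bound with probability $1-o(1)$.

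For the near-orthogonality of $\mathbf{y}_i$ and $\mathbf{y}_j$ with $i \neq j$, note that $\widetilde{\mathbf{y}}_i^\top \widetilde{\mathbf{y}}_j = \sum_{v=1}^n \epsilon(\mathbf{x}_v(i))\epsilon(\mathbf{x}_v(j))$ is a sum of $n$ independent bounded mean-zero random variables (mean zero because $\Expected{\epsilon} = 0$ by the symmetry of the arcs and because $\mathbf{x}_v(i) \perp \mathbf{x}_v(j)$). By Hoeffding's inequality, $|\widetilde{\mathbf{y}}_i^\top \widetilde{\mathbf{y}}_j| = \bigO{\sqrt{n\log n}}$ with probability $1 - n^{-\omega(1)}$, and a union bound over the $\binom{d}{2} = \poly(n)$ pairs (using $d = \poly(n)$) preserves this. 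Dividing by $\|\widetilde{\mathbf{y}}_i\|\|\widetilde{\mathbf{y}}_j\| = \Theta(n)$ gives $\mathbf{y}_i^\top \mathbf{y}_j = \bigOtildenop{1/\sqrt{n}}$ for all $i \neq j$.

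The main obstacle I anticipate is establishing $c > 0$ with a quantitative constant uniformly in $q \ge 1$ for a concrete choice of arcs; this is a one-dimensional computation in the spirit of \Cref{lem:correlation} and may require a short monotonicity argument, but unlike the triangle case the geometry is simpler because we only need to control a second rather than third moment on the circle. A secondary technical point is the variance calculation for the quadratic form, where the mixing of randomness coming from the arc assignments (which defines $\mathbf{y}_i$) and from the other dimensions (which affects $\Adj$) must be handled carefully; conditioning on the $i$-th coordinates first and then applying \Cref{thm:fouriercoefficients} to the resulting conditional graph distribution is the cleanest route.
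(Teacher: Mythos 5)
Your construction is essentially the paper's: one vector per dimension, supported with values $\pm 1$ on two opposing circular arcs, with the key quantitative input being that the connection probability conditioned on the positions in dimension $i$ is perturbed by $\Theta(p/\sqrt{d})$, proved via a univariate Edgeworth expansion on the remaining $d-1$ coordinates (this is exactly \Cref{lem:probforeigenvalues}, which also uses \Cref{cor:phip} to convert $\phi(\hattau)$ into $\Theta(p)$). The near-orthogonality argument is likewise the same.

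Two execution points differ, and one of them is where your anticipated obstacle lives. First, you take wide antipodal arcs of measure $1/4$, so the sign of the perturbation is only controlled \emph{on average}, and you must verify that $c = -\Expected{\epsilon(x)\epsilon(y)(|x-y|_C^q-\mu)} > 0$ uniformly in $q$. The paper instead takes arcs of small constant radius $c$ around $0$ and around $1/2$; then every within-cluster pair has $|a-b|_C^q \le (2c)^q < \mu$ and every cross-cluster pair has $|a-b|_C^q \ge (1/2-2c)^q > \mu$ \emph{deterministically}, so the perturbation has the right sign pointwise and no averaged positivity computation is needed. Adopting that choice dissolves your main obstacle. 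Second, for concentration you propose Chebyshev on the quadratic form with the variance controlled by \Cref{thm:fouriercoefficients}; this is plausible but left as a sketch, and the mixing of randomness you worry about is real. The paper's route is more elementary: because the perturbed connection probability bound is pointwise, one can condition on $\mathbf{x}_{(1)}$ and apply Bernstein (\Cref{thm:bernstein}) directly to each degree $\deg(u\to A)$ and $\deg(u\to B)$ as sums of independent indicators, then union bound. I would not call either of these a fatal gap, but as written the positivity of your constant $c$ for general $q$ and the variance bound are both unproven steps; both are repaired by switching to the small-arc construction.
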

The idea behind our construction is to define one vector $\mathbf{y}_i$ for every dimension $i \in [d]$. Now, sample the position of all vertices in dimension $i$ and call the result $\mathbf{x}_{(i)}$, i.e., the $n$-dimensional vector containing the position of all vertices in dimension $i$. Now, fix a sufficiently small constant $c > 0$ and define the set $A$ as the set of all vertices which have distance at most $c$ from the origin in dimension $i$, and similarly the set $B$ as the set of all vertices with distance at least $\frac{1}{2} -c$ from the origin. More formally, we set $A = \{v \mid |\mathbf{x}_{(i)}(v)|_{C} \le c\}$ and $B = \{v \mid |\mathbf{x}_{(i)}(v)|_{C} \ge \frac{1}{2}- c\}$. With this, we define the vector $\tilde{\mathbf{y}}_i \in \{\pm1, 0\}$ by setting \begin{align*}
    \tilde{\mathbf{y}}_i(v) = \begin{cases}
        +1 &\text{if } v \in A\\
        -1 &\text{if } v \in B\\
        \hspace{.23cm}0  &\text{otherwise.}
    \end{cases}
\end{align*}
Finally, $\mathbf{y}_i$ is defined as $\tilde{\mathbf{y}}_i$ normalized to unit length (i.e., $\mathbf{y}_i \eqqcolon \mathbf{y}_i/\|\tilde{\mathbf{y}}_i\|$). This construction is depicted in \Cref{fig:evq}, left.
The reason why this procedure yields an approximate eigenvector w.h.p.~is that the quadratic form $\tilde{\mathbf{y}}_i^\top \Adj \tilde{\mathbf{y}}_i$ is equal to the difference of the number of edges with both endpoints in either $A$ or $B$ (called \emph{intra-cluster edges}), and the number of edges with one endpoint in $A$ and the other in $B$ (called \emph{inter-cluster edges}). 
Because intra-cluster edges are slightly likelier to be present than inter-cluster edges (w.r.t. the randomness incurred by all dimensions except $i$), this difference has expected size $\approx \frac{np}{\sqrt{d}}$.
Since $d \ll np$, this is much larger than the magnitude of the random fluctuations in the number of inter- and intra-cluster edges (which is of order $\sqrt{np}$), so we can prove that $\mathbf{y}_i^\top \Adj \mathbf{y}_i = \Omega(np/\sqrt{d})$ holds w.h.p., as desired. 
It is furthermore easy to see that the $\mathbf{y}_i$ constructed this way are almost orthogonal.

\begin{figure}
    \centering
    \includegraphics[width=0.65\linewidth]{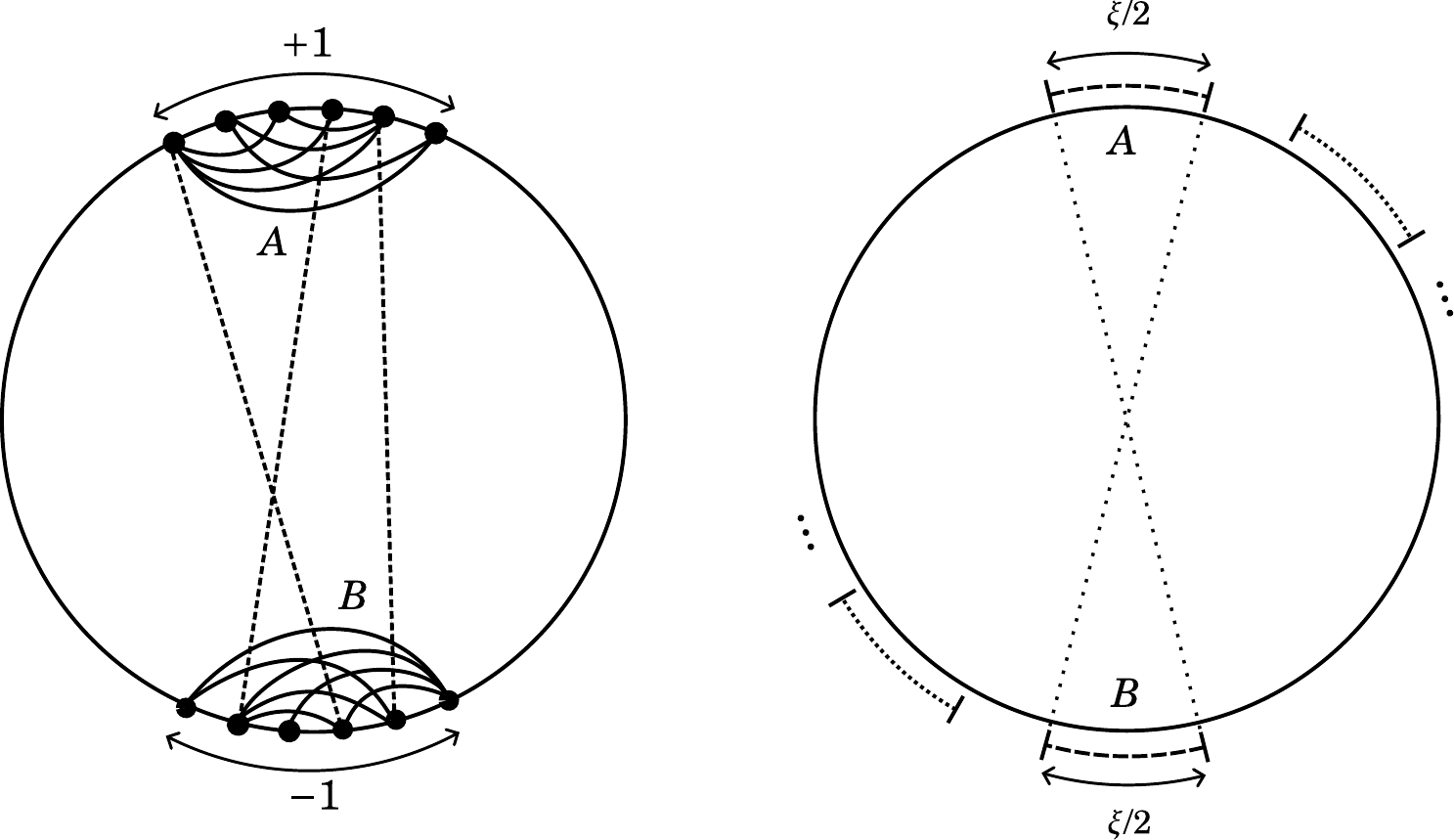}
    \caption{Illustration for the construction of explicit eigenvectors for a $\RGG$ and $ \RGGinfty$. Left: The case of $\RGG$. It is indicated that intra-cluster edges (solid) are likelier than inter-cluster edges (dashed), making the vector constructed this way an approximate eigenvector. Right: The changes required for $\RGGinfty$. The sets $A, B$ become much smaller and only take up a \say{volume} of $\xi = \Theta(\log(1/p)/d$. Now we can fit $\tilde{\Omega}(d)$ vectors in a single dimension. Furthermore, inter-cluster do not exist.  }
    \label{fig:evq}
\end{figure}

The crucial fact upon which the above discussion relies is that our intra-cluster edges are likelier to be present than inter-cluster edges. 
This difference is of order $p/\sqrt{d}$. 
We capture this formally in the following lemma. \begin{lemma}\label{lem:probforeigenvalues}
    For all $\mathbf{x}_{(1)}$, we have that \begin{align*}
    \forall \{u,v\} \in \binom{A}{2} \cup \binom{B}{2}: \Pr{u \sim v \mid \mathbf{x}_{(1)}} &\ge p + \frac{Cp}{\sqrt{d}} \quad \text{ and }\\ \forall \{u,v\} \in A \times B: \Pr{u \sim v \mid \mathbf{x}_{(1)}} &\le p - \frac{Cp}{\sqrt{d}}.
    \end{align*}% where $\RGG | \mathbf{x}_{(1)}$ denotes the law of $\RGG$ conditional on fixing the positions of all vertices in dimension $1$ according to $\mathbf{x}_{(1)}$.
\end{lemma}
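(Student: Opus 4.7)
The plan is to reduce the lemma to a one-dimensional Edgeworth expansion of the sum of distances over the remaining dimensions. Fix $\mathbf{x}_{(1)}$ and a pair $\{u,v\}$ as in the claim; let $\delta = |x_u(1) - x_v(1)|_C^q$, which is determined by $\mathbf{x}_{(1)}$. Since the positions of all vertices in dimensions $2,\ldots,d$ are independent of $\mathbf{x}_{(1)}$,
\[
    \Pr{u\sim v \mid \mathbf{x}_{(1)}} = \Pr{T_{d-1}\le\tau-\delta},
\]
where $T_{d-1} = \sum_{i=2}^d \Delta_i(\{u,v\})$ is an i.i.d.\ sum of $(d-1)$ copies of the one-dimensional $q$-th power distance with mean $\mu$ and variance $\sigma^2$. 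Normalizing by $S = (T_{d-1}-(d-1)\mu)/(\sigma\sqrt{d-1})$ and $\tau^\star = (\tau - \delta - (d-1)\mu)/(\sigma\sqrt{d-1})$, this equals $\Pr{S\le\tau^\star}$.

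I then apply the univariate Edgeworth expansion (\Cref{thm:univariateedgeworthdensity}) to the density of $S$ to a sufficiently high constant order $s$, and integrate to obtain a CDF approximation of the form $\Pr{S\le\tau^\star} = \Phi(\tau^\star) + o(p/\sqrt{d})$ uniformly in $\tau^\star$. The order $s$ is chosen large enough (depending on $\gamma$ and the polynomial lower bound on $p$) so that the error $O(d^{-(s-2)/2})$ beats $p/\sqrt{d}$; this is possible because $d\ge n^{\gamma}$ and $p\ge n^{-O(1)}$, and the $\Delta_i$ are continuous random variables of bounded density with moments of all orders. The same reasoning applied to the full $d$-dimensional sum gives $\Phi(\hat\tau) = p + o(p/\sqrt{d})$; this is essentially the content of \Cref{lem:thresholdestimate} together with \Cref{cor:phip}.

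It remains to locate $\tau^\star$ relative to $\hat\tau$ and Taylor-expand. Direct computation gives
\[
    \tau^\star - \hat\tau = \hat\tau\bigl(\sqrt{d/(d-1)}-1\bigr) + \frac{\mu-\delta}{\sigma\sqrt{d-1}} = \frac{\mu-\delta}{\sigma\sqrt{d-1}} + O(\log(n)/d).
\]
For $\{u,v\}\in\binom{A}{2}\cup\binom{B}{2}$, the torus geometry forces $|x_u(1)-x_v(1)|_C\le 2c$ (two points near $\pm 1/2$ are close on the circle), so $\delta\le (2c)^q$; for $\{u,v\}\in A\times B$, $|x_u(1)-x_v(1)|_C\ge 1/2-2c$, so $\delta\ge (1/2-2c)^q$. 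A direct calculation gives $\mu = \int_0^{1/2} 2t^q\, dt = 1/((q+1)2^q)$, which lies strictly between $0$ and $2^{-q}$, so one can choose a constant $c>0$ (depending on $q$) so that $(2c)^q < \mu < (1/2-2c)^q$. Hence $\mu-\delta$ is bounded below by a strictly positive constant in the intra-cluster case and bounded above by a strictly negative constant in the inter-cluster case, so $\tau^\star-\hat\tau = \pm\Theta(1/\sqrt{d})$ with the appropriate sign. A second-order Taylor expansion of $\Phi$, using $\phi(\hat\tau)=\Theta(p)$ (\Cref{cor:phip}), $|\phi'(x)|\le|x|\phi(x)$, and $|\hat\tau|=O(\log n)$ (\Cref{cor:tauissmall}), yields
\[
    \Phi(\tau^\star)-\Phi(\hat\tau) = \phi(\hat\tau)(\tau^\star-\hat\tau) + O\!\left(p\log(n)/d\right) = \pm\Theta(p/\sqrt{d}),
\]
and combining this with the two Edgeworth approximations proves both inequalities.

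The main obstacle is controlling the Edgeworth error to $o(p/\sqrt{d})$: since $p$ may be as small as $\Theta(1/n)$, this target can be as small as $1/(n\sqrt{d})$, forcing a constant-order but possibly large expansion together with care about tails. Everything else is elementary once the geometric input $|\mu-\delta|=\Theta(1)$ is in hand.
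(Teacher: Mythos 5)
Your reduction to the $(d-1)$-dimensional sum, the geometric computation of $\mu-\delta=\Theta(1)$ with the correct sign, and the final Taylor step via $\phi(\hattau)=\Theta(p)$ all match the paper's argument. However, there is a genuine gap in the error accounting: the claim $\Pr{S\le\tau^\star}=\Phi(\tau^\star)+o(p/\sqrt{d})$ is false in general, and so is $\Phi(\hattau)=p+o(p/\sqrt{d})$. Taking the order $s$ of the Edgeworth expansion large only shrinks the \emph{remainder} $o(d^{-(s-2)/2})$ beyond the included correction terms; it does not remove the correction terms themselves. The $j=1$ (skewness) correction $\phi(x)p_1(x)/\sqrt{d}$ integrates to a quantity of order $\phi(\tau^\star)\,\mathrm{poly}(\tau^\star)/\sqrt{d}=\Theta(p\,\mathrm{polylog}(n)/\sqrt{d})$, and the third cumulant of $|x_u-x_v|_C^q$ is nonzero for general $q$, so this term is genuinely present and is at least as large as the signal $Cp/\sqrt{d}$ you are trying to detect. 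Likewise, \Cref{lem:thresholdestimate} only localizes $\hattau$ to within $O(p\log^4(n)/\sqrt{d})$ of $\Phi^{-1}(p)$, which for $p=\Theta(1)$ gives $|\Phi(\hattau)-p|=\Theta(\mathrm{polylog}(n)/\sqrt{d})$, not $o(p/\sqrt{d})$. As written, both of your Gaussian approximations carry errors of the same order as (or larger than) the effect you want to isolate, so the two inequalities do not follow.

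The repair is to never compare against $\Phi$ at all, which is what the paper does: use $\Pr{r\le\hattau}=p$ \emph{exactly} (by definition of $\hattau$), write $\Pr{\bar r\le\hattau+\beta}-p$ as $\Pr{\hattau\le r\le\hattau+\beta}$ plus the discrepancy between the densities of the normalized $(d-1)$-term and $d$-term sums, and use Edgeworth only for two benign purposes: (i) the two densities share the same correction terms up to relative $O(1/d)$ factors, so their difference integrates to $O(p\,\mathrm{polylog}(n)/d)=o(p/\sqrt{d})$ (the skewness terms cancel rather than needing to be negligible individually); and (ii) on the short interval $[\hattau,\hattau+\beta]$ of length $\beta=\Theta(1/\sqrt{d})$ the density is $(1\pm o(1))\phi(x)$, giving $\Pr{\hattau\le r\le\hattau+\beta}=(1-o(1))\beta\phi(\hattau)=\Omega(p/\sqrt{d})$. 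Alternatively you could keep your two-sided comparison to $\Phi$ but then you must show that the Edgeworth corrections at $\tau^\star$ and $\hattau$ cancel to within $o(p/\sqrt{d})$, which amounts to the same computation.
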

\begin{proof}
    Denote by $r = \sum_{i=1}^d \frac{\Delta_i(\{u, v\}) - \mu}{\sigma \sqrt{d}}$ the rescaled $L_q$-distance (raised to the power of $q$) between two vertices $u$ and $v$. 
    Define further $\bar{r}$ by leaving out the first dimension in the previous sum. 
    We essentially use that, if $u,v$ are either both in $A$ or both in $B$, it holds that
    \begin{align*}
        \Pr{u \sim v \mid \mathbf{x}_{(1)}} = \Pr{r \le \hattau \mid \mathbf{x}_{(1)}} &\ge \Pr{\bar{r} + \frac{c - \mu}{\sigma\sqrt{d}} \le \hattau} = \Pr{\bar{r} \le \hattau - \frac{c - \mu}{\sigma\sqrt{d}}}
    \end{align*} because $u,v$ both being in either $A$ or $B$ implies that $\Delta_1(\{u,v\}) \le c$. 
    We choose $c$ sufficiently small (i.e. smaller than $\mu$) such that $\beta =  -\frac{c - \mu}{\sigma\sqrt{d}}$ is positive and focus on bounding $\Pr{\bar{r} \le \hattau + \beta}$ from below. 
    We apply an Edgeworth expansion to $\sqrt{\frac{d}{d-1}} \bar{r}$ using \Cref{thm:univariateedgeworthdensity}, where we added the factor of $\sqrt{\frac{d}{d-1}}$ to ensure that the variance remains $1$. 
    This yields that, for all $s \ge 2$,
    \begin{align}
        f_{\sqrt{\frac{d}{d-1}} \bar{r}} (x) &= \left( \phi(x) + \phi(x)\sum_{j=1}^{s-2} \frac{p_j(x)}{(d-1)^{j/2}} \right) \pm  o\left( \frac{1}{d^{(s-2)/2}} \right)\\
        &= \left( \phi(x) + \phi(x)\sum_{j=1}^{s-2} \frac{p_j(x)}{d^{j/2}} \left(\frac{d}{d-1} \right)^{j/2} \right) \pm  o\left( \frac{1}{d^{(s-2)/2}} \right)
    \end{align} Using a Taylor expansion, we get that \begin{align*}
        \left(\frac{d}{d-1}\right)^{j/2} = \left( 1  + \frac{1}{d-1} \right)^{j/2} = 1 + \frac{j}{2(d-1)} \pm \bigO{\frac{1}{d^2}}.
    \end{align*} 
    Hence, for every $s \ge 2$ there is a constant $C > 0$ such that  \begin{align}\label{eq:stuff}
        f_{\sqrt{\frac{d}{d-1}} \bar{r}} (x) &\ge \left( \phi(x) + \phi(x)\sum_{j=1}^{s-2} \frac{p_j(x)}{d^{j/2}} \right) - \phi(x)\sum_{j=1}^{s-2} \frac{C|x|^{3j}}{d^{1 + j/2}} -  o\left( \frac{1}{d^{(s-2)/2}} \right) \nonumber \\
        &\ge f_r(x) - \phi(x)\sum_{j=1}^{s-2} \frac{C|x|^{3j}}{d^{1 + j/2}} -  o\left( \frac{1}{d^{(s-2)/2}} \right),
    \end{align} 
    where we noted that the expression in the parentheses is just the $s$-th order Edgeworth expansion of $r$. 
    Now, we use that \begin{align*}
        \Pr{\bar{r} \le \hattau + \beta} \ge \Pr{\sqrt{\frac{d}{d-1}} \bar{r} \le  \hattau + \beta} = \Pr{\sqrt{\frac{d}{d-1}} \bar{r} \le - \log(n)} + \Pr{ -\log(n) \le \sqrt{\frac{d}{d-1}} \bar{r} \le  \hattau + \beta}.
    \end{align*} 
    From the tail bound in \Cref{lem:tailofz}, we get that the first term is $n^{-\omega(1)}$.
    Recalling that $|\hattau| \le \log(n)$, defining $\alpha = -\log(n)$ and integrating \eqref{eq:stuff}, we get that 
    \begin{align*}
        \Pr{ -\log(n) \le \sqrt{\frac{d}{d-1}} \bar{r} \le  \hattau + \beta} &\ge \Pr{ \alpha \le r \le  \hattau + \beta} -
        \sum_{j=1}^{s-2} \frac{C\log(n)^{3j}}{d^{1 + j/2}} \int_{\alpha}^{\hattau + \beta} \phi(x) \text{d} x - o\left( \frac{\log(n)}{d^{(s-2)/2}} \right) \\
        &\ge \Pr{ \alpha \le r \le  \hattau + \beta} - \frac{1}{d}\sum_{j=1}^{s-2} \bigO{ \left( \frac{\log^3(n)}{\sqrt{d}} \right)^j } \Phi(\hattau + \beta) - \frac{p}{d} \\
        &\ge \Pr{ \alpha \le r \le  \hattau + \beta} - (1 + o(1))\frac{\Phi(\hattau + \beta)}{d} - \frac{p}{d}
    \end{align*} if we choose $s$ to be a sufficiently large constant such that $\frac{\log(n)}{d^{(s-2)/2}} \le \frac{p}{d}$, which is possible since $d \ge n^\gamma$. We further note that \begin{align*}
        \Phi(\hattau + \beta) &\le \Phi(\hattau) + \beta\sup_{x \in [\hattau, \hattau+\beta]}\phi(x)\\ 
        &= p + \beta\sup_{x \in [\hattau, \hattau+\beta]}\phi(x)\\ 
        &= p + \beta \phi(\hattau) \sup_{y \in [0, \beta]} \exp\left( -\frac{1}{2}\left(2y\hattau + y^2\right)  \right) =p + (1-o(1))\beta\phi(\hattau)
    \end{align*}
    as $\absolute{\hattau} \le \log(n)$ and $\beta = \bigO{\frac{1}{\sqrt{d}}} = n^{-\Omega(1)}$. 
    Further, we have that $\phi(\hattau) \le Cp$ by \Cref{cor:phip}. 
    Combined this yields that $\Phi(\hattau + \beta) \le p ( 1 + O(1/\sqrt{d}) )$, so \begin{align*}
        \Pr{\bar{r} \le \hattau + \beta} &\ge \Pr{ \alpha \le r \le  \hattau + \beta} - \bigO{\frac{p}{d}} \\
        &\ge \Pr{ r \le  \hattau} - \Pr{r \le \alpha} + \Pr{ \hattau \le r \le \hattau + \beta} - \bigO{\frac{p}{d}}\\
        &\ge p - n^{-\omega(1)} - \bigO{\frac{p}{d}} +\Pr{ \hattau \le r \le \hattau + \beta}
    \end{align*} 
    by the tail bound from \Cref{lem:tailofz}. 
    Now, applying another Edgeworth expansion to $f_r$ using \Cref{thm:univariateedgeworthdensity} yields that $\sup_{ x \in [\hattau, \hattau + \beta] } \left| \frac{f_r(x)}{\phi(x)}  \right| = 1 \pm o(1)$, so \begin{align*}
        \Pr{ \hattau \le r \le \hattau + \beta} \ge (1-o(1))\beta\sup_{x \in [\hattau, \hattau+\beta]}\phi(x) \ge (1-o(1))\beta\phi(\hattau) = \Omega\left(\frac{p}{\sqrt{d}}\right)
    \end{align*} as noted previously. This shows the first part of our claim, i.e., $\Prnop{u \sim v \mid \mathbf{x}_{(1)}} \ge p + \frac{Cp}{\sqrt{d}}$ for $u,v$ being either both in $A$ or both in $B$. The second part follows by an analogous calculation.
\end{proof}

\noindent In total, this allows us to prove \Cref{lem:spectrallowerbound}. \begin{proof}[Proof of \Cref{lem:spectrallowerbound}]
    For a set $M\subseteq V$, we denote by $\deg(u \rightarrow M)$ the number of neighbors of $u$ in the set $M$. 
    It is not hard to notice that \begin{align*}
        \mathbf{y}_1^\top \Adj \mathbf{y}_1 &= \frac{\tilde{\mathbf{y}}_1^\top \Adj \tilde{\mathbf{y}}_1}{\|\tilde{\mathbf{y}}_1\|^2} \\&= \frac{1}{\|\tilde{\mathbf{y}}_1\|^2} \left(\sum_{u\in A} \left( \deg(u \rightarrow A) - \deg(u \rightarrow B) \right) +\sum_{u\in B} \left( \deg(u \rightarrow B) - \deg(u \rightarrow A) \right) \right).
    \end{align*} 
    In other words, evaluating the quadratic form $\tilde{\mathbf{y}}_i^\top \Adj \tilde{\mathbf{y}}_i$ is the difference of the number of intra-cluster edges and inter-cluster edges. 
    To estimate the sum, we recall that the sets $A, B$ are defined as a function of the position of all vertices in our fixed, first dimension, denoted by $\mathbf{x}_{(1)}$.
    We call $\mathbf{x}_{(1)}$ \emph{good} if \begin{align*}
        &\absolute{|A| - \Expectedsub{\mathbf{x}_{(1)} \sim \text{Unif}([-1/2,1/2]^n)}{|A|}} \le 10\sqrt{n\log(n)} \text{ and}\\& \absolute {|B| - \Expectedsub{\mathbf{x}_{(1)} \sim \text{Unif}([-1/2,1/2]^n)}{|B|}} \le  10\sqrt{n\log(n)}.
    \end{align*} 
    Applying the Bernstein bound (\Cref{thm:bernstein}), we see that $\mathbf{x}_{(1)}$ is good with probability $1 - 1/n^2$, as  
    \begin{align*}
       &\Prsub{\mathbf{x}_{(1)} \sim \text{Unif}([-1/2,1/2]^n)}{ \left| |A| - \Expectedsub{\mathbf{x}_{(1)} \sim \text{Unif}([-1/2,1/2]^n)}{|A|} \right| \ge 10\sqrt{n\log(n)} } \\ & \hspace{7cm}\le 2\exp\left( \frac{-100n\log(n)}{2n + 10\sqrt{n\log(n)}} \right)
        \le \frac{1}{n^2}.
    \end{align*} 
    Now, we can apply \Cref{lem:probforeigenvalues} to estimate $\deg(u \rightarrow A)$ and the related terms. To this end, we first note that there is a constant $0  < \alpha < 1$ such that \begin{align*}
        \Expectedsub{\mathbf{x}_{(1)} \sim \text{Unif}\left([-1/2, 1/2]^n\right)}{|A|} = \Expectedsub{\mathbf{x}_{(1)} \sim \text{Unif}\left([-1/2, 1/2]^n\right)}{|B|} = \alpha n.
    \end{align*} 
    Further, assuming that $\mathbf{x}_{(1)}$ is good, we get by \Cref{lem:probforeigenvalues} that, for any $u \in A$, 
    \begin{align*}
        \Expectedsub{\Gbf \sim \RGG | \mathbf{x}_{(1)}}{\deg(u \rightarrow A)} \ge p(\alpha n - 10\sqrt{n\log(n)}) + \Omega\left(\frac{np}{\sqrt{d}}\right) \ge \alpha p n + \Omega\left(\frac{np}{\sqrt{d}}\right)
    \end{align*} 
    since $d \ll (np)^{1-\varepsilon}$.  Moreover, it is easy to see that the random variable $\deg(u \rightarrow A) - \Expected{\deg(u \rightarrow A)}$ (where the randomness is over the position of all vertices in dimensions $\{2,\ldots,d\}$ while the positions in dimension $1$ are fixed)  can be represented as a sum of independent zero mean random variables $X_1,\ldots, X_{|A|}$ bounded in absolute value by $1$. Furthermore, each $X_i$ can be represented as $X_i = Y_i - p_i$ where $Y_i$ is a Bernoulli RV with expectation $p_i$, while $p_i = (1 + o(1))p$ by \Cref{lem:probforeigenvalues}. Hence, $X_i$ takes a value of $1 - p_i$ w.p. $p_i$ and a value of $-p_i$ w.p. $1- p_i$. This implies that \begin{align*}
        \Expectedsub{\Gbf \sim \RGG | \mathbf{x}_{(1)}}{X_i^2} \le p_i(1-p_i)^2 + (1- p_i)p_i^2 \le 2p_i \le 3 p.
    \end{align*} Thus, by a Bernstein bound (\Cref{thm:bernstein}), we have \begin{align*}
        &\Prsub{\Gbf \sim \RGG | \mathbf{x}_{(1)}}{ \left|\deg(u \rightarrow A) - \Expectedsub{\Gbf \sim \RGG | \mathbf{x}_{(1)}}{\deg(u \rightarrow A)}\right| \ge 10\sqrt{np\log(n)}}\\ 
        &\hspace{2cm}\le 2 \exp\left( -\frac{-100np\log(n)}{6p|A| + 10\sqrt{np\log(n)}} \right) \le 2 \exp\left( -\frac{-100np\log(n)}{6np + 10\sqrt{np\log(n)}} \right) \le \frac{1}{n^2}.
    \end{align*} This certifies that there is a constant $C > 0$ such that  
    \begin{align*}
        \deg(u \rightarrow A) &\ge \alpha np + \Omega\left(\frac{np}{\sqrt{d}}\right) - 10\sqrt{np\log(n)} \ge \alpha np  + \frac{Cnp}{\sqrt{d}}
    \end{align*} holds with probability $1 - 1/n^2$ over the randomness in $\RGG | \mathbf{x}_{(1)}$ whenever we have a good $\mathbf{x}_{(1)}$. Note that we also applied the fact that $d \ll (np)^{1-\varepsilon}$ above. An entirely analogous calculation shows that $\deg(u \rightarrow B) \le \alpha np - \frac{Cnp}{\sqrt{d}}$ holds as well. Noting that $\|\tilde{\mathbf{y}}_1\|^2 \le n$ and that $\mathbf{x}_{(1)}$ is good w.p. $1 - 1/n^2$, we get that \begin{align*}
        \mathbf{y}_1^\top \Adj \mathbf{y}_1 \ge \frac{(|A| + |B|)2Cnp}{n\sqrt{d}} = \Omega\left(\frac{np}{\sqrt{d}}\right)
    \end{align*} holds with probability $1 - 2/n^2$. Since this is true for each $\mathbf{y}_i$, we get from a union bound that it also holds for all $\mathbf{y}_i$ w.h.p.

    We finish the proof by arguing that the constructed random vectors are w.h.p. pairwise almost orthogonal and that we can lift our lower bound on their quadratic forms from $\Adj$ to $\Adjpp$.
    For the former, note that, for each $j$, $\|\tilde{\mathbf{y}}_j\|^2$ is a Binomial random variable with success probability bounded away from $0$ and $n$ trails, and therefore $\|\tilde{\mathbf{y}}_j\|^2 = \Omega(n)$ with probability $1 - \bigO{n^{-3}}$. 
    Moreover, for every $i \neq j$, $\tilde{\mathbf{y}}_j^\top \tilde{\mathbf{y}}_i$ is a sum of $n$ independent random variables in $\{-1, 0, 1\}$, each with mean $0$. 
    Hence, by McDiarmid's inequality, we get that $|\tilde{\mathbf{y}}_j^\top \tilde{\mathbf{y}}_i| = \bigOtilde{\sqrt{n}}$ with probability $1 - \bigO{n^{-3}}$.
    Combining both yields that $|\mathbf{y}_j^\top \mathbf{y}_i| = \bigOtilde{1/\sqrt{n}}$ for each $i \neq j$ with probability $1 - \bigO{n^{-3}}$, and applying a union bound proves that w.h.p. all pairs of vectors are almost orthogonal.
    To lift our bound on the quadratic form from $\Adj$ to $\Adjpp$, we further observe that, by McDiarmid's inequality, $|\mathds{1}^\top \tilde{\mathbf{y}}_i| = \bigOtilde{\sqrt{n}}$ with probability $1 - \bigO{n^{-2}}$.
    Hence, using again that $\tilde{\mathbf{y}}_j\|^2 = \Omega(n)$ with probability $1 - \bigO{n^{-3}}$, we can apply the union bound to obtain that w.h.p.
    \[
        |\mathbf{y}_i^\top(p \mathds{1} \mathds{1}^\top)\mathbf{y}_i| = p |\mathds{1}^\top \mathbf{y}_i|^2 = \bigOtilde{p} \le o\left(\frac{np}{\sqrt{d}}\right) 
    \]
    for all $i$.
\end{proof}

\subsubsection{$L_\infty$-Norm}\label{sec:linftynormlowerbound}

For $L_\infty$-norm, we prove the following. \begin{lemma}\label{lem:spectrallowerboundinfty}
    Consider a $\Gbf \sim \RGGinftyp{n}{d}{p}$ and assume that $d \ll \sqrt{np}$. With probability $1 - o(1)$, there are unit vectors $\mathbf{y}_1, \mathbf{y}_2, \ldots, \mathbf{y}_r$ with $r = \Theta(d^2)$ such that \begin{align*}
        \mathbf{y}_1^\top \left(\Adjpp\right) \mathbf{y}_1 = \Theta\left(  \frac{np}{d} \right) \text{ and } \mathbf{y}_i^\top\mathbf{y}_j = \bigOtilde{\frac{1}{\sqrt{n}}} \text{ for all } i \neq j.
    \end{align*} In particular this implies that 
    \begin{align*}
        \max\{ |\lambda_2(\Adj)|, |\lambda_n(\Adj)| \} = \Omega\left( \frac{np}{d} \right).
    \end{align*} 
\end{lemma}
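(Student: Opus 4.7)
The plan is to construct $\tilde{\Theta}(d^2)$ near-eigenvectors by packing $\Theta(1/\xi)$ \emph{pairs of antipodal arcs} into each of the $d$ dimensions, exploiting the fact that in the $L_\infty$ model a single coordinate distance exceeding $\tau = (1-\xi)/2$ already forbids an edge, so $A$-$B$ inter-cluster edges can be eliminated \emph{deterministically} with arcs of width only $\Theta(\xi)$ rather than $\Theta(1)$. Fix a small constant $\alpha < 1/4$, set $w = \alpha \xi$, and recall $\xi = 1 - p^{1/d} = \Theta(\log(1/p)/d)$. For each dimension $i \in [d]$ and each $j \in [\lfloor 1/(2w) \rfloor]$, let $A_{i,j}$ be an arc of length $w$ in coordinate $i$, chosen so that $\{A_{i,1}, A_{i,2}, \ldots\}$ tile half of the circle without overlap, and let $B_{i,j}$ denote the antipodal arc. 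Define $\tilde{\mathbf{y}}_{i,j}(v) = +1$ if $\mathbf{x}_v(i) \in A_{i,j}$, $-1$ if $\mathbf{x}_v(i) \in B_{i,j}$, and $0$ otherwise, and set $\mathbf{y}_{i,j} = \tilde{\mathbf{y}}_{i,j}/\|\tilde{\mathbf{y}}_{i,j}\|$. This yields $r = d \cdot \lfloor 1/(2w) \rfloor = \Theta(d/\xi) = \tilde{\Theta}(d^2)$ vectors.

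The key observation is that for any $u \in A_{i,j}$ and $v \in B_{i,j}$ the circular distance in coordinate $i$ is at least $1/2 - 2w > 1/2 - \xi/2 = \tau$, hence $\Adj_{uv} = 0$ deterministically given $\mathbf{x}_{(i)}$. Expanding
\[
\tilde{\mathbf{y}}_{i,j}^\top \Adjpp \tilde{\mathbf{y}}_{i,j} \;=\; \sum_{u \ne v} \tilde{\mathbf{y}}_{i,j}(u)\tilde{\mathbf{y}}_{i,j}(v)(\Adj_{uv} - p),
\]
inter-cluster terms carry sign $-1$ and (conditional on $\mathbf{x}_{(i)}$) expected value $-p$, contributing $+2|A_{i,j}||B_{i,j}| p$ deterministically. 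A Bernstein bound gives $|A_{i,j}|, |B_{i,j}| = \Theta(nw) = \Theta(n\xi)$ with probability $1 - n^{-\omega(1)}$, so this contribution is $\Omega(n^2\xi^2 p)$; after normalizing by $\|\tilde{\mathbf{y}}_{i,j}\|^2 = \Theta(n\xi)$, this will be $\Omega(n\xi p) = \Omega(np\log(1/p)/d) = \Omega(np/d)$, and the subtraction $\mathbf{y}_{i,j}^\top (p \mathds{1}\mathds{1}^\top) \mathbf{y}_{i,j} = p(|A|-|B|)^2/(|A|+|B|) = \tilde{O}(p)$ is $o(np/d)$ since $d \ll n$.

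The main obstacle will be showing that the random intra-cluster contribution $\sum_{u\ne v: u,v \in A_{i,j}} (\Adj_{uv} - p^{(d-1)/d})$ (plus the analogous $B$-term), whose conditional expectation given $\mathbf{x}_{(i)}$ is zero, does not swamp the deterministic inter-cluster piece. Its variance splits into diagonal terms of size $O(|A|^2 p) = O(n^2 \xi^2 p)$ and covariances between adjacent edges $\Adj_{uv}, \Adj_{uw}$. These covariances are signed weights of chains of length $k=2$; after integrating out the chain endpoints I can invoke \Cref{lem:fouriercoefficientslinfty} to bound each by $O(p^2 \log^3(n)/d)$, handling the endpoint event $\mathcal{G}$ via \Cref{lem:gprobbound} (whose bad case contributes only $n^{-\omega(1)}$). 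Summing over the $O(n^3\xi^3)$ ordered triples yields variance $O(n^2\xi^2 p + n^3 \xi^3 p^2 \log^3(n)/d)$. The hypothesis $d \ll \sqrt{np}$ translates to $n\xi\sqrt{p} = \omega(1)$ (and in fact $\omega(\sqrt{n}\log(1/p))$), which makes both standard deviations $o(n^2\xi^2 p)$, so with probability $1 - n^{-\omega(1)}$ the random intra part is a lower-order perturbation.

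For pairwise almost-orthogonality, the case $i = i',\ j \ne j'$ is trivial since $\supp(\tilde{\mathbf{y}}_{i,j}) \cap \supp(\tilde{\mathbf{y}}_{i,j'}) = \emptyset$ by the disjoint-tiling construction. For $i \ne i'$, the entries $\{\tilde{\mathbf{y}}_{i,j}(v)\tilde{\mathbf{y}}_{i',j'}(v)\}_v$ depend on disjoint coordinates $\mathbf{x}_v(i)$, $\mathbf{x}_v(i')$, hence are independent across $v$ and mean-zero by the $A$-$B$ symmetry; each is non-zero with probability $\Theta(\xi^2)$, so a Bernstein bound gives $|\tilde{\mathbf{y}}_{i,j}^\top \tilde{\mathbf{y}}_{i',j'}| = \tilde{O}(\sqrt{n}\,\xi)$ with probability $1 - n^{-\omega(1)}$. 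Dividing by $\|\tilde{\mathbf{y}}_{i,j}\|\,\|\tilde{\mathbf{y}}_{i',j'}\| = \Theta(n\xi)$ gives $|\mathbf{y}_{i,j}^\top \mathbf{y}_{i',j'}| = \tilde{O}(1/\sqrt{n})$, and a union bound over the $r^2 = \poly(n)$ pairs (together with a union bound over $r$ quadratic-form events) yields the claim simultaneously. The stated consequence $\max\{|\lambda_2(\Adj)|, |\lambda_n(\Adj)|\} = \Omega(np/d)$ then follows from the variational characterization, using that $\mathbf{y}_1$ has inner product $\tilde{O}(1/\sqrt{n})$ with the top eigendirection $\mathds{1}/\sqrt{n}$.
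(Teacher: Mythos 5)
Your proposal is correct and follows essentially the same route as the paper's proof: the same construction of $\pm 1$ indicator vectors on antipodal arcs of width $\Theta(\xi)$ tiling each dimension, the same key observation that inter-cluster edges are deterministically forbidden in the $L_\infty$ model (which is where the $\Omega(np/d)$ signal comes from), and the same Bernstein-plus-union-bound treatment of arc sizes, near-orthogonality, and the $p\mathds{1}\mathds{1}^\top$ correction. The only difference is cosmetic: you control the intra-cluster fluctuation by a global second-moment bound (with covariances of adjacent edges bounded via \Cref{lem:fouriercoefficientslinfty}), whereas the paper uses per-vertex Bernstein concentration of $\deg(u\to A)$ together with \Cref{lem:probforeigenvaluesinfty}; both yield the same conclusion under $d \ll \sqrt{np}$.
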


\noindent We construct the vectors $\mathbf{y}_1, \mathbf{y}_2, \ldots, \mathbf{y}_r$ in a similar way as before. The difference will be that we do not get one vector per dimension, but $\tilde{\Theta}(d)$ many. Again, we construct our vectors by sampling the position of all vertices in a fixed dimension, which we denote by $\mathbf{x}_{(1)}$. 
Now, we shall define a region within our fixed dimension that consists of two opposing circular arcs, each of length $\xi/2 = \Theta(\log(1/p)/d)$ as depicted in \Cref{fig:evq}, right. 
Similarly to the notation used previously, we call the set of vertices falling into one of the arcs $A$, and the set of vertices falling into the other $B$. 
Then, we define a vector $\tilde{\mathbf{y}}$ just like before, by setting
\begin{align*}
    \tilde{\mathbf{y}}(v) = \begin{cases}
        +1 &\text{if } v \in A\\
        -1 &\text{if } v \in B\\
        \hspace{.23cm}0  &\text{otherwise.}
    \end{cases}
\end{align*}
Finally, our approximate eigenvector $\mathbf{y}$ will be the normalized version of the above, i.e., $\mathbf{y} = \tilde{\mathbf{y}}/ \|\tilde{\mathbf{y}}\|$. 
It is easy to see that---since the length of our circular arcs is only $\xi/2 = \Theta(\log(1/p)/d)$---we can define $\Theta(d)$ orthogonal such $\mathbf{y}$ \emph{for each} dimension, such that in the end, we end up with $\Theta(d^2)$ vectors in total. 
To prove that the vectors constructed this way are indeed approximate eigenvectors, we again show that inter-cluster edges are less likely than intra-cluster edges. In case of inter-cluster edges, this is in fact trivial, since it is easy to see that for $u\in A$ and $v \in B$, the $L_\infty$-distance between $u,v$ is above the threshold, no matter what happens in the remaining $d -1$ dimensions. Accordingly, by construction of $A$ and $B$, there are no inter-cluster edges. However, for intra-cluster edges, we prove the following simple lemma.
\begin{lemma}\label{lem:probforeigenvaluesinfty}
     For all $\mathbf{x}_{(1)}$, we have that \begin{align*}
        \forall \{u,v\} \in \binom{A}{2} \cup \binom{B}{2}: \Prsub{\Gbf \sim \RGGinfty | \mathbf{x}_{(1)} }{u \sim v} = p^{\frac{d-1}{d}} = p\left(1 + \Omega\left(\frac{1}{d}\right)\right).
    \end{align*} where $\RGGinfty | \mathbf{x}_{(1)}$ denotes the law of $\RGGinfty$ conditional on fixing the positions of all vertices in dimension $1$ according to $\mathbf{x}_{(1)}$.
\end{lemma}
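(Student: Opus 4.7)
The plan is to exploit the construction of the arcs $A$ and $B$ to reduce the probability to a product of $d-1$ independent factors, and then apply a standard Taylor expansion. Concretely, each arc has length $\xi/2 = \Theta(\log(1/p)/d)$, so for any two vertices $u, v$ lying in the same arc, their circular distance in dimension $1$ satisfies $|\mathbf{x}_u(1) - \mathbf{x}_v(1)|_C \le \xi/2$. Since $p \le 1 - \varepsilon$ and $d = \omega(1)$, we have $\xi = 1 - p^{1/d} < 1/2$ for sufficiently large $d$, so $\xi/2 < (1-\xi)/2$. Thus the dimension-$1$ constraint $|\mathbf{x}_u(1) - \mathbf{x}_v(1)|_C \le (1-\xi)/2$ is met deterministically once we have conditioned on $\mathbf{x}_{(1)}$.

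Next, I would use that the coordinates of $\mathbf{x}_u, \mathbf{x}_v$ in dimensions $2, \dots, d$ are independent of $\mathbf{x}_{(1)}$ and jointly uniform on $[-1/2,1/2]^{2(d-1)}$, and that by definition of $\xi$ each marginal event $\{|\mathbf{x}_u(i) - \mathbf{x}_v(i)|_C \le (1-\xi)/2\}$ occurs with probability $1 - \xi = p^{1/d}$. Since the event $\{u \sim v\}$ is the intersection of these $d - 1$ marginal events together with the (almost sure) dimension-$1$ event, independence across coordinates yields
\begin{align*}
    \Prsub{\Gbf \sim \RGGinfty \mid \mathbf{x}_{(1)}}{u \sim v} \;=\; (1-\xi)^{d-1} \;=\; p^{(d-1)/d}.
\end{align*}

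Finally, I would establish the lower bound $p^{(d-1)/d} = p(1 + \Omega(1/d))$ by writing $p^{(d-1)/d} = p \cdot p^{-1/d} = p \cdot \exp(\log(1/p)/d)$ and Taylor expanding the exponential to obtain $p^{-1/d} = 1 + \log(1/p)/d + O((\log(1/p)/d)^2)$. Because $p \le 1 - \varepsilon$ for some constant $\varepsilon > 0$, we have $\log(1/p) \ge \log(1/(1-\varepsilon)) = \Omega(1)$, and since $d = \omega(1)$ the quadratic error term is negligible, yielding the desired $\Omega(1/d)$ lower bound.

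I do not expect any serious obstacle: the entire argument is driven by the geometric observation that the arcs are short enough to trivialize the dimension-$1$ constraint, which makes the conditional probability factor cleanly. The only mild care needed is in justifying $\xi/2 < (1-\xi)/2$, which holds for all $d$ large enough because $\xi = 1 - p^{1/d} \to 0$ as $d \to \infty$ when $p$ is bounded away from $0$ (indeed, in the regime $p \ge 1/n$ and $d = \omega(\log n)$ used throughout the section we even have $\xi = o(1)$).
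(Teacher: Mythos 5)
Your proposal is correct and follows essentially the same route as the paper: conditioning on $\mathbf{x}_{(1)}$ trivializes the dimension-$1$ constraint because the arcs have length $\xi/2 < (1-\xi)/2$, the remaining $d-1$ dimensions factor independently to give exactly $(1-\xi)^{d-1} = p^{(d-1)/d}$, and a Taylor expansion of $p^{-1/d}$ using $\log(1/p) = \Omega(1)$ yields the $p(1+\Omega(1/d))$ bound. The paper states this more tersely, but your added justification of the geometric step and the expansion is exactly the intended argument.
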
 \begin{proof}
    Conditional on $u,v \in A$ or $u,v \in B$ for a fixed dimension, the probability that $u$ and $v$ are below distance $1 - \xi$ in each of the remaining $d-1$ dimensions is exactly $(1-\xi)^{d-1} = p^{\frac{d-1}{d}}$. Applying a Taylor series then yields the desired result.
\end{proof}

%\begin{figure}
%    \centering
%    \includegraphics[width=0.25\linewidth]{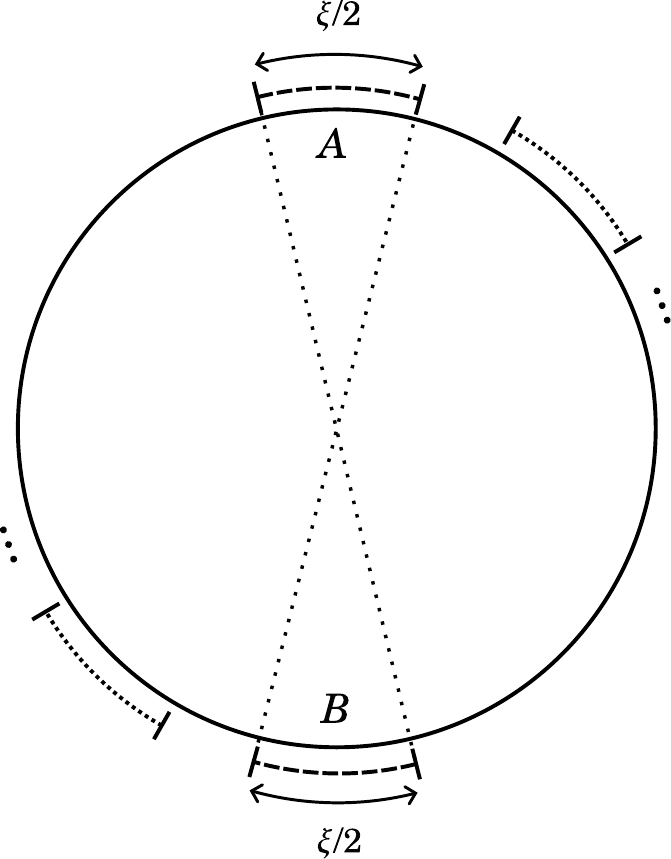}
%    \caption{Illustration for the construction of explicit eigenvectors for a $\Gbf\sim \RGGinfty$.}
%    \label{fig:ev}
%\end{figure}

\noindent With this, we can prove \Cref{lem:spectrallowerboundinfty}. \begin{proof}[Proof of \Cref{lem:spectrallowerboundinfty}] 
    As in the proof of \Cref{lem:spectrallowerbound}, for any set $M\subseteq V$, we denote by $\deg(u \rightarrow M)$ the number of neighbors of $u$ in $M$. 
    Furthermore, \begin{align*}
        \mathbf{y}_1^\top \Adj \mathbf{y}_1 & = \frac{\tilde{\mathbf{y}}_1^\top \Adj \tilde{\mathbf{y}}_1}{\|\tilde{\mathbf{y}}_1\|^2} \\&= \frac{1}{\|\tilde{\mathbf{y}}_1\|^2} \left(\sum_{u\in A} \left( \deg(u \rightarrow A) - \deg(u \rightarrow B) \right) +\sum_{u\in B} \left( \deg(u \rightarrow B) - \deg(u \rightarrow A) \right) \right)\\
        &=\frac{1}{\|\tilde{\mathbf{y}}_1\|^2} \left(\sum_{u\in A} \deg(u \rightarrow A) + \sum_{u\in B} \deg(u \rightarrow B) \right)
    \end{align*} because no inter-cluster edges exist. To estimate the sum, we recall that the sets $A, B$ are defined as a function of the position of all vertices in our fixed, first dimension, denoted by $\mathbf{x}_{(1)}$. We call $\mathbf{x}_{(1)}$ \emph{good} if \begin{align*}
       &\absolute{|A| - \Expectedsub{\mathbf{x}_{(1)} \sim \text{Unif}([-1/2,1/2]^n)}{|A|}} \le 10\log(n)\sqrt{\frac{n}{d}} \text{ and} \\& \absolute{|B| - \Expectedsub{\mathbf{x}_{(1)} \sim \text{Unif}([-1/2,1/2]^n)}{|B|}} \le 10\log(n)\sqrt{\frac{n}{d}}.
    \end{align*} 
    Note that $\mathbf{x}_{(1)}$ is good with probability $1 - 1/n^2$ as, by the Bernstein bound (\Cref{thm:bernstein}), 
    \begin{align*}
        &\Prsub{\mathbf{x}_{(1)} \sim \text{Unif}([-1/2,1/2]^n)}{ \left| |A| - \Expectedsub{\mathbf{x}_{(1)} \sim \text{Unif}([-1/2,1/2]^n)}{|A|} \right| \ge 10\log(n)\sqrt{\frac{n}{d}} } \\
        &\hspace{6.5cm} \le 2\exp\left( \frac{- \frac{100n\log^2(n)}{d}}{n\xi + 10\log(n)\sqrt{\frac{n}{d}}} \right)
        \le \frac{1}{n^2}
    \end{align*} since $\xi = \Theta(\log(1/p)/d)$. Moreover, it is easy to note that \begin{align*}
        \Expectedsub{\mathbf{x}_{(1)} \sim \text{Unif}\left([-1/2, 1/2]^n\right)}{|A|} = \Expectedsub{\mathbf{x}_{(1)} \sim \text{Unif}\left([-1/2, 1/2]^n\right)}{|B|} = \frac{1}{2}\xi n.
    \end{align*} Now, applying \Cref{lem:probforeigenvaluesinfty}, we get that for every good $\mathbf{x}_{(1)}$ and any $u \in A$, \begin{align*}
        \Expectedsub{\Gbf \sim \RGGinfty | \mathbf{x}_{(1)}}{\deg(u \rightarrow A)}& \ge p|A| \ge p\left(\frac{1}{2}\xi n - 10\log(n)\sqrt{\frac{n}{d}}\right) \\&\ge \frac{1}{2}\xi np - 10p\log(n)\sqrt{\frac{n}{d}} = \Omega\left( \frac{np}{d} \right)
    \end{align*} since $d \ll \sqrt{np}$. Moreover, it is easy to see that the random variable $\deg(u \rightarrow A) - \Expected{\deg(u \rightarrow A)}$ (where the randomness is over the position of all vertices in dimensions $\{2,\ldots,d\}$ while the positions in dimension $1$ are fixed)  can be represented as a sum of independent zero mean random variables $X_1,\ldots, X_{|A|}$ bounded in absolute value by $1$. Furthermore, each $X_i$ can be represented as $X_i = Y_i - p_i$ where $Y_i$ is a Bernoulli RV with expectation $p_i$, while $p_i = (1 + o(1))p$ by \Cref{lem:probforeigenvaluesinfty}. Hence, $X_i$ takes a value of $1 - p_i$ w.p. $p_i$ and a value of $-p_i$ w.p. $1- p_i$. This implies that \begin{align*}
        \Expectedsub{\Gbf \sim \RGGinfty| \mathbf{x}_{(1)}}{X_i^2} \le p_i(1-p_i)^2 + (1- p_i)p_i^2 \le 2p_i \le 3 p.
    \end{align*} Thus, by a Bernstein bound (\Cref{thm:bernstein}), we have \begin{align*}
        &\Prsub{\Gbf \sim \RGGinfty | \mathbf{x}_{(1)}}{ \left|\deg(u \rightarrow A) - \Expectedsub{\Gbf \sim \RGGinfty | \mathbf{x}_{(1)}}{\deg(u \rightarrow A)}\right| \ge 10\log(n)\sqrt{\frac{np}{d}}}\\ 
        &\hspace{3cm}\le 2 \exp\left( -\frac{-100\frac{np\log^2(n)}{d}}{6p|A| + 10\log(n)\sqrt{\frac{np}{d}}} \right) \le 2 \exp\left( -\frac{-100\frac{np\log^2(n)}{d}}{ 6\xi np + 10\log(n)\sqrt{\frac{np}{d}}} \right) \le \frac{1}{n^2}
    \end{align*} Where we used that $|A| \le \xi np$ because $\mathbf{x}_{(1)}$ is good. This certifies that 
    \begin{align*}
        \deg(u \rightarrow A) &\ge \frac{1}{2}\xi np-10\log(n)\sqrt{\frac{np}{d}} - 10p\log(n)\sqrt{\frac{n}{d}} = \Omega\left(\frac{np}{d}\right)
    \end{align*} holds with probability $1 - 1/n^2$ whenever we have a good $\mathbf{x}_{(1)}$. Note that we also applied the fact that $d \ll \sqrt{np}$ above. An entirely analogous calculation shows that $\deg(u \rightarrow B) = \Omega\left(\frac{np}{d}\right)$ holds as well. Noting that $\|\tilde{\mathbf{y}}_1\|^2 = \bigO{\frac{n}{d}}$ for good $\mathbf{x}_{(1)}$, and that $\mathbf{x}_{(1)}$ is good w.p. $1 - 1/n^2$, we get that \begin{align*}
        \mathbf{y}_1^\top \Adj \mathbf{y}_1 \ge \Omega\left(  \frac{np}{d}\right)
    \end{align*} holds with probability $1 - 2/n^2$. Since this is true for each $\mathbf{y}_i$, we get from a union bound that it also holds for all $\mathbf{y}_i$ w.h.p.
\end{proof}

\section{Acknowledgments}
We would like to thank Kiril Bangachev for fruitful discussions during early stages of this work.

\bibliographystyle{abbrv}	
\bibliography{bibliography}

\appendix
\section{Deferred Proofs}

\subsection{Deferred Proofs from \Cref{sec:probestimates}}\label{sec:deferredprelims}

\thresholdestimate*
\begin{proof}
    We prove that choosing $b = \frac{Cp\log^{4}(n)}{\sqrt{d}}$ for some suitable $C > 0$ ensures that \begin{align*}
        \Pr{\Deltaboldhat{}(1) \le \Phi^{-1}(p) - b} < p \text{ and } \Pr{\Deltaboldhat{}(1) \le \Phi^{-1}(p) + b} > p
    \end{align*} which implies our lemma. Since $\Deltaboldhat{}(1)$ is a sum of i.i.d. continuous random variables of bounded density, we get from \Cref{thm:univariateedgeworthdensity} that the density $g$ of $\Deltaboldhat{}(1)$ satisfies \begin{align*}
        \left| g(x) - \left( \phi(x) + \phi(x)\sum_{j=1}^{s-2} \frac{p_j(x)}{d^{j/2}} \right) \right| =  o\left( \frac{1}{d^{(s-2)/2}} \right).
    \end{align*} We choose $s$ sufficiently large such that $\frac{1}{d^{(s-2)/2}} \le \frac{p}{d}$ which is possible since we assume $p \ge 1/n$ and $d \ge n^{\gamma}$ for some arbitrarily small but constant $\gamma > 0$. Setting $a= -\log(n)$ and applying the above, we thus get that for any $b > 0$ sufficiently small, \begin{align*}
        \Pr{\Deltaboldhat{}(1) \le \Phi^{-1}(p) - b} &= \Pr{\Deltaboldhat{}(1) \le a} + \int_{a}^{\Phi^{-1}(p) - b} g(x) \text{d} x.
    \end{align*} We immediately get that the first term is $\le n^{-\omega(1)}$ by \Cref{lem:tailofz}. The second term can be bounded as\begin{align}\label{eq:iseriouslycantthinkofasuitablenameforthisrightnow}
        \int_{a}^{\Phi^{-1}(p) - b} g(x) \text{d} x &\le \int_{a}^{\Phi^{-1}(p) - b} \left( 1 +  \sum_{j=1}^{s-2} \frac{p_j(x)}{d^{j/2}}\right) \phi(x)\text{d} x + \bigO{\frac{\log(n)}{d^{(s-2)/2}}} \nonumber \\
        &\le \left( 1 +  \sum_{j=1}^{s-2} \bigO{\frac{\log^{3j}(n)}{d^{j/2}}}\right)\int_{a}^{\Phi^{-1}(p) - b} \phi(x)\text{d} x + \bigO{\frac{\log(n)}{d^{(s-2)/2}}} \nonumber \\
        &\le \left( 1 +  \bigO{\frac{\log^{3}(n)}{\sqrt{d}}}\right)\int_{a}^{\Phi^{-1}(p) - b} \phi(x)\text{d} x + \bigO{\frac{p\log(n)}{\sqrt{d}}}
    \end{align} where we used that $\Phi^{-1}(p) = \bigO{\log(n)}$, $p_j$ is a polynomial of degree $3j$, so $|p_j(x)| \le \bigO{|a|^{3j}} = \bigO{\log^{3j}(n)}$. Now, assuming further that $b \le n^{-\varepsilon}$ for an arbitrary $\varepsilon > 0$, we get that \begin{align*}
        \int_{a}^{\Phi^{-1}(p) - b} \phi(x)\text{d} x &= \int_{-\infty}^{\Phi^{-1}(p)} \phi(x)\text{d} x - \int_{\Phi^{-1}(p) - b}^{\Phi^{-1}(p)} \phi(x)\text{d} x \\
        &\le p - b\phi\left( \Phi^{-1}(p) - b \right)\\
        &\le p - b\phi\left(\Phi^{-1}(p)\right) \exp\left(b\Phi^{-1}(p) - b^2/2\right)\\
        &= p - (1-o(1))b\phi\left(\Phi^{-1}(p)\right)\\
        &\le p - C'bp
    \end{align*} where we used that $\exp\left(b\Phi^{-1}(p) - b^2/2\right) = 1 - o(1)$ since the absolute value of the  exponent is $o(1)$ because $|\Phi^{-1}(p)| \le \log(n) \text{ and } b \le n^{-\varepsilon}$. In the last step, we further used that $\phi\left(\Phi^{-1}(p)\right) \ge C'\Phi\left(\Phi^{-1}(p)\right) = C'p$ for some constant $C'$ by \Cref{prop:cdfpdf}. Thus, going back to (\ref{eq:iseriouslycantthinkofasuitablenameforthisrightnow}), we get \begin{align*}
        \Pr{\Deltaboldhat{}(1) \le \Phi^{-1}(p) - b} \le p + \bigO{\frac{p\log^{3}(n)}{\sqrt{d}} + \frac{p\log(n)} {\sqrt{d}}} - C'pb + n^{-\omega(1)}.
    \end{align*} Since $p \ge 1/n$, choosing $b = \frac{Cp\log^{3}(n)}{\sqrt{d}}$ for sufficiently large constant $C > 0$ ensures that \[\Pr{\hat{\mathbf{z}}(1) \le \Phi^{-1}(p) - b} < p\] as desired. For the other direction, we proceed similarly and note that for every  sufficiently small $b > 0$, \begin{align*}
        \Pr{\Deltaboldhat{}(1) \le \Phi^{-1}(p) + b} \ge \left( 1 - \bigO{\frac{\log^{3}(n)}{\sqrt{d}}}\right)\int_{a}^{\Phi^{-1}(p) + b} \phi(x)\text{d} x - \bigO{\frac{p\log(n)}{\sqrt{d}}}
    \end{align*} and \begin{align*}
        \int_{a}^{\Phi^{-1}(p) + b} \phi(x)\text{d} x &\ge p - \int_{-\infty}^{a} \phi(x)\text{d} x + \int_{\Phi^{-1}(p)}^{\Phi^{-1}(p) + b} \phi(x)\text{d} x \\
        &\ge p - n^{-\omega(1)} + b\phi\left(\Phi^{-1}(p)\right)\\
        &\ge p - n^{-\omega(1)} + bp\frac{1 - e^{-3/2}}{2\log(n)}
    \end{align*} by \Cref{prop:cdfpdf}. In total, \begin{align*}
        \Pr{\Deltaboldhat{}(1) \le \Phi^{-1}(p) + b} \ge  p - n^{-\omega(1)} - \bigO{\frac{p\log^{3}(n)}{\sqrt{d}} + \frac{p\log(n)} {\sqrt{d}}} + bp\frac{1 - e^{-3/2}}{2\log(n)}
    \end{align*} so choosing $b = \frac{Cp\log^{4}(n)}{\sqrt{d}}$ for sufficiently large constant $C > 0$ ensures that 
    $$\Pr{\Deltaboldhat{}(1) \le \Phi^{-1}(p) + b} > p$$  as desired.
    
    %where $X \sim \mathcal{N}(0,1)$. Now, since $p$ is a constant, we know that the density of $X$ in the interval $[p, p + \varepsilon]$ is $\Omega(1)$, so the above probability is at least $c_p \cdot C/\sqrt{d}$ for some constant $c_p > 0$ that can depend on $p$. Thus, choosing $C$ large enough yields that 
    %$\mathbb{P}(Z \le \Phi^{-1}(p) + C/\sqrt{d}) \ge p$ which implies  that $\tilde{\tau} \le \Phi^{-1}(p) + C/\sqrt{d}$. Entirely analogously, we can also show that $\tilde{\tau} \ge \Phi^{-1}(p) - C/\sqrt{d}$ for some $C > 0$ large enough.
\end{proof}

\tauissmall*
\begin{proof}
    If $p = \Theta(1), p < 1-\varepsilon$, then it is easy to see that $\Phi^{-1}(p) = \Theta(1)$, so we get $\tilde{\tau} = \Theta(1)$ by \Cref{lem:thresholdestimate} as well. On the other hand, if $p = o(1)$, we note that $\Phi^{-1}(p) < 0$ and we get from \Cref{prop:cdfpdf} that for all $x \le -\frac{1}{2}\log(n)$ \begin{align*}
        \Phi(x) \le  \phi(x) \le \frac{1}{\sqrt{2\pi}}\exp\left( -\frac{\log^2(n)}{8}\right) = n^{-\omega(1)}. 
    \end{align*} On the other hand, we know that $p = \Phi(\Phi^{-1}(p))$, so $\Phi^{-1}(p) \le -\frac{1}{2}\log(n)$ would contradict $p \ge n^{-c}$. Applying \Cref{lem:thresholdestimate} then implies that $|\tilde{\tau}| \le \log(n)$ holds as well.
\end{proof}

\phip*
\begin{proof}
    We first note that $\Phi^{-1}(p) = -\omega(1)$ because $p = o(1)$, and that $|\Phi^{-1}(p)| \le \log(n)$ by \Cref{cor:tauissmall}.
    Hence, by \Cref{lem:thresholdestimate}, we have \begin{align*}
        \phi(\tilde{\tau}) &\ge \phi\left(\Phi^{-1}(p) - \frac{Cp\log^4(n)}{\sqrt{d}}\right) = \exp\left( \frac{C\Phi^{-1}(p)p\log^4(n)}{\sqrt{d}} - \frac{1}{2}\left(\frac{Cp\log^4(n)}{\sqrt{d}}\right)^2\right) 
        \phi\left(\Phi^{-1}(p)\right)\\ &= (1 - o(1)) \phi\left(\Phi^{-1}(p)\right),
    \end{align*} since $|\Phi^{-1}(p)| \le \log(n)$ and $d \ge n^{\gamma}$. Now, since $\Phi^{-1}(p) \le -1$ for sufficiently large $n$, we get by \Cref{prop:cdfpdf} that $\phi\left(\Phi^{-1}(p)\right) \ge \Phi\left(\Phi^{-1}(p)\right) = p$, so the first statement follows. The fact that $\Phi(\hattau) \le \phi(\hattau) \le (1 + o(1))p$ follows by \Cref{prop:cdfpdf} and noting that $\hattau = o(1)$ for $p = o(1)$. Finally, The fact that $C_1p \le \phi(\hattau) \le C_2p \text{ and } C_1p \le \Phi(\hattau) \le C_2p$ for all $\frac{1}{n^c} \le p \le 1-\varepsilon$ follows for $p = o(1)$ by the above combined with \Cref{prop:cdfpdf}, and for $p= \Omega(1)$ since $|\hattau| = \Theta(1)$ due to \Cref{cor:tauissmall}.
\end{proof}

\tailofz*
\begin{proof}
    It suffices to show the statement for $j = 1$ as all components of $\Deltaboldhat{}$ have the same marginal distribution.
    We note that \begin{align*}
        \sqrt{d}\Deltaboldhat{}(1) = \sum_{i = 1}^d \frac{\boldsymbol{\Delta}_i(\{u, v\})-\mu}{\sigma}
    \end{align*} for some pair of vertices $u, v$, and this is a sum of i.i.d. RVs with mean $0$ and absolute value bounded by some constant $M$ a.s. Therefore, applying \Cref{thm:bernstein} yields that \begin{align*}
        \Pr{\Deltaboldhat{}(1) \ge c \log(n)} &= \Pr{\sqrt{d} \Deltaboldhat{}(1) \ge c \sqrt{d} \log(n)}\\
        &\le \exp \left( - \frac{c^2 d \log(n)^2}{2(d\sigma^2 + \frac{1}{3}Mc\sqrt{d}\log(n))} \right)\\
        &\le \exp \left( - \min\left\{ \frac{c^2\log(n)^2}{4M^2} , \frac{c\sqrt{d}\log(n)}{4 M} \right\} \right) = n^{-\omega(1)}.
    \end{align*} Doing the same for the lower tail finishes the proof of the first part. 
    To prove that the same statement holds for $\mathbf{z}$, we use the decomposition from \eqref{eq:znoise} ($\mathbf{z} = \Deltaboldhat{}/\zeta + \hat{\boldsymbol{\eta}}$) to conclude that \begin{align*}
        \Pr{\mathbf{z}(1) \le -c\log(n)} \le \Pr{|\Deltaboldhat{}(1) / \zeta| \le \frac{c}{2}\log(n)} + \Pr{|\etahat(1)| \le \frac{c}{2}\log(n)} \le n^{-\omega(1)}
    \end{align*} since $\zeta$ is at most a constant and $\eta \ge 1$ so both terms are $n^{-\omega(1)}$. 
\end{proof}

\subsection{Deferred Proofs from \Cref{sec:approxdensity}}\label{sec:deferrederror}
We further prove that a cycle and a chain in a single dimension meet \cramer (for the chain this holds with probability $\Omega(1)$). 
\begin{lemma}\label{lem:cramer}
    For a cycle or a chain of length $k \ge 3$, there is a function $\varepsilon(\delta)$ such that for any $j \in [d]$, we have that \begin{align*}
        |C_{\Deltaboldhat{j}}(\mathbf{t})| \le 1 - \varepsilon(\delta)
    \end{align*} whenever $\|\mathbf{t}\| \ge \delta$. For a chain of length $k = 2$, the two endpoints are arranged such that the above holds with constant probability. More precisely, there is an event $\mathcal{E}$ happening with probability $\Omega(1)$ under which there is a function $\varepsilon: \mathbb{R} \rightarrow \mathbb{R}$ such that $|C_{\Deltaboldhat{j}}(\mathbf{t})| \le 1 - \varepsilon(\delta)$ whenever $\|\mathbf{t}\| \ge \delta$.
\end{lemma}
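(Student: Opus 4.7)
The plan is to reduce the multivariate Cramér bound to a univariate one via a Fourier expansion in the torus variables, and then bound the univariate Fourier coefficients uniformly. Reparametrize via consecutive increments $u_i \coloneqq \mathbf{x}_{v_{i+1}}(j) - \mathbf{x}_{v_i}(j) \bmod 1 \in \mathbb{T}$ for $i \in [k]$, so that the $i$-th coordinate of $\Deltaboldhat{j}$ equals $h(u_i)$ where $h(y) \coloneqq (|y|_C^q - \mu)/\sigma$. The $u_i$ are then uniform on $\mathbb{T}^k$ subject to the single affine constraint $u_1 + \cdots + u_k \equiv c \pmod 1$, with $c = 0$ for a cycle and $c = \mathbf{x}_{v_{k+1}}(j) - \mathbf{x}_{v_1}(j)$ for a chain conditioned on its endpoints. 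Expanding each factor in Fourier series, $e^{i t h(u)} = \sum_{n \in \mathbb{Z}} A_n(t) e^{2\pi i n u}$ with $A_n(t) \coloneqq \int_{\mathbb{T}} e^{i t h(y) - 2\pi i n y} \dd y$, parametrizing $u_k = c - \sum_{i < k} u_i$, and integrating the $k-1$ free increments over $\mathbb{T}^{k-1}$, all cross terms vanish unless all Fourier indices coincide, yielding the compact identity
\[
    C_{\Deltaboldhat{j}}(\mathbf{t}) \;=\; \sum_{n \in \mathbb{Z}} e^{2\pi i n c} \prod_{i=1}^k A_n(t_i).
\]

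For $k \ge 3$, I would combine this identity with Cauchy--Schwarz on two of the $k$ factors, Parseval's identity $\sum_n |A_n(t)|^2 = 1$, and the trivial bound $|A_n(t_i)| \le 1$ on the remaining $k - 2 \ge 1$ factors, to obtain $|C_{\Deltaboldhat{j}}(\mathbf{t})| \le \max_n |A_n(t_{j^*})|$ for any index $j^*$ outside the Cauchy--Schwarz pair. Since $k \ge 3$, permuting the choice of pair lets me take $j^*$ to be the index with $|t_{j^*}| \ge \delta/\sqrt{k}$. This reduces everything to the univariate claim: $\max_n |A_n(t)| \le 1 - \varepsilon_0(\delta)$ uniformly in $|t| \ge \delta$. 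The pointwise strict inequality $\max_n |A_n(t)| < 1$ for $t \ne 0$ follows from Parseval: saturation at some $N$ would force $|A_m(t)| = 0$ for $m \ne N$, so $e^{i t h(y)} = A_N(t) e^{2\pi i N y}$ almost everywhere, meaning $h$ is affine, contradicting its non-linearity on $[0, 1/2]$ for any $t \ne 0$. Uniformity on $\delta \le |t| \le M$ follows from continuity of $t \mapsto \max_n |A_n(t)|$ and compactness, and on $|t| \ge M$ from a van der Corput / stationary-phase estimate, using the fact that on each piece where $h$ is smooth the phase $t h(y) - 2\pi n y$ has only isolated stationary points (for $q > 1$) or is linear with slope bounded away from zero for most $n$ (for $q = 1$).

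For a chain of length $k = 2$ the identity reduces to $C(\mathbf{t}) = \sum_n e^{2\pi i n c} A_n(t_1) A_n(t_2)$, and the Cauchy--Schwarz trick no longer concludes. The obstruction is genuine: when $c = 0$ the two distances coincide identically, so $C(\mathbf{t}) = \psi(t_1 + t_2)$ has modulus $1$ along the line $t_1 + t_2 = 0$, violating Cramér. I would therefore take $\mathcal{E} \coloneqq \{\,|c|_C \ge c_0\,\}$ for a suitably small absolute constant $c_0 > 0$; this event has probability $1 - 2 c_0 = \Omega(1)$ under the uniform law on the endpoints. On $\mathcal{E}$ I would rewrite $C(\mathbf{t}) = \int_{\mathbb{T}} e^{i (t_1 h(u) + t_2 h(c - u))} \dd u$, argue that the phase $t_1 h(u) + t_2 h(c-u)$ cannot be constant in $u$ for any $\mathbf{t} \ne 0$ (using non-linearity of $h$ together with $c \ne 0$), and then combine the same compactness and oscillatory-integral decay as in the univariate step to obtain $|C(\mathbf{t})| \le 1 - \varepsilon(\delta)$ with $\varepsilon$ depending only on $\delta$ and $c_0$.

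The main obstacle will be the uniform decay at infinity of the univariate Fourier coefficients $A_n(t)$: because $h$ is only piecewise smooth (with a kink at $y = 0$ and, for $q = 1$, constant slope on each half-interval), textbook stationary phase does not apply directly, and one must combine a Dirichlet-kernel-style direct computation on each linear piece (for $q = 1$) with a van der Corput bound away from the kink (for $q > 1$), carefully tracking the boundary contributions to rule out near-resonances $n \approx t h'/(2\pi)$. The analogous difficulty recurs in the $k = 2$ case, where the bound must be made uniform over all $c$ with $|c|_C \ge c_0$.
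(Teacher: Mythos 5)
Your route is genuinely different from the paper's. The paper conditions on an explicit geometric event (three consecutive vertices confined to carefully chosen arcs), integrates out a single intermediate vertex to get a one-dimensional random variable $\mathbf{t}(1)\Deltaboldhat{j}(1)+\mathbf{t}(2)\Deltaboldhat{j}(2)$ with uniformly bounded density and variance (via a monotonicity argument), and then invokes a quantitative bound of Bobkov--Chistyakov--G\"otze on characteristic functions of such variables. You instead pass to the increment variables on the torus, observe they are uniform on the subtorus $\sum_i u_i \equiv c$, and obtain the exact identity $C_{\Deltaboldhat{j}}(\mathbf{t})=\sum_{n}e^{2\pi i nc}\prod_i A_n(t_i)$; for $k\ge 3$ the Cauchy--Schwarz/Parseval step cleanly reduces everything to the univariate estimate $\sup_n|A_n(t)|\le 1-\varepsilon_0(\delta)$, which is correct and arguably more transparent than the paper's argument (it also makes the uniformity over the endpoint position $c$ manifest, since $c$ enters only through a unimodular factor). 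The remaining univariate work (compactness for $\delta\le|t|\le M$, linear-phase/van der Corput estimates for large $|t|$, handled separately for $q=1$ and $q\ge 2$) is real but doable with the tools you name, and you flag it honestly.

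There is, however, a concrete error in your $k=2$ case: the event $\mathcal{E}=\{|c|_C\ge c_0\}$ is not sufficient, because it fails to exclude (a neighborhood of) antipodal endpoints $c=1/2$. On the circle one has the identity $|1/2-u|_C=1/2-|u|_C$, so for $q=1$ and $c=1/2$ the function $h(c-u)=\bigl((1/2-|u|_C)-\mu\bigr)/\sigma$ equals $\mathrm{const}-h(u)$ identically. Hence for every $t$ the phase $t\,h(u)+t\,h(c-u)$ is constant in $u$ and $|C_{\Deltaboldhat{j}}(t,t)|=1$, so your claim that ``the phase cannot be constant in $u$ for any $\mathbf{t}\neq 0$ using non-linearity of $h$ together with $c\neq 0$'' is false on the diagonal $t_1=t_2$; \cramer{} genuinely fails there, and for $c$ near $1/2$ no bound uniform in $\mathcal{E}$ of the form $1-\varepsilon(\delta)$ can hold. (For $q\ge 2$ this particular degeneracy disappears, but the lemma is claimed for all $q\ge1$.) The fix is routine --- take $\mathcal{E}=\{c_0\le |c|_C\le 1/2-c_0\}$, which still has probability $1-4c_0=\Omega(1)$, and note that for $q=1$ the large-$\|\mathbf{t}\|$ analysis along the two diagonals $t_1=\pm t_2$ produces stationary arcs of total length $2|c|_C$ and $1-2|c|_C$ respectively, so \emph{both} constraints on $|c|_C$ are genuinely needed --- but as written the $k=2$ argument does not go through.
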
\begin{proof}
    We define an event $\mathcal{E}$ and split \begin{align*}
        C_{\Deltaboldhat{j}}(\mathbf{t}) = \Expected{\exp(i \mathbf{t}^\top\Deltaboldhat{j})} = \Pr{\mathcal{E}} \Expected{\exp(i \mathbf{t}^\top\Deltaboldhat{j}) \mid \mathcal{E}} + \Pr{\bar{\mathcal{E}}} \Expected{\exp(i \mathbf{t}^\top\hat{\mathbf{z}}) \mid \bar{\mathcal{E}}},
    \end{align*} so \begin{align*}
        |C_{\Deltaboldhat{j}}(\mathbf{t})| \le \Pr{\mathcal{E}} \left|\Expected{\exp(i \mathbf{t}^\top\Deltaboldhat{j}) \mid \mathcal{E}}\right| + \Pr{\bar{\mathcal{E}}}.
    \end{align*} Thus, it suffices to show that $\left|\Expected{\exp(i \mathbf{t}^\top\Deltaboldhat{j}) \mid \mathcal{E}}\right| \le 1 - \varepsilon(\delta)$ for some $\mathcal{E}$ with $\Pr{\mathcal{E}} = \Omega(1)$. We give an explicit event $\mathcal{E}$ for which this is met. To this end, denote the vertices of our cycle or chain by $v_1, \ldots, v_{k+1}$\footnote{In case of a cycle $v_1 = v_{k+1}$.} and the edges by $e_1, \ldots, e_k$. Note that the vector $\mathbf{t}$ associates the value $\mathbf{t}(j)$ to edge $e_j$. Since $\|\mathbf{t}\| \ge \delta$, there is some $j$ with $|\mathbf{t}(j)| \ge \delta/\sqrt{k}$. We assume w.l.o.g. that $j=1$ such that $|\mathbf{t}(1)| \ge \delta/\sqrt{k}$ is associated to $e_1 = \{v_1, v_2\}$ and $\mathbf{t}(2)$ is associated to $e_2 = \{v_2, v_3\}$. We consider two cases depending on the sign of $\mathbf{t}(1)$ and $\mathbf{t}(2)$.

    Case 1: $\text{sgn}(\mathbf{t}(1)) = \text{sgn}(\mathbf{t}(2))$. In this case we consider the event that $x_1 \in [-c, c] \eqqcolon I_1$, that $x_3 \in \left[\frac{1}{4} - c, \frac{1}{4} + c\right] \eqqcolon I_3$, and that $x_2 \in \left[\frac{3}{8} - c, \frac{3}{8} + c \right] \eqqcolon I_2$ for some sufficiently small constant $c > 0$. We further denote the expectation in the conditional probability space obtained by conditioning on $\mathcal{E}$ by $\tilde{\mathbb{E}}$ and use that \begin{align*}
        \left|\Expected{\exp(i \mathbf{t}^\top\Deltaboldhat{j}) \mid \mathcal{E}}\right| &= \left|\tilde{\mathbb{E}}_{x_1, x_3, \ldots, x_{k+1}}\left[ \tilde{\mathbb{E}}_{x_2} \left[ \exp(i \mathbf{t}^\top\Deltaboldhat{j}) \right] \right]\right| \\
        &= \left|\tilde{\mathbb{E}}_{x_1, x_3, \ldots, x_k}\left[ \exp\left(i\sum_{\ell=3}^k \mathbf{t}(\ell)\Deltaboldhat{j}(\ell)\right) \tilde{\mathbb{E}}_{x_2} \left[ \exp\left(i\sum_{\ell=1}^2 \mathbf{t}(\ell)\Deltaboldhat{j}(\ell)]\right) \right] \right]\right| \\
        &\le \tilde{\mathbb{E}}_{x_1,x_3 \ldots, x_k}\left[ \left|\exp\left(i\sum_{\ell=3}^k \mathbf{t}(\ell)\Deltaboldhat{j}(\ell)]\right)\right|\left| \tilde{\mathbb{E}}_{x_2} \left[ \exp\left(i\sum_{\ell=1}^2 \mathbf{t}(\ell)\Deltaboldhat{j}(\ell)]\right) \right] \right| \right]\\
        &= \tilde{\mathbb{E}}_{x_1,x_3, \ldots, x_k}\left[\left| \tilde{\mathbb{E}}_{x_2} \left[ \exp\left(i\sum_{\ell=1}^2 \mathbf{t}(\ell)\Deltaboldhat{j}(\ell)]\right) \right] \right| \right].
    \end{align*} To show that this is strictly smaller than $1$, it thus suffices to show that \begin{align}\label{eq:sup}
        \sup_{x_1, x_3,\ldots, x_k} \left| \tilde{\mathbb{E}}_{x_2} \left[ \exp\left(i\sum_{j=1}^2 \mathbf{t}(j)\Deltaboldhat{j}(\ell)]\right) \right] \right| < 1
    \end{align} where the supremum goes over all positions $x_1, x_3, \ldots, x_k$ that are in accordance with $\mathcal{E}$. 
    Recall that by definition of $\Deltaboldhat{j}$, we have $\Deltaboldhat{j}(\ell) = \frac{\Delta_j(e_\ell) - \mu}{\sigma}$ where $\Delta_1(\{u,v\}) = |x_u - x_v|_C^q$. We define \begin{align*}
        g_{x_1,x_3,\lambda}(x_2 ) \coloneqq \Deltaboldhat{j}(1) + \lambda\Deltaboldhat{j}(2) =  \frac{|x_1 - x_2|_C^q - \mu}{\sigma} + \lambda \frac{|x_2 - x_3|_C^q - \mu}{\sigma}
    \end{align*} and we note that \begin{align*}
        \sum_{\ell=1}^2 \mathbf{t}(\ell)\Deltaboldhat{j}(\ell) = \mathbf{t}(1)g_{x_1,x_3,\lambda}(x_2) \text{ for } \lambda = \frac{\mathbf{t}(2)}{\mathbf{t}(1)} 
    \end{align*} while $\frac{\mathbf{t}(2)}{\mathbf{t}(1)} \in [0,1]$ due to our assumptions. Now, it is easy to see that for all $\lambda \in [0,1]$, $g_{x_1,x_3}(x_2, \lambda)$ is a function strictly monotonically increasing in $x_2$ since for increasing $x_2 \in \left[\frac{3}{8} - c, \frac{3}{8} + c\right]$, both $|x_1 - x_2|_C^q$ and $|x_2 - x_3|_C^q$ are strictly increasing. Recalling that $x_2$ is uniform in $\left[\frac{3}{8} - c, \frac{3}{8} + c\right]$, we can treat $Y_{x_1,x_3,\lambda} = g_{x_1,x_3}(x_2, \lambda )$ as a random variable (for fixed $x_1,x_3,\lambda$) where the randomness comes solely from $x_2$. We claim that $Y_{x_1,x_3,\lambda}$ has a uniformly bounded variance and density over all $x_1 \in I_1, x_3 \in I_3$ and $\lambda \in [0,1]$. \begin{claim}\label{clm:boundeddensityandvariance}
        Denote by $f_{x_1,x_3,\lambda}(x)$ the density of $Y_{x_1,x_3,\lambda} = g_{x_1,x_3}(x_2, \lambda )$ for $x_2 \sim \text{Unif}\left(I_2\right)$. Then, there exist constants $M, s$ such that 
        \begin{align*}
            \sup_{\substack{x_1\in I_1, x_3 \in I_3 \\ \lambda \in [0,1], x \in \mathbb{R}}} f_{x_1,x_3,\lambda}(x) \le M \hspace{.5cm}\text{ and }\hspace{.5cm}    \sup_{\substack{x_1\in I_1, x_3 \in I_3 \\ \lambda \in [0,1]}} \Var{Y_{x_1,x_3,\lambda}} \le s
        \end{align*}
    \end{claim} 
    \noindent With the above claim, our result follows using the following proposition from \cite{Bobkov_Chistyakov_Goetze_2012}. \begin{proposition}[Theorem 1 in \cite{Bobkov_Chistyakov_Goetze_2012}]
        For a random variable $Y$ which has a density that is almost everywhere bounded by $M$ and variance bounded by $s$, we have that \begin{align*}
            |\Expected{ \exp(itY)}| \le 1 - \min \left\{ \frac{c_1}{M^2s^2}, \frac{c_2t^2}{M^2}\right\}
        \end{align*} where $c_1,c_2 > 0$ are universal constants.
    \end{proposition} 
    \noindent With the above and \Cref{clm:boundeddensityandvariance}, we get that \begin{align*}
        \sup_{x_1,x_3,\ldots, x_k} \left| \tilde{\mathbb{E}}_{x_1} \left[ \exp\left(i\sum_{\ell=1}^2 \mathbf{t}(\ell)\Deltaboldhat{j}(\ell)]\right) \right] \right|& = \sup_{x_1, x_3,\ldots, x_k} \left| \mathbb{E} \left[ \exp\left(i \mathbf{t}(1) Y_{x_1,x_3,\frac{\mathbf{t}(2)}{\mathbf{t}(1)}} \right) \right] \right| \\& \le 1 - \min \left\{ \frac{c_1}{M^2s^2}, \frac{c_2\delta^2}{M^2k}\right\},
    \end{align*} which proves (\ref{eq:sup}). So it only remains to prove \Cref{clm:boundeddensityandvariance}. \begin{proof}[Proof of \Cref{clm:boundeddensityandvariance}] The fact that the variance is uniformly bounded follows directly from the easy fact that $|g_{x_2, x_k}(x_1, \lambda)|$ is bounded by $|g_{x_2, x_k}(x_1, \lambda)| \le 1/\sigma$. To show that the density is bounded, we note that for any fixed $x_1,x_3, \lambda$, we have (if we abbreviate $g_{x_1,x_3,\lambda}$ by $g$) \begin{align*}
        \Pr{g(x_1) \le y} &= \Pr{x_1 \le g^{-1}(y)} \\
        \text{ so } f_{g(x_1)}(y) &= \frac{\text{d}}{\text{d} y} \Pr{x_1 \le g^{-1}(y)} \\
        &= \frac{\text{d} \Pr{x_1 \le g^{-1}(y)}}{\text{d} g^{-1}(y)}\frac{\text{d} g^{-1}(y)}{\text{d} y}\\
        &= \left.\frac{\text{d} \Pr{x_1 \le z}}{\text{d} z} \right|_{z=g^{-1}(y)} \left(\left.\frac{\text{d} g(z)}{\text{d} z}\right|_{z = g^{-1}(y)} \right)^{-1}\\
        &= \begin{cases}
            \frac{1}{|I_2|} \left(\frac{q|g^{-1}(y) - x_2|_C^{q-1} + q\lambda|g^{-1}(y) - x_k|_C^{q-1}}{\sigma}\right)^{-1} &\text{if } y \in \left[g\left(\frac{3}{8}-c\right), g\left(\frac{3}{8}+c\right)\right] \\
            0 & \text{else}
        \end{cases}
    \end{align*} which follows by the chain rule and the inverse function rule. Now, it is easy to see that this is uniformly bounded from above by some constant $M$ for all $x_2 \in I_2, x_k \in I_k, \lambda \in [0,1]$, provided only that $c$ was chosen small enough.
    \end{proof}

    Case 2: $\text{sgn}(\mathbf{t}(1)) \neq \text{sgn}(\mathbf{t}(2))$. In this case we can proceed analogously to case 1, the only difference is that we define the event $\mathcal{E}$ as the event that $x_1 \in [-c,c] \eqqcolon I_1, x_3 \in \left[\frac{1}{4} - c, \frac{1}{4} + c\right] \eqqcolon I_3$ (as before) and $x_2 \in \left[\frac{1}{8} - c, \frac{1}{8} + c\right]$ (this has changed compared to case 1). Then again \begin{align*}
        g_{x_1,x_3}(x_2, \lambda ) \coloneqq \Deltaboldhat{j}(1) + c\Deltaboldhat{j}(2) = \frac{|x_1 - x_2|_C^q - \mu}{\sigma} + \lambda \frac{|x_1 - x_3|_C^q - \mu}{\sigma}
    \end{align*} is a strictly increasing function for $\lambda \in [-1, 0]$ and $x_2 \in [-c,c], x_k \in \left[\frac{2}{3} - c, \frac{2}{3} + c\right]$.  After noting this, all other steps carry over analogously. In the case that $\mathbf{t}(2) = 0$, the argument from both case 1 and case 2 applies.

    This can be done whenever we deal with a chain or a cycle of length $k\ge 3$. For a chain of length $k = 2$ with endpoints $v_1, v_3$, and with intermediate vertex $v_2$, everything still works if the $v_1, v_3$ are placed in $I_1, I_3$, respectively. Since this occurs with probability $\Omega(1)$ over the (uniform) draw of the endpoints $v_1,v_3$, the result follows.
\end{proof}
Finally, we prove the following. 

\cgfderivative* \begin{proof}
    First of all, we define \begin{align*}
        \partial \Kpure(\mathbf{z}_j) &\coloneqq \left.\boldsymbol{\nabla}^{\otimes b}\Kpure\left(\mathbf{x}, \mathbf{z}_j\right)\right|_{\mathbf{x} = \frac{c\mathbf{t}}{\sqrt{d}}} - \left.\boldsymbol{\nabla}^{\otimes b}\Kpure\left(\mathbf{x}, \mathbf{z}_j\right)\right|_{\mathbf{x} = \mathbf{0}} \\
        \text{ and } \partial \Kmix(\mathbf{z}_j) &\coloneqq \left.\boldsymbol{\nabla}^{\otimes b}\Kmix\left(\mathbf{x}, \mathbf{z}_j\right)\right|_{\mathbf{x} = \frac{c\mathbf{t}}{\sqrt{d}}} - \left.\boldsymbol{\nabla}^{\otimes b}\Kmix\left(\mathbf{x}, \mathbf{z}_j\right)\right|_{\mathbf{x} = \mathbf{0}}
    \end{align*}
    Now, it is not hard to see that each entry of any $\mathbf{z}_j$ has the same marginal distribution. By definition of $\Kpure$, this means that $\Kpure\left(\mathbf{x}, \mathbf{z}_j\right)$ is the same function for each $j \in [d]$. Since $\boldsymbol{\nabla}^{\otimes b}\Kpure\left(\mathbf{x}, \mathbf{z}_j\right)$ is a continuous function, our statement follows in case of $\Kpure$.

    In case of $\Kmix$, the individual functions $\Kmix(\vecx, \mathbf{z}_j)$ are not the same for different $j$. Even though we could apply the same argument for each fixed $j$, this does not imply that $\left\| \partial \Kmix(\mathbf{z}_j) \right\|$ is bounded by  \emph{the same} $\varepsilon$ for all $j \in [d]$. To prove that our statement holds for the same $\varepsilon$ over all $j \in [d]$, we therefore derive an explicit upper bound on $\left\| \partial\Kmix(\mathbf{z}_j) \right\|$ using the relationship between cumulants and mixed moments combined with mixed moments that signal $\Deltaboldhat{j}$ underlying $\mathbf{z}$ is bounded almost surely. Before this, however, we note that for any $\vecx \in \mathbb{R}^k$, we have $\Kmix(\vecx, \mathbf{z}_j) = K(\vecx, \mathbf{z}_j) - \Kpure(\vecx, \mathbf{z}_j)$. Thus, \begin{align*}
        \|\partial \Kmix(\mathbf{z}_j)\|\! &=\! \left\| \left.\boldsymbol{\nabla}^{\otimes b}\left(K\left(\mathbf{x}, \mathbf{z}_j\right) - \Kpure\left(\mathbf{x}, \mathbf{z}_j\right)\right)\right|_{\mathbf{x} = \frac{c\mathbf{t}}{\sqrt{d}}}\!\!\! - \left.\boldsymbol{\nabla}^{\otimes b}\left(K\left(\mathbf{x}, \mathbf{z}_j\right) - \Kpure\left(\mathbf{x}, \mathbf{z}_j\right)\right)\right|_{\mathbf{x} = \mathbf{0}} \right\| \\
        &\le\! \left\| \left.\boldsymbol{\nabla}^{\otimes b}K\left(\mathbf{x}, \mathbf{z}_j\right)\right|_{\mathbf{x} = \frac{c\mathbf{t}}{\sqrt{d}}}\!\!\! - \left.\boldsymbol{\nabla}^{\otimes b}K\left(\mathbf{x}, \mathbf{z}_j\right)\right|_{\mathbf{x} = \mathbf{0}} \right\| \\&\qquad\qquad\qquad\qquad\qquad + \left\| \left.\boldsymbol{\nabla}^{\otimes b}K\left(\mathbf{x}, \mathbf{z}_j\right)\right|_{\mathbf{x} = \frac{c\mathbf{t}}{\sqrt{d}}}\!\!\! - \left.\boldsymbol{\nabla}^{\otimes b}\Kpure\left(\mathbf{x}, \mathbf{z}_j\right)\right|_{\mathbf{x} = \mathbf{0}} \right\|.
    \end{align*} Since we already showed that the second term is at most $\varepsilon/2$ for sufficiently small $\delta$, it suffices to bound only the first term. This end, we recall that
    \begin{align*}    
    \mathbf{z}_j = \Deltaboldhat{j}/\zeta + \tilde{\boldsymbol{\eta}}_j \text{ where } \Deltaboldhat{j} \coloneqq \frac{\boldsymbol{\Delta}_j - \mu}{\sigma} \text{ and }  \tilde{\boldsymbol{\eta}}_j \coloneqq  \frac{\boldsymbol{\eta}_j}{\zeta\sigma} \sim \mathcal{N} \left( 0, \frac{d^{-2\eta}}{(\zeta\sigma)^2} \mathbf{I}_k \right)
    \end{align*} as used in \Cref{lem:integrability} as well. Since $\Deltaboldhat{j}$ and $\tilde{\boldsymbol{\eta}}_j$ are independent, we get that \begin{align*}
        K(\mathbf{t}, \mathbf{z}_j) = K(\mathbf{t}/\zeta, \Deltaboldhat{j}) + K(\mathbf{t}, \tilde{\boldsymbol{\eta}}_j),
    \end{align*} so it suffices to prove \begin{align*}
       & \left\| \left.\boldsymbol{\nabla}^{\otimes b_1}K\left(\mathbf{x}, \tilde{\boldsymbol{\eta}}_j \right)\right|_{\mathbf{x} = \frac{c\mathbf{t}}{\sqrt{d}}} - \left.\boldsymbol{\nabla}^{\otimes b_1}K\left(\mathbf{x}, \tilde{\boldsymbol{\eta}}_j \right)\right|_{\mathbf{x} = \mathbf{0}} \right\| \le \varepsilon/2 \text{ and }\\& \left\| \left.\boldsymbol{\nabla}^{\otimes b_1}K\left(\mathbf{x}, \Deltaboldhat{j} \right)\right|_{\mathbf{x} = \frac{c\mathbf{t}}{\sqrt{d}}} - \left.\boldsymbol{\nabla}^{\otimes b_1}K\left(\mathbf{x}, \Deltaboldhat{j} \right)\right|_{\mathbf{x} = \mathbf{0}} \right\| \le \varepsilon/2.
     \end{align*} The first statement is immediate since all the $\tilde{\boldsymbol{\eta}}_j$ have the same distribution and threrfore the result follows because $K\left(\mathbf{x}, \tilde{\boldsymbol{\eta}}_j \right)$ and all its derivatives are a continuous function. To prove the second result, we use that each entry of $\Deltaboldhat{j}$ is bounded almost surely by some constant $M$, which implies that any mixed moment in the entries of $\Deltaboldhat{j}$ of order $r$ is at most $M^r$. At this point, we recall that the CGF has the representation \begin{align*}
        K_{\mathbf{z}}((t_1, \ldots, t_k)^\top) = \sum_{\substack{s = (s_1, \ldots, s_k) \in \mathbb{N}^{\times k}}} \kappa_{s}(\mathbf{z}) \frac{(it_1)^{s_1}(it_2)^{s_2}\cdots (it_k)^{s_k}}{s_1!s_2!\cdots s_k!}.
    \end{align*} and that \begin{align*}
        \kappa_{s}(\mathbf{z}) &= \kappa(\underbrace{\mathbf{z}(1), \ldots, \mathbf{z}(1)}_{s_1 \text{ times}}, \underbrace{\mathbf{z}(2), \ldots, \mathbf{z}(2)}_{s_2 \text{ times}}, \ldots, \underbrace{\mathbf{z}(k), \ldots, \mathbf{z}(k)}_{s_k \text{ times}}) \\
        \text{where }  \kappa(X_1, X_2, \ldots, X_k) &= \sum_\pi (|\pi|-1)!(-1)^{|\pi|-1}\prod_{B \in \pi}\Expected{\prod_{j\in B} X_j}.
    \end{align*} It is easy to see that for any $1 \le \ell \le |s|$, the number of partitions of $[|s|]$ with $\ell$ blocks is at most $\frac{|s|!}{\ell!} \binom{|s|}{\ell} \le \frac{|s|!}{\ell!}2^{|s|}$. Using that the $X_j$ are bounded a.s. by some constant $C$, we get that \begin{align*}
        |\kappa_{s}(\Deltaboldhat{j})| = \sum_{\ell = 1}^{|s|} \sum_{\pi : |\pi| = \ell} (\ell-1)!(-1)^{\ell-1}\prod_{B \in \pi}\Expected{\prod_{j\in B} X_j} \le \sum_{\ell = 1}^{|s|} |s|! C^{|s|} = |s|! |s| C^{|s|} \le |s|! M^{|s|} 
    \end{align*} for some constant $M > C$. Moreover, it is easy to see that for every $s = (s_1, \ldots, s_k)$ such that $|s| = r \ge k$, we have that \begin{align*}
        s_1!s_2!\cdots s_k! \ge \left( \left \lfloor r/k\right\rfloor !\right)^k \ge \left( \sqrt{ \lfloor r/k \rfloor } \left( \frac{r}{2ek} \right)^{\lfloor r/k \rfloor}\right)^k \ge \left( \frac{2ek}{r} \left( \frac{r}{2ek} \right)^{r/k} \right)^k \ge \left( \frac{1}{r} \right)^{k} \left( \frac{r}{2ek} \right)^{r}
    \end{align*} by Stirling's approximation and using that $\lfloor r/k \rfloor \ge (r/k) - 1$. Similarly, we further have that \begin{align*}
        |s|! \le 2\sqrt{2 \pi r} \left( \frac{r}{e}\right)^{r}. 
    \end{align*} Accordingly, for every $s = (s_1, \ldots, s_k) \in \mathbb{N}^{\times k}$ with $|s| = r$, we get that \begin{align*}
        \frac{\kappa_s(\mathbf{z})}{s_1!s_2!\cdots s_k!}\le 2 r^k \sqrt{2 \pi r} M^r(2k)^r,
    \end{align*} which grows merely exponentially in $r$. Now, let $D$ denote an operator representing an arbitrary partial derivative of order $b$. It is easy to see that \begin{align*}
        \left| D K(\mathbf{x}, \Deltaboldhat{j})|_{\mathbf{x} = \mathbf{t}} - D K(\mathbf{x}, \Deltaboldhat{j})|_{\mathbf{x} = \mathbf{0}}  \right| &\le \sum_{\substack{s = (s_1, \ldots, s_k) \in \mathbb{N}^{\times k} \\ |s| > b}} \frac{|\kappa_{s}(\mathbf{z})|}{s_1!s_2!\cdots s_k!} |s|^b \left( \| \boldsymbol{t} \|_\infty\right)^{|s| - b} \\
        &\le \sum_{\substack{s = (s_1, \ldots, s_k) \in \mathbb{N}^{\times k} \\ |s| > b}} \frac{|\kappa_{s}(\mathbf{z})|}{s_1!s_2!\cdots s_k!} |s|^b \left( \| \boldsymbol{t} \|_\infty\right)^{|s| - b} \\
        &\le \sum_{r = b+1}^\infty 2\sqrt{2 \pi r}k^rM^r(2k)^r r^b \left( \| \boldsymbol{t} \|_\infty\right)^{|s| - b} \\
        &\le C_1 \sum_{r = b+1}^\infty \exp\left( C_2 r + C_3 \log(r) + (r-b)\log(\|\mathbf{t}\|_\infty) \right) \\
        &\le C_1 \sum_{r = b+1}^\infty \exp\left( (C_4 + C_5 \log(\|\mathbf{t}\|_\infty))r \right)
    \end{align*} which can be made smaller than any $\varepsilon > 0$ by for all $\|\mathbf{t}\|_\infty$ small enough. The statement follows.
\end{proof}

\subsection{Deferred Proofs from \Cref{sec:triangles}}\label{sec:deferredtriangles}

\momentsfortriangleanalysis*
\begin{proof}
    We get from \Cref{lem:cumulantasnicemixedmoment} and the definition of $\mathbf{z}$ that \begin{align*}
        \kappa_{(1,1,\ldots,1)} = \Expected{ \prod_{j = 1}^k \mathbf{z}_1(j) } = \Expected{ \prod_{j = 1}^k \frac{\Delta(e_j) - \mu + \boldsymbol{\eta}(j)}{\zeta \sigma} } = (\zeta \sigma)^{-k} \Expected{ \prod_{j = 1}^k (\gamma(e_j) + \boldsymbol{\eta}(j)) }
    \end{align*} where $\mu = \Expected{\Delta(e_1)}$ and $\boldsymbol\eta \sim \mathcal{N}(0, d^{-2\eta}\mathbf{I}_k)$. At this point, it only remains to quantify the influence of $\boldsymbol{\eta}$. But since $\boldsymbol{\eta}$ is independent of all the other randomness, we get that \begin{align*}
        \Expected{ \prod_{j = 1}^k (\gamma(e_j) + \boldsymbol{\eta}(j)) } = \sum_{S \subseteq [k]}^k \Expected{\prod_{j\in S}\gamma(e_j)}\Expected{\prod_{j\notin S}\boldsymbol{\eta}(j)} = \Expected{\prod_{j=1}^k \gamma(e_j)}
    \end{align*} since all terms corresponding to $S \neq \emptyset$ are zero.
\end{proof}

Let us prove that $\Expected{\gamma(e_1)\gamma(e_2)\gamma(e_3)} < 0$ for all $L_q$-norms.

\correlation
\begin{proof}
    For an edge $e = \{u, v\}$, we have (up to a constant factor) $\gamma(e) = |x_u - x_v|_C^q - \mu$ with $\mu = \Expected{|x_u - x_v|_C^q}$. We define $\delta(e) = |x_u - x_v|_C$ and write $y \coloneqq \delta(e_1)$ to decompose \begin{align*}
        \Expected{\gamma(e_1)\gamma(e_2)\gamma(e_3)} &= \mathbb{E}_{y}\left[\gamma(e_1)\mathbb{E}[\gamma(e_2)\gamma(e_3) \mid y]\right]
    \end{align*} and note that both $\gamma(e_1)$ and $\mathbb{E}[\gamma(e_2)\gamma(e_3) \mid y]$ are a function of $y$. Since $\gamma(e_1) = y^q - \mu$ it is easy to see that $\gamma(e_1)$ is strictly monotonically increasing in $y$. We prove that $\mathbb{E}[\gamma(e_2)\gamma(e_3) \mid y]$ is monotonically decreasing in $y$. 

    To this end, we note that \begin{align*}
        \mathbb{E}[\gamma(e_2)\gamma(e_3) \mid y] &= \mathbb{E}[(\Delta(e_2) - \mu)(\Delta(e_3) - \mu) \mid y]\\
        &= \mathbb{E}[\Delta(e_2)\Delta(e_3) \mid y] - \mu \Expected{\Delta(e_2)} - \mu\Expected{\Delta(e_3)} + \mu^2\\
        &= \underbrace{\mathbb{E}[\Delta(e_2)\Delta(e_3) \mid y]}_{\eqqcolon F(y)} - \mu^2
    \end{align*} so it suffices to show that $F(y)$ is decreasing in $y$. We note that \begin{align*}
        F(y) = \int_{0}^{1/2} \left( x^q|x-y|^q + \left(1/2 - x\right)^q\left(1/2 - |x-y|\right)^q \right) \text{d} x
    \end{align*} and prove that the derivative w.r.t. $y$ is $< 0$ for all $y \in (0,1/2)$. Note that by the Leibniz integral rule \begin{align*}
        \frac{\text{d}}{\text{d} y} F(y) &= \int_{0}^{1/2} \frac{\text{d}}{\text{d} y}\left( x^q|x-y|^q + \left(1/2 - x\right)^q\left(1/2 - |x-y|\right)^q \right) \text{d} x\\ 
        &=\int_{0}^{1/2} \left( \text{sgn}(y-x)qx^q|x-y|_C^{q-1} + \text{sgn}(x-y)q\left(1/2 - x\right)^q\left(1/2 - |x-y|_C\right)^{q-1} \right) \text{d} x\\
        &= q \left( \int_{0}^y x^q(y-x)^{q-1} \text{d} x - \int_{y}^{1/2} x^q(x-y)^{q-1} \text{d} x \right) \\
        &\hspace{0.8cm} + q\left( -\int_{0}^y (1/2 - x)^q(1/2 - y + x)^{q-1} \text{d} x + \int_{y}^{1/2} (1/2 - x)^q(1/2 + y - x)^{q-1} \text{d} x \right).
    \end{align*} Rearranging yields \begin{align*}
        \frac{\text{d}}{\text{d} y} F(y) &= q \underbrace{\left( \int_{0}^y x^q(y-x)^{q-1} \text{d} x -\int_{0}^y (1/2 - x)^q(1/2 - y + x)^{q-1} \text{d} x \right)}_{\eqqcolon g_1(y)} \\
        &\hspace{3cm} + q\underbrace{\left( \int_{y}^{1/2} (1/2 - x)^q(1/2 + y - x)^{q-1} \text{d} x - \int_{y}^{1/2} x^q(x-y)^{q-1} \text{d} x \right)}_{\eqqcolon g_2(y)}.
    \end{align*} We prove that $g_1(x), g_2(x) \le 0$ for all $y\in[0,1/2]$ which implies what we want. Starting with $g_2$, we see that \begin{align*}
        g_2(y) &= \int_{y}^{1/2} (1/2 - x)^q(1/2 + y - x)^{q-1} \text{d} x - \int_{y}^{1/2} x^q(x-y)^{q-1} \text{d} x\\
        &= \int_{0}^{1/2-y} z^q(z+y)^{q-1} \text{d} z - \int_{0}^{1/2-y} (z+y)^qz^{q-1} \text{d} z
    \end{align*} where we substituted $z = 1/2 - x$ in the first integral and $z = x - y$ in the second integral. Combining the integrals and factoring out then yields \begin{align*}
        g_2(y) = \int_{0}^{1/2-y} z^{q-1}(z+y)^{q-1} \left( z - (z + y) \right) \text{d} z = -y \int_{0}^{1/2-y} z^{q-1}(z+y)^{q-1} \text{d} z.
    \end{align*} Since the above integral and $y$ are clearly non-negative, we conclude that $g_2(y) < 0$ whenever $y \neq 0$ and $y \neq 1/2$ as desired.

    Proceeding with $g_1$, we note that \begin{align*}
        g_1(y) &=  \int_{0}^y x^q(y-x)^{q-1} \text{d} x -\int_{0}^y (1/2 - x)^q(1/2 - y + x)^{q-1} \text{d} x\\
        &\le \int_{0}^y x^q(1/2-x)^{q-1} \text{d} x - \int_{0}^y (1/2 - x)^qx^{q-1} \text{d} x\\
        &= \int_{0}^y x^{q-1}(1/2-x)^{q-1} \left(x - (1/2-x) \right) \text{d} x \\
        &= \int_{0}^y \underbrace{x^{q-1}(1/2-x)^{q-1} \left(2x - 1/2 \right)}_{\eqqcolon h(x)} \text{d} x.
    \end{align*} To show that this is negative, we consider two cases. Case 1: $y \le 1/4$. Then $\int_{0}^yh(x)\text{d}x \le 0$  since the integrand $h(x)$ is non-positive in the whole domain. In Case 2, where $y > 1/4$, we note that \begin{align*}
        \int_{0}^yh(x)\text{d}x = \int_{0}^{1/2}h(x)\text{d}x - \int_{y}^{1/2}h(x)\text{d}x.
    \end{align*} However, we also have that \begin{align*}
        \int_{0}^{1/2} h(x)\text{d}y &= \int_{0}^{1/4}h(x)\text{d}x + \int_{1/4}^{1/2}h(x)\text{d}yx\\
        &= \int_{0}^{1/4}h(x)\text{d}y + \int_{0}^{1/4}h(1/2-x)\text{d}x\\
        &= \int_{0}^{1/4}(h(x) + h(1/2-x))\text{d}x.
    \end{align*} Now, it is easy to see that $h(x) = -h(1/2 - x)$ (by definition of $h$), so the integral above is $0$. Going back, this means that \begin{align*}
        \int_{0}^yh(x)\text{d}x = - \int_{y}^{1/2}h(x)\text{d}x.
    \end{align*} and now (since $y > 1/4$), the above integrand is non-negative, so (due to the minus in front of the above expression) $g_1(y) = \int_{0}^yh(x)\text{d}x \le 0$ holds in both case 1 and 2. Assuming further that $y \in (0,1/2)$, the inequality can be made strict, as desired.

    In total, we have thus shown that $\mathbb{E}[\gamma(e_2)\gamma(e_3) \mid y]$ is strictly monotonically decreasing in $y$. Now, we can use this fact to show that $\mathbb{E}[\gamma(e_1)\gamma(e_2)\gamma(e_3) ] = \mathbb{E}_{y}\left[\gamma(e_1)\mathbb{E}[\gamma(e_2)\gamma(e_3) \mid y]\right] < 0$ since we know that $f_1(y) \coloneqq \gamma(e_1)$ is a function monotonically increasing in $y$, and since $f_2(y) \coloneqq \mathbb{E}[\gamma(e_2)\gamma(e_3) \mid y]$ is decreasing in $y$. In particular, we know that $f_1(0) < 0$ while $f_1(1/2) > 0$ and we know that $f_2(0) > 0$ while $f_2(1/2) < 0$. Due to strict monotonicity and since $f_1,f_2$ are continuous, this implies that $f_1, f_2$ each have exactly one zero in the interval $(0,1/2)$. We call these positions $y_1, y_2$, respectively such that $f_1(y) > 0$ for $y \in (y_1, 1/2]$ and $f_2(y) < 0$ for $y \in (y_2, 1/2]$. Depending on which of $y_1,y_2$ is larger, we distinguish three cases.
    
    Case 1: $y_2 < y_1$. Here, we can split \begin{align*}
        \mathbb{E}[\gamma(e_1)\gamma(e_2)\gamma(e_3) ] &= 2\int_0^{1/2}f_1(y) f_2(y) \text{d} y \\
        &= 2 \left(\underbrace{\int_0^{y_2}f_1(y) f_2(y) \text{d} y}_{\eqqcolon I_1} + \underbrace{\int_{y_2}^{y_1}f_1(y) f_2(y) \text{d} y}_{\eqqcolon I_2} + \underbrace{\int_{y_1}^{1/2}f_1(y) f_2(y) \text{d} y}_{\eqqcolon I_3} \right).
    \end{align*} Now, $I_1 \le 0$ since $f_1(y) \le 0$ and $f_2(y) \ge 0$ for $y\in[0,y_2]$ by monotonicity. Furthermore, we have $f_1(y)f_2(y) \le f_1(y)f_2(y_1)$ for all $y \in [y_2, 1/2]$ because $f_2(y_1)$ is the smallest ('most negative') value $f_2$ attains for $y \in [y_2, y_1]$ (where $f_1,f_2$ are both negative), and the largest ('least negative') value $f_2$ attains for $y \in [y_1, 1/2]$  (where $f_1$ is positive and $f_2$ is negative). Hence, leaving out $I_1$ because it is $< 0$ anyways and applying the above fact, we get \begin{align*}
        \frac{1}{2}\mathbb{E}[\gamma(e_1)\gamma(e_2)\gamma(e_3) ] &\le \int_{y_2}^{y_1}f_1(y) f_2(y) \text{d} y + \int_{y_1}^{1/2}f_1(y) f_2(y) \text{d} y \\
        &\le f_2(y_1) \int_{y_2}^{1/2}f_1(y) \text{d} y.
    \end{align*} Now, $f_2(y_1) < 0$ due to $y_2 < y_1$, and  $\int_{y_2}^{1/2}f_1(y) \text{d}y > 0$ since $\int_{0}^{1/2}f_1(y) \text{d}y = 0$ while $f_1(y)$ is strictly increasing and $y_2 > 0$. This implies that $ f_2(y_1) \int_{x_2}^{1/2}f_1(y) \text{d} < 0$ as desired.

    Case 2: $y_2 > y_1$. Using analogous arguments as used in case 1, we can split \begin{align*}
        \frac{1}{2}\mathbb{E}[\gamma(e_1)\gamma(e_2)\gamma(e_3) ] &\le \int_{0}^{y_1}f_1(y) f_2(y) \text{d} y + \int_{y_1}^{y_2}f_1(y) f_2(y) \text{d} y \\
        &\le f_2(y_1) \int_0^{y_2}f_1(y) \text{d} y.
    \end{align*} This time $f_2(y_1) > 0$ since $y_1 < y_2$, but $\int_0^{y_2}f_1(y) \text{d} y < 0$ since $\int_{0}^{1/2}f_1(y) \text{d}y = 0$ while $f_1(y)$ is strictly increasing and $y_2 > 0$. Our statement follows.

    Case 3: $y_1 = y_2$: In this case, our statement follows even more directly since $f_1$ and $f_2$ have opposite sign for all $y \neq y_1=y_2$ so \begin{align*}
        \mathbb{E}[\gamma(e_1)\gamma(e_2)\gamma(e_3) ] &= 2\int_0^{1/2}f_1(y) f_2(y) \text{d} y
    \end{align*} is negative.
\end{proof}

\end{document}